\newcommand{\bfi}{\bfseries\itshape}
\newcommand{\cok}{\kappa}
\newcommand{\cA}{\mathcal A}
\newcommand{\cB}{\mathcal B}
\newcommand{\cC}{\mathcal C}
\newcommand{\cD}{\mathcal D}
\newcommand{\cF}{\mathcal F}
\newcommand{\cH}{\mathcal H}
\newcommand{\cL}{\mathcal L}
\newcommand{\cO}{\mathcal O}
\newcommand{\cS}{\mathcal S}
\newcommand{\cU}{\mathcal U}
\newcommand{\cV}{\mathcal V}
\newcommand{\RR}{\mathbb R}
\newcommand{\bbR}{\mathbb R}
\newcommand{\bbT}{\mathbb T}
\newcommand{\rk}{{\rm rank\ }}
\newcommand{\QED}{\hfill $\square$\vspace{2mm}}
\newtheorem{theorem}{Theorem}[section]
\newtheorem{definition}[theorem]{Definition}
\newtheorem{lemma}[theorem]{Lemma}
\newtheorem{remark}[theorem]{Remark}
\newtheorem{proposition}[theorem]{Proposition}
\begin{document}
\setcounter{tocdepth}{1}

\title[Convexity of toric-focus integrable systems]{Convexity of singular
affine structures and toric-focus integrable Hamiltonian systems}

\author{Tudor Ratiu}
\address{School of Mathematics, Shanghai Jiao Tong
University, 800 Dongchuan Road, Minhang District, Shanghai, 200240 China
and Section de Mathématiques, Ecole Polytechnique Fédérale de Lausanne,
CH-1015 Lausanne, Switzerland. Partially supported by the  National Natural
Science Foundation of China grant number 11871334  and by NCCR SwissMAP
grant of the Swiss National Science Foundation. ratiu@sjtu.edu.cn,
tudor.ratiu@epfl.ch}
\author{Christophe Wacheux}
\address{IBS Center for Geometry and Physics, Math Building 108, POSTECH campus, Pohang, Gyeonsangbukdo, South Korea. cwacheux@ibs.re.kr}

\author{Nguyen Tien Zung}
\address{School of Mathematics, Shanghai Jiao Tong University (visiting professor),
800 Dongchuan Road, Minhang District, Shanghai, 200240 China
and Institut de Mathématiques de Toulouse, UMR5219,
Université Paul Sabatier, 118 route de Narbonne,
31062 Toulouse, France, tienzung@math.univ-toulouse.fr}

\begin{abstract}
This work is devoted to a systematic study of symplectic convexity for
integrable Hamiltonian systems with elliptic and focus-focus singularities. A
distinctive feature of these systems is that their base spaces are still smooth
manifolds (with boundary and corners), similarly to the toric case, but their
associated integral affine structures  are singular, with non-trivial monodromy,
due to focus singularities. We obtain a series of convexity results, both
positive and negative, for such singular integral affine base spaces. In
particular, near a focus singular point, they are locally convex and the
local-global convexity principle still applies. They are also globally convex
under some natural additional conditions. However, when the monodromy is
sufficiently big then the local-global convexity principle breaks down, and the
base spaces can be globally non-convex even for compact manifolds. As one of
surprising examples, we construct a 2-dimensional ``integral affine black
hole'', which is locally convex but for which a straight ray from the center can
never escape. \end{abstract}

\subjclass{37J35,  53A15, 52A01}

\keywords{integrable system, toric-focus, convexity,
integral affine structure, singularities}

\maketitle

\tableofcontents

\section{Introduction}
label{sec:introduction}
This paper is devoted to the study of convexity properties of integrable Hamiltonian systems in the
presence of focus-focus singularities. These singularities are one of the three kinds of
elementary non-degenerate singularities for integrable systems (the other two being elliptic and
hyperbolic). They are of special interest in mathematics and physics for at least the following two reasons:

- They can be found everywhere, even in the most simple physical systems, e.g.,
the spherical pendulum, the symmetric spinning top, the internal movement of
molecules, the nonlinear
Schr\"odinger equation, the Jaynes-Cummings-Gaudin model, and so on.
(See, e.g., \cite{BaDo2015,BoFo-IntegrableBook2004, CuBa2015,
DGSZ-DynamicalMonodromy2009, EJS-BendingQuantum2004,
Zhilinskii-Deffect2006} and references therein.)
They also appear in the theory of special Lagrangian fibrations on Calabi-Yau
manifolds related to mirror symmetry. (See, e.g.,
\cite{CaMa_Lagrangian2009,Gross_TropicalBook2011,
GrSi-AffineComplex2011,KoSo-Affine2006} and references therein.)

- They give rise to the phenomenon of non-trivial monodromy, both classical and quantum,
for both the Lagrangian fibrations on symplectic manifolds and the singular
integral affine  structure on the base spaces. The notion of monodromy was
introduced by Duistermaat~\cite{Duistermaat-AA1980}. The phenomenon of
non-trivial monodromy for focus-focus singularities was first observed in
concrete examples by Cushman and
Kn\"{o}rrer~\cite{CuKno-EnergymomentumLagrTop1985}. The general formula was
obtained independently by Zung~\cite{Zung-Focus1997} and
Matveev~\cite{Matveev1996}. This monodromy is an obstruction to the existence of
global action-angle variables, and is also an obstruction to the existence of
global quantum numbers in a very large class of integrable quantum systems,
including such simple systems as the $H_2O$ molecule.

For our study of convexity, this monodromy also creates a challenge and leads to
some very surprising results.

We restrict our attention to toric-focus integrable Hamiltonian  systems, i.e.,
systems which admit only non-degenerate elliptic and focus-focus singularities
and no hyperbolic singularity. The reason is that the base space (i.e., the
space of connected components of the level sets of the momentum map) branches
out at hyperbolic singular points, so it does not make much sense to talk about
convexity at those points. On the other hand, if the system is toric-focus, then
the base space is still a smooth manifold, together with an associated
(singular) integral
affine structure, and one can talk about the convexity of that structure.

When there are no focus-focus singularities, then the system is toric (i.e, it
admits a Hamiltonian torus action of half the dimension), and its convexity
properties are well-known, due to the famous Atiyah--Guillemin--Sternberg
theorem \cite{At1982,GuSt-Convexity1982} and the Delzant classification theorem
\cite{Delzant1988}. For a subclass of toric-focus systems in dimension 4 called
semitoric systems, V\~u Ng\d{o}c \cite{San-Polytope2007} developed a theory of
multi-valued momentum polytopes which are similar to Delzant polytopes.
Nevertheless, the problem of intrinsic convexity of semitoric systems has never
been studied anywhere in the literature, as far as we know.

Let us now present the main concepts and results of this paper.

In order to study convexity of singular affine structures, we shall first define
the class of singular affine structures that we will study, namely, the \emph{affine structure with focus singularities}. Then, we define clearly what
convexity means for them. So we introduce the notion of straight lines,
especially those passing through singular points, and study their existence and
their multiple branchings. Then we say that a set in a singular affine space is
convex if any two points can be joined by at least one (and maybe several)
straight line segment in that set. We also need 
some extensions of the notion of convexity for singular affine structures: local and global convexity, strong convexity, $\Sigma$-convexity, and
a notion of convex hull (which exists but may be non-unique). We also 
make use of the notion of multiple-valued
integral affine charts around focus singular points in our study.

The key technical assumption in almost all theorems is either compactness or a
properness condition for non-compact spaces. As expected from the theory of
non-linear Lie group actions and momentum maps, without some properness hypothesis convexity may fail.  The so-called local-global convexity principle remains one
of the main technical tools which links local to global convexity; it is
presented in detail, in the framework needed in this paper, in Section
\ref{section:LocalConvexity}, even in the presence of focus singularities.

The paper presents two kinds of convexity results, positive and negative.

\subsection*{Positive convexity results} \hfill

$\bullet$ (Theorem \ref{thm:FF1Local}, Theorem \ref{thm:FF2Local},
Proposition \ref{prop:FocusBox1})
\textit{Local convexity near focus points}:
Any box around a simple focus singularity is convex and, moreover, the
local-global convexity principle holds in such a box.

$\bullet$ (Theorem \ref{thm_gobal_convex_2D_compact},
Theorem \ref{thm:GlobalConvex2Dfirst}, Theorem \ref{thm:ConvexS2})
\textit{Global convexity in dimension 2}:
Let $\mathcal{B}$ be the 2-dimensional base space of a toric-focus
integrable Hamiltonian system on a connected
4-dimensional symplectic manifold (with or without boundary).
Assume that the base $\cB$ is compact or satisfies some properness
conditions. If $\cB$ is non-empty and locally convex, then
$\mathcal{B}$ is convex (in its own affine structure) and is
topologically either a disk, an annulus, or a M\"obius band.
There are globally convex examples if  $\cB$ is a sphere.

$\bullet$ (Theorem \ref{thm:ConvexSmallMonodromy},
Theorem \ref{thm:GlobalConvexnDnoncompact}).
\textit{Global convexity if there exists a system-preserving
$\mathbb{T}^{n-1}$-action or, equivalently,
$(n-1)$ globally well-defined affine functions on the base space}:
Let $\cB$ be the base space of a toric-focus integrable Hamiltonian
system with $n$ degrees of freedom on a connected symplectic
manifold $M$. Assume that the system admits a
global Hamiltonian $\mathbb{T}^{n-1}$-action and that $\cB$
is either compact or satisfies some  properness
conditions. Then $\cB$ is convex.

\subsection*{Negative convexity results} \hfill

All negative convexity results we have in this paper are due
to monodromy, which is the main potential obstruction to
convexity. We find integrable Hamiltonian systems whose base
spaces of the associated Lagrangian foliations possess
sufficiently complicated monodromy which then enforces
non-convexity of these base spaces.

$\bullet$ (Theorem \ref{thm:NonConvexS2})
\textit{Existence of ``\textit{affine black holes}''
in dimension 2 (or higher), leading to locally convex but globally
non-convex integral affine structures
on the 2-sphere $S^2$} (with 24 focus singular points,
counted with multiplicities).

$\bullet$ (Theorem \ref{thm:NonConvexDim3})
\textit{Existence of a non-convex box of dimension $3$ (or
higher) around two focus curves passing nearby each other}.
The local monodromy group in this case is a
representation of the free group with 2 generators.

$\bullet$ (Theorem \ref{thm:NonConvexFocus2})
\textit{Existence of a non-convex $\text{focus}^m$ box
for any $m \geq 2$}. The local monodromy group in this case is
the free Abelian group $\mathbb{Z}^m$.

\subsection*{Organization of the paper} \hfill

The paper is organized as follows.

In Section \ref{section:ReviewConvex1} and \ref{section:ReviewConvex2}
we give a brief overview of convexity results in symplectic geometry and
in integrable Hamiltonian systems. Strictly speaking, these sections are
not necessary for the understanding of the rest of the paper. However,
we believe that this overview is interesting in its own right, especially
to the non-experts, and give a clear picture of where our main results
are situated in the field of symplectic convexity.

In Section \ref{section:ToricFocus}
we recall the notion of integrable Hamiltonian systems
and present their interpretation as
singular Lagrangian fibrations. We recall the main results about their
singularities, give the associated normal forms, and present the topology
and affine structure on the base space of the fibration. This naturally
leads to the definition of toric-focus systems, the object of study in
this paper, whose known properties are reviewed.

Section \ref{sec_ffsingularities} is devoted to the study of affine
manifolds with focus singularities. We first show how monodromy
induces affine coordinates near focus points in arbitrary dimensions
and study the behavior of the affine structure in the presence of
focus points in increasing order of complexity.

In Section \ref{sec_straight_lines_convexity} we define the notion of
straight lines, with special emphasis on
the ones passing through singular points. This allows us to introduce
the notions of convexity and local convexity with respect to the
underlying singular affine structure. We show the existence and
branching of extensions of straight lines hitting singular points. In
view of global convexity, we also introduce the notions of strongly
convex subsets of a singular affine manifold.

In Section \ref{section:LocalConvexity} we prove local convexity
theorems for the singular affine structure around a singular point
with just one focus component. We also explain that the presence of a
singular point with many focus components may lead to non-convexity,
even for compact manifolds, by showing why it is not convex on an example.

Section \ref{sec_global_convexity} is devoted to both positive and
negative results on global convexity. We show that if the base space
of the singular Lagrangian fibration of a two degrees of freedom
integrable system with no hyperbolic singularities has non-empty boundary
and is either compact or proper, then it
is convex. If this base space is a two dimensional sphere,
in contrast to the case when it has boundary, it is not necessarily
convex. We present both a convex and non-convex example in this situation.
If the dimension of this base space is at least three, then there are
non-convex examples, even when the manifold is compact, has a boundary,
and is topologically a cube. All the non-convex examples are related
to the fact that the monodromy group is big. When there exist $n-1$ global
affine functions on the base space, which means that the monodromy group
is not too complicated, we prove that the base
space is convex in any dimension $n$, under the assumption
that the base space is compact or its affine
structure is proper. If the affine structure is not proper, then it is
already known \cite{PeRaVN2017} that convexity fails, even if there is
a Hamiltonian $\mathbb{T}^{n-1}$-action.

\section{A brief overview of convexity in symplectic geometry}
\label{section:ReviewConvex1}

The present work is part of a big program aimed at understanding
the relationship between symplectic and Poisson
geometry to convexity of the momentum maps of Hamiltonian actions
and several of its generalizations.
Readers who are familiar with this subject can skip this overview section.

\subsection{Kostant's Linear Convexity Theorem} \hfill

 The link between convexity
and coadjoint orbits appears for the first
time in a 1923 paper of Schur \cite{Schur1923} who showed that the set of
diagonals of an isospectral set of Hermitian $n \times n$ matrices,
viewed as a subset of $\mathbb{R}^n$, lies
in the convex hull whose vertices are the vectors formed by the $n!$
permutations of its eigenvalues. Horn \cite{Horn1954a} proved that this
inclusion is an equality. The fundamental
breakthrough pointing the way towards the basic relationship between
convexity and certain aspects of Lie theory and symplectic geometry
is due to Kostant \cite{Kostant1973} who realized that the
\textit{Schur-Horn Convexity Theorem} is
just a special case of a much more general statement: the projection
of a coadjoint orbit of a connected compact Lie group relative to a
bi-invariant inner product onto the dual of a Cartan subalgebra is the
convex hull of the Weyl group orbit of one (hence any) intersection points
of the orbit with the Cartan algebra. Even this result is a special
case of a convexity theorem for symmetric spaces.

The influence of this
theorem, called \textit{Kostant's Linear Convexity Theorem}
in the development of the relationship between geometric aspects
of Lie theory, symplectic and Poisson geometry, infinite dimensional
differential geometry, affine geometry, and integrable systems cannot
be overstated. We give below a very quick and incomplete synopsis of the
enormous work generated by this result in order to put our paper in this
larger context. For an excellent review of the convexity results in Lie
theory and symplectic geometry see Guillemin and Sjamaar's book
\cite{GuSj2005} and references therein.
We isolate five main areas where Kostant's Linear Convexity
Theorem has important extensions and has led to remarkable results.

\subsection{Infinite dimensional Lie theory}  \hfill

The first infinite
dimensional convexity result, due to Atiyah and Pressley \cite{AtPr1983},
extends Kostant's Convexity Theorem to loop groups of compact connected
and simply connected Lie groups. A more general convexity theorem,
valid for all coadjoint orbits of arbitrary Kac-Moody Lie groups associated
to a symmetrizable generalized Cartan matrix, except for some degenerate
orbits, is due to Kac and Peterson \cite{KaPe1984}.  The direct
generalization of the Schur-Horn Convexity Theorem to the Banach Lie group
of unitary operators on a separable Hilbert space is due to Neumann
\cite{Neumann1999}. The main issue in the formulation of the theorem
is the topology used to close the convex hull and the author discusses
both Schatten classes as well as the operator topology. This work is
continued in \cite{Neumann2002} by considering the infinite dimensional
orthogonal and symplectic groups. Again, the essential issue is the
topology used for the closure of the convex hull.

A different type of
Schur-Horn Convexity Theorem for an appropriate completion of the group
of area preserving diffeomorphisms of the annulus $[0,1] \times S^1$ is
due to Bloch, Flaschka, and Ratiu \cite{BlFlRa1993}. The completion
relative to various topologies is shown to equal the semigroup of not
necessarily invertible measure preserving maps of the annulus. The role
of the maximal torus is played the Hilbert space of $L^2$-functions on
the interval $[0,1]$ and that of the Weyl group by semigroup of maps
$[0,1] \times S^1 \ni (z, \theta)\mapsto(a(z), j(x)\theta)$, where
$a(z)$ is measure preserving and $j(z) = \pm 1$ a.e. It is shown that the
Schur-Horn Convexity Theorem holds: the orthogonal projection of the
set of functions with the same moments, given by integration over the
circle, equals a weakly compact convex set in the Hilbert space of
$L^2$-functions on the interval $[0,1]$ whose extreme points coincide
with the Weyl semigroup orbit. The link of this convexity theorem
with Toeplitz quantization, measure theory, the dispersionless Toda PDE,
and the PDE version of Brockett's double bracket equation is discussed in
\cite{BlFlRa1996, BlEHFlRa1997}. The convexity result has been extended
to the subgroup of equivariant symplectomorphisms of a symplectic toric
manifold in the Ph.D. thesis of Mousavi \cite{Mousavi2012} (unpublished).
Our paper does not explore such infinite dimensional generalizations.

\subsection{``Linear'' symplectic formulations}  \hfill

The so-called ``linear''
convexity theorems have their origin in the study of the behavior of
eigenvalues of matrices under linear operations; the Schur-Horn Convexity
Theorem and Konstant's Linear Convexity Theorem are representatives of
this point of view. The foundational work extending these theorems to
symplectic geometry is due to Atiyah \cite{At1982} and Guillemin and
Sternberg \cite{GuSt-Convexity1982, GuSt1984}. Let $(M^{2n},\omega)$ be a
$2n$-dimensional symplectic manifold endowed with a Hamiltonian
$\mathbb{T}^k$-action, with invariant momentum map $\mathbf{J}: M
\rightarrow \mathbb{R}^k$. Then the fibers of $\mathbf{J}$ are connected
and $\mathbf{J}(M)$ is a compact convex polytope, namely the convex
hull of the image of the fixed point set of the $\mathbb{T}^k$-action;
$\mathbf{J}(M)$ is called the \textit{momentum polytope}.
If the $\mathbb{T}^k$-action is effective (the intersection of all isotropy
subgroups of each point in $M^{2n}$ is the identity), then there must be
at least $k+1$ fixed points of the action and $k \leq n$.

The generalization of the Atiyah-Guillemin-Sternberg theorem  to
compact Lie group actions on compact connected symplectic manifolds
is due to Kirwan \cite{Kirwan1984}: the fibers of the momentum map
are connected and the image of the momentum map intersected with a Weyl
chamber is a connected polytope. For Hamiltonian actions of compact
Lie groups, this intersection is called \textit{momentum
polytope}. The momentum polytope of the restriction of the Hamiltonian
action of a compact Lie group to its maximal torus is the convex hull
of the Weyl group orbit of the momentum polytope of the Hamiltonian
compact Lie group action.

Brion \cite{Brion1986} sharpened Kirwan's Convexity Theorem in the
framework of projective algebraic varieties by finding a more detailed
description of this momentum polytope and linked it to the Kirwan
stratification \cite{Kirwan1984_book}. For an up to date account
of stratified spaces, see Pflaum's book \cite{Pflaum2001}.
Sjamaar \cite{Sjamaar1998} extended Brion's methods to symplectic
manifolds and gave a
description of the polytope in a neighborhood of any of its points $p$
in terms of the action of the stabilizer of the group action on the
manifold at a point $m$ satisfying $\mathbf{J}(m) = p$; this also yields a necessary
condition for $p$ to be a vertex. In addition, Kirwan's Convexity Theorem
was extended to actions on affine varieties and cotangent bundles.

Hilgert, Neeb, and Plank \cite{HiNePl-Convexity1994} extended Kirwan's
Convexity Theorem to non-compact symplectic manifolds with a proper
momentum map and obtained other convexity results.
Lerman \cite{Lerman1995} proved the same result for linear compact
Lie group actions on symplectic vector spaces whose associated momentum
map is not necessarily proper employing his symplectic cuts technique.
Lerman, Meinrenken, Tolman and Woodward \cite{LT-Orbifold1997,LMTW98}
also extended Atiyah--Guillemin--Sternberg--Kirwan's convexity theorems
to the case of symplectic \textit{orbifolds}, gave a local description of the
momentum polytope, and proved the openness of the map from the orbit space to the
momentum polytope.

The momentum polytope for non-compact manifolds has discrete vertices and
is, in general,  an unbounded, convex, locally finite intersection of polyhedral cones,
each of which is determined by local data on the
manifold, conveniently expressed in terms of the Marle-Guillemin-Sternberg
Normal Form \cite{Marle1984, Marle1985, GuSt1984_normal_form} (a
slice theorem for Hamiltonian actions; see also
\cite{GuSt1990}, \cite{OrRa2004} for a presentation of the normal form
and various extensions thereof).

Heinzner and Huckleberry \cite{HeHu1996}
replaced the compact symplectic manifold and compact Lie group in
Kirwan's theorem by an irreducible complex K\"ahler, not necessarily compact,
manifold with an action of the complexification of the compact Lie group and
showed that the image by the momentum map of the open subset
consisting of the Lie group orbits of maximal dimension, intersected with
a Weyl chamber, is convex.

Knop \cite{Knop2002} introduced the concept
of convex Hamiltonian manifold in the following way. Suppose that
a symplectic manifold $M$ admits a Hamiltonian Lie group action. Choose a
Cartan subalgebra $\mathfrak{t}$ in the Lie algebra of the compact
symmetry Lie group and a positive Weyl chamber $\mathfrak{t}^\ast _+$.
For each point in the manifold, the image of the orbit by the momentum
map intersects the positive Weyl chamber  in exactly one point. This
defines a map $\psi: M \rightarrow \mathfrak{t}^\ast _+$. A convex
Hamiltonian manifold is defined then as a Hamiltonian manifold such
that the inverse image by $\psi$ of any straight line segment in
$\mathfrak{t}^\ast _+$ is connected in $M$. Then it is proved that
convexity of $M$ is equivalent to the convexity of $\psi(M)$ together with
the connectedness of the fibers of $\psi$ and openness of $\psi:M
\rightarrow \psi(M)$, the range being endowed with the subspace topology.
It is shown that several convexity results are a consequence of the
fact that the hypotheses of these theorems guarantee that the symplectic
manifold is convex. It is also proved that $\psi(M)$ is locally a
polyhedral cone and that results that required in the hypothesis
properness of the momentum map, generalize to convex symplectic
manifolds.

Kostant \cite{Kostant1973}
already showed that his Linear Convexity Theorem holds also for real
flag manifolds and Atiyah \cite{At1982} generalized it to the symplectic
setting. The general version of this theorem is due to Duistermaat
\cite{Duistermaat1983} who proved that the image of the fixed point
set of an antisymplectic involution by the momentum map of a toral
Hamiltonian action on a compact connected symplectic manifold, coincides
with the momentum polytope. This result has striking consequences, for
example, in the study of the gradient character of the finite Toda lattice
system for compact semisimple Lie algebras, the convexity properties
of the associated momentum map and their relationship the Brockett
double bracket equation (a gradient system in the normal metric),
as well as certain ``non-linear'' convexity theorems (see \cite{BlFlRa1990}),
which will be discussed in the next subsection.

If the symplectic action of a compact Lie group does not admit a
momentum map, there are two generalizations of the convexity theorems.
The first one is based on a construction of Condevaux, Dazord, and Molino
\cite{CoDaMo-Moment1988}.
They introduce a flat connection on the trivial bundle whose structure
group is the underlying additive group of the dual of the Lie algebra of
symmetries over the symplectic manifold, consider the holonomy bundle
in this product containing a given point (whose structure group is the
holonomy group based at that point), and the projection to the dual of
the Lie algebra. The cylinder valued momentum map is the
quotient of this projection from the original symplectic manifold to
the quotient of the dual of the Lie algebra of symmetries by the closure
of the holonomy group; for a detailed presentation of this momentum
map and its properties see \cite[Sections 5.2-5.4, 7.6]{OrRa2004}.

Birtea, Ortega, and Ratiu \cite{BiOrRa2008} proved a convexity theorem
for cylinder valued momentum maps of compact symplectic group actions
in the spirit of Kirwan's Convexity Theorem supposing that the holonomy
group mentioned above is closed and that the cylinder valued momentum map
is a closed map and is tube-wise Hamiltonian (each point admits an open
group invariant neighborhood such that the restriction of the action to
this neighborhood admits a standard momentum map; for connected Abelian
Lie group actions, this hypothesis is not necessary): the intersection
of the range of the momentum map in the quotient of the dual of the
symmetry Lie algebra by the closed holonomy group (i.e., the ``cylinder'')
intersected with the quotient of a Weyl chamber by this holonomy group
is weakly convex. This statement uses the natural length metric
of the cylinder (so it is a geodesic metric space) and weak convexity
of a set means that any two points can be joined by a geodesic
contained in this set, however, not necessarily the shortest one.

The second approach, due to Benoist \cite{Benoist2002} and Giacobbe
\cite{Giacobbe2000, Giacobbe2005}
is to work with the momentum map naturally associated to an appropriate
covering of the symplectic manifold. Benoist works with the universal
covering of the symmetry group acting symplectically on the universal
covering of the symplectic manifold. Suppose that this momentum map
is proper modulo the holonomy group, i.e., the inverse image of
any compact set is included in the holonomy group orbit of a compact
set in the symplectic manifold. Then the image of the momentum map
intersected with a Weyl chamber is convex (and has the other usual
properties in convexity theorems), also extending the Kirwan Convexity
Theorem.

Giacobbe \cite{Giacobbe2000, Giacobbe2005}
showed first that if the group is a torus, there is a
minimal covering on which the given symplectic action admits a momentum
map and that the image of this covering by the momentum map equals a
convex compact polytope times a vector space, both being explicitly
described. Then he showed that, for an non-Abelian compact group $H$
which acts symplectically on $M$, there exists an $H$-equivariant
momentum map $\mathbf{J}:
\widetilde{M} \rightarrow \mathfrak{h}^* \cong \mathfrak{g}^*  \times
\mathfrak{t}^\ast$, where $\mathfrak{h}$ is the Lie algebra of $H$,
$\mathfrak{g}$ is the Lie algebra of the commutator $G$ of $H$
which is semisimple, $\mathfrak{t}$ is the Lie algebra of the center of $H$
which is a torus $T = T_e \times T_c$, where $T_e$ is the maximal subtorus
of $T$ acting in a Hamiltonian fashion on $M$, $T_c$ is a torus complement to
$T_e$ in $T$, and $\widetilde{M}$ is the minimal covering of $\widetilde{M}$
on which the $T$-action can be lifted to a Hamiltonian action.
Then $\mathbf{J}\left(\widetilde{M} \right)
\cap (\mathfrak{s}_+^* \times \mathfrak{t}^\ast ) = P \times
\mathfrak{t}_c^*$, where $\mathfrak{t}_c = Lie\ T_c,
\mathfrak{t}_e = Lie\ T_e$, $\mathfrak{s}_+^*$ is the positive
Weyl chamber of $\mathfrak{g}$,
and $P \subset \mathfrak{s}_+^* \times
\mathfrak{t}_e^\ast$ is the product of the momentum polytope in
$\mathfrak{t}^\ast _e$ given by the Hamiltonian $T_e$-action on $M$
and the $G$-Kirwan polytope in $\mathfrak{s}_+^*$.
Giacobbe also proved any effective symplectic action of a
$n$-dimensional torus on a closed connected symplectic $2n$-manifold
with a fixed point must be Hamiltonian.

There are also convexity theorems for presymplectic manifolds. For torus
actions, this result is due to Ratiu and Zung
\cite{RaZu-Presymplectic2017}. Let
$\mathbf{J}: M^{2d+q} \rightarrow  \mathbb{R}^{q+d}$ be a flat momentum
map of a Hamiltonian torus $\mathbb{T}^{q+d}$-action on a connected
compact presymplectic manifold whose presymplectic form has constant
corank $d$. The flatness condition means that the image
$\mathbf{J}\left(M^{2d+q}\right)$ lies in the intersection of $d$
hyperplanes in $\mathbb{R}^{q+d}$. Then
$\mathbf{J}\left(M^{2d+q}\right)$ is a convex $q$-dimensional polytope
(rational or non-rational)
lying in the $q$-dimensional affine subspace of $\mathbb{R}^{q+d}$
given by the flatness condition. In addition, any such presymplectic
manifold admits a unique equivariant symplectization.
For general compact Lie groups, the convexity result is due to Lin and
Sjamaar \cite{LiSj-PresymplecticConveity2017}, extending to
presymplectic manifolds Kirwan's Convexity Theorem and, of course,
containing the presymplectic convexity theorem just described.

\subsection{``Non-linear'' symplectic formulations} \hfill

``Non-linear'' convexity theorems appear as the result of
the analysis of the behavior of eigenvalues or
singular values of matrices under nonlinear
operations (usually, multiplication). There are far less
theorems of this kind and they are considerably more involved,
although many known convexity
theorems (for example, those presented in Marshall, Olkin, and Arnold's
book \cite{MaOlAr2011}) await a symplectic interpretation, albeit mostly
not in the classical sense, as will be apparent from the presentation
below. Reformulating them in a symplectic, Poisson, Dirac, or groupoid
context, remains a challenge. The matrix case of such theorems goes
back to Weyl \cite{Weyl1949} and Horn \cite{Horn1954b}. Let $P$ be
the set of positive definite Hermitian matrices whose determinant
equals 1 and $\Sigma_\lambda$ the
isospectral subset of matrices in $P$ defined by the given eigenvalues
$(\lambda_1,\dots,\lambda_n) \in \mathbb{R}^n$. The image of the map
$P \ni p \mapsto (\log(\det p_1),\dots,\log(\det p_n)) \in \mathbb{R}^n$,
where $p_k=(p_{ij}),\, i,j=1,\ldots,k$ is the $k\times k$ principal
sub-matrix of $p$, is a convex polytope.

The Lie theoretical generalization
of the above Weyl--Horn theorem is again due to Kostant \cite{Kostant1973}
and is called
\textit{Kostant's Non-linear Convexity Theorem} which has the following
formulation. Let $G$ be a connected semisimple Lie group and $G= KAN
= PK$ its Iwasawa and Cartan decomposition, respectively, with
$A \subset P$. Let $\mathcal{O}_a$ be the $K$-orbit of $a \in A$ in
$P$ (by conjugation) and $\rho_A: G \rightarrow A$ the Iwasawa projection
$\rho_A(kan) = a$. Identify $A$ with its Lie algebra $\mathfrak{a}$ by
the exponential map. Then $\rho_A( \mathcal{O}_a)$ is the convex hull
of the Weyl group orbit through $a$. The Weyl-Horn Convexity
theorem is a special case of Kostant's Non-linear Convexity Theorem by
making the following choices: $G = SL(n,\mathbb{C})$,
$K=SU(n)$, $P$ positive definite Hermitian matrices of determinant 1
(i.e., the Cartan complement of $K$ in $G$), $A$ positive
diagonal matrices of determinant equal to 1, $\Sigma_\lambda$ the orbit
(by conjugation) of $K=SU(n)$ in $P$ through $(\lambda_1, \ldots,\lambda_n)
\in A$, $p = kan$ the Iwasawa decomposition of $p \in P$,
$n$ upper triangular with ones on the diagonal. Then the map
$P \ni p \mapsto (\log(\det p_1),\dots,\log(\det p_n)) \in \mathbb{R}^n$
is the logarithm of $\rho_A$.

A symplectic proof of Kostant's theorem for a large class of
Lie groups was given by Lu and Ratiu \cite{LuRa1991}; in the special
case of the Weyl-Horn Convexity Theorem,
$\Sigma_\lambda$ is a symplectic manifold, the map $P \rightarrow
\mathbb{R}^n$ given above is the momentum map for a Hamiltonian torus
action, and convexity follows from the Atiyah-Guillemin-Sternberg
Convexity Theorem. However, there is a twist:
the symplectic form on the orbit $\Sigma_\lambda$ is not due to
a Lie-Poisson structure on the dual of a Lie algebra, but it is induced
from a Poisson structure compatible with the Lie group structure on
the group $SL(n,\mathbb{C})$. This has far reaching consequences, as
discussed below. The symplectic proof of Kostant's Non-linear Convexity
Theorem given in \cite{LuRa1991} is based on the following idea. Consider
the complexification $G^\mathbb{C}$ of $G$ and write the Iwasawa
decomposition $G^\mathbb{C} = UB$, where $U$ is a maximal compact
subgroup and $B$ is the solvable complement; $B$ is a Poisson-Lie
group (i.e., a Lie group and a Poisson manifold for which multiplication
is a Poisson map) endowed with the so-called Lu-Weinstein Poisson
structure \cite{LuWe1990} whose symplectic leaves are the dressing
orbits (left or right) of $U$ on $B$. The restriction of this action
to a maximal torus of $U$ is Hamiltonian and its associated momentum
map is the Iwasawa projection of $G^\mathbb{C}$, which then
proves Kostant's Convexity Theorem for $G^\mathbb{C}$ viewed as a
real Lie group by invoking the Atiyah-Guillemin-Sternberg Convexity
Theorem.

To get the general case, one would like to apply Duistermaat's
Convexity Theorem for the antisymplectic involution induced by
the Cartan involution, viewing $AN$ in the Iwasawa decomposition
$G = KAN$ as the fixed point set in $B$. However, $B$ is not invariant
(as claimed in \cite{Duistermaat1983} and then used in \cite{LuRa1991})
and the argument holds only if the centralizer of $\mathfrak{a}$ in
$\mathfrak{k}$ is Abelian, as Hilgert and Neeb \cite{HiNe1998} pointed
out. This paper contains several interesting results on nonlinear
convexity theorems. The map $B \ni b \mapsto \log(b^*b) \in \mathfrak{p}$
is a diffeomorphism sending each symplectic leaf of $B$ (a dressing
orbit) to a coadjoint orbit of $K$ in $\mathfrak{p}$; here
$\mathfrak{u}$ is the $+1$ eigenspace of the Cartan involution $\tau$
on $G^\mathbb{C}$ viewed as a real Lie group, so $\mathfrak{u}$ is
the Lie algebra of $U$, $\mathfrak{p}$ is the $-1$ eigenspace of
$\tau$, $b^* = \tau(g ^{-1})$ (so for $G = SL(n, \mathbb{C})$ it is just
the usual transpose conjugate), $\mathfrak{u}^*$ is identified with
$\mathfrak{p}$ via the imaginary part of the Killing form. So, a coadjoint
orbit in $\mathfrak{p}$ carries two symplectic forms: the usual
orbit symplectic form and the push forward of the dressing symplectic form
by the map $B\ni b \mapsto \log(b^*b) \in \mathfrak{p}$. Ginzburg and
Weinstein \cite{GiWe1992} proved that there is a global diffeomorphism
on $\mathfrak{p}$ taking the Lie-Poisson tensor on $\mathfrak{p}$ to
the push forward of the Poisson Lie tensor on $B$ to $\mathfrak{p}$ by
the map defined above. The symplectic proof of the Kostant Non-linear
Convexity Theorem for all connected real semisimple Lie groups was given
in Sleewaegen's 1999 Ph.D. thesis \cite{Sleewaegen1999} (unpublished) and
later by Kr\"otz and Otto \cite{KrOt2006} each extending the
Duistermaat Convexity Theorem in such a way that the symplectic proof
outlined above works in full generality.

The previously described proof of the Kostant Non-linear Convexity Theorem
has sparked interest in other types of non-linear convexity results.
The first such theorem was given by Flaschka and Ratiu
\cite{FlRa-Convexity1996} and is very general. Let $K$ be the compact
real form of a connected complex semisimple Lie group $G$ (hence $K$ is
connected) and denote by $G^\mathbb{R}$ the real underlying Lie group.
Let $G^\mathbb{R}= KAN$ be the Iwasawa decomposition,
$\mathfrak{g}$, $\mathfrak{g}^\mathbb{R}$,
$\mathfrak{k}$, $\mathfrak{a}$, $\mathfrak{n}$ the Lie algebras of $G$,
$G^\mathbb{R}$, $K$, $A$, $N$, respectively, and $T \subset K$
the connected maximal torus in $K$ whose Lie algebra is $\mathfrak{t} =
{\rm i}\mathfrak{a}$. Then $\mathfrak{b}= \mathfrak{a} + \mathfrak{n}$
and $\mathfrak{k}$ are dual to each other via the imaginary part of the
Killing form; thus $\mathfrak{k}$ is isomorphic to $\mathfrak{b}^\ast$.
The Cartan decomposition $G^\mathbb{R} = PK$ defines the Cartan involution
$\tau: G^\mathbb{R} \rightarrow G^\mathbb{R}$, $\mathfrak{k}$ is the
$+1$ eigenspace of $\tau$ and let $\mathfrak{p}$ be the $-1$ eigenspace
of $\tau$. Denote by $\mathfrak{a}_+$ a positive Weyl chamber in
$\mathfrak{a}$. Now suppose that $K$ acts on a compact connected
symplectic manifold $(M, \omega)$. Assume that the restriction of this
action to the maximal torus $T \subset K$ is Hamiltonian with associated
invariant momentum map $\phi: M \rightarrow \mathfrak{a} \cong
\mathfrak{t}^*$. Suppose there exists a map $j:M \rightarrow \mathfrak{p}$
with the following four properties: $j$ is equivariant relative to the
adjoint action of $K$ on $\mathfrak{p}$; $T_mj(T_mM)$ is the annihilator
of the stabilizer subalgebra $\mathfrak{k}_m$ for every $m \in M$;
$\ker T_mj$ coincides with the $\omega$-orthogonal complement of the
tangent space to the $K$-orbit at $m$ for every $m \in M$; the
restriction of $j$ to $j^{-1}(\mathfrak{a}_+)$ is proportional to $\phi$.
(Note that the second and third conditions are the content of the
Reduction Lemma; see, e.g.,  \cite[Lemma 4.3.4]{AbMa1978},
\cite[Propositions 4.5.12 and 4.5.14]{OrRa2004}.) Then the fibers of $j$
are connected and $j(M) \cap \mathfrak{a}_+$ is
a convex polytope. The hypotheses of this theorem are non-trivially
verified if $j$ is the Lu momentum map (\cite{Lu1990, LuWe1990}) of
a Poisson Lie group structure on $K$ \cite{FlRa-Convexity1996}.

All
Poisson Lie group structures on compact Lie groups have been classified
and the dual groups computed (\cite{LeSo1991, FlRa-Convexity1996}),
the most important one being the Lu-Weinstein Poisson Lie structure
\cite{LuWe1990}. If the compact Lie group $K$ in the Flaschka-Ratiu
Convexity Theorem is a Poisson Lie group, Alekseev \cite{Alekseev1997}
has given a different proof
of this theorem, by modifying the symplectic structure on $M$ using
the Poisson Lie group structure on $K$, which then reduces this theorem
for Poisson actions of compact Poisson Lie groups on symplectic manifolds
to the usual Kirwan Convexity Theorem (see also \cite{GuSj2005}
for a discussion of this method).

A non-linear convexity theorem for quasi-Hamiltonian actions is due to
Alekseev, Malkin, and Meinrenken \cite{AlMaMe1998}. The momentum
map for quasi-Hamiltonian actions is group valued, so convexity
means that the projection of the image of the group valued momentum
map to the space of conjugacy classes of the group, identified
with the fundamental Weyl alcove in a choice of a positive Weyl
chamber, is convex; this projection is the momentum polytope for
connected quasi-Hamiltonian spaces of compact, connected, simply
connected Lie group actions.
It should be emphasized  that the Lu momentum map for
Poisson Lie group actions (whose values lie, by definition, in the
dual Poisson Lie group of the Poisson Lie group whose action generates
it, if it exists) is \textit{not} an example of this
group valued momentum map; the two momentum maps coincide only for
Abelian groups; see \cite[Section 5.4]{OrRa2004} for a discussion and its
relationship to the cylinder valued momentum map introduced in
\cite{CoDaMo-Moment1988}.

The existing momentum  map convexity
theorems strongly suggest a convexity theorem for the optimal momentum
map introduced by Ortega and Ratiu \cite{OrRa2002}
(see \cite[Sections 5.5 and 5.6]{OrRa2004} for a study of its properties),
which always exists for any canonical Lie group action on a Poisson
manifold. The target of the optimal momentum map is the quotient
topological space of the Poisson manifold on which the group acts by
the polar pseudo-group of the group of diffeomorphisms given by the action.
The problem is to find an intrinsic definition of the concept of convexity
for this topological space. The conjecture is that this topological space
is intrinsically convex.

Zung \cite{Zung-Proper2006} showed that every proper
quasi-symplectic groupoid $(\Gamma \rightrightarrows P, \omega+\Omega)$
in the sense of Xu \cite{Xu-Momentum2003} (also known as twisted presymplectic
groupoid  \cite{BCWZ-TwistedDirac2003} ) is symplectically linearizable,
and studied the intrinsic affine structures of appropriate quotient spaces
of proper quasi-symplectic groupoids and of their quasi-Hamiltonian manifolds.
He showed that, under some mild conditions, these intrinsic affine structures
are intrinsically convex. Most existing symplectic convexity results,
both linear and nonlinear, and also for group-valued momentum maps,
can be recovered from this convexity result for quasi-Hamiltonian manifolds
of proper quasi-symplectic groupoids as important particular cases. 
In particular, one can cite here a very beautiful result of Weinstein
\cite{Weinstein_Convex2001}, which was one of the original motivations
for the study of proper groupoids and momentum maps:  the set of ordered
frequency $n$-tuples of all possible sums of a pair of positive definite quadratic Hamiltonian functions on $\mathbb{R}^{2n}$ with given frequencies 
is an unbounded closed convex locally polyhedral subset of $\mathbb{R}^{n}$. 
There are remarkable similarities between the Ortega--Ratiu construction of the optimal momentum map and Zung's construction
of the intrinsic transverse structures, so it is likely that these objects are
closely related together, though no one worked it out yet. For another very
recent result on transverse symplectic complexity, see~\cite{Ishida2017}

\subsection{Local-Global Convexity Principle} \hfill

All proofs of the symplectic convexity theorems rely on the following
strategy. First one proves some local convexity properties of the map in
question, using some kind of local normal forms, e.g., the
Marle-Guillemin-Sternberg normal norm
\cite{Marle1984, Marle1985, GuSt1984_normal_form}, which has its roots
in the quadratic nature of the singularities of the momentum
map, first observed by Arms, Marsden, and Moncrief \cite{ArMaMo1980}.
Then one uses, under certain hypotheses, a passage from local
convexity to global convexity.

Originally, this passage from local to global
was done using Morse theory, but it became later
apparent that these Morse techniques do not always apply.
A better tool is the so-called  \textit{Local-Global Convexity Principle}.
The origins of this principle go back to Tietze \cite{Tietze1928} and
Nakajima \cite{Nakajima1928}: \textit{if a connected closed subset
$S \subset \mathbb{R}^n$ is locally convex, then it is convex}. Since then,
there are many extensions and developments of this result,
due to Schoenberg \cite{Schoenberg1942}, Klee \cite{Klee1951},
Sacksteder, Straus, and Valentine \cite{SaStVa1961}, Blumenthal and
Freese \cite{BlFr1962}, Kay \cite{Kay1985}, Cel \cite{Cel1998}, and so on.

Condevaux, Dazord, and Molino \cite{CoDaMo-Moment1988} were the first authors
to use the local-global convexity principle to give a different and much simpler
proof of the Atiyah-Guillemin-Sternberg and Kirwan Convexity Theorems.
Then Hilgert, Neeb, and Plank \cite{HiNePl-Convexity1994} isolated this principle
as a fundamental tool, and gave the following version of it, which is well
adapted for symplectic convexity.

Let $f:S \rightarrow V$ be a continuous map from a connected Hausdorff
topological space $S$ to a vector space $V$. The map $f$ is said to
be locally fiber connected if for each $s \in S$, there is an open
neighborhood $U \subset S$ of $s$ such that $f^{-1}(f(u))\cap U$ is
connected for all $u \in U$. Suppose that there is an assignment
of a closed convex cone $C_s \subset V$ with vertex $f(s)$ to each
$s \in S$. Such an assignment is called local convexity data for $f$
if for each $s\in S$ there is an open neighborhood $U_s \subset S$ of
$s$ such that $f|_{U_s}: U_s \rightarrow C_s$ is an open map and
$f ^{-1}(f(u)) \cap U_s$ is connected for all $u \in U_s$. The
Local-Global Convexity Principle in \cite{HiNePl-Convexity1994} states
that if $f:S \rightarrow V$ is a proper locally fiber connected map
with local convexity data $\{C_s \mid s \in S\}$, then $f(S) \subset V$ is
a closed convex locally polyhedral set, the fibers of $f$ are connected,
$f:S \rightarrow f(S)$ is an open map (with respect to the subspace
topology of $f(S) \subset V$), and $C_s = f(s) + L_{f(s)}(f(S))$, where
$L_{f(s)}(f(S))$ denotes the closure of the cone $\mathbb{R}_+(f(S)-f(s))$.

The above Local-Global Convexity theorem of
Hilgert, Neeb, and Plank \cite{HiNePl-Convexity1994} was
crucial in the proof of the Flaschka-Ratiu Convexity
Theorem \cite{FlRa-Convexity1996}, in particular for all compact Poisson Lie
groups, because the usual Morse theoretic arguments failed to work for the
Poisson Lie group structures different from the Lu-Weinstein one.

Prato \cite{Prato1994} pointed out that the loss of compactness of the
manifold also implies, in general, the loss of the convexity of the
momentum map, even for torus actions. She also showed that if there is
an integral element in the Lie algebra of the symmetry torus such that
the momentum map component for this element is proper and has its
minimum as its unique critical value, then the image of the momentum map
of the Hamiltonian torus action is the convex hull of a finite number
of affine rays starting in the fixed point set of the action. This
convexity theorem has been vastly generalized in several directions.
Birtea, Ortega, and Ratiu \cite{BiOrRa2009} have shown that only closedness
of the momentum map is sufficient to obtain a convexity result, thereby
setting the stage for other convexity theorems in infinite dimensions
than the ones mentioned earlier. These results have been further
generalized in Birtea, Ortega, and Ratiu \cite{BiOrRa2008} where the
target map is a length metric space; this has as consequence the
convexity theorem for cylinder valued momentum maps, mentioned earlier.
Bjorndahl and Karshon \cite{BjKa2010} have provided another Local-Global
Convexity theorem for proper continuous maps from a connected Hausdorff
topological space to a convex set in Euclidean space.

The paper \cite{Zung-Proper2006} by Zung contains another simple
version of the local-global convexity principle,
which is the one that we will use this paper  to show
some positive global convexity results for toric-focus integrable
Hamiltonian systems (see Section \ref{sec_global_convexity}).

\section{Convexity in integrable Hamiltonian systems}
\label{section:ReviewConvex2}

Integrable Hamiltonian systems in the classical sense of Liouville
are a special case of Hamiltonian group actions on symplectic manifolds,
where the group is the non-compact Abelian group $\mathbb{R}^n$,
with $n$ being half of the dimension of the manifold.

Each integrable Hamiltonian system with a proper momentum map has an
associated \textit{base space}, whose points correspond to connected
components of the level sets of the momentum map. Due to the existence
of local \textit{action} coordinates, this base space is equipped
with an intrinsic \textit{integral affine structure}, which is singular
in general, and which plays a key role in the classification of integrable
systems; see, e.g., \cite{Duistermaat-AA1980,Zung-Integrable2003}.

If the integrable system admits hyperbolic singularities, then the base
space has \textit{branching points}, and it does not make much sense to
even talk about local convexity of the affine structure at those points.
However, there are large classes of integrable Hamiltonian systems
without hyperbolic singularities, for which we can study convexity
properties of the base spaces. We present a brief overview of such
systems in this section.

\subsection{Toric systems and their momentum polytopes}
\label{subsection_toric_review}  \hfill

A symplectic toric manifold is a compact connected symplectic
$2n$-dimensional manifold endowed with an effective Hamiltonian
$\mathbb{T}^n$-action. From the geometric point of view of integrable
Hamiltonian systems, they are nothing else but integrable systems
whose singular points are all elliptic
non-degenerate, and which admit a full set of global action variables.
See, e.g., Audin's book \cite{Audin2004} for an introduction to this topic of
toric integrable systems.

As a special case of the Atiyah--Guillemin--Sternberg theorem,
the base space of a symplectic toric manifold is a convex polytope
embedded in $\mathbb{R}^n$ via the momentum map, which satisfies three
additional properties: \textit{rationality} (each facet is given
by a linear equation whose linear coefficients are integers),
\textit{simplicity} (each vertex has exactly $n$ edges),
and \textit{regularity} (near each vertex, the polytope is locally
integral-affinely isomorphic to the orthant
$(x_1,\hdots,x_n) \in \mathbb{R}^n \mid x_1 \geq 0, \hdots, x_n \geq 0\}$
in $\mathbb{R}^n$). Such a convex polytope is called a \textit{Delzant
polytope}, because  Delzant \cite{Delzant1988} proved
a 1-to-1 correspondence between these polytopes and connected
compact symplectic toric manifolds (up to isomorphisms).
For non-compact symplectic toric manifolds, the situation is considerably
more involved; Karshon and Lerman \cite{KaLe2015} analyzed this
situation.

If we consider convex polytopes which are rational simple but not
regular, then they correspond to compact symplectic toric
\textit{orbifolds}, instead of manifolds. According to a result of
Lerman and Tolman \cite{LT-Orbifold1997}, there is a 1-to-1
correspondence between connected compact symplectic toric orbifolds
and \textit{weighted simple rational convex polytopes}: each facet
of the polytope is given an arbitrary natural number $w$, called its
weight, which corresponds to the orbifold type
$D^{2(n-1)} \times (D^2/\mathbb{Z}_w)$  at its preimage under the
momentum map ($D^k$ denotes a $k$-dimensional ball).

If the polytope is irrational then the situation is more complicated,
because there is no symplectic Hausdorff space corresponding to it.
Prato \cite{Prato_Nonrational2001} invented the notion of symplectic
\textit{quasifolds} to deal with this case. Some other authors
\cite{KLMV-NoncommutativeToric2014}
talk about non-commutative toric varieties in this situation.

Ratiu and Zung \cite{RaZu-Presymplectic2017} extended
Delzant's classification theorem to compact
\textit{presymplectic toric manifolds}. It states that
connected compact presymplectic manifolds are classified, up to
equivariant presymplectic diffeomorphisms, by their associated
\textit{framed momentum polytopes}, which are convex and may
be rational or non-rational. Unlike the symplectic case, the
polytope is not enough to characterize Hamiltonian presymplectic
manifolds and additional information, in this case the framing of the
polytope, is needed. Furthermore, they defined a \textit{Morita
equivalence relation} on the set of framed polytopes and, using it,
the classification of connected presymplectic toric manifolds is
alternatively given by the Morita equivalence classes
of their framed momentum polytopes.
Toric orbifolds \cite{LT-Orbifold1997}, quasifolds
\cite{BaPr_SimpleNonrational2001}, and non-commutative toric varieties
\cite{KLMV-NoncommutativeToric2014} can be viewed as quotients of
presymplectic toric manifolds by the kernel isotropy foliation
of the presymplectic form; thus, they are classified by the Morita
equivalence classes of their framed momentum polytopes.
The Lerman-Tolman \cite{LT-Orbifold1997}
classification of symplectic orbifolds by weighted simple rational
convex polytopes can be recovered from this Morita equivalence.
(See also \cite{LiSj-PresymplecticConveity2017}).

\subsection{Toric degenerations} \hfill

There are many interesting integrable Hamiltonian systems on compact
symplectic manifolds with degenerate singularities, whose base spaces
(with the associated integral affine structures)
are still convex polytopes. The most famous examples are probably
the so-called \textit{Gelfand-Cetlin system}, introduced by
Guillemin and Sternberg \cite{GuSt1983}, and the
\textit{bending flows} of
polygons in $\mathbb{R}^3$, introduced by Kapovich and Millson \cite{KaMi1996}.

It turns out that these two famous examples, and other systems
with convex momentum polytopes but degenerate singularities, have
some very similar topological and geometrical characteristics, namely:

- Their symplectic manifolds are not toric, but admit \textit{toric degenerations}.
See, e.g., \cite{AlBr-Degeneration2004,
KoMi-GC2005,HaKa2015} for toric degenerations related to integrable systems.
It's interesting to note that the momentum polytope corresponds to
that of a (singular) toric variety at the degeneration, see, e.g.,
\cite{BGL-SingularToric2005}.

- Even though their smooth momentum maps have degenerate
singularities, the inverse image of every point of the momentum polytope
under the momentum map is not a singular variety but rather a
\textit{smooth} isotropic manifold or orbifold. See, e.g.,
\cite{Bouloc2017,BMZ-GC2017} for some results in this
direction.

\subsection{Log-symplectic convexity} \hfill

The Delzant correspondence extends to certain classes of toric Poisson
manifolds which are generically symplectic but their Poisson structure
admits a degeneracy locus. If this degeneracy locus is a smooth
hypersurface, Guillemin, Miranda, Pires, and Scott \cite{GuMiPiSc2015}
proved a generalization of the Delzant correspondence. If, however,
this degeneracy locus has singularities of the type of
normal crossing configurations of smooth hypersurfaces, the manifolds
are called log symplectic, to emphasize the nature of the singularities.
For the formulation of the analogue of the Delzant correspondence,
several new ideas need to be introduced. This was done by Gualtieri,
Li, Pelayo, and Ratiu \cite{GuLiPeRa2017} and required the appeal
to the Mazzeo-Melrose \cite{MaMe1998} decomposition for manifolds with
corners, free divisors appearing in algebraic geometry, and certain
ideas of tropicalization. The notion of the momentum map is extended
to the global tropical momentum map whose range is constructed from
partial compactifications of affine spaces intimately linked to extended
tropicalizations of toric varieties.

The analogue of the Delzant theorem
to this log symplectic case states that there is a bijective
correspondence between
equivariant isomorphism classes of oriented compact connected toric
Hamiltonian log symplectic $2n$-manifolds and equivalence classes of
pairs $(\Delta, M)$, where $\Delta$ is a compact convex log affine
polytope of dimension $n$, satisfying the Delzant condition, and $M$
is a principal $n$-torus bundle over $\Delta$ with vanishing obstruction
class. From the point of view of integrable systems, which is the subject
of this paper, this convexity result yields a classification of a large
family of toric integrable systems having a base whose integral affine
structure degenerates in a very precise manner along a stratification.

\subsection{Non-Abelian integrability}  \hfill

Non-Abelian integrability appears for the first time, in a very general context,
in Abraham and Marsden's book \cite[Exercise 5.2I]{AbMa1978}: a
Hamiltonian action of a symmetry group is completely integrable if, generically,
all reduced spaces are zero dimensional, or, if the Hamiltonian is not
a functional of the momentum map, they should be two dimensional.
This idea is developed further in Marsden's lectures \cite{Marsden1981},
where even a model of action-angle coordinates are proposed, based on a
local product structure of the symplectic manifold that takes into account
the reduced space. These ideas were vague at the time because the
Marle-Guillemin-Sternberg Normal Form did not exist yet.

An identical definition of non-Abelian integrability as that in Abraham
and Marsden's book \cite{AbMa1978}, both appearing in the same year,
but aimed at linking  non-Abelian to Abelian, i.e., standard Liouville,
integrability, is due to Mishchenko and Fomenko \cite{MiFo1978}: the
action of a Hamiltonian
symmetry group $G$ on a symplectic manifold $M$ is completely integrable
if the dimension of the manifold is the sum of the dimension and the
rank of $G$. By definition, the rank of $G$ is the rank of
$\mathfrak{g}^\ast$, which, in turn is defined to be the dimension of
the minimal coadjoint isotropy subalgebra in $\mathfrak{g}$; by
the Duflo-Vergne Theorem \cite{DuVe1969} the set of points whose coadjoint
isotropy algebra is minimal is Zariski open and the coadjoint isotropy
algebras at all of these points are Abelian. Note that the
Mishchenko-Fomenko condition means that the reduced spaces associated
to every point in this Zariski open set are zero dimensional, which
is exactly the condition imposed in \cite{AbMa1978}.

The link of non-Abelian integrability with Kirwan's convexity theorem is
the following: the inverse image of a coadjoint orbit by the momentum map is a
$G$-orbit in $M$, i.e., the Kirwan polytope can be identified,
in this case, with $M/G$.

In \cite{MiFo1978} it is shown that if a system is integrable in the
non-Abelian sense defined above, then the
system is also integrable in the Abelian sense (i.e., there is
another Abelian Lie algebra of functions relative to the Poisson
bracket whose dimension
is half of the dimension of the manifold) and its trajectories are straight
lines on a torus whose dimension is the rank of the group (which is
strictly smaller than half of the dimension of the manifold); see
also \cite[Subsections 3.1--3.4]{FoTr1988} for a detailed discussion.

Due to the fact that the solutions of a non-Abelian integrable system
take place on tori of dimension strictly smaller than half of the dimension
of the manifold, these systems are also often called super-integrable
systems.

The natural question then arises to classify all compact connected
symplectic manifolds admitting a completely integrable action of a compact
connected Lie group. Iglesias \cite{Iglesias1991} carried out the classification
for rank one groups. Delzant \cite{Delzant1990} did it for
rank two groups: if $G$ is a compact connected Lie group of rank two
acting on a compact connected symplectic manifold in a completely
integrable manner, then the image of the momentum map and the principal
isotropy group classify the manifold up to equivariant symplectic
diffeomorphisms. The classification of multiplicity free
Lie group actions, i.e., non-Abelian completely integrable
actions, of a compact connected Lie group was finished
by Knop \cite{Knop2011} (for previous local results,
see Losev \cite{Losev2009}).

\subsection{Convexity in the presence of focus-focus singularities} \hfill
\label{subsection:review_convexity_with_focus}
As we mentioned earlier, if an integrable Hamiltonian system admits only
elliptic singularities, then it  is a toric system and enjoys convexity
properties. If it also has hyperbolic singularities then it does not make
much sense to speak about convexity of the base space at hyperbolic points
because of the branching at these points.

If the system admits no hyperbolic or degenerate singularities, only
non-degenerate elliptic and focus-focus singularities, then are often called
\textit{almost-toric} in the literature, but we prefer to call them 
\textit{toric-focus} because this terminology emphasizes the
role of focus-focus singularities in these systems. 

A subclass of toric-focus systems are the so-called semitoric
systems, for which one asks that all but one component of the momentum map
yield a periodic flow, plus some properness conditions. 
For semitoric systems on a connected four-manifold,
V\~{u} Ng\d{o}c \cite{San-Polytope2007} obtained a result similar to
Atiyah-Guillemin-Sternberg convexity theorem. More precisely, 
the momentum map $\textbf{F}$ has connected fibers and a simply 
connected image;  the set of critical values of $\textbf{F}$ consists of 
the points on the boundary of its image $\textbf{F}(M)$ plus a finite number $d$
of interior points corresponding to the focus-focus singular fibers; 
there are $2^d$ natural different ways to map the image $\textbf{F}(M)$ 
to $2^d$ different convex "momentum polygons" of the system. The non-uniqueness
of these "momentum polygons" is due to the monodromy, in stark contrast to 
the case of toric systems.

However, as far as we know, convexity properties of general toric-focus integrable
systems have never been treated in a systematic way in the literature before, and
our present paper is the first one to deal with it.

\section{Toric-focus integrable Hamiltonian systems}
\label{section:ToricFocus}

\subsection{Integrable systems} \hfill

We consider {\bfi integrable Hamiltonian system} in the Liouville sense. That
means, for $n$ degrees of freedom a Hamiltonian function $H= H_1$ together with
a so-called {\bfi momentum map} ${\bf H} = (H_1,\hdots, H_n): M^{2n} \to
\mathbb{R}^n$ consisting of $n$ smooth functions on a symplectic manifold
$(M^{2n},\omega)$ which are functionally independent (i.e., ${\rm d}
\mathbf{H}:={\rm d}H_1 \wedge \hdots \wedge {\rm d}H_n \neq 0$ an open dense set) and pairwise commuting
($\{H_i, H_j\} = 0$) under the Poisson bracket.
\textit{Throughout this paper it is assumed that $M^{2n}$ is paracompact,
connected, and that $\mathbf{H}$ is a proper map.}

At each point $p \in M^{2n}$, the number
\[
\rk p := \rk {\rm d}{\bf H} (p):=
\dim \operatorname{span}_\mathbb{R}\{{\rm d}H_1(p), \ldots,
{\rm d}H_n(p)\}
\]
is called the
{\bfi rank} of the system at $p$. If $\rk p < n$ then we say that $p$
is a {\bfi singular point}  of the system,
and ${\rm corank}\ p := n - \rk p$ is called the {\bfi corank} of $p$.
 If $p$ has
rank $0$, i.e. ${\rm d}H_1 (p) = \hdots = {\rm d}H_n(p) = 0$, we say $p$ is a
{\bfi fixed point. }

Given an integrable system, the  partition
\begin{equation*}
\mathcal{F}:=\{\Lambda \text{ connected component of } {\bf H}^{-1}(c),\, c \in {\bf H}(M^{2n}) \},
\end{equation*}
together with the space $\cB := \nicefrac{M^{2n}}{\sim_\mathcal{F}}$ where $x \sim_\mathcal{F} y$
if $x$ and $y$ are in the same element of the partition,
is called the {\bfi associated singular Lagrangian torus fibration} of the system, because the regular fibers are
Lagriangian tori. $\cB$ equipped with the quotient topology is a Hausdorff topological space because $\mathbf{H}$
is proper, and is called the \textit{\textbf{base space}} of the system. A fiber (i.e., an element of the partition 
$\mathcal{F}$) is called \textit{\textbf{regular}} if it does not contain any singular point, otherwise it is called \textit{\textbf{singular}}.

The classical action-angle variables theorem (due to Liouville and Mineur
\cite{Mineur-AA1937}, but often called Arnold-Liouville theorem; see, e.g.,
\cite[Appendix A2]{HoZe1994} for a classical proof and \cite{Zung-AA2017} for a new conceptual approach to it) says that every regular fiber of the associated Lagrangian
torus fibration is indeed a Lagrangian torus which admits a neighborhood $\cU(N)
\cong \mathbb{T}^n \times D^n$ with coordinates $(\theta_1,\hdots, \theta_n,
I_1,\hdots , I_n)$, where $(I_1,\hdots I_n)$ are coordinates on a small disk
$D^n \subset \mathbb{R}$, and $(\theta_1 \pmod 1, \hdots \theta_n \pmod1)$ are
periodic coordinates on the torus $\mathbb{T}^n$, in which the symplectic form
is canonical,
\begin{equation*}
\omega = \sum_{i=1}^n d\theta_i \wedge dI_i,
\end{equation*}
and the components of the
momentum map depend only on the variables $(I_1,\hdots, I_n)$,
\begin{equation*}
H_i = h_i(I_1,\hdots,I_n) .
\end{equation*}
The variables $(I_1,\hdots, I_n)$ are called \textbf{\textit{actions}}
while $(\theta_1, \hdots, \theta_n)$ are called \textbf{\textit{angles}}.

From a geometric point of view, we can forget about the momentum map, and only
look at the associated Lagrangian fibration. So we will adopt the following
notion of geometric integrable Hamiltonian systems \cite{Zung-Integrable2003}:
\textit{a \textbf{\textit{geometric integrable Hamiltonian system}}
is a singular \textbf{\textit{Lagrangian torus fibration}} such that
each fiber admits a saturated  neighborhood (i.e., consisting of whole
fibers) and an integrable Hamiltonian system in this neighborhood
whose associated Lagrangian fibration coincides with the given fibration.}

\subsection{Local normal form of non-degenerate singularities} \hfill
\label{subsection:local_normal_form_nondeg}

In this subsection, we recall necessary definitions and results about
non-degenerate singularities of integrable Hamiltonian systems, which will be
used in the paper.

Recall that a fixed point, i.e., a singular point of rank 0, of the
momentum map ${\bf H} = (H_1,\hdots, H_n): M^{2n} \to \mathbb{R}^n$, is a point
$p \in M^{2n}$ such that
${\rm d}H_1 (p) = \hdots = {\rm d}H_n(p) = 0.$
At a fixed point $p$, the quadratic parts of the functions $H_1, \hdots, H_n$
are well-defined and span an Abelian subalgebra $\mathcal{A}_p$ of the Lie
algebra $Q(T_pM)$ of homogeneous quadratic functions on the tangent space
$T_pM$: the Lie bracket is the Poisson bracket given by the symplectic form at
$p$. The Lie algebra $Q(T_pM)$ is isomorphic to the simple Lie algebra
$\mathfrak{s}\mathfrak{p}(2n,\mathbb{R})$. The fixed point is called
\textbf{\textit{non-degenerate}} if $\mathcal{A}_p$ is a Cartan subalgebra of
$Q(T_pM)$, i.e., it has dimension $n$ and all its elements are semi-simple.

In case $p$ is a singular point of corank $\cok <n$, we may assume, without 
loss of generality, that $dH_1 (p) = \hdots = dH_\kappa(p) = 0$ and 
$dH_{\cok+1} \wedge \hdots \wedge dH_n (p) \neq 0$.
The Hamiltonian vector fields of the functions $H_{\cok+1}, \hdots, H_n$
commute and give rise to a free local $\mathbb{R}^{n-\cok}$-action. The 
quotient of the local coisotropic codimension-$(n-\cok)$ submanifold 
$\{H_{\kappa+1} = const, \hdots, H_n = const \}$ containing $p$
by the orbits of this free local $\mathbb{R}^{n-\cok}$-action is  
the {\bfi local Marsden-Weinstein reduction} of the system (with respect 
to $(H_{\cok+1}, \hdots, H_n)$). This local reduced space is a $2\cok$-dimensional
symplectic manifold with an integrable system given by the momentum map 
$(H_1,\hdots, H_\cok)$, and the image
of $p$ in this local reduced space is a fixed point $p_{reduced}$ for the reduced system. 
We say that $p$ is
\textbf{\textit{non-degenerate of corank $\cok$}} if and only if its image 
$p_{reduced}$ is a non-degenerate fixed point for the
local reduced integrable Hamiltonian system.

According to a classical theorem of Williamson \cite{Williamson1936}, 
at each non-degenerate fixed point $p$, there exist a triple
of nonnegative integers $\Bbbk (p) = (k_e, k_h, k_f)$, called the \textbf{\textit{Williamson type}} of $p$,
$k_e + k_h +  2k_f  =n$ is the corank of $p$,
such that, after the local Marsden-Weinstein reduction, the quadratic part 
of each first integral function whose differential vanishes at $p$ is a linear combination of the  quadratic functions
${\rm e}_i (1 \leq i \leq k_e), {\rm h}_i (1 \leq i \leq k_h), {\rm f}^1_i, {\rm f}^2_i (1 \leq i \leq k_f)$ given by

 \begin{itemize}
  \item ${\rm e}_i = x^2_i + \xi^2_i, \; \forall\;  1 \leq i \leq k_e$   , 

  \item ${\rm h}_i = x_{i+k_e} \xi_{i + k_i}, \; \forall\;  1 \leq i \leq k_h$
  \item $\begin{cases}
           {\rm f}^2_i = x_{2i -1 +k_e +k_h}\xi_{2i + k_e+k_h} + x_{2i +k_e +k_h}\xi_{2i -1 + k_e+k_h} \\
         {\rm f}^1_i = x_{2i -1 +k_e +k_h}\xi_{2i-1 + k_e+k_h} + x_{2i  +k_e +k_h}\xi_{2i + k_e+k_h} 
        \end{cases} \; \forall\;  1 \leq i \leq k_f$.
 \end{itemize}
In addition, $(x_1,\xi_1,\hdots, x_n,\xi_n)$
 is a canonical linear coordinate system on the tangent space at $p$.
The numbers $k_e, k_h$, and $ k_f$ are called the numbers of \textbf{\textit{elliptic, hyperbolic}},
and \textbf{\textit{focus-focus}} components of (the system at) $p$, respectively.

If $p$ is a non-degenerate singular point of corank $\cok < n$ and
$n-\cok > 0$, we define its {\bfi  Williamson type} $\Bbbk (p) = 
(k_e, k_h, k_f)$ to be the numbers of elliptic, hyperbolic,
and focus-focus components of the image of $p$ in the
local Marsden-Weinstein reduction at $p$; in this case we have
$k_e + k_h + 2k_f = \cok$.

We will need the following local symplectic linearization (i.e.,  
normal form) theorem for non-degenerate singular points,
which is due to Vey~\cite{Vey1978} in the analytic case and to Eliasson 
\cite{Eliasson1984, Eliasson1990} in the smooth case 
(see also \cite{Chaperon2013,DuMo-Elliptic1991, Miranda-Thesis2003,
VNWa2013,Zung-Focus2001,Zung-Birkhoff2005}).

\begin{theorem}[Vey-Eliasson \cite{Vey1978, Eliasson1984, Eliasson1990}]
\label{thm:LocalLinearization}
If $p \in M^{2n}$ is a non-degenerate singular point
of rank $n-\cok$ and Williamson type $\Bbbk (p) = (k_e, k_h, k_f)$
of an integrable Hamiltonian system
${\bf H}=(H_1,\,\ldots,H_n): M \rightarrow \mathbb{R}^n$,
then there exists a local symplectic
coordinate system  $(x_1,\, \ldots,x_n,\, \xi_1,\,\ldots,\, \xi_n)$
about $p$, in which $p$ is represented as
$(0,\,\ldots,\, 0)$, such that $\{H_i,\,q_j\}=0$, for all $i,j$,  where

 \begin{itemize}
  \item $q_i = {\rm e}_i = x^2_i + \xi^2_i$ ($1 \leq i \leq k_e$)   are elliptic components,

  \item $q_{k_e+i} = {\rm h}_i = x_{i+k_e} \xi_{i + k_i}$ ($1 \leq i \leq k_h$)
  are hyperbolic components,

  \item $\begin{cases}
      q_{2i -1 +k_e +k_h}  =  {\rm f}^1_i = x_{2i -1 +k_e +k_h}\xi_{2i + k_e+k_h} - x_{2i +k_e +k_h}\xi_{2i -1 + k_e+k_h} \\
       q_{2i +k_e +k_h} =  {\rm f}^2_i = x_{2i -1 +k_e +k_h}\xi_{2i-1 + k_e+k_h} + x_{2i  +k_e +k_h}\xi_{2i + k_e+k_h}
        \end{cases} $ \\
        ($1 \leq i \leq k_f$) are focus-focus components,
    \item  $q_{\cok+i} = \xi_{i}$ ($1 \leq i \leq n-\cok$)
are regular components.
 \end{itemize}
Moreover, if $p$ does not have any hyperbolic component, then
the functions $H_i$ $(i=1,\dots,n)$ can be written as functions
depending only on $q_1,\hdots,q_n$:
$$H_i = g_i(q_1,\hdots,q_n) + H_i(p),$$ 
where $g_1,\hdots,g_n$ are smooth functions such that
$\det (\partial g_i/\partial q_j)_{i=1,\dots,n}^{i=j,\dots,n} (0) \neq 0.$
\end{theorem}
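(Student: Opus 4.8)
The plan is the classical one for such normal-form statements. First I would peel off the regular directions: since $dH_{\cok+1}\wedge\cdots\wedge dH_n(p)\neq 0$ and all the $H_i$ Poisson-commute, the Carath\'eodory--Jacobi--Lie theorem yields local symplectic coordinates in which $H_{\cok+i}=\xi_i$ for $1\le i\le n-\cok$, and then the relations $\{H_i,\xi_j\}=0$ force $H_1,\dots,H_\cok$ to be independent of $x_1,\dots,x_{n-\cok}$. After normalizing $H_i(p)=0$ and passing to the local Marsden--Weinstein reduction along $\{\xi_1=\cdots=\xi_{n-\cok}=0\}$, one is reduced to an integrable system $(H_1,\dots,H_\cok)$ on a $2\cok$-dimensional symplectic manifold with a non-degenerate \emph{fixed} point of Williamson type $(k_e,k_h,k_f)$, $k_e+k_h+2k_f=\cok$, depending smoothly on the parameters $\xi_1,\dots,\xi_{n-\cok}$. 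So it suffices to treat fixed points, in parametrized form, and then reassemble; the ``regular components'' $q_{\cok+i}=\xi_i$ are exactly those produced by this step.

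At the fixed point the quadratic parts of $H_1,\dots,H_\cok$ span a Cartan subalgebra of $Q(T_pM)\cong\mathfrak{sp}(2\cok,\mathbb{R})$, and Williamson's classification of the Cartan subalgebras of $\mathfrak{sp}$ lets me conjugate it, by a single linear symplectomorphism, to the span of the standard quadratic functions ${\rm e}_i,{\rm h}_i,{\rm f}^1_i,{\rm f}^2_i$. After this the quadratic parts of the $H_i$ are already in the prescribed form, and the whole problem reduces to killing, by a further local symplectomorphism fixing $p$, the higher-order terms that obstruct the exact relations $\{H_i,q_j\}=0$.

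This last step is the genuinely hard one, and it is where the analytic and smooth cases diverge. In the analytic category one runs a Birkhoff--Gustavson normalization, degree by degree, solving at each degree a cohomological equation for the semisimple operator $\mathrm{ad}_{q_1+\cdots+q_n}$ on homogeneous polynomials and controlling convergence by a majorant-series estimate. In the smooth category one exploits the compact symmetry hidden in the singularity: the elliptic generators ${\rm e}_i$ and the rotational focus generators ${\rm f}^1_i$ generate a Hamiltonian $\mathbb{T}^{k_e+k_f}$-action fixing $p$, which can be symplectically linearized (Bochner linearization followed by an equivariant Darboux correction), after which an equivariant Eliasson/Morse--Bott lemma straightens the elliptic and focus-focus blocks; the hyperbolic generators ${\rm h}_i=x_i\xi_i$ carry no compact symmetry, so $k_h>0$ requires Eliasson's more delicate flow-box and deformation argument. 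I expect the smooth hyperbolic case to be the main obstacle --- the published literature on it was long incomplete, and the focus-focus case also needed separate treatment, as the list of references in the statement reflects. For the toric-focus systems of this paper one always has $k_h=0$, so only the (by now fully established) elliptic and focus-focus parts of this step are actually used.

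For the ``moreover'' statement I would assume $k_h=0$ and use that $\{H_i,q_j\}=0$ makes each $H_i$ constant on the orbits of the flows of all the $q_j$; near $p$ these flows generate an action of $\mathbb{T}^{k_e+k_f}\times\mathbb{R}^{n-k_e-k_f}$ whose orbits are exactly the connected common level sets of $(q_1,\dots,q_n)$ --- here the absence of hyperbolic components is essential, since a level set $\{x\xi=c\}$ with $c\neq 0$ is disconnected and $H_i$ could take different values on its two branches. Hence $H_i$ descends to a function of $(q_1,\dots,q_n)$, smooth by Schwarz's theorem on smooth invariants of a linear torus action together with the vanishing of the brackets with the non-compact generators ${\rm f}^2_i$ and $\xi_i$, so $H_i=g_i(q_1,\dots,q_n)+H_i(p)$. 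For the Jacobian condition I would evaluate $dH_i=\sum_j(\partial g_i/\partial q_j)(0)\,dq_j$ at $p$: since $dH_i(p)=0$ for $i\le\cok$ while $d\xi_1,\dots,d\xi_{n-\cok}$ are independent there, the matrix $\bigl(\partial g_i/\partial q_j\bigr)(0)$ is block triangular with identity regular block (because $H_{\cok+i}=\xi_i=q_{\cok+i}$) and with elliptic/focus block invertible precisely because the quadratic parts of $H_1,\dots,H_\cok$ span a Cartan subalgebra; hence $\det\bigl(\partial g_i/\partial q_j\bigr)(0)\neq 0$.
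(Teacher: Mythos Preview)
The paper does not give its own proof of this theorem: it is stated as a result from the literature, attributed to Vey in the analytic case and to Eliasson in the smooth case, with a list of further references (Chaperon, Dufour--Molino, Miranda, V\~u Ng\d{o}c--Wacheux, Zung) for various pieces and refinements. So there is nothing to compare your proposal against in the paper itself.

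That said, your outline is a faithful summary of the standard strategy found in those references: strip off the regular directions via Carath\'eodory--Jacobi--Lie and Marsden--Weinstein reduction, linearize the quadratic part by Williamson, then remove higher-order terms (Birkhoff--Vey in the analytic setting; in the smooth setting, linearize the Hamiltonian $\mathbb{T}^{k_e+k_f}$-action generated by the elliptic and focus rotational components and invoke the harder Eliasson-type arguments for the remaining blocks). Your remark that the smooth hyperbolic case was the problematic one in the literature, and that only $k_h=0$ is needed for this paper, is accurate and matches the citations. The ``moreover'' argument via connectedness of the joint level sets of $(q_1,\dots,q_n)$ when $k_h=0$, together with Schwarz's theorem for smoothness of the descended functions, is also the standard route. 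One small point: in your Jacobian discussion you identify $H_{\cok+i}$ with $\xi_i$, but the theorem as stated does not assume any particular ordering of the $H_i$; the conclusion $\det(\partial g_i/\partial q_j)(0)\neq 0$ really comes from the fact that $(H_1,\dots,H_n)$ and $(q_1,\dots,q_n)$ define the same singular foliation near $p$, so one must be a local diffeomorphic reparametrization of the other --- your block-triangular argument works after a preliminary linear recombination of the $H_i$.
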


For example, when the symplectic manifold $M$ is of dimension $4$ and the
non-degenerate singular point $p$ has no hyperbolic component, then one
of the following three cases occur:

\begin{itemize}
\item[{\rm (i)}] (elliptic-elliptic singularity: $k_e = 2$, $k_h=k_f = 0$)
$q_1 = (x_1^2 + \xi_1^2)/2$ and  $q_2 = (x_2^2 + \xi_2^2)/2$;
\item[{\rm (ii)}] (focus-focus singularity: $k_e=k_h = 0$, $k_f =1$)
$q_1=x_1\xi_2 - x_2\xi_1$ and
$q_2 =x_1\xi_1+x_2\xi_2$;
\item[{\rm (iii)}] (transversally-elliptic singularity: $k_e = 1$,
$k_h=k_f = 0$) $q_1 = (x_1^2 + \xi_1^2)/2$ and  $q_2 = \xi_2$.
\end{itemize}

If $p$ is a point of highest corank in a
singular fiber of an integrable system, then the orbit $\cO(p)$
of the Poisson $\mathbb{R}^n$-action generated by the momentum map
through $p$ must be compact (otherwise it will contain in its boundary
singular points of even higher corank), and hence is diffeomorphic
to the $(n-\cok)$-dimensional torus $\mathbb{T}^{n-\cok}$,
where $\kappa$ is the corank of $p$. According to a theorem by
Miranda and Zung \cite{MirandaZung-NF2004},
under the non-degeneracy condition of $p$, the system can be linearized
not only at $p$ but also in a neighborhood of the orbit $\cO(p)$.
To formulate this theorem precisely, we need
first to construct the linear model around such an orbit, which is
what we do next.

Take a symplectic manifold of direct product type
\begin{equation*}
V =  D^{2\cok} \times \bbT^{n-\cok} \times D^{n-\cok}
\end{equation*}
with a canonical coordinate system
\[ (x_1,\, \ldots,x_{\cok},\, \xi_1,\,\ldots,\, \xi_{\cok},
\theta_{\cok+1},\hdots, \theta_n , I_{\cok+1},\hdots,I_n)\]
where $(x_1,\, \ldots,x_{\cok},\, \xi_1,\,\ldots,\, \xi_{\cok})$
are canonical coordinates
on the standard symplectic disk $D^{2\kappa}$ and
$(\theta_{\cok+1},\hdots, \theta_n , I_{\cok+1},\hdots,I_n)$ are action-angle
coordinates on $\bbT^{n-\cok} \times D^{n-\cok}$. They look the same as the
coordinate system in the statement of Theorem \ref{thm:LocalLinearization},
except for the
fact that the coordinates $\theta_{\kappa+1} \pmod 1,\hdots,\theta_n \pmod 1$
are not local but periodic coordinates  of period 1 on  $\bbT^{n-\kappa}$.
Take the  functions $q_1,\hdots,q_n$ with the same expression as in
Theorem \ref{thm:LocalLinearization}. We get a direct product linear
model around a compact orbit $\cO(p)$ of Williamson type $(k_e,k_h,k_f)$.

Let $\Gamma$ be a finite group with a  free symplectic action
$\rho:\Gamma\times V \rightarrow V$, preserving the momentum map
$(q_1,\hdots,q_n)$, and is \textbf{\textit{linear}} in the following sense:
{\it $\Gamma$ acts on the product $V = D^{2\cok} \times D^{n-\cok} \times
\bbT^{n-\cok}$ component wise; the action of $\Gamma$ on $D^{n-\cok}$ is
trivial, its action on $\bbT^{n-\cok}$ is by translations relative
to the coordinate system $(\xi_1,\hdots,\xi_{n-\cok})$, and its action on
$D^{2\kappa}$ is linear with respect to the coordinate system
$(x_1,\xi_1,\hdots,x_{\cok},\xi_{\cok})$}.
Then we can form the quotient symplectic manifold
$V/\Gamma$, with an integrable Hamiltonian system on it given by the same
momentum map
$(q_1,\hdots,q_n)$ as before. We call it an \textbf{\textit{almost direct product linear
model}}. It is a direct product model if $\Gamma$ is trivial.

\begin{theorem}[\cite{MirandaZung-NF2004}]
\label{thm:OrbitNormal}
The associated Lagrangian fibration of an integrable Hamiltonian system near
a compact orbit $\cO(p)$ through a non-degenerate singular point $p$ of
Williamson type $(k_e,k_h,k_f)$ is isomorphic to one of
the above almost direct product linear models.
\end{theorem}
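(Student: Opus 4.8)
The strategy is to bootstrap from the pointwise normal form of Theorem~\ref{thm:LocalLinearization} at $p$ to a normal form along the whole orbit by a reduction--reconstruction argument; the ``reconstruction'' step is where the finite twisting group $\Gamma$ appears, as a monodromy. \emph{Reduction to a slice.} Since $dH_1(p)=\dots=dH_\cok(p)=0$, the fields $X_{H_1},\dots,X_{H_\cok}$ vanish at $p$, so $\cO(p)$ is the orbit through $p$ of the locally free Hamiltonian $\RR^{\,n-\cok}$-action generated by the regular integrals $H_{\cok+1},\dots,H_n$; by hypothesis this orbit is the compact torus $\bbT^{\,n-\cok}$, and the action stays locally free on a saturated neighborhood of it. Performing the local Marsden-Weinstein reduction with respect to $(H_{\cok+1},\dots,H_n)$ produces a germ of $2\cok$-dimensional symplectic manifold $(\overline{M},\overline{\omega})$ with reduced integrable system $(\overline{H}_1,\dots,\overline{H}_\cok)$ for which the image $\bar p$ of $p$ is a non-degenerate fixed point of Williamson type $(\ke,\kh,\kf)$.

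\emph{Linearizing the slice and reconstructing the neighborhood.} The rank-$0$ case of Theorem~\ref{thm:LocalLinearization} on $\overline{M}$ yields symplectic coordinates near $\bar p$ identifying $(\overline{M},\overline{\omega})$ with a disk $D^{2\cok}$ carrying the Williamson quadratics $q_1,\dots,q_\cok$, with each $\overline{H}_i$ a function of them. Since the regular flows are quasi-periodic with a locally constant period lattice, they exponentiate to a \emph{free} Hamiltonian $\bbT^{\,n-\cok}$-action near $\cO(p)$; the equivariant Darboux / coisotropic embedding theorem (the Marle-Guillemin-Sternberg normal form in the free case) then presents a neighborhood of $\cO(p)$, as a symplectic manifold with this torus action, as the product $D^{2\cok}\times\bbT^{\,n-\cok}\times D^{\,n-\cok}$ with the product symplectic form and momentum map --- up to the choice of a section (connection) over $\bbT^{\,n-\cok}$, whose holonomy carries the $D^{2\cok}$-slice around a loop in $\bbT^{\,n-\cok}$ back to itself by a linear symplectomorphism of $D^{2\cok}$ fixing each of $q_1,\dots,q_\cok$. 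This defines a monodromy homomorphism $\mu\colon \ZZ^{\,n-\cok}=\pi_1(\bbT^{\,n-\cok})\to G$, where $G$ is the group of such linear symplectomorphisms.

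\emph{From monodromy to a finite group.} The identity component $G_0$ is generated by the Hamiltonian flows of $q_1,\dots,q_\cok$ themselves (rotations in the elliptic planes, dilations in the hyperbolic ones, the rotation-and-dilation in each focus block), and its contribution to $\mu$ is removed by altering the section --- equivalently, by replacing each angle $\theta_j$ $(\cok<j\le n)$ by $\theta_j$ plus a function of $q_1,\dots,q_\cok,I_{\cok+1},\dots,I_n$, which is admissible because those flows preserve the symplectic form, the fibration, and the momentum map. After this adjustment $\mu$ factors through the component group $\pi_0(G)$, which is finite (the stabilizer of each Williamson block has finitely many components and identical blocks can only be permuted); choosing finite-order linear symplectomorphisms representing the finitely many values of $\mu$ and pairing each generator with the corresponding translation of $\bbT^{\,n-\cok}$ produces a finite group $\Gamma$ acting on $V=D^{2\cok}\times\bbT^{\,n-\cok}\times D^{\,n-\cok}$ freely, symplectically, preserving $(q_1,\dots,q_n)$, and linearly in the sense of the definition above; equivalently one passes to the finite cover of the neighborhood that unwinds the (now finite) residual monodromy. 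The reconstruction then identifies the germ of the Lagrangian fibration near $\cO(p)$ with that of the almost direct product linear model $V/\Gamma$, as claimed.

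\emph{Main obstacle.} The technical heart is the second step: upgrading the smooth ``torus bundle'' picture near $\cO(p)$ to a genuinely symplectic direct-product-up-to-finite-quotient model. This requires averaging the normalizing diffeomorphisms over the acting torus $\bbT^{\,n-\cok}$ to make them equivariant, together with a Moser-type homotopy to straighten $\overline{\omega}$, all while keeping the first integrals in the form $g_i(q_1,\dots,q_n)$; doing these three normalizations simultaneously, rather than destroying one while fixing another, is the delicate point. One must also verify carefully that the $G_0$-part of the monodromy really is killed by an admissible redefinition of the angle variables (this is exactly where the commutativity of the $q_i$-flows with the fibration and the symplectic form is used) and that $\pi_0(G)$ is finite, including an explicit inspection of the focus-focus block. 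Once the symplectic product structure is secured, the remaining monodromy bookkeeping is routine.
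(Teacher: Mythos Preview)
The paper does not prove this theorem; it is quoted from \cite{MirandaZung-NF2004} without argument, so there is no ``paper's own proof'' to compare against. Your outline is broadly in the spirit of the Miranda--Zung argument (reduce to a slice, linearize there via Eliasson, reconstruct, and read off the finite group $\Gamma$ as residual monodromy), and your identification of $\Gamma$ with the image of the slice monodromy in $\pi_0(G)$ after killing the $G_0$-part is the right picture.

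There is, however, one genuine gap. You assert that ``the regular flows are quasi-periodic with a locally constant period lattice, they exponentiate to a \emph{free} Hamiltonian $\bbT^{\,n-\cok}$-action near $\cO(p)$,'' and then invoke the free case of the Marle--Guillemin--Sternberg normal form. But the flows of the \emph{given} integrals $H_{\cok+1},\dots,H_n$ are not periodic; one must first construct $n-\cok$ action variables whose Hamiltonian flows are $1$-periodic, and---crucially---show that these actions extend smoothly across the singular set near $\cO(p)$. On the regular part this is Liouville--Mineur, but the smooth extension through the singular fibers is exactly the nontrivial input (it uses the Eliasson normal form at $p$ together with an argument that the period integrals stay smooth as one approaches the singular stratum). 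In the actual proof this step is established first (building on Zung's earlier semi-local torus-action results), and only then does one run the equivariant Moser/averaging argument you describe. As written, your sketch assumes the torus action in order to set up the bundle whose monodromy you then analyze, so the hardest analytic step is hidden in a clause. Your ``Main obstacle'' paragraph correctly flags the simultaneous-normalization difficulty, but it does not address this prior existence issue.
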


\subsection{Semi-local structure of singularities}\label{subsection:semi-local}
 \hfill

A singular fiber $N$ of an integrable Hamiltonian system with proper
momentum map  is called
{\bfi non-degenerate} if its singular points are non-degenerate.
By a {\bfi singularity}, we mean a germ of a saturated open
neighborhood of a single singular fiber $N$ (together with its singular Lagrangian
fibration). We say that a singularity is {\bfi non-degenerate} if the singular
fiber $N$ in question is non-degenerate, plus an additional technical condition
called the {\bfi non-splitting condition}: the bifurcation diagram (i.e.,
the set of singular values of the momentum map) for the system in a
neighborhood of $N$ does not split at $N$, i.e., this bifurcation diagram
coincides (locally) with the bifurcation diagram of the system in a
neighborhood of a singular point of maximal corank in 
$N$\footnote{A smooth map $f: M\rightarrow N$
is said to be \emph{locally trivial} at $n_0 \in f(M)$, if there is an
open neighborhood $U \subset N$ of $n_0$ such that $f^{-1}(n)$ is a
smooth submanifold of $M$ for each $n\in U$ and there is a smooth map
$h: f^{-1}(U)\rightarrow f^{-1}(n_0)$ such that $f\times
h:f^{-1}(U)\rightarrow U \times f ^{-1}(n_0)$ is a diffeomorphism. The
\emph{bifurcation set} $\Sigma_f$ consists of all the points of $N$
where $f$ is not locally trivial. The set of critical values of $f$ is
included in the bifurcation set; if $f$ is proper, this inclusion is an
equality (see \cite[Proposition 4.5.1]{AbMa1978} and the comments
following it).}. This  condition was first introduced by Zung
\cite{Zung-Integrable1996} under a different name; later Bolsinov and Fomenko
\cite[Section 1.8.4 and Section 9.9]{BoFo-IntegrableBook2004} called it
``the non-splitting condition'', a term which is more intuitive and will be adopted in this
paper. All natural examples of integrable systems from mechanics and physics
that we know satisfy this non-splitting condition.

As was shown in \cite{Zung-Integrable1996}, all non-degenerate singular points
of maximal corank in a singular fiber $N$ of an integrable system (even without
the non-splitting condition) have the same Williamson type $\Bbbk = (k_e, k_h,
k_f)$, which is called the {\bfi Williamson type of} the singular
fiber, or of the singularity. The corank $\cok = k_e + k_h +
2k_f$ of singular points of highest corank in $N$ is also called the
{\bfi corank} of the singular fiber, or of the singularity.

{\bfi Elementary} non-degenerate singularities
of integrable systems are those  with
$k_e+k_h+k_f = 1$ and $n-\cok = 0$, i.e., they have rank 0 and
only one component, either elliptic, hyperbolic, or focus-focus.
They live on 2-dimensional (the elliptic and hyperbolic case) and
4-dimensional (the focus-focus case) symplectic manifolds.

In the following theorem, we denote a neighborhood of an elementary
non-degenerate singular fiber $N$ (with some upper index, in dimension
2 or 4), together with the associated Lagrangian foliation, by
$(P^2(N^e), \cL^e)$ in the elliptic case, $(P^2(N^h), \cL^h)$ in the
hyperbolic case,  and $(P^4(N^f), \cL^f)$ in the focus-focus case.
$(D^{n-\cok} \times {\bbT}^{n-\cok}, \cL^r)$ denotes a regular Lagrangian
torus fibration in dimension $2(n-\cok)$ in the neighborhood of a
regular fiber.

\begin{theorem}[Semi-local structure of non-degenerate singularities,
\cite{Zung-Integrable1996}] \label{thm:Semilocal}
Let $N$ be a non-degenerate singularity of corank $\kappa$ and
Williamson type $\Bbbk = (k_e, k_h, k_f)$ in an integrable
Hamiltonian system given by a proper momentum map ${\bf H}: M^{2n} \to
\mathbb{R}^n$ on a symplectic manifold $(M^{2n},\omega)$. Then there
is a neighborhood $\cU(N)$ on $N$ in $M^{2n}$, saturated by the fibers
of the system, with the following properties:

{\rm (i)} There exists an effective Hamiltonian action of
$\mathbb{T}^{k_e +k_f + (n-\cok)}$ on $\cU(N)$ which preserves the system.
This number $k_e +k_f + (n-\cok)$ is the maximal possible. There is a
(non-unique, in general) locally free  $\mathbb{T}^{n-\cok}$-subaction
of this torus action.

{\rm (ii)}  $\cU(N)$ together with the associated Lagrangian torus
fibration is homeomorphic
to the quotient of a direct product of elementary
non-degenerate singularities (2-dimensional
elliptic, 2-dimensional hyperbolic, and/or 4-dimensional focus-focus)
and a regular Lagrangian torus foliation of the type
\begin{multline}
({\mathcal U}({\bbT}^{n-\cok}), \cL^r) \times
(P^2(N^e_{1}), \cL^e_{1}) \times \cdots \times (P^2(N^e_{k_e},
{\mathcal L}^e_{k_e}) \times \\
\times  (P^2_h(N^h_{1}, {\mathcal L}^h_{1}) \times\cdots \times
(P^2(N^h_{k_h}), {\mathcal L}^h_{k_h}) \times
(P^4(N^f_{1}), {\mathcal L}^f_{1}) \times \cdots \times
(P^4(N^f_{k_f}), {\mathcal L}^f_{k_f})
\end{multline}
by a free action of a finite group $\Gamma$ with the following property:
$\Gamma$ acts on the above product
component-wise (i.e., it commutes with the projections onto the
components), it preserves the system, and, moreover, it acts trivially
on elliptic components.
\end{theorem}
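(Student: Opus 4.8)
The plan is to deduce the semi-local statement near the whole singular fibre $N$ from the local normal forms available near a single singular point of maximal corank, and then to glue. The two inputs are the Vey--Eliasson normal form (Theorem~\ref{thm:LocalLinearization}) and its orbit version around a compact singular orbit (Theorem~\ref{thm:OrbitNormal}); from these one reads off, near a maximal-corank point, all the circle and torus symmetries present. The task is then (a) to show these symmetries are canonically attached to the singular Lagrangian foliation and therefore propagate from a neighbourhood of one orbit $\mathcal{O}(p)$ to a saturated neighbourhood $\mathcal{U}(N)$ of $N$, and (b) to use them, together with the non-splitting condition, to split $\mathcal{U}(N)$ with its foliation into a finite quotient of a product of elementary pieces.

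\textbf{Part (i): the torus action and its maximality.} Fix $p \in N$ of maximal corank $\kappa$ and Williamson type $(k_e,k_h,k_f)$. By Theorem~\ref{thm:OrbitNormal}, a saturated neighbourhood of $\mathcal{O}(p) \cong \mathbb{T}^{n-\kappa}$ is an almost direct product linear model; lifting to the finite cover $V$, the functions $q_1,\dots,q_n$ of Theorem~\ref{thm:LocalLinearization} are defined, and the Hamiltonian flows of the $k_e$ elliptic rotations $q_1,\dots,q_{k_e}$, of the $k_f$ ``angular'' focus-focus functions $q_{2i-1+k_e+k_h}$, and of the $n-\kappa$ regular angle directions are all $1$-periodic and together generate a Hamiltonian $\mathbb{T}^{k_e+k_f+(n-\kappa)}$-action preserving the foliation (the hyperbolic components and the ``radial'' focus-focus functions generate only non-periodic $\mathbb{R}$-flows). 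Each circle factor is intrinsic to the foliation: the $j$-th elliptic $S^1$ is the unique effective Hamiltonian $S^1$-action whose fixed locus near $N$ is the $j$-th elliptic substratum and which is trivial on the other components; the $j$-th focus-focus $S^1$ is pinned down by its linearisation at the focus points; the $\mathbb{T}^{n-\kappa}$ is the maximal periodic part of the $\mathbb{R}^n$-action tangent to $N$. Because $N$ is connected and these are local-to-global characterisations, the action extends uniquely to all of $\mathcal{U}(N)$ and stays effective; it is Hamiltonian because its generators are Hamiltonian vector fields of functions that are locally functions of the momentum map (Theorem~\ref{thm:LocalLinearization}) and hence patch, by connectedness of $N$, to globally defined functions on $\mathcal{U}(N)$. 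Maximality follows from the local model: a Hamiltonian torus preserving the system near $N$ restricts near $\mathcal{O}(p)$ to a subtorus of the torus $\mathbb{T}^{k_e+k_f+(n-\kappa)}$ of Theorem~\ref{thm:OrbitNormal}, which is the full group of foliation-preserving torus symmetries of the linear model. The locally free $\mathbb{T}^{n-\kappa}$-subaction is the one along the regular directions, acting freely on the nearby regular tori.

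\textbf{Part (ii): the product decomposition.} By the non-splitting condition, the bifurcation diagram of the system near $N$ agrees with that at a maximal-corank point, so in suitable affine coordinates on $\mathbb{R}^n$ it is the product of $k_h$ transverse ``crosses'' (hyperbolic), $k_e+k_f$ isolated critical values, and a regular factor $\mathbb{R}^{n-\kappa}$. The plan is to realise this product structure on $\mathcal{U}(N)$ itself, peeling factors off one at a time. First, using the locally free $\mathbb{T}^{n-\kappa}$-subaction of Part (i) and a singular action--angle argument, split off the regular factor $(\mathcal{U}(\mathbb{T}^{n-\kappa}),\mathcal{L}^r)$, reducing to a rank-$0$ (fixed-point) singularity on the $2\kappa$-dimensional local Marsden--Weinstein reduced space. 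Next, using the $k_e$ elliptic circle actions and an equivariant Eliasson-type argument, split off the $k_e$ two-dimensional elliptic factors $(P^2(N^e_j),\mathcal{L}^e_j)$. What remains is a $2(k_h+2k_f)$-dimensional germ about a fixed point of Williamson type $(0,k_h,k_f)$; analysing the topology of its (stratified) singular fibre --- which, up to a finite quotient, is the product of $k_h$ figure-eight curves and $k_f$ pinched tori --- produces a fibre-preserving homeomorphism onto a finite quotient of $\prod(P^2(N^h_j),\mathcal{L}^h_j)\times\prod(P^4(N^f_j),\mathcal{L}^f_j)$, and with it the finite group $\Gamma$. The group $\Gamma$ commutes with the projections because the decomposition of the central fibre into its hyperbolic and focus-focus ``factor loops'' is canonical, and after absorbing any residual rotation into the elliptic circle actions one may take $\Gamma$ to act trivially on the elliptic factors.

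\textbf{Main obstacle.} The delicate step is Part (ii): upgrading the pointwise normal forms to a statement valid near the whole, topologically non-trivial, stratified fibre $N$ --- that is, showing the local product structures at different maximal-corank points of $N$ are mutually compatible. The non-splitting condition is precisely what annihilates the obstruction to this gluing, but turning that into a proof requires careful bookkeeping of the vanishing cycles of the hyperbolic and focus-focus pieces inside $N$ and of the monodromy of the $\mathbb{T}^{n-\kappa}$-action around $N$. This is also why one can claim only a homeomorphism: in the smooth and symplectic categories the hyperbolic and focus-focus building blocks carry genuine functional and discrete invariants that obstruct an honest product splitting.
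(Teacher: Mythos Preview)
The paper does not contain a proof of this theorem: it is stated with attribution to \cite{Zung-Integrable1996} and used as a black box throughout (the only commentary is the remark immediately following it, noting that part (ii) fails in the smooth and symplectic categories). So there is no in-paper proof to compare your proposal against.

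That said, your sketch is a faithful outline of the strategy in the original reference. The two-step structure --- first extract the intrinsic $\mathbb{T}^{k_e+k_f+(n-\kappa)}$-action from the periodic flows visible in the Vey--Eliasson/Miranda--Zung normal form and propagate it from a neighbourhood of one maximal-corank orbit to all of $\mathcal{U}(N)$, then use that action together with the non-splitting condition to peel off the regular and elliptic factors and analyse the residual hyperbolic/focus-focus core topologically --- is exactly how Zung's 1996 argument proceeds. You have also correctly identified the genuine content: the extension of the locally-defined torus action to a global one on $\mathcal{U}(N)$ (which in the original is done via a sheaf-theoretic argument on system-preserving $S^1$-actions), and the fact that the non-splitting hypothesis is precisely what kills the obstruction to compatibility of the local product structures at different maximal-corank points. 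Your closing caveat about why only a homeomorphism is claimed matches the remark the paper makes after the statement.
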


\begin{remark}
{\rm Item (ii) of Theorem~\ref{thm:Semilocal} does not hold in the symplectic
category, and not even in the differentiable category as soon as we go beyond
the most simple cases. See for instance~\cite{BoIz2017} for a smooth invariant
of focus-focus singularities with more than one pinch in the differentiable
category, and~\cite{VuNgoc2003},\cite{Wacheux-Asymptotics2015}, \cite{PeTa2018}
for symplectic invariants of focus-focus singularities with one or more pinches. }
\end{remark}

\subsection{Topology and differential structure of the base space} \hfill

Recall that the \textbf{\textit{base space}} of an integrable Hamiltonian the system with a proper
momentum map $\textbf{H}: M^{2n} \to \mathbb{R}^n$
 is a Hausdorff space with the quotient topology from $M^{2n}$, i.e., the weakest topology
for which the projection map $\pi: M^{2n} \to \cB$
of the system is continuous. 
According to Theorem \ref{thm:Semilocal}, under the non-degeneracy
assumption, locally $\cB$ is homeomorphic to an almost direct product of
elementary components, which are either regular, elliptic, hyperbolic, or
focus-focus: each regular component is just an open interval, each
elliptic component is a half-closed interval with one
end point (which corresponds to the elliptic singular fiber), each
hyperbolic component is a bouquet of half-closed intervals (a star-shaped
graph, with the vertex in the center corresponding  to the hyperbolic
singular fiber), each focus-focus component is a 2-dimensional disk whose
center corresponds to the focus-focus fiber.

The phrase ``almost direct'' means that we may have to take the quotient of a
direct product, as described above, of 1-dimensional and 2-dimensional
components by a diagonal action of a finite group $\Gamma$, which acts
non-trivially only on hyperbolic components. (The action of $\Gamma$ on local
regular, elliptic, and focus-focus components of the base space is trivial.)

In particular, if there is no hyperbolic component, then the base space is
locally homeomorphic to a direct product of intervals (maybe half-closed)
and disks, and so it is an $n$-dimensional  topological
manifold (which is in fact a smooth manifold, see Proposition
\ref{prop:BisManifold}), possibly with boundaries and corners.
 The points on $\cB$
which correspond to singularities of
the system with at least one elliptic component $(k_e \geq 1)$ 
are the boundary points of $\cB$, and those points with at least
two elliptic components $(k_e \geq 2)$ lie on the corners of $\cB$. 
When there are hyperbolic singularities, the base space is not a manifold, 
but rather a \textit{branched manifold} in the sense of Williams
\cite{Williams1974}, or more precisely, a topological 
space which is locally an almost-direct product of graphs.

It is not clear what does it mean for a branched manifold
to be convex. Hence, in this paper we will not consider hyperbolic
singularities when talking  about convexity. (However,  in the hyperbolic
case, one can still talk about (quasi) convexity of the closure
of each regular region of the base space and the (quasi) convexity of a
function on $\cB$; see \cite{Zung-Integrable1993}.)
So we adopt the following definition.

\begin{definition}
\label{def:toric-focus}
An integrable Hamiltonian system with non-degenerate singularities
is of \textbf{\textit{toric-focus}} type if its singular points have no
hyperbolic components, only elliptic and/or focus-focus components.
\end{definition}

The base space $\cB$ of an integrable system also admits 
a natural \textit{\textbf{differential structure}} induced from $M^{2n}$: 
a function $f: \cB \to \mathbb{R}$ is called a \textit{\textbf{smooth function}} on $\cB$ 
if its pull-back $f \circ \pi$ is a smooth function on $M^{2n}$. 
It is quite obvious that the set $C^\infty(\cB)$ of all 
so-defined smooth functions on $\cB$ is a differential 
structure in the sense of \cite{Sikorski1967},\cite[Section
2.1]{Sniatycki2013}, i.e. satisfies the following two conditions:

\begin{itemize} 
\item[{\rm (i)}] $C^\infty(\cB)$ is a $\mathcal{C}^\infty$-ring, i.e., we have 
$g \circ (u_1,..,u_N) \in C^\infty(\cB)$
$\forall u_1, ..,u_N \in C^\infty(\cB)$ and 
$ \forall g \in \mathcal{C}^\infty(\RR^N,\RR)$;

\item[{\rm (ii)}] Locality property: 
Let $\tau_\infty$ be the weakest topology
that makes all functions in $C^\infty(\cB)$ 
continuous. If $f$ is a function on $\cB$
such that for all $b \in \cB$
there exists an open set $\cV_b \ni b$ with respect to $\tau_\infty$ and $u_b \in
C^\infty(\cB)$ such that $f|_{\cV_b} = (u_b)|_{\cV_b}$, then $f \in
C^\infty(\cB)$. \end{itemize} 

For a general projection $\pi: M \to \cB$ from a manifold $M$
to a space $\cB$, it might happen that the quotient topology of 
$\cB$ is strictly stronger than the "smooth topology" $\tau_\infty$ of $\cB$ induced from $M$. 
However, in our case of proper integrable Hamiltonian systems with nondegenerate singularities,  
it follows from local normal forms that these
two induced topologies on $\cB$ are the same (see the proof of
proposition \ref{prop:BisManifold}). In fact, moreover, 
as it was observed in \cite[Section 3.6]{Zung-Integrable2003}, for base spaces of integrable systems with nondegenerate singularities, the sheaf of local smooth functions is a fine sheaf (which admits partition functions), and the Poincar\'e lemma and DeRham cohomology still work, just like in the
regular case.

The following fact about toric-focus base spaces has been known for a long time, apparently without anyone bothering to write it down explicitly:

\begin{proposition}
\label{prop:BisManifold}
Let $\cB$ be the base space of a toric-focus integrable Hamiltonian system on a symplectic manifold $M^{2n}$. Then $\cB$ together with its
induced differential structure is a smooth manifold, possibly with boundary and corners, whose manifold
topology is the same as the quotient topology from
$M^{2n}$.
\end{proposition}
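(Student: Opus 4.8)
The plan is to reduce everything to the semi-local normal forms of Theorems~\ref{thm:Semilocal} and~\ref{thm:OrbitNormal} and then to match the induced differential structure with the standard one of a manifold with corners, factor by factor. First I would fix a fiber $N$, set $b=\pi(N)$, and apply Theorem~\ref{thm:Semilocal}: since the system is toric-focus (no hyperbolic components), a saturated neighborhood $\cU(N)$ of $N$ together with its singular Lagrangian fibration is the quotient, by a free component-wise action of a finite group $\Gamma$, of a direct product
\[
({\mathcal U}(\TT^{n-\cok}),\cL^r)\times\prod_{i=1}^{k_e}(P^2(N^e_i),\cL^e_i)\times\prod_{j=1}^{k_f}(P^4(N^f_j),\cL^f_j),
\]
where $\Gamma$ preserves the fibration and, on the \emph{base}, acts trivially on every regular, elliptic and focus-focus component. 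Hence the base of $\cU(N)$ is the product of the bases of the elementary factors: an open disk $D^{n-\cok}$ from the regular factor, a half-open interval from each elliptic factor (the image of the corresponding $1$-dimensional momentum component), and an open $2$-disk from each focus-focus factor (the image of the corresponding $2$-dimensional momentum map, which is a homeomorphism onto an open subset of $\RR^2$ because focus-focus fibers are connected). Since $(n-\cok)+k_e+2k_f=n$, this exhibits $\cB$ with its quotient topology as a topological $n$-manifold with boundary and corners (boundary where $k_e\ge1$, corners where $k_e\ge2$), and shows $\pi$ to be open, being locally conjugate to the evident open projection of a product of standard torus fibrations; second countability of $\cB$ then follows from that of $M$ together with openness of $\pi$, and Hausdorffness is already known from properness of $\mathbf{H}$.

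Next I would identify $C^\infty(\cB)$, in such a chart, with the standard smooth (manifold-with-corners) functions of the chart coordinates. The inclusion ``$\Leftarrow$'' is immediate, since the chart coordinates are components of $\mathbf{H}$ restricted to the elementary factors, hence smooth on $M$ and $\Gamma$-invariant. For ``$\Rightarrow$'', take a smooth function $g$ on $M$ constant on the fibers and descend it. Away from the singular points of the system $\mathbf{H}$ is a submersion, so there $g$ is locally a smooth function of the chart coordinates. On the regular and focus-focus factors this already suffices everywhere, including at the focus-focus critical value, because the critical set of a focus-focus singularity is the finite set of pinch points, so $\mathbf{H}$ restricts to a submersion on $P^4(N^f_j)$ minus those points, with \emph{open} image containing that critical value, and $g$ descends smoothly there. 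On an elliptic factor $P^2(N^e_i)$ the critical value is attained only at the elliptic point; but near that point $g$ is invariant under the circle generated by the $i$-th component of $\mathbf{H}$, and a smooth $S^1$-invariant function of $(x_i,\xi_i)$ is a smooth function of $x_i^2+\xi_i^2$ on the closed half-line, so by Theorem~\ref{thm:LocalLinearization} $g$ is a smooth function of that momentum component up to the boundary; at a corner one runs the same argument with the torus $\TT^{k_e}$ generated by all the elliptic components and the corresponding multivariate invariant-function statement. By connectedness of the fibers and the product description of $\cU(N)$, these local descriptions patch to a single smooth function of the chart coordinates, and since $\Gamma$ acts trivially on the base this is unaffected by passing to the quotient. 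This yields an atlas of $\cB$ with coordinates in $C^\infty(\cB)$ and smooth transitions, so $(\cB,C^\infty(\cB))$ is a smooth manifold with boundary and corners.

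Finally, for the topological assertion I would compare the three topologies in play: the quotient topology, the topology $\tau_\infty$ generated by $C^\infty(\cB)$, and the atlas topology. The first refines the second because pullbacks of functions in $C^\infty(\cB)$ are continuous on $M$; the second refines the third because the charts are built from $C^\infty(\cB)$-functions; and the third refines the first because, read in a chart, $\pi$ is the standard open continuous surjection of the local symplectic model onto the base model, so $\pi^{-1}(V)$ open in $M$ forces $V$ open in every chart. Hence the three coincide. The step I expect to be the main obstacle is the ``$\Rightarrow$'' inclusion above --- showing that a fiber-constant smooth function on $M$ is smooth \emph{up to the corner} in the normal-form coordinates --- which is precisely where one must use the classical description of smooth invariants of the elliptic circle (and torus) actions rather than mere continuity; the focus-focus directions, by contrast, cause no extra difficulty, because there the critical value of $\mathbf{H}$ is also attained at regular points, so smoothness at it comes directly from the submersion normal form.
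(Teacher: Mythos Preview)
Your proof is correct, but it is organized differently from the paper's and does more work than strictly necessary. The paper argues pointwise rather than fiberwise: it fixes a point $p$ on the fiber over $x$, invokes the Vey--Eliasson local normal form (Theorem~\ref{thm:LocalLinearization}) at $p$, and uses the crucial last clause of that theorem --- that in the absence of hyperbolic components one has $H_i=g_i(q_1,\ldots,q_n)+H_i(p)$ with $\mathbf{g}$ a local diffeomorphism --- to write down the chart $(\pi_*q_1,\ldots,\pi_*q_n)=\mathbf{g}^{-1}\circ(\mathbf{H}-\mathbf{H}(p))$ directly. Chart compatibility and the identification of $C^\infty(\cB)$ then follow essentially for free from the fact that the various $\mathbf{g}$'s are diffeomorphisms. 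By contrast, you go through the semi-local product picture of Theorem~\ref{thm:Semilocal}, treat each elementary factor separately, and then argue the ``$\Rightarrow$'' inclusion for $C^\infty(\cB)$ by hand (submersion argument for regular and focus-focus factors, smooth $S^1$-invariants for elliptic factors). Your route has the virtue of making explicit exactly where the elliptic boundary requires the invariant-theory step, which the paper simply asserts is ``clear from the construction''; the paper's route has the virtue of brevity and of making chart transitions transparently smooth without any factor-by-factor bookkeeping. Both arguments ultimately rest on the same normal form input.
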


\begin{proof}
Take an arbitrary point  $x \in \cB$
and denote by $N_x \subset M^{2n}$ the corresponding fiber of the system, and $p \in N_x$
an arbitrary point on $N_x$. Take a small neighborhood $U(p)$ of $p$ in the symplectic manifold together with a canonical local coordinate system  $(x_1,\, \ldots,x_n,\, \xi_1,\,\ldots,\, \xi_n)$ provided by the Vey-Eliasson local normal form Theorem \ref{thm:LocalLinearization}. In particular,
the quadratic and linear functions $q_1,\hdots,q_n$, whose formulas are given in
Theorem \ref{thm:LocalLinearization}, is a
complete set of first integrals near $p$, and moreover the momentum map ${\bf H} = (H_1,\dots,H_n)$ can be expressed via these functions near $p$:
$$H_i = g_i(q_1,\hdots,q_n) + H_i(p),$$
where $\textbf{g}: (g_1,\hdots,g_n): (\mathbb{R}^n,0) \to (\mathbb{R}^n,0)$
is a smooth local diffeomorphism. The number of
elliptic components of $p$ does not depend on the choice of the point $p$ in $N_x$; let us call this number $e$ ($0 \leq e = k_e \leq n$), so that $q_i = x_i^2 + \xi_i^2$ for $i=1,\hdots, e$.
The image $U(x) = \pi(U(p))$ of $U(p)$ in $\cB$ under the projection map $\pi: M^{2n} \to \cB$
is a small open neighborhood of $x$ (with respect to the quotient topology) because of the 
nondegeneracy condition on the momentum map. The functions 
$q_1,\hdots,q_n$ can be pushed forward to become a map
$$(\pi_*q_1,\hdots\pi_*q_n)|_{U(x)} = \textbf{g}^{-1} \circ (\textbf{H} - \textbf{H}(p))|_{U(x)}: U(x) \to \mathbb{R}^n_{e+},$$ 
which is a homeomorphism from $U(x)$ onto a neighborhood of the origin in the "quadrant"
$$\mathbb{R}^n_{e+} := \{(q_1,\hdots,q_n) \in \mathbb{R}^n\ | \ q_1 \geq 0, \hdots, q_e \geq 0 \}.$$ 
($\mathbb{R}^n_{e+} = \mathbb{R}^n$ if $e=0$ and is a half-space if $e=1$.) We can take these models $(U(x), (\pi_*q_1,\hdots\pi_*q_n)|_{U(x)})$
for smooth local charts on $\cB$. The fact that these charts are smoothly compatible and turn $\cB$ into a smooth manifold, possible with boundary and corners, follows immediately from the fact that the maps $\textbf{g}$ (exchanging between an original momentum map and linear-quadratic momentum maps in canonical local models) are local diffeomorphisms. The fact that the induced smooth functions on $\cB$ are exactly those which are smooth in this smooth manifold structure, and that the smooth topology coincides with the quotient topology, is also clear from the construction.
\end{proof}

The image of a singular fiber of Williamson type
$(k_e,0,k_f)$, with $k_f > 0$, is called a
\textbf{\textit{singular point of type focus power
$k_f$}}, or, more concisely, $\textbf{\textit{focus}}^{k_f}$ point,
on $\cB$. This terminology does not
distinguish the number of elliptic components of the singularities, and as we shall see, only 
focus-focus components influence convexity properties of the base space. Note that, $\cB$
is still a smooth manifold at these focus singular points; it is the \textit{associated affine structure}, discussed in the next section, 
which is singular there. On the other hand,
the image of elliptic singular fibers (without any focus-focus component)  on $\cB$ are considered not as singular points on $\cB$, but rather as boundary or corner points, because the associated affine structure is still regular at these boundary and corner points.

Let us remark that, as was pointed out 
in \cite{Zung-Integrable2003}, toric-focus
integrable systems admit 
\textbf{\textit{local smooth sections}} to the associated Lagrangian fibrations, not only at regular fibers but also at singular fibers, and
the projection maps from these local smooth
sections to the base space are local diffeomorphisms. This fact simplifies the 
theory of characteristic classes, 
classification and integrable surgery  \cite{Zung-Integrable2003} in the case of toric-focus systems. (One can require that a section in the neighborhood of a focus-focus singularity without elliptic components contains only regular points of the system; a local section
near a singular fiber with elliptic components should be understood as the image under the
blown-down map of a blown-up picture in which 
each point with $k_e$ elliptic components 
becomes a $k_e$-dimensional torus.)

\subsection{Integral affine structure on the base space} \hfill

We say that a local function on the base space $\cB$ 
of a proper integrable Hamiltonian system is a local \textbf{\textit{action
function}} if its pull-back to the symplectic manifold generates a
Hamiltonian flow whose time-one map is the identity, i.e., if it can be viewed as the momentum map
of a Hamiltonian $\bbT^1$-action which preserves the system.

By Arnold-Mineur-Liouville theorem on local action-angle variables, near each regular point $x\in \cB$ (corresponding to a regular Lagrangian torus of the system), there is a smooth local
coordinate system consisting of $n$ actions functions $(p_1,\hdots,p_n)$
in a neighborhood of $x$. If $p'$ is another
local action function near $x$, then, up to a constant,
it is a linear combination of the action functions $p_1,\hdots,p_n$ with
integer coefficients:
\begin{equation}
\label{eq:action1}
p' = \sum_{i=1}^n a_ip_i + c,\ \; a_i \in \mathbb{Z},\; c \in \mathbb{R}.
\end{equation}

It follows that the sum or difference of two local action functions is
again a local action function, even near a singular fiber. Indeed, near
each regular fiber the statement is true because of \eqref{eq:action1}.
Near a singular fiber it is also true because of the continuity of the
time-one flow map of a vector field: if the map is the identity outside
the singular fibers, then it is also the identity at a singular fiber.
Thus the sheaf $\cA$ of local action functions on $\cB$ is an Abelian
sheaf which contains the constant functions. (Constant functions
correspond to the trivial torus action.) The quotient $\cA/\mathbb{R}$ of
$\cA$ by constant functions is a sheaf of free Abelian groups called
the sheaf of local \textbf{\textit{action 1-forms}}. Theorem
\ref{thm:Semilocal} yields the following formula for the rank of the
stalk $(\cA/\mathbb{R}) (x)$ of $\cA/\mathbb{R}$
at every point $x \in \cB$ of corank $\cok$ and Williamson type
$(k_e,k_h,k_f)$: $\rk(\cA/\mathbb{R}) (x) = (n-\cok) + k_e + k_f =
n - k_h - k_f$, i.e.
 \begin{equation*}
(\cA/\mathbb{R}) (x) \cong \mathbb{Z}^{n - k_h - k_f}.
 \end{equation*}

Notice that elements of $\cA$ are smooth functions on $\cB$.

If we forget about the focus singular points of $\cB$ and restrict $\cA$
to its regular part $\cB_{reg}$, then $\cA|_\cB$ is a regular 
\textit{\textbf{integral affine structure}} in the usual sense:
local charts (for the manifold structure of $\cB_{reg}$, possibly
with boundary and corners) are given by local action coordinate systems
(near boundary and corner points which correspond to elliptic fibers, 
these action functions are provided in semi-local normal forms 
\cite{MirandaZung-NF2004}), such that each corner of $\cB_{reg}$ is locally polyhedral,
and such that  the transformation maps between different local
action coordinate systems are elements of $GL(n,\mathbb{Z}) \rtimes
\mathbb{R}^n $, i.e., they are  affine maps whose linear part is in the group
$GL(n,\mathbb{Z})$ of linear isomorphisms of the lattice $\mathbb{Z}^n$. 

In our paper, however, we will keep the whole base space $\cB$, and call
$\cA$ the \textit{\textbf{associated singular integral affine structure}} 
on $\cB$. One may take, for example, the following as a general definition of singular affine structures (which can of course be generalized further, or can be made more restrictive): 

\begin{definition}
An Abelian sheaf $\cA$ of smooth functions on a manifold $\cB$ is called a singular (integral) affine structure if there is an open dense subset $\cB_{reg}$ of $\cB$ on which $\cA$ is the sheaf of (integral) affine functions of a regular (integral) affine structure.
\end{definition}

In this paper, we are mainly interested in integral affine structures coming toric-focus 
integrable systems. 

\section{Base spaces and affine manifolds with focus singularities}
\label{sec_ffsingularities}

 \subsection{Monodromy and affine coordinates near elementary focus points} \hfill
\label{subsec_monodromy}
The monodromy (of the Gauss-Manin flat connection on the bundle  of
homology groups of the fibers) of a regular Lagrangian torus fibration
was first introduced in 1980 by Duistermaat \cite{Duistermaat-AA1980}
as one of the obstructions to the existence of global action-angle variables.
This discovery has generated much work, both among mathematicians
and physicists; see, e.g., \cite{CuBa2015} and references therein.

It is now well-known (see \cite{Zung-Focus1997,Zung-Focus2001})
that, each elementary focus-focus singular fiber in dimension 4 is a "\textit{pinched torus}" 
with $k$ pinches, and the monodromy around such a  fiber is non-trivial and is given by the matrix
$
\begin{pmatrix}
1 & k \\ 0 & 1
\end{pmatrix}
$,
where $k\geq 1$ is the number of singular focus-focus points on the singular fiber. 

Concretely, this means the following. Take a simple loop
on the 2-dimensional base space $\cB_{reg}$ with the focus base point $O$ 
(corresponding to the focus-focus singular fiber) lying in
its interior. For a chosen point $c$ on this path, take two appropriate
1-cycles $\gamma_1$ and $\gamma_2$ on the corresponding Lagrangian fiber
$N_c \cong \mathbb{T}^2$, which form a basis of $H_1(N, \mathbb{Z})$.
Move $c$ along this closed path. Then $N_c$ and $\gamma_1, \gamma_2$
move together with it. When $c$ has moved once along this closed path,
coming back to its initial position in $\mathcal{B}$, $N_c$ also
comes back to itself, but the homology classes of $\gamma_1, \gamma_2$
change by the following formula:
 \begin{equation*}
\begin{pmatrix}
[\gamma_1^{new}] \\  [\gamma_2^{new}]
 \end{pmatrix}
 =
\begin{pmatrix}
1 & k \\ 0 & 1
 \end{pmatrix}
 \begin{pmatrix}
[\gamma_1] \\  [\gamma_2]
 \end{pmatrix}.
 \end{equation*}

More precisely, with respect to the basis $\gamma_1$ and $\gamma_2$, the
parallel transport along the fixed loop of the Gauss-Manin connection on the
bundle of first homology groups is given by above matrix.

The number $k$ in the above monodromy formula is also called the \textbf{\textit{index}}
of the focus singularity $O$ on $\cB$. 

For the above formula to hold, one must choose $\gamma_1$ to be the
cycle represented by the orbits of the system-preserving
Hamiltonian $\bbT^1$-action given by Theorem \ref{thm:Semilocal}
and choose appropriate orientations for the path on the base  space and
the cycles.

\begin{figure}[!ht]
\vspace{-25pt}
\centering
 {\mbox{} \hspace*{0cm }\includegraphics[width=1 \textwidth]{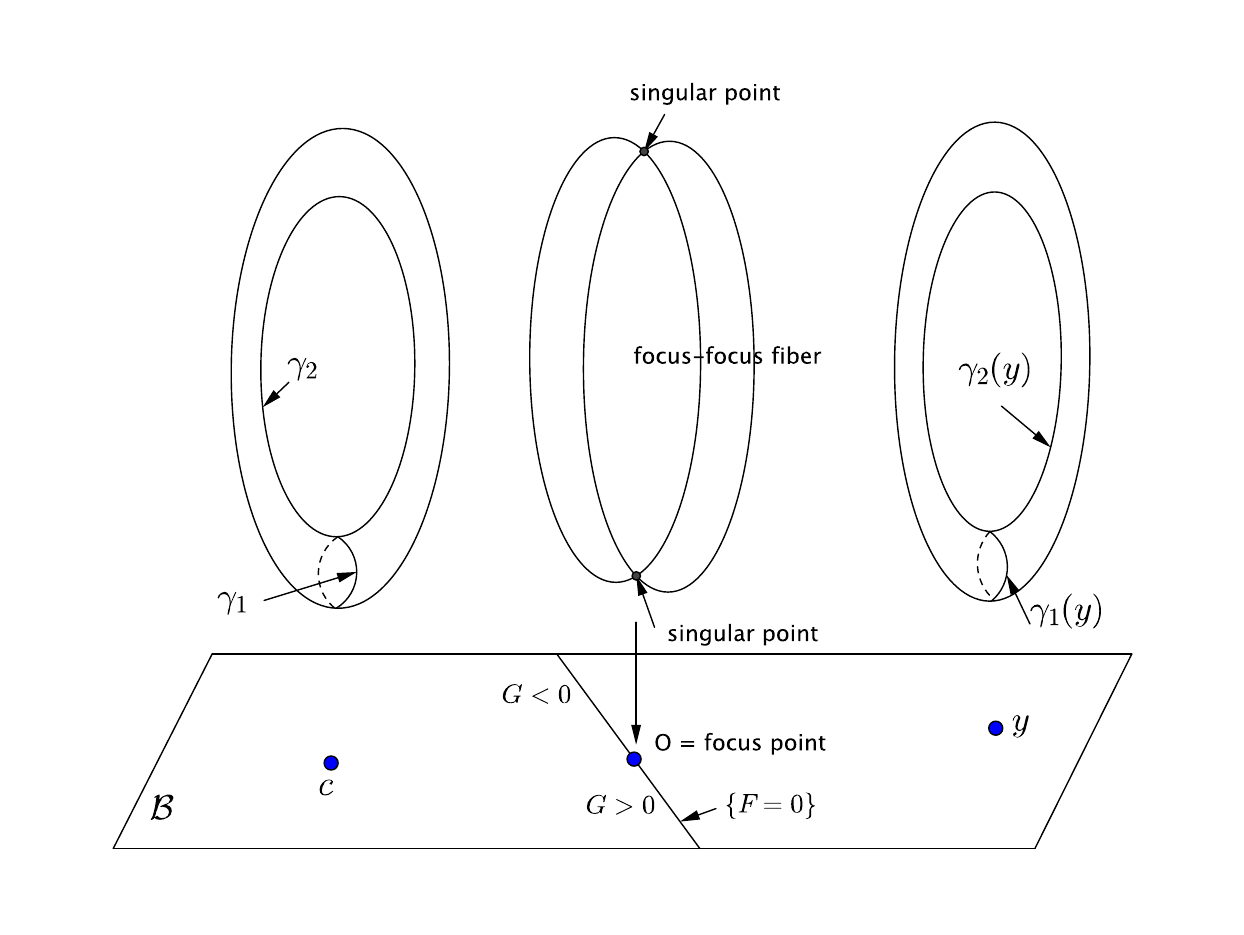}}

\vspace{-15pt}
  \caption{Monodromy around focus-focus singular fiber.}
  \label{fig:Focus1}
\end{figure}

In order to avoid confusion about the orientations,
it is be better to look at the monodromy on action functions or action
1-forms, instead of 1-cycles on the tori; these are essentially the same,
because the action 1-forms generate 1-cycles on the tori.
In terms of action functions, the monodromy around an elementary
focus-focus singularity can be described as follows.

For each point $y \in \cB$, denote by $\gamma_i (y)$ ($i = 1,2$)
the result of parallel transport of $\gamma_i$ from $c$ to $y$
by the Gauss-Manin connection along a simple path on $\cB$ avoiding
the focus point. Let
\begin{equation}
\label{eq:ActionIntegral}
F(y)= \int_{\gamma_1 (y)} \alpha,\quad G(y)= \int_{\gamma_2 (y)} \alpha,
\end{equation}
where $\alpha$ is a primitive 1-form of the symplectic 2-form
$d\alpha = \omega$.
Such a primitive 1-form exists near the focus-focus
singular fiber, because the cohomology class of the symplectic
form in a neighborhood of the focus-focus fiber is 0, due to the
fact that the fiber is Lagrangian (see~\cite{VuNgoc2003,Wacheux-Asymptotics2015}
for a proof of this last fact).
Then $F$ and $G$ are two {\color{red} (local)} action functions. $F$ is
single-valued and is, in fact, a smooth action function on $\cB$ given by
Theorem \ref{thm:Semilocal}. The function $G$ is multi-valued: it depends on the
homotopy class of the path from $c$ to $y$ (See Figure \ref{fig:Focus1} for the
case of 2 pinches). Single-valued action functions are defined only on a
complement of a half-line starting from the focus-focus point, along the
eigendirection of the monodromy matrix. With two of them we have a complete set
of affine charts.

The first homology group with integral coefficients of the focus-focus fiber
is isomorphic to $\mathbb{Z}$. When $c$ tends to the focus point $O$ 
on $\cB$ then $\gamma_1$
tends to 0 while $\gamma_2$ tends to a generator of this homology group
(independently of the path taken), hence $G$ admits a continuous extension to
the focus point $O$ on $\cB$, and we may arrange so that $F(O) = G(O) = 0$ and
choose $c$ such that $F(c) < 0$. The line $\{F=0\}$ 
cuts $\cB$ into two parts and $G$
is single-valued and strictly monotonous on that line.

We choose two different branches of $G$, denoted by $G_l$ and
$G_r$, by specifying the homotopy of the path from $c$ to $y$ in
$\cB \setminus \{O\}$ as follows. For $G_l$, if $F(y) \leq 0$
then the path from $c$ to $y$ lies in the region $\{F \leq 0\}$,
if $F(y) > 0$ then the path from $c$ to $y$ cuts the ray $\{F=0,
G \leq 0\}$ but does not cut the ray $\{F=0, G \geq 0\}$. For
$G_r$, if $F(y) \leq 0$ then the path from $c$ to $y$ lies in the
region $\{F \leq 0\}$, if $F(y) > 0$ then the path from $c$ to
$y$ cuts the ray $\{F=0, G \geq 0\}$ but does not cut the ray
$\{F=0, G \leq 0\}$. 

\begin{definition}
The set of affine functions $(F,G_l,G_r)$ defined as above is called a {\bfi multivalued affine coordinate system } around the focus point $O$.
\end{definition}


The monodromy formula around the focus-focus singularity,
or the focus point on $\cB$, can now be expressed as a relation between the
two branches $G_l$ and $G_r$ of the multi-valued action function $G$:
\begin{equation}
\label{eq:Monodromy1}
G_r = G_l + kF \; \text{when}\;  F > 0;
\quad G_r = G_l  \; \text{when}\;  F \leq 0.
\end{equation}

In turn, Formula \eqref{eq:Monodromy1} can be seen as a special case of the
Duistermaat-Heckman formula \cite{DuHe1982} for the Hamiltonian
$\mathbb{T}^1$-action generated by $F$; see \cite{Zung-Focus2001}.

\begin{figure}[!ht]
\vspace{-15pt}
\centering
 {\mbox{} \hspace*{0cm }\includegraphics[width= 0.7 \textwidth]{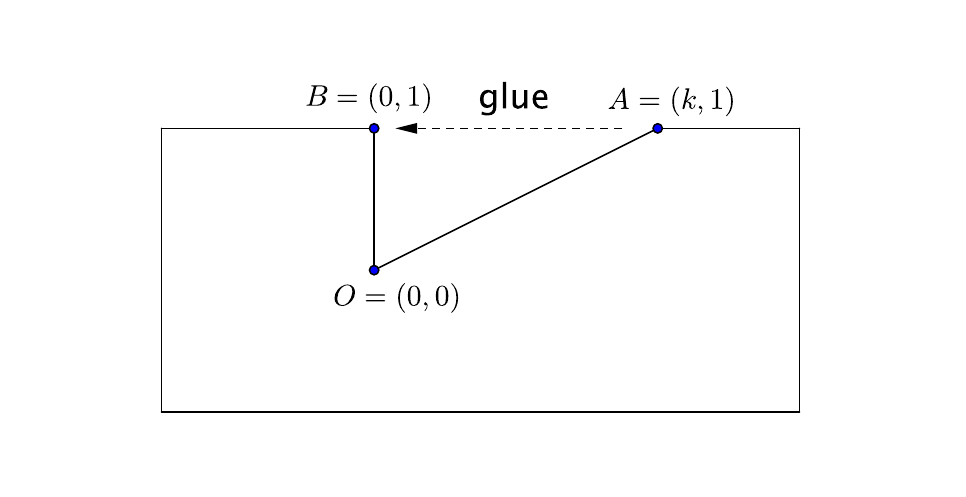}}

\vspace{-15pt}
  \caption{Affine structure near the focus point.}
  \label{fig:Focus2}
\end{figure}

Visually, the singular affine structure near the focus-focus
base point $O$ on $\cB$
can be obtained from a Euclidean plane with a standard coordinate system
by cutting out the angle given by the two rays $OA, OB$,
where $O = (0,0), A = (k,1), B = (0,1)$, and then gluing the ray $OB$
to the ray $OA$ by the transformation matrix $\begin{pmatrix}
1 & k \\ 0 & 1
\end{pmatrix}
 $. (See Figure \ref{fig:Focus2}).

Since we have both a smooth regular differential
structure and a singular integral affine structure on $\cB$,
it is natural to ask how the affine structure behaves in terms
of the smooth differential structure near
the singular focus points. In particular,
how does $G$ behave on the line $\{F=0\}$ with respect to the
smooth differential structure?

The construction we present here that results in Formula
\eqref{eqn:AsymptoticG} and Lemma \ref{lem:AsymptoticG} below are well-known to
people working on symplectic invariants of integrable Hamiltonian systems, and
go back at least to Dufour, Molino, and Toulet \cite{DuMoTo-Hyperbolic1994};
see also V\~u Ng\d{o}c~\cite{San-SemiglobalFF2003} and
Wacheux~\cite{Wacheux-Asymptotics2015} for the focus-focus case, with some
correction given in~\cite{SeVN2018} by V\~u Ng\d{o}c and Sepe.

Recall that we have a smooth coordinate system $(F,H)$ near $O$, where $F$ is
the action function. Their pull backs to the symplectic manifold are given by
the following formulas in a local canonical coordinate system near a focus-focus
point, according to Theorem \ref{thm:LocalLinearization}:
\begin{equation*}
F = x_1y_2 - x_2y_1; \quad H =x_1 y_1 + x_{2} y_{2}.
\end{equation*}

Without loss of generality (using some rescaling if necessary),
we may assume that the point $p: (x_1,y_1,x_2,y_2) = (1,0,0,0)$ is in a
small neighborhood of the focus-focus
point where the normal form is well-defined.
Take a small 2-dimensional disk $D =\{x_1 = 1, x_2 = 0\}$ which intersects
the local plane $\{ y_1=y_2 =0\}$ (which is part of the local focus-focus
fiber) transversally at $p$. On $D$ we have $H=y_1, F=y_2$, which means
that the local smooth structure of $\cB$ near $O$ can be projected from
the smooth local structure of the disk $D$ which intersects all the
fibers near the focus-focus fiber transversally.  This implies, in
particular, that the smooth structure on $\cB$ near $O$ does not depend
on the choice of the singular point on the singular fiber where one takes
 the normal form.

\begin{figure}[!ht]
\vspace{-15pt}
\centering
 {\mbox{} \hspace*{0cm }
 \includegraphics[width=1 \textwidth]{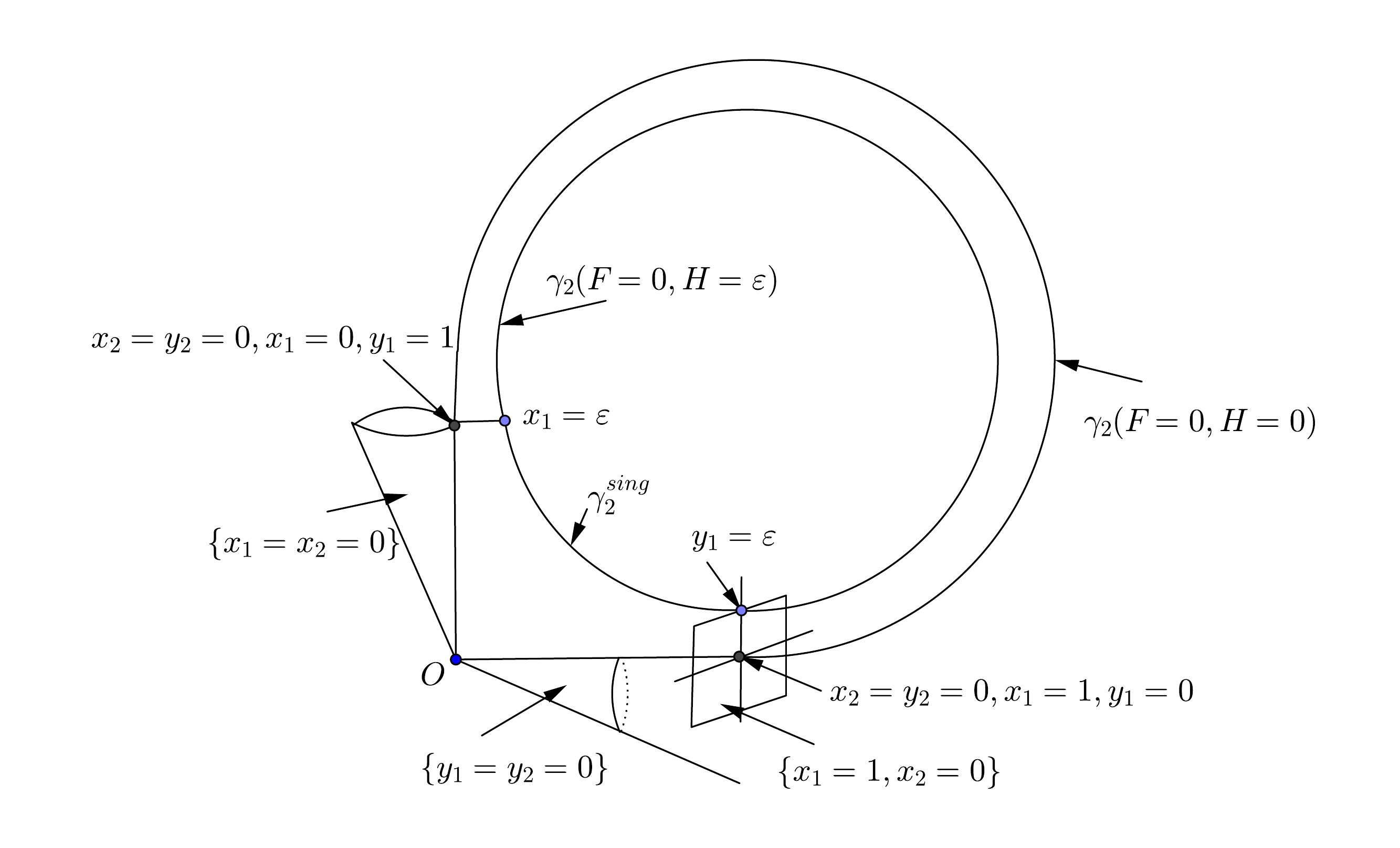}}

\vspace{-15pt}
  \caption{Cycle $\gamma_2$.}
  \label{fig:ActionFunction1}
\end{figure}

We can assume that the 1-form $\alpha$  in formula
\eqref{eq:ActionIntegral} for action functions is given in the
local canonical coordinates $ (x_1,y_1,x_2,y_2)$ by
\begin{equation*}
\alpha = x_1 dy_1 + x_2 dy_2
\end{equation*}
and that the part $\gamma_2^{sing}$
of the 1-cycle $\gamma$ on the Lagrangian torus
$\{F = 0, H = \varepsilon\}$, which
is inside the ball with the canonical coordinate system
$ (x_1,y_1,x_2,y_2)$, is given by the equations
$\{x_2= y_2 = 0,\, x_1 y_1  = \varepsilon = H\}$ and the inequalities
$0 \leq x_1 \leq 1$, $0 \leq y_1 \leq 1$.

If the focus-focus fiber contains only one singular point (see Figure
\ref{fig:ActionFunction1}), then at $F=0$, $H= \varepsilon$ we have:
\begin{align} \label{eqn:AsymptoticG}
G &= \int_{\gamma_2} \alpha = \int_{\gamma_2^{sing}} \alpha
+  \int_{\gamma \setminus \gamma_2^{sing}} \alpha
= \int_{H}^1 (H/y_1) dy_1 + g(H)\\
&= H \log (1/H) + g(H), \nonumber
\end{align}
where $g$ is a smooth function.

The case when there are many ($k > 1$) singular points on the focus-focus fiber
is absolutely similar, except for the fact that there are many ``singular'' pieces
of $\gamma_2$ (each piece near one singular point) which by integration contribute 
to the singular part of $G$ instead of just one. 
For example, assume that there is another focus-focus singular point with
a canonical local normal form in which we have two functions $(F, H')$ such as above
(with the same $F$ as near the first focus-focus point, but with another function $H'$
which is a priori different from $H$). Then $H'$ can be written as a smooth function
of two variables $F$ and $H$: $H' = h'(F,H)$, with $\partial h' / \partial H \neq 0$.
This second focus-focus point contributes the singular term $H' \log (1/H')$ to the
expression of $G$, which is not equal to $H \log (1/H)$ when $F=0$ but which is of the type
$f(H) H \log (1/H) + l(H)$, where $f$ and $l$ are smooth functions and $f(0) > 0$. By summing
up all these singular terms in the case $k > 1$, we have proved the following  lemma
(see V\~{u} Ng\d{o}c \cite{VuNgoc2003}, Pelayo and Tang \cite{PeTa2018}, 
and Bolsinov and Izosimov \cite{BoIz2017} for more
detailed asymptotic formulas and smooth symplectic invariants):

\begin{lemma} \label{lem:AsymptoticG}
With the above notations and assumptions,
$G$ is strictly increasing
with respect to $H$ on the line $\{F=0\}$ on the base space $\cB$,
has real positive derivative $\partial G/\partial H > 0$
on this line, except for the point
$O$ where $\partial G/\partial H = +\infty$.
\end{lemma}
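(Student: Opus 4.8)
The plan is to derive the statement directly from the asymptotic expansion in formula \eqref{eqn:AsymptoticG}, which already does most of the work. First I would fix the setup: on the line $\{F=0\}$ in $\cB$ near the focus point $O$, the smooth coordinate is $H$ (with $H=\varepsilon > 0$ parametrizing the nearby regular tori and $H=0$ corresponding to $O$), and $G$ restricted to this line is the single-valued action function whose value is computed by the period integral $\int_{\gamma_2}\alpha$. The content of \eqref{eqn:AsymptoticG}, together with the discussion immediately following it for the $k>1$ case, is that along $\{F=0\}$ one has $G(H) = \left(\sum_{j=1}^k f_j(H)\right) H\log(1/H) + l(H)$, where each $f_j$ is smooth with $f_j(0) > 0$ (in the one-pinch case there is a single term with $f_1 \equiv 1$), and $l$ is smooth; write $f(H) := \sum_j f_j(H)$, so $f(0) > 0$.

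Next I would simply differentiate. For $H > 0$ small,
\begin{equation*}
\frac{\partial G}{\partial H}(H) = f(H)\bigl(\log(1/H) - 1\bigr) + f'(H)\,H\log(1/H) + l'(H).
\end{equation*}
As $H \to 0^+$, the dominant term is $f(H)\log(1/H) \to +\infty$ because $f(0) > 0$; the term $f'(H)H\log(1/H) \to 0$ and $l'(H)$ stays bounded. Hence $\partial G/\partial H \to +\infty$ as $H\to 0^+$, which is the claimed behavior at $O$. Moreover, shrinking the neighborhood of $O$ if necessary, we can guarantee $f(H) > 0$ throughout and that $\log(1/H) - 1$ is large enough that the whole expression is strictly positive; this gives $\partial G/\partial H > 0$ on the punctured line, hence $G$ is strictly increasing in $H$ there. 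Combined with the continuous extension of $G$ to $O$ (already established in the text, since $\gamma_2$ tends to a fixed generator of $H_1$ of the focus fiber), strict monotonicity extends across $O$ itself.

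The only genuine subtlety — and the step I would treat most carefully — is the justification that the coefficient $f(0)$ of the $H\log(1/H)$ singularity is strictly positive when $k>1$, i.e. that the contributions of the several pinch points do not cancel. This is exactly the point flagged in the paragraph after \eqref{eqn:AsymptoticG}: at the $j$-th singular point one has canonical coordinates in which $F$ is the same but $H$ is replaced by $H^{(j)} = h^{(j)}(F,H)$ with $\partial h^{(j)}/\partial H \neq 0$, and crucially $\partial h^{(j)}/\partial H|_0 > 0$ because $F$ and the various $H^{(j)}$ all induce the \emph{same} co-orientation of the line $\{F=0\}$ (they are all compatible with the fixed $\bbT^1$-action generated by $F$ and with a consistent orientation of the base). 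Each such point then contributes a term of the form $f_j(H)\,H\log(1/H) + (\text{smooth})$ with $f_j(0) = (\partial h^{(j)}/\partial H|_0)>0$, so the sum $f(0) = \sum_j f_j(0) > 0$ with no cancellation. I would reference V\~u Ng\d{o}c \cite{VuNgoc2003}, Pelayo--Tang \cite{PeTa2018}, and Bolsinov--Izosimov \cite{BoIz2017} for the precise asymptotic bookkeeping at multi-pinch fibers; once positivity of $f(0)$ is in hand, the lemma follows by the elementary differentiation above.
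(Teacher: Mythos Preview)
Your proposal is correct and follows essentially the same approach as the paper: both derive the lemma directly from the asymptotic formula \eqref{eqn:AsymptoticG} and its multi-pinch extension, with the key input being that the coefficient of the $H\log(1/H)$ term is strictly positive. The paper simply states ``by summing up all these singular terms \ldots we have proved the following lemma'' and leaves the differentiation implicit, whereas you carry it out explicitly; your added justification for why each $f_j(0)>0$ (consistent co-orientation with the $\bbT^1$-action generated by $F$) is a useful clarification that the paper only asserts without argument.
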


 \subsection{Affine coordinates near focus points in higher dimensions} \hfill

 A singularity with 1 focus-focus component in a
 toric-focus integrable system with $n$ degrees of freedom ($n \geq 3$)
 may be viewed as a parametrized version of elementary focus-focus singularities.

In this case, topologically we have a direct product of an elementary
focus-focus singularity with a regular Lagrangian torus fibration
(i.e., the finite group $\Gamma$ in Theorem \ref{thm:Semilocal}
can be chosen to be trivial). 

On the base space $\cB$  near the focus point $O$,
we have a local smooth coordinate system
$(F, H, L_1,\hdots, L_{n-2})$
which consists of $n-1$ action functions
$L_1 = x_3, \hdots, L_{n-2} = x_n,$
$F = x_1y_2 - x_2y_1$, and one additional first integral
$H =x_1 y_1 + x_{2} y_{2}$, where $(x_1,y_2,\hdots, x_n,y_n)$
is a local canonical coordinate system on the symplectic manifold.

Some of the action functions $L_1,\hdots, L_{n-2}$ may eventually
correspond to the elliptic components of the singularity (if it also has
elliptic components). For our discussions, due to the normal forms, 
these elliptic singular components are harmless; 
they simply create boundary and corners for the base space $\cB$
and $\cB$ becomes a smooth manifold with boundary and corners. 

The set of focus points on $\cB$ near $O$, or equivalently, the
family of focus-focus fibers near the fiber corresponding to $O$,
is smooth $(n-2)$-dimensional and given by the equations $F=H=0$.
The functions $(L_1,\hdots, L_{n-2})$ form a smooth coordinate system
on this local submanifold of focus points on $\cB$.

Similarly to the case with 2 degrees of freedom, there is a multi-valued
action function $G$ in a neighborhood of $O$ in $\cB$. We can
choose, as in Subsection \ref{subsec_monodromy}, two branches
of $G$, denoted by $G_l$ and $G_r$,
by specifying the homotopy of the path from
a point $c\in \cB$, satisfying $F(c) < 0$, to
$y \in \cB \setminus \{F=H=0\}$ as follows. For $G_l$:
if $F(y) \leq 0$, then the path from $c$ to $y$ lies in the
region $\{F \leq 0\}$, whereas if $F(y) > 0$, then the path from
$c$ to $y$ cuts half-line $\{F=0, H \leq 0\}$ but does not cut
the half-line $\{F=0, H \geq 0\}$. For $G_r$:
if $F(y) \leq 0$, then the path from $c$ to $y$ lies in the
region $\{F \leq 0 \}$, whereas if $F(y) > 0$, then the path from
$c$ to $y$ cuts the half-line $\{F=0, H \geq 0\}$ but does not
cut the half-line $\{F=0, H \leq 0\}$.

The monodromy formula around the local codimension two submanifold
of focus points in $\cB$ can be expressed as a relation between the
two branches $G_l$ and $G_r$ of the multi-valued action function $G$,
just as in the case of an elementary focus-focus singularity:
\begin{equation*}
G_r = G_l + kF \; \text{when}\;  F > 0;
\quad G_r = G_l  \; \text{when}\;  F \leq 0.
\end{equation*}

On $\{F=0\}$, the multi-valued action function $G$ is, in fact,
continuous, single-valued, and has the same behavior with
respect to $H$ as in the case of an elementary focus-focus singularity:
$\partial G /\partial H > 0$ when $H \neq 0$, and
$\partial G /\partial H = + \infty$ when $H = 0$.

The local Marsden-Weinstein reduction with respect to the
$\mathbb{R}^{n-2}$-action generated by $(L_1,\hdots, L_{n-2})$
yields a $(n-2)$-dimensional family of elementary focus-focus
singularities, each for one level of $(L_1,\hdots, L_{n-2})$.
It means that we can slice a neighborhood of $O$ in $\cB$
by the functions $(L_1,\hdots, L_{n-2})$ into a $(n-2)$-dimensional
family of 2-dimensional base spaces containing each one a focus point.
The pair $(F,H)$ is a local smooth coordinate system
and the pair $(F,G)$ is a multi-valued system of action coordinates
on each such 2-dimensional base space. The restriction of $G$
to the local $(n-2)$-dimensional submanifold $\{F=H=0\}$ of focus points
in $\cB$ is not constant, in general, but it is a smooth function
in the variables $(L_1,\hdots,L_{n-2})$. The reason is that the family
of 1-cycles $\gamma_2$, as shown in Figure~\ref{fig:ActionFunction1},
consists of  a ``singular part'' $\gamma_2^{sing}$ which does not depend on
$(L_1,\hdots,L_{n-2})$ (with respect to a canonical
system of coordinates) and a ``regular part'' which depends smoothly on
$(L_1,\hdots,L_{n-2})$. We denote the restriction of $G$ to the
submanifold $\{F=H=0\}$ by $G^{critical}(L_1,\hdots,L_{n-2})$
and call it the (function of) \textbf{\textit{critical values}} of $G$.
(This function is only unique up to a constant; one can  put,
for example, $G(O) = 0$ to fix it).

\begin{figure}[!ht]
\vspace{-15pt}
\centering
 {\mbox{} \hspace*{0cm }
 \includegraphics[width=0.75 \textwidth]{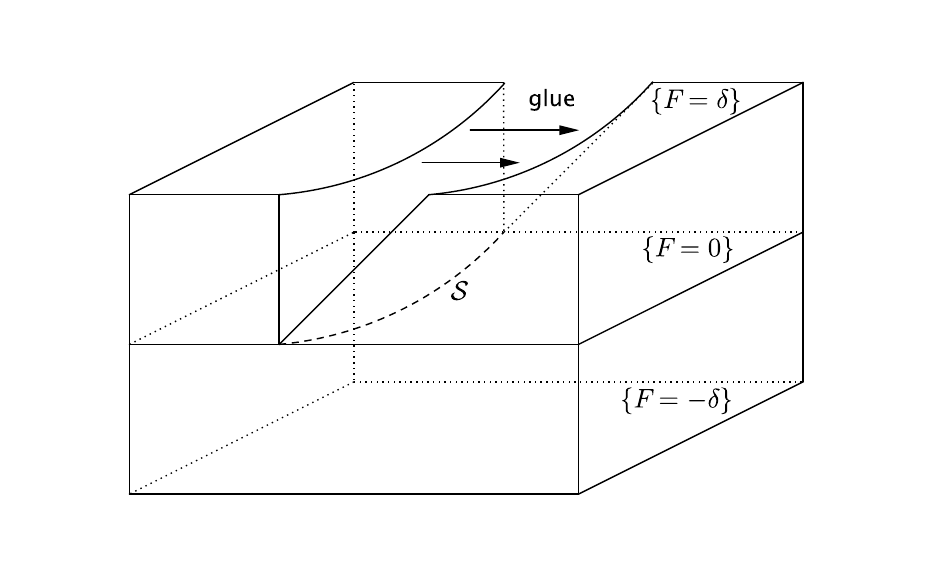}}

\vspace{-15pt}
\caption{Construction of a base space with focus points in higher
dimensions.}
  \label{fig:Focus3}
\end{figure}

To visualize the affine structure of $\cB$ near $O$, one can proceed as
follows. In a neighborhood of the origin in the Euclidean space
$\mathbb{R}^n$ with a local coordinate system $(F,G, L_1,\hdots, L_{n-2})$
(if there are elliptic singular components then just take a corner of this
space, e.g., $L_1, L_2 \geq 0$ if the number of elliptic components is
equal to 2), draw the ``critical'' $(n-2)$-dimensional submanifold
\begin{equation*}
\cS = \{F=0, \, G= G^{critical}(L_1,\hdots,L_{n-2})\}.
 \end{equation*}
Dig out a ``ditch''
\begin{equation*}
\cD = \{F \geq 0 , \, G^{critical}(L_1,\hdots,L_{n-2}) \leq G \leq
G^{critical}(L_1,\hdots,L_{n-2}) + kF\},
 \end{equation*}
which has $\cS$ as its ``sloping bottom'' and glue the two ``walls''
of the ditch together, slice by slice (glue the two edges of each
$\cD \cap \{L_1 = const., \hdots, L_{n-2} = const.\}$ in the same way
as in the case of a 2-dimensional base space). The result is the local
affine model for $\cB$; see Figure \ref{fig:Focus3} for the case $n=3$.
(See also \cite{Wacheux-Local2015}.)
Notice that our ditch is ``curved'': its base $\cS$
is a submanifold but not an affine submanifold, in general. However,
$\cS$ lies on the affine hypersurface $\{F = 0\}$.

 \subsection{Behavior of the affine structure  near $\text{focus}^m$ points} \hfill
\label{subsection:FocusMBehavior}

 Consider a singular point $O$ of Williamson type
 $\Bbbk = (k_e,0,k_f)$ with $2 \leq k_f = m \leq n/2$
 in the base space $\cB$ of a toric-focus  integrable Hamiltonian system
 with $n$ degrees of freedom.

For simplicity, we assume, for the moment, that the singularity
corresponding to $O$ of the Lagrangian fibration
is a topological direct product of elementary singularities, i.e.,
the finite group $\Gamma$ in Theorem \ref{thm:Semilocal}
is trivial.

Similarly to the situation with only one focus component treated in the
previous subsections, Theorems \ref{thm:LocalLinearization},
\ref{thm:OrbitNormal}, and \ref{thm:Semilocal} provide a local smooth
coordinate system $(F_1,H_1,\hdots, F_m,H_m, L_1,\hdots, L_{n-2m})$
and a set of affine real-valued functions \\ 
$(F_1,G_1,\hdots, F_m,G_m, L_1,\hdots, L_{n-2m})$ in a neighborhood
$\cU(O)$ of the focus$^m$ point $O$, with the following
properties:

\begin{itemize}

\item All coordinate functions  vanish at $O$:
\begin{equation*}
F_i (O) = H_i(O) = G_i(O) = L_j (O) = 0 \; \forall \; i=1,\hdots,m; j =1,\hdots, n-2m.
\end{equation*}

\item The set of singular points in $\cU(O)$
is the union of $m$ codimension two submanifolds
\begin{equation*}
\cS_i = \{F_i = H_i = 0\} ,\; i=1, \hdots, m.
\end{equation*}

\item
The fundamental group of the local set of regular points in $\cU(O)$ is
isomorphic to $\mathbb{Z}^m$:
\begin{equation*}
\pi_1 \left(\cU(O) \setminus (\cup_{i=1}^m \cS_i)\right)
\cong \mathbb{Z}^m.
\end{equation*}

\item The linear monodromy representation
 $\rho: \mathbb{Z}^m \to GL(n,\mathbb{Z})$ of $\pi_1 (\cU(O) \setminus (\cup_{i=1}^m \cS_i))$
for the regular integral  affine structure on
$\cU(O) \setminus (\cup_{i=1}^m \cS_i)$, or equivalently,
for the Lagrangian torus fibration over
$\pi_1 (\cU(O) \setminus (\cup_{i=1}^m \cS_i))$,
is generated by the matrices
\begin{equation*}
M_i = I_{2i-2} \oplus \begin{pmatrix}
1 & k_i \\ 0 & 1
\end{pmatrix}
\oplus I_{n-2i}, \quad \forall \, i =1, \hdots, m,
\end{equation*}
where $I_d$ means the identity matrix of size $d \times d$, and $k_i > 0$
is the index of the focus points on $\cS_i \setminus \cup_{j  \neq i} S_j$.

\item If the fiber over $O$ has no elliptic component, then
$\cU(O)$ together with the coordinate functions $(F_i,H_i, L_j)$
looks like an $n$-dimensional cube $]-a, a[^n$ in $\mathbb{R}^n$
for some $a > 0$. If there are $k_e > 0$ elliptic components then
the functions $L_{n-2m-k_e+1}, \hdots, L_{n-2m}$ admit only nonnegative
values, and $\cU(O)$ looks like $1/2^{k_e}$ part
 of a cube
\begin{equation*}
 ]-a, a[^{n-k_e} \times [0,a[^{k_e}.
\end{equation*}

\item For each $i = 1,\hdots , m$, the action function $G_i$ will change to
$G_i + k F_i$ if one goes one full circle around $S_i$ in an appropriate
direction. To be more precise, we can think of $G_i$ as having two branches
$(G_i)_l$ and $(G_i)_r$: $(G_i)_l$ is a smooth action function on
$\cU(O) \setminus \{F_i = 0, H_i \geq 0\}$ with a continuous extension to
the whole $\cU(O)$ but not smooth on $\{F_i = 0, H_i \geq 0\}$;
$(G_i)_r$ is a smooth action function on
$\cU(O) \setminus \{F_i = 0, H_i \geq 0\}$ with a continuous extension to
the whole $\cU(O)$ but not smooth on $\{F_i = 0, H_i \leq 0\}$. The monodromy can be
expressed in terms of the following relations:
\begin{equation*}
\begin{aligned}
(G_i)_r &= (G_i)_l + k_iF_i \; \text{when}\;  F_i > 0\\
(G_i)_r &= (G_i)_l  \; \text{when}\;  F_i \leq 0
\end{aligned}
\qquad \forall\ i =1, \hdots, m.
\end{equation*}

\item For each $i =1, \hdots, m$,  we have
\begin{equation*}
\frac{\partial G_i}{\partial H_i} > 0 \quad \text{is real positive on} \quad \{F_i =0\},
\end{equation*}
except on $S_i$ where  $\frac{\partial G_i}{\partial H_i}  = + \infty$.
\end{itemize}

\begin{definition}
The set of affine functions $(F_1,G_1,\hdots, F_m,G_m,L_1,\hdots,
L_{n-2m})$ defined as above is called a {\bfi multi-valued action
coordinate system} in a neighborhood $\cU(O)$ of the focus$^m$
point $O$.
\end{definition}

In the case when we have only a topological almost direct product, i.e.,
the finite group $\Gamma$ in Theorem \ref{thm:Semilocal} is non-trivial,
its action preserves every fiber of the singular Lagrangian fibration, and
so the base space of the quotient fibration by $\Gamma$ is exactly the
same as the base space of the direct product model. Thus $\cB$ is locally
still the same as in the direct product case. It still has a local smooth
coordinate system  $(F_1,H_1,\hdots, F_m,H_m, L_1,\hdots, L_{n-2m})$
consisting of $n-m$ action functions
$(F_1,\hdots, F_m, L_1,\hdots, L_{n-2m})$ and $m$ additional functions
$(H_1,\hdots, H_m)$; it still has $m$ complementary multi-valued
action functions $G_1,\hdots, G_m$, with the same formulas and
monodromy as above. The only difference is that, unlike the
direct product case, the $n$-tuple
$(dF_1,dG_1,\hdots, dF_m,dG_m, dL_1,\hdots, dL_{n-2m})$
\textit{does not}
generate the whole lattice of action 1-forms at a regular point near
$O$ in general, but only a sub-lattice of finite index.

 For example, consider the almost direct
product
\begin{equation*}
(\cF_1 \times \cF_2)/\mathbb{Z}_2
\end{equation*}
where $\cF_1$ and $\cF_2$ are elementary focus-focus singularities  of index $2$,
$\mathbb{Z}_2 = \mathbb{\mathbb{Z}}/2\mathbb{Z}$ acts freely on each of them
in such a way that $\cF_1/\mathbb{Z}_2$ and $\cF_2/\mathbb{Z}_2$ are elementary
focus-focus singularities  of index 2. For this almost direct product, the function
\begin{equation*}
(G_1 + G_2)/2
\end{equation*}
is a multi-valued action function, though such a function cannot be a multi-valued
action function in the direct product situation.
$(G_1 + G_2)$ is a multi-valued action function in the direct product situation.

\begin{remark}{\rm
V\~u Ng\d{o}c \cite{San-SemiglobalFF2003} and Wacheux
\cite{Wacheux-Asymptotics2015} obtained more refined asymptotic formulas for the
functions $G_i$ in the particular case when $k_i = 1\, \forall\, i$, using the
complex algorithm function. More precisely, they showed that in that case
Formula \eqref{eqn:AsymptoticG} becomes $G = \Re(- (H+ \sqrt{-1} F)\log(H+
\sqrt{-1}F)) + g(H,F)$ (where  $\Re$ means the real part, and $g$ is a smooth
function) (or a similar formula for higher corank), which holds also for $F \neq
0$, whereas Formula  \eqref{eqn:AsymptoticG} only holds for $F =0$. We will not
need these refined formulas in the present paper.
}
\end{remark}

\subsection{Definition of affine structures with focus points}  \hfill

Guided by the properties of the base spaces of toric-focus integrable
systems, we define the following notion of singular integral
affine structures with focus points on manifolds with or without
boundary and corners.

\begin{definition} \label{def:IntegralAffine1}
A (singular) \textbf{integral affine structure with focus points} on a smooth
manifold $\cB$ (possibly with boundary and corners) is a free Abelian
subsheaf $\cA_\mathbb{Z}$ of the sheaf of local smooth functions on $\cB$, 
called the sheaf of local \textbf{integral affine functions},
which satisfies the following properties:

{\rm(i)} $\cA$ contains all constant functions. We will denote by 
$\cA_\mathbb{Z}/\mathbb{R}$
the quotient of $\cA_\mathbb{Z}$ by the constant functions 
and call it the sheaf of local \textbf{integral affine 1-forms} on $\cB$

{\rm(ii)} For $x$ in a open dense set in $\cB$, called the set of 
\textbf{regular points}, $\cA_\mathbb{Z} (x)/\mathbb{R} \cong \mathbb{Z}^n$ 
(where $\cA_\mathbb{Z} (x)$ denotes the stalk
of $\cA_\mathbb{Z}$ at $x$), and a basis of $\cA_\mathbb{Z} (x)$ forms a 
local coordinate system on $\cB$, called a \textbf{local integral affine 
coordinate system}.

{\rm(iii)} If $x \in \cB$ is singular (i.e., non-regular), then 
$\rk \cA_\mathbb{Z} (x) < n$. The set $\cF_m = \{ x \in B\  |\  
\rk \cA_\mathbb{Z} (x) = n-m\}$ is a smooth submanifold
of codimension $2\cok$ in $\cB$ (whose intersection with the boundary and 
corners of $\cB$ is transversal).  Each point $x \in \cF_m$ is called a 
\textbf{singular focus point of corank $m$}, or  \textbf{of type focus 
power $m$}, or an \textbf{$F^m$-point} for short. 

{\rm(iv)} For every point $x \in \cF_m$, there is a small neighborhood 
$V$ of $x$ and $\cok$ codimension two submanifolds $S_1,\hdots S_m$,
with transversal intersections, such that $\cF_k \cap V = \cap_{i=1}^m S_i$.
Moreover, there is a family of functions 
$$
(F_1, (G^l)_l, (G^1)_r, \hdots,  F_m, (G_m)_l, (G_m)_r, L_1,\hdots, L_{n-2m})$$
in $V$, called a \textbf{multi-valued system of integral affine coordinates} near $x$, 
which satisfies the following conditions:

\begin{itemize}
\item $F_1,\hdots, F_k, L_1, \hdots, L_{n-2m} \in \cA_\mathbb{Z}(V)$ and 
together with some other smooth functions $H_1,\hdots H_m$ form a smooth 
coordinate system of $V$.

\item $\cS_i = \{F_i = H_i = 0\}$ for every $i = 1,\hdots, m$.

\item $(G_i)_l$ is continuous on $V$ and $(G_i)_l |_{V \setminus \{F_i = 0, H_i \geq 0\}} 
\in  \cA_\mathbb{Z}(V \setminus \{F_i = 0, H_i \geq 0\})$. 

\item
$(G_i)_r$ is continuous on $V$ and $(G_i)_r |_{V \setminus \{F_i = 0, H_i \leq 0\}} \in  
\cA_\mathbb{Z}(V \setminus \{F_i = 0, H_i \leq 0\}).$ 

\item $(G_i)_r = (G_i)_l$ when $F_i \leq 0$ and 
$(G_i)_r = (G_i)_l + k_iF_i$
when $F_i \geq 0$, for some positive constant $k_i \in \mathbb{Z}_+$. 
Moreover, $\left(\partial (G_i)_l/\partial H_i\right) (y)  = 
\left(\partial (G_i)_r/\partial H_i\right) (y) > 0$
with respect to the coordinate system $(F_1, H_1, \hdots, F_m,H_m, 
L_1,\hdots, L_{n-2m})$  
for every $y \notin \cS_i$.
\end{itemize}
\end{definition}

Of course, if  $\cB$ is the base space of a toric-focus integrable Hamiltonian
system, then the sheaf of local action function on $\cB$ is an integral affine
structure with focus singularities. Local integral affine functions on $\cB$ are
those functions whose pull back on $M^{2n}$ give rise to Hamiltonian vector
fields whose time-one flow is the identity map, i.e., they are generators of
$\mathbb{T}^1$-Hamiltonian actions whose images preserve the system. We do not
consider the points of $\cB$ corresponding to elliptic singularities of the
system as singular points of $\cB$, but rather as regular points lying on the
boundary and corners.

We also define {\bfi affine structures with focus points} on a manifold $\cB$
(without the adjective \textit{integral}) by simply removing all the words
``integral'' and ``$\mathbb{Z}$'' from Definition \ref{def:IntegralAffine1}, and
by considering the sheaf $\cA$ of all local affine functions (instead of just
integral affine functions). At a regular point, the stalk of $\cA/\mathbb{R}$ is
isomorphic to $\mathbb{R}^n$ instead of $\mathbb{Z}^n$.  In the monodromy
formulas $(G_i)_r = (G_i)_l + k_iF_i$, the numbers $k_i$ are still positive but
are not required to be integers. 

Of course, an integral affine structure with focus point is a special case of
affine structures with focus points: in that case, the relation between $\cA$
and $\cA_\mathbb{Z}$ is $\cA = \cA_\mathbb{Z} \otimes \mathbb{R}$.

There are many ways to modify the above definitions to extend them to more general
situations by weakening or changing some conditions. For example, one may require 
$\cB$ to be a more general differential space instead of a manifold, and/or define
the affine structure just outside a subset of $\cB$, etc. 
See, for example, \cite{CaMa_Lagrangian2009,GrSi-AffineComplex2011}.
However, we will stick with the above definition, 
because it comes from toric-focus integrable systems and is convenient for our study of convexity. 

From now on, when we talk about a singular affine manifold $\cB$, we always mean
an affine manifold with focus singularities in the sense of the above definitions.

\begin{remark}
{\rm
The direct product of two manifolds $\cB_1$ and $\cB_2$ with singular affine
structures is of course again a manifold with a singular affine structure
which is the "direct product" of the two affine structures in question, but
there are in general (infinitely) many non-isomorphic singular affine structures on $\cB_1 \times \cB_2$ with the same singular set and indices as this product affine structure.
}
\end{remark}

\section{Straight lines and convexity}
\label{sec_straight_lines_convexity}

\subsection{Regular and singular straight lines} \hfill

\emph{Affine lines}, a.k.a. \emph{straight lines}, in the regular part of a singular affine manifold 
$\cB$ can be defined in an obvious way:  a curve $\gamma$  lying entirely
in the regular part of a singular affine manifold $\cB$ is called an
{\bfi affine line} or {\bfi straight line}, if it is locally affine in every local affine chart of the
regular part of $\cB$. In other words, for any two local affine functions there is a non-trivial
affine combination of them which vanishes on $\gamma$. Here, 
by a (parametrized) curve in $\cB$, we mean a continuous map from an interval
$I \subset \mathbb{R}$ (which can be bounded or unbounded, 
with or without end points) to $\cB$.

Any non-trivial regular straight line can be naturally (affinely)
parametrized by local affine functions: the parametrization is unique
up to transformations of the type $t \mapsto a t +b$ ($a$ and $b$ are constants, $a \neq 0$).

We also want to study straight lines which contain singular points of
$\cB$.  They will be called \textit{singular straight lines}, or
\textit{singular affine lines}.
We can define them  by using limits of regular straight lines.

\begin{definition}
A non-constant parametrized continuous curve $\gamma: I \to \cB$ in a singular affine manifold
 $\cB$ ($I$ is an interval in $\mathbb{R}$), which contains singular points of $\cB$, is called a
\textbf{singular straight line} or \textbf{singular affine line}  if
for every point $t_0$ in the interior of $I$ there exists a subinterval neighborhood  $[s,t] \subset I$
of $t_0$ in $I$, and
a sequence of affinely parametrized regular straight lines $\gamma_n: [s,t] \to \cB$
which converges to $\gamma|_{[s,t]}$ (in the standard compact-open topology).
\end{definition}

Of course, the regular part of a singular straight line $\gamma$ (i.e.,
$\gamma$ minus singular points of $\cB$), if not empty, consists of
regular straight lines, because in the regular region, the limit of a
sequence of straight lines is again a straight line.

\subsection{Singular straight lines in  dimension 2 and branched extension} \hfill

\label{subsection:StraightFFa}

We will see that, unlike regular affine lines, singular affine lines exhibit
unusual behavior at singular points. To understand their nature, let us first
consider the simplest case of a single focus point $O$ in a 2-dimensional
singular affine manifold $\cB^2$.

Let $F,G$ be a multi-valued affine coordinate system near $O$ in $\cB^2$:
$F(O) = G(O) = 0$, $F$ is single valued, $G$ is multivalued.  $G$ has two
branches $G_l$ and $G_r$ (extensions of an affine function $G$ from the
region $\{F \leq 0 \}$ to the region $\{F > 0\}$,
from the left or from the right of the singular point $O$), with the
monodromy formula
 \begin{equation*}
G_r = G_l + kF\quad \text{when}\quad F > 0\quad \text{and}\quad
 G_r = G_l\quad \text{when}
\quad F \leq 0,
 \end{equation*}
where $k > 0$ is the index of the focus point $O$.

Let $\gamma: I \to \cB^2$ be a parametrized singular straight line in
$\cB^2$, with $\gamma(t_0) = O$ for some $t_0 \in I$. Let $\gamma_n: I \to \cB^2$ be parametrized regular
straight lines in $\cB^2$ which converge to $\gamma$ when $n\to \infty$.
Since $F$ is single-valued and is affine on every $\gamma_n$ with
respect to the parameter $t \in I$, it follows that $F$ is also affine
on $\gamma$ with respect with to the parameter $t \in I$. Moreover,
$F(t_0) = F(O) = 0$ on $\gamma$. We distinguish two possible situations.

1) $F$ is identically zero on $\gamma$, i.e., $\gamma(I)  \subset
\{F = 0\}$. On  $\{F = 0\}$ the function $G$ is single-valued and becomes
an affine parametrization for $\gamma$, i.e., $G$ is affine with respect
to $t$ on $\gamma$. The curves $\{F = c\}$ (where $c \neq 0$ is a
constant) are straight lines affinely parametrized by $G$ (or $G_l$
or $G_r$ when $c  > 0$) and they converge to the singular affine curve
$\{F = 0\}$ (also affinely parametrized by $G$) when $c$ tends to $0$.

2) $F$ is not identically on  punctured open neighborhood of $t_0$. Then we can take $F$ as an
affine (re)parametrization of $\gamma$. Let $\gamma_n$ be a family of
straight lines which converges to $\gamma$ when $n$ goes to infinity.
Without losing generality, we may assume that $F(\gamma_n(t)) =
F(\gamma(t))$ for every $n \in \mathbb{N}$ and every $t \in I$.
Put $X_n = \gamma_n(t_0)$. Then $X_n \neq O$, but the
sequence of points $(X_n)$ lies on the straight line $\{F = 0\}$ and
converges to the singular point $O$ when $n \to \infty$. Put
$c_n = G(X_n)$, then $c_n \neq 0$ and $c_n
\stackrel{n \to \infty}{\longrightarrow} 0$ (remind that $G$ is well-defined on the submanifold $\{ F=0 \}$. By taking a subsequence
of $(\gamma_n)$, if necessary, we may assume that either $c_n < 0$
for all $n$ or $c_n > 0$ for all $n$.

\begin{figure}[!ht]
\vspace{-15pt}
\centering
 {\mbox{} \hspace*{-1.8cm }\includegraphics[width=1.2 \textwidth]{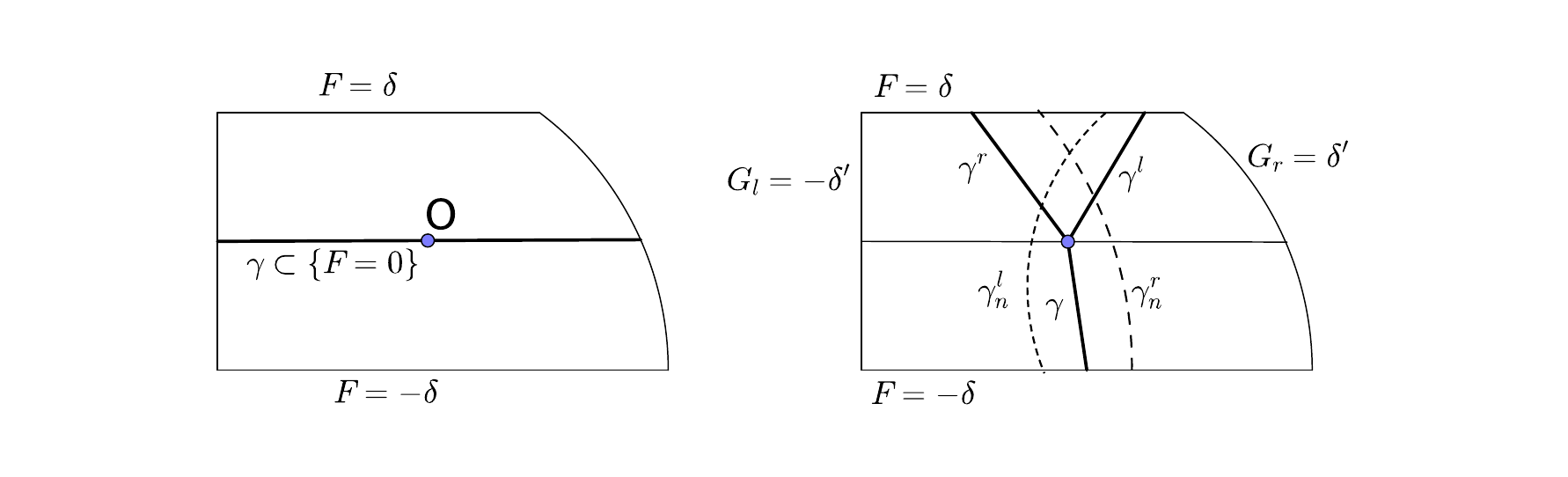}}

\vspace{-15pt}
  \caption{Extension of a straight line through a focus point.}
\end{figure}

If $c_n < 0$ for all $n$, then we say that the straight lines
$(\gamma_n)$ tend to $\gamma$ {\bfi from the left}, and if  $c_n > 0$
for all $n$, then we say that the straight lines $(\gamma_n)$ tend
to $\gamma$ {\bfi from the right} (with respect to the singular
multi-valued affine coordinate system $(F,G)$).

Assume now that $t_0$ lies in the interior of the interval $I$, and cut $I$
by $t_0$ into two parts: $I_1 = I \cap\, ]-\infty, t_0]$ and
$I_1 = I \cap [t_0, +\infty[$. We will say that $\gamma|_{I_1}$ is a
straight line coming from the region $\{F < 0\}$ which {\bfi hits $O$},
and $\gamma|_{I_2}$ is a {\bfi straight extension of $\gamma|_{I_1}$
after hitting $O$}.

 Our main observation in this subsection is the following:

\begin{proposition}
\label{prop:extension1}
In a neighborhood of a focus point $O$, every straight line hitting
$O$ and not lying on $\{F = 0\}$ has exactly two different straight
extensions after hitting $O$.
\end{proposition}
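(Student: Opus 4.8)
The plan is to work in a neighbourhood of $O$ with a multi-valued affine coordinate system $(F,G)$ as above, and to determine all extensions by combining the rigidity of the single-valued function $F$ with a direct comparison of the two branches $G_l,G_r$. Throughout I would take, without loss of generality, $\gamma|_{I_1}\subset\{F\le 0\}$; the case $\gamma|_{I_1}\subset\{F\ge 0\}$ is entirely analogous, with the roles of $\{F<0\}$ and the monodromy side interchanged. First, since $F$ is single-valued and affine with respect to the affine parameter of each approximating regular line, passing to the limit shows that $F$ is affine in the parameter along every singular straight line. Because $\gamma$ does not lie on $\{F=0\}$, $F$ is not locally constant on $\gamma$; as $F(O)=0$ and $F<0$ just before $t_0$, it is $F>0$ just after $t_0$. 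On the half-space $\{F\le 0\}$ the pair $(F,G)$ is a genuine single-valued affine chart, so $\gamma|_{I_1}=\{G=aF:F\le 0\}$ for a unique slope $a\in\mathbb{R}$ (using $F(O)=G(O)=0$). Hence any straight extension $\gamma|_{I_2}$ lies in $\{F\ge 0\}$ near $O$, meets $\{F=0\}$ only at $O$, and over $\{F>0\}$ is a regular straight line; in the genuine affine chart $(F,G_l)$ on $\cU(O)\cap\{F>0\}$ it is therefore $\{G_l=cF:F\ge 0\}$ for some $c$, the constant term vanishing because $G_l$ is continuous with $G_l(O)=0$ and $F(O)=0$.

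Next I would establish existence of two extensions by exhibiting $\ell_l:=\{G_l=aF:F\ge 0\}$ and $\ell_r:=\{G_r=aF:F\ge 0\}$. For $\ell_l$, the sets $\gamma_n:=\{G_l=aF-\tfrac1n\}$ are regular straight lines (level sets of the affine function $G_l-aF$), avoid $O$, cross $\{F=0\}$ at $G=-\tfrac1n<0$ and so remain in the open set $\cB\setminus\{F=0,\,G\ge 0\}$ where $G_l$ is a genuine affine coordinate; parametrised affinely by $F$ they converge uniformly on a fixed interval to $\gamma|_{I_1}\cup\ell_l$, so this union is a singular straight line and $\ell_l$ is a straight extension. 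For $\ell_r$ one uses $\gamma_n:=\{G_r=aF+\tfrac1n\}$, which cross $\{F=0\}$ at $G=\tfrac1n>0$ and stay where $G_r$ is a genuine affine coordinate; the monodromy relation $G_r=G_l+kF$ on $\{F>0\}$ rewrites $\ell_r$ as $\{G_l=(a-k)F:F\ge 0\}$. Since $k>0$, the slopes $a$ and $a-k$ are different, hence $\ell_l\ne\ell_r$.

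Finally I would show these are the only two. Let $\gamma=\gamma|_{I_1}\cup\ell$ be any singular straight line with $\ell=\{G_l=cF:F\ge 0\}$ as above, and choose affinely parametrised regular lines $\gamma_n\to\gamma$. Over $\{F\le 0\}$ one gets $\gamma_n\cap\{F\le 0\}=\{G=\alpha_nF+\beta_n\}$ with $\alpha_n\to a$ and $\beta_n\to 0$, and $\beta_n\ne 0$ because $\gamma_n$ avoids $O=\{F=0,\,G=0\}$. After passing to a subsequence $\beta_n$ has constant sign. If $\beta_n<0$, the point $\gamma_n\cap\{F=0\}$ lies where $G_l$ is a genuine affine coordinate, so $\gamma_n=\{G_l=\alpha_nF+\beta_n\}$ near $O$ and its part over $\{F>0\}$ converges to $\ell_l$, forcing $c=a$ and $\ell=\ell_l$. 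If $\beta_n>0$, symmetrically $\gamma_n=\{G_r=\alpha_nF+\beta_n\}$ near $O$, and $G_r=G_l+kF$ shows its part over $\{F>0\}$ converges to $\ell_r$, forcing $c=a-k$ and $\ell=\ell_r$. Combined with the previous paragraph, this gives exactly two distinct extensions, which is the assertion.

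The step I expect to be the main obstacle is the claim, used in the last two paragraphs, that a regular straight line meeting $\{F=0\}$ slightly below (resp.\ above) $O$ is, near $O$, the level set of a single branch $G_l$ (resp.\ $G_r$) of the action function --- equivalently, that it cannot wind around $O$. This has to be extracted from the explicit normal form of the focus singularity (the cut-and-glue model of Figure~\ref{fig:Focus2}, equivalently the branch data recorded in Definition~\ref{def:IntegralAffine1}) together with the monotonicity of $F$ along $\gamma_n$ coming from the first paragraph: monotonicity of the single-valued function $F$ both selects the correct branch and prevents the line from turning back into $\{F<0\}$. Once this is settled, everything else reduces to bookkeeping with the relation $G_r=G_l+kF$.
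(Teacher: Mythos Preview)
Your proof is correct and follows essentially the same approach as the paper's: both parametrise the incoming ray by $F$ with slope $a$, construct the two extensions $\gamma^l=\{G_l=aF\}$ and $\gamma^r=\{G_r=aF\}$ as limits of regular lines shifted to the left and right of $O$, and then argue uniqueness by passing to a subsequence on the sign of the crossing value $\beta_n$ on $\{F=0\}$. The ``main obstacle'' you flag is not really an obstacle: since $F$ is single-valued affine and non-constant along each $\gamma_n$, it is strictly monotone, so $\gamma_n$ meets $\{F=0\}$ exactly once and cannot wind around $O$; the sign of $\beta_n$ then determines which branch $G_l$ or $G_r$ is smooth along all of $\gamma_n$, exactly as you say.
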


\begin{remark}{\rm
The straight line has only a unique straight extension if it lies
on $\{F = 0\}$}.
\end{remark}

\begin{proof}
Indeed, without loss of generality,  working locally,
we may assume for simplicity that $I =  [- \delta , \delta]$ for some
small $\delta > 0$, $t_0 =0$, $F(\gamma(t)) = t$ and $G(\gamma(t)) = at$
for some constant $a \in \mathbb{R}$ on $\gamma|_{I_1}$. We can define
straight lines $\gamma^l_n: I \to \cB^2$ and $\gamma^r_n: I \to \cB^2$
by $F(\gamma^l_n (t)) = t, G_l(\gamma^l_n (t)) = at - \varepsilon_n$ and
$F(\gamma^r_n (t)) = t, G_r(\gamma^r_n (t)) = at + \varepsilon_n$, where
$\varepsilon_n$ are small positive numbers which tend to $0$ when $n$
goes to infinity.

Then $(\gamma^l_n)$ tend to a singular straight line $\gamma^l$ defined by
$$
F(\gamma^l_n (t)) = t, \quad G_l(\gamma^l (t)) = at
$$
(for every $t \in I = [- \delta , \delta]$), while $(\gamma^r_n)$
tend to a singular straight line $\gamma^r$ defined by
$$
F(\gamma^l_n (t)) = t, \quad G_r(\gamma^l (t)) = at.
$$

Recall that when $F \leq 0$, then $G_l = G_r = G$ are the same function,
but when $F > 0$, then $G_l$ and $G_r$ are two different functions
(two branches of $G$) related by the monodromy formula $G_r = G_l + kF$.
Because of that, $\gamma^l |_{I_1} = \gamma^r |_{I_1} = \gamma$, where
$\gamma: I_1 \to \cB$ is a straight line that hits $O$, but
$\gamma^l |_{I_2} \neq \gamma^r |_{I_2}$.

It is easy to see that any sequence of regular straight lines converging to
a singular straight line which contains $O$ and which does not lie on 
$\{F=0\}$ must contain a subsequence such that either all of its elements "lie
on the left of $O$" or all of its elements "lie on the right of $O$"; in the "left"
case the limit will be $\gamma^l$ and in the right case the limit will be 
$\gamma^r$. So it means that we have exactly two affine extensions
$\gamma^l$ and $\gamma^r$. 
\end{proof}

We will say that $\gamma^l$ is the (local) {\bfi extension from the left}
of $\gamma$ after hitting $O$, while  $\gamma^r$ is the (local)
{\bfi extension from the right} of $\gamma$; together they form the
{\bfi branched, double-valued extension} of $\gamma$.

The difference between $\gamma^l$ and $\gamma^r$ in terms of $G$ is as
follows:
$$
G_l(\gamma^l (t)) - G_l(\gamma^r(t)) =
G_r(\gamma^l (t)) - G_r(\gamma^r(t)) = k F(t),
$$
when $F(t) > 0$. The difference is 0 when $F(t) \leq 0$. Geometrically,
it means that \textit{the extension from the left lies on the right of
the extension from the right}.

\subsection{Straight lines in dimension $n$ near a focus point} \hfill

\label{subsection:StraightFFb}

Let $O$ be a $F^1$ singular point (i.e., with just one focus
component) in a singular $n$-dimensional affine manifold $\cB$,
with $n \geq 3$. ($O$ can lie on the boundary of $\cB$.) Near $O$,
we have a local  multi-valued affine coordinate system
$(G,F, L_1,\hdots,L_{n-2})$, where $F, L_1,\hdots,L_{n-2}$ are
single valued affine functions, $F$ is the ``angular momentum'' for
the focus-focus singularities,
$F(O) = 0$, $G$ is single valued when $F \leq 0$ and admits a double-valued affine extension, denoted by
$G_l$ and $G_r$, to the region $F > 0$, which are related to each other by the same formula as in the 2-dimensional case:
 \begin{equation*}
 G_r = G_l + kF\;
\text{when}\; F > 0, \quad G_l=G_r =G\; \text{when}\; F \leq 0,
 \end{equation*}
where $k$ is some positive constant.

The set of all singular points of $\cB$ near $O$ is a local
$(n-2)$-dimensional hypersurface $\cS$ containing $O$, lying in the local
affine hyperplane  $\{F = 0\}$ of $\cB$, on which $(L_1,\hdots,L_{n-2})$
is a regular local coordinate system.  We  consider
$G_{critical} = G |_\cS$ as a function of $(L_1,\hdots,L_{n-2})$,
the {\bfi function of critical values} of $G$ near $O$.
(This function is smooth but not constant in general.)
$G_l$ (respectively, $G_r$)  is obtained from $G$ by affine extension
from $F \leq 0$ to $F > 0$ via the paths ``on the left'' (respectively,
``on the right'') of the critical set $\cS$, i.e., paths which cut the
subspace $\{F = 0\}$ at points whose value of $G$ is less than
(respectively, greater than) the critical value of $G$ for the same
level of $(L_1, \hdots, L_{n-2})$.

Similarly to the 2-dimensional situation, for a local singular
parametrized straight line $\gamma$ in dimension $n$
which contains $O$, we can distinguish two cases (see Figure \ref{fig_2cases}).

\underline{Case 1}. $F$ is identically zero on the line, i.e., the
straight line lies on the hypersurface $\{F = 0\}$. On this hypersurface
the function $G$ is single valued and, together with
$(L_1,\hdots,L_{n-2})$, form a local single-valued affine coordinate
system on $\{F = 0\}$. In other words, if we restrict our attention to
$\{F = 0\}$, then we can forget about the singular points; $\{F = 0\}$
admits a regular affine structure compatible with the singular affine
structure on $\cB$, and (singular) affine straight lines on  $\{F = 0\}$
are simply straight lines with respect to the regular affine structure
on it. Such a straight line can cut the singular set $S \subset \{F = 0\}$
at one, or many, or an infinite number of points, but there is no
branching. Such a (local) straight line  can, of course, be constructed
as a limit of (local) regular straight lines in $\cB$:
to generate a regular straight line $\gamma_c$, where $c \neq 0$ is some
small constant, just keep the same values of $G, L_1,\hdots,L_{n-2}$
but change the value of $F$ from $0$ to $c$. Then
$\displaystyle \lim_{c \to 0} \gamma_c = \gamma$ is a singular straight
line.

\begin{figure}[!ht]
\centering
{\mbox{} \hspace*{-0cm }\includegraphics[width=1 \textwidth]{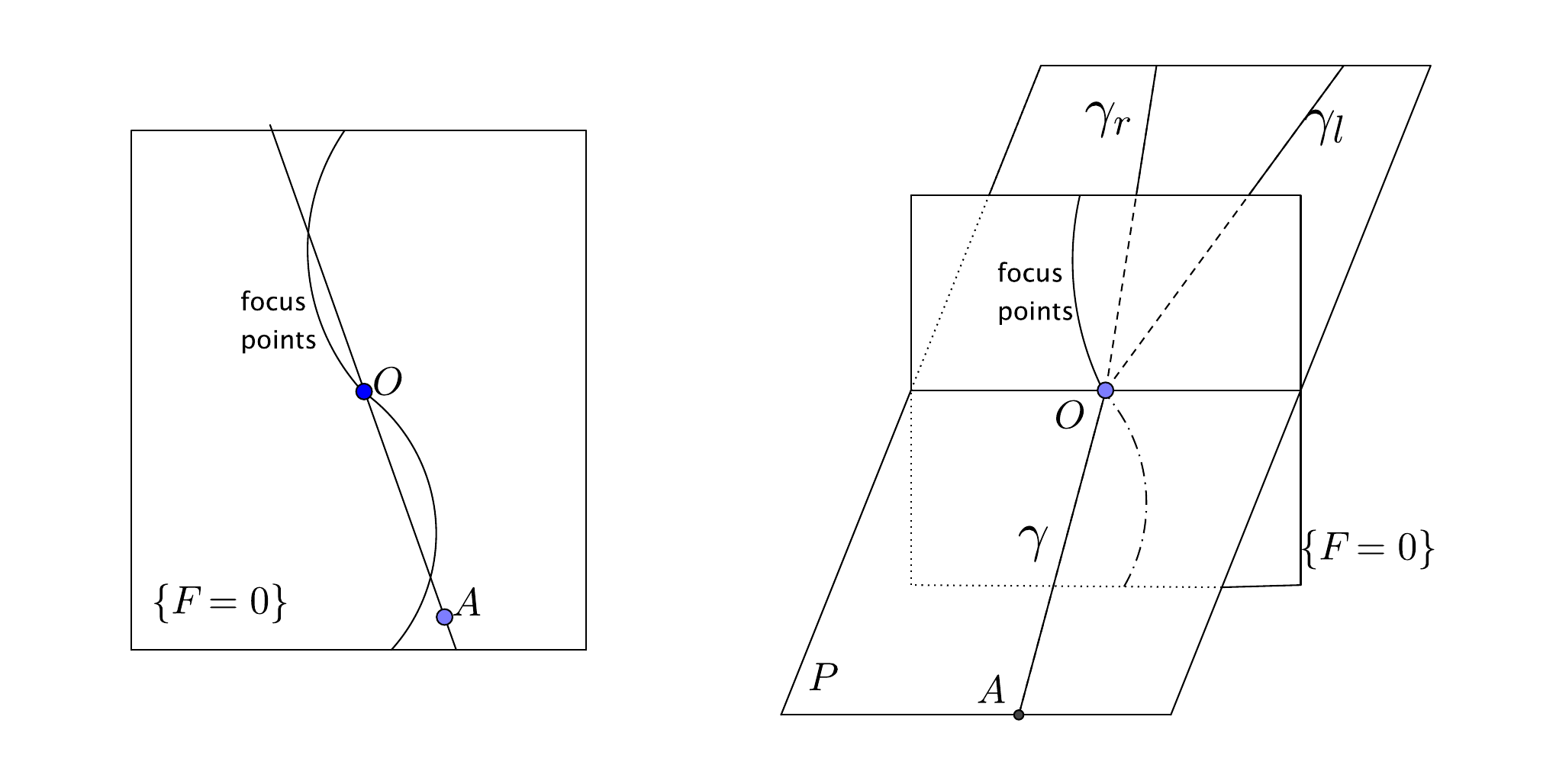}}

\vspace{-15pt}
\caption{The two cases of straight lines going through focus points.}
\label{fig_2cases}
\end{figure}

\underline{Case 2}. $F$ is not identically zero on the line. Then we can
assume that $\gamma$ is  parametrized by $F$, i.e.,
$F(\gamma(t)) = t$ for every $t \in I = [-\delta,\delta]$. By passing
affine equations from regular straight line to our singular
straight line  via the limit, we see that for each $j =1,2,\hdots n-2$,
there exists  two real numbers $a_j, b_j$ such that
 \begin{equation*}
a_jF + L_j = b_j
 \end{equation*}
identically on $\gamma$.
Define the following local 2-dimensional subspace
$P$ of $\cB$:
 \begin{equation*}
 P = \{x \in \cB \; |  \; a_jF(x) + L_j(x) = b_j, \;
 \forall\,
 j = 1,\hdots,n-2 \}.
 \end{equation*}
$P$ is given by $n-2$ independent linear equations and it intersects
the  $(n-2)$-submanifold $\cS$ of focus points transversally
at $O$. $P$ inherits from $\cB$ the structure of a local affine manifold
which contains $O$ as the only singular focus point.
The singular straight line $\gamma$ lies in $P$. By doing this reduction,
we fall back to the 2-dimensional case with a focus singular point.
So, similarly to the 2-dimensional case, $\gamma |_{[-\delta,0]}$ is a
straight line which hits a singular point $O$ and which admits
exactly two different straight line extensions $\gamma^l$ and $\gamma^r$
after hitting $O$. The difference between
$\gamma^l$ and $\gamma^r$ in terms of the local multi-valued affine
function $G$ is the same as in the 2-dimensional case:
 \begin{equation*}
 G_l(\gamma^l (t)) - G_l(\gamma^r(t)) =
 G_r(\gamma^l (t)) - G_r(\gamma^r(t)) = k F(t)
 \end{equation*}
when $F(t) > 0$. The difference is 0 when $F(t) \leq 0$.

\subsection{Straight lines  near a  $\text{focus}^m$ point} \hfill

The case when a singular straight line $\gamma: [-\delta, \delta] \to \cB$
contains a $F^m$ point $O$ of type (focus power $m$)
is similar.

We have a local multi-valued affine coordinate system
 \begin{equation*}
 (F_1,G_1,\hdots, F_m,G_m, L_1,\hdots,L_{n-2m})
 \end{equation*}
centered at $O$ ($F_i(O) = G_i(O) = L_j(O) = 0$),   such
that $F_1, \hdots, F_m,L_1,\hdots, L_{n-2m}$ are single-valued,
$G_i$ is single-valued when $F_i \leq 0$ and is double-valued
with branches $(G_i)_l$ and $(G_i)_r$ when $F_i > 0$ for each
$i =1,2,\hdots,m$. The relation between $(G_i)_l$ and $(G_i)_r$ is
$(G_i)_r = (G_i)_l + k_iF_i$  when $F_i > 0$ for some positive
constants $k_i >0$. (These numbers $k_i$ are
the monodromy indices of the singular point $O$.)
We also have a local smooth coordinate system
 \begin{equation*}
 (F_1,H_1,\hdots, F_m,H_m, L_1,\hdots,L_{n-2m}).
 \end{equation*}
 The local singular set on $\cB$ is $\cup_{i=1}^m \cS_i$ where $\cS_i =\{F_i=H_i=0\}.$

In the generic case, when none of the functions $F_1,\hdots, F_m$ is
identically zero on $\gamma$,  then $\gamma$ intersects the set of
singular points of $\cB$ only at $O$, and the straight line
$\gamma |_{[-\delta,0]}$ admits exactly $2^m$ different straight
line extensions after hitting $O$ (see Figure \ref{fig4branches}).

\begin{figure}[!ht]
\vspace{-15pt}
\centering
 {\mbox{} \hspace*{-1cm}\includegraphics[width=1.1 \textwidth]{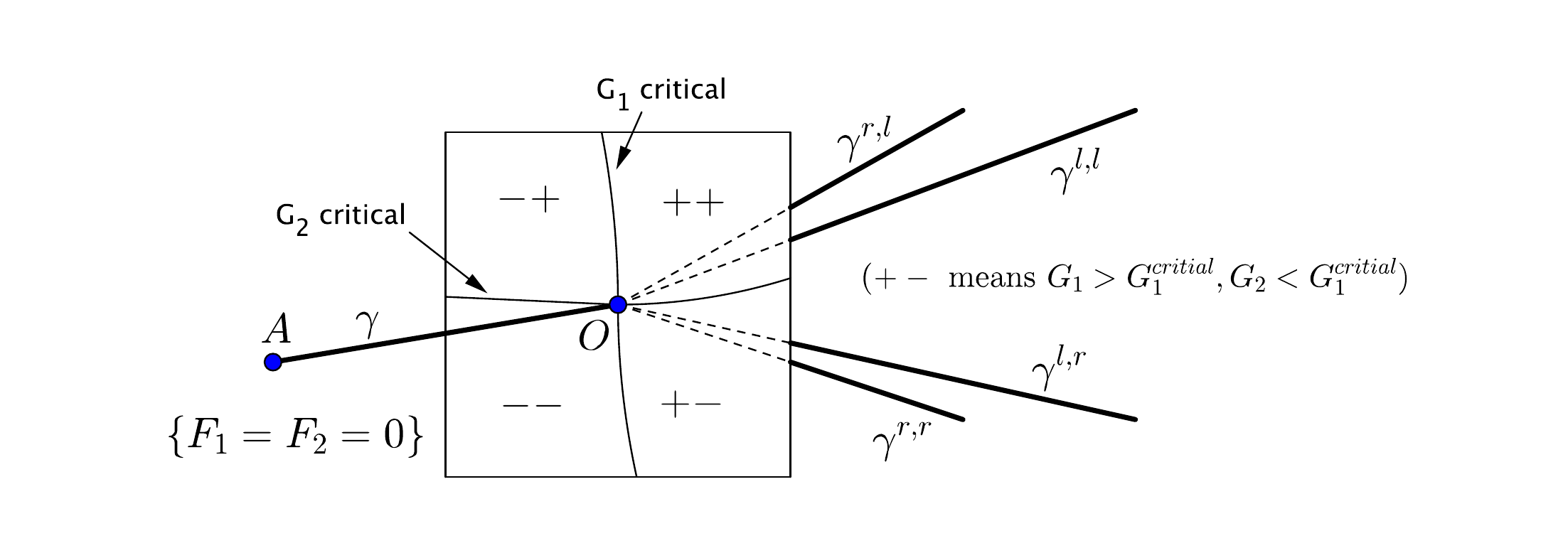}}

\vspace{-15pt}
  \caption{Four branches of straight extension through a $\text{focus}^2$ point $O$.}
  \label{fig4branches}
\end{figure}

Without loss of generality, we may assume that $\gamma(-\delta) = A$
is a point such that $F_1(A) < 0, \hdots, F_m(A) < 0$. For each
multi-index ${\bf d} = (d_1,\hdots,d_m)$, where each $d_i$ is either
$l$ (left, minus) or $r$ (right, plus), and each small positive number $c$,
we can define a regular straight line
$\gamma_{c, {\bf d}}: [-\delta,\delta] \to \cB$ near $O$
which satisfy the following conditions for every $t  \in [-\delta,0]$:
 \begin{equation*}
L_i (\gamma_{c, {\bf d}}(t)) =  L_i(\gamma(t)), \quad \forall \
i=1,\hdots, n-2m
\end{equation*}
and
\begin{equation*}
F_i (\gamma_{c, {\bf d}}(t)) =  F_i(\gamma(t)),\quad
G^{d_i}_i (\gamma_{c, {\bf d}}(t)) =  G_i(\gamma(t)) + G_i(p_{c, {\bf d}}),
\quad \forall \ i=1,\hdots, m,
 \end{equation*}
where $p_{c, {\bf d}}$ is the point with coordinates $F_1 (p_{c, {\bf d}}) =\hdots=F_m (p_{c, {\bf d}}) =
L_1 (p_{c, {\bf d}})= \hdots = L_{n-2m} (p_{c, {\bf d}})= 0$,
$H_i (p_{c, {\bf d}})= -c$  if $d_i$ is $l$ and $H_i (p_{c, {\bf d}}) = c$
if $d_i$ is $r$. Taking the limit of $\gamma_{{\bf c}, {\bf d}}$ when
$ c$ tends to $0$, we  get
$2^m$ branches $\gamma_{\bf d}$ of straight extensions of $\gamma$, one
for each $\bf d$.

If, among the values $F_1(A), \hdots F_m(A)$, there are only $s$ values
different from 0, and the other $m-s$ values are equal to 0
$(0 \leq s < m)$, then we only have $2^s$ branches of straight extensions
of $\gamma$ instead of $2^m$ branches.
In particular, if $F_1(A) = \cdots = F_m(A) = 0$, then $\gamma$ lies
entirely on the local  $(n-m)$-dimensional  subspace
$\{F_1(x) = \cdots = F_m(x) = 0\}$ of $\cB$ with a flat affine
structure (we can ignore the singular points when we restrict our
attention to this subspace because all the multi-valued affine
coordinate functions are single-valued there), and so $\gamma$ admits
a unique straight extension in this case.

\subsection{The notions of convexity and strong convexity}  \hfill

We begin with a definition.

\begin{definition}
\label{def_convexity}
 A singular affine manifold  $\cB$ (or a subset $\cC \subset \cB$)
is called (globally) \textbf{convex}
if for any two points $A,B \in \cB$ (or $A,B \in \cC$)
there exists a (regular or singular) straight line
from $A$ to  $B$ in $\cB$ (or in $\cC$, respectively).
We  say that $\cB$  is \textbf{locally convex} if any neighborhood of any
point $x \in\cB$   admits a sub-neighborhood of $x$
which is convex.  Similarly, a subset $\cC \subset \cB$ is called
is \textbf{locally convex} if any neighborhood of any
point $x \in\cC$   admits a sub-neighborhood of $x$ whose intersection
with $\cC$ is convex.
\end{definition}

Of course, any globally convex singular affine manifold is locally
convex, but the converse is not true in general.

The above notion of convexity using straight lines is very natural
and is a generalization of the notion of \textbf{\textit{intrinsic
convexity}} (see \cite{Zung-Proper2006}) from the regular case to the
singular case. When $\cB$ is affinely immersed in $\RR^n$ with its
canonical affine structure, this Definition \ref{def_convexity}
is equivalent to the usual definition of convexity.

\begin{figure}[!ht]
\vspace{-15pt}
\centering
 {\mbox{} \hspace*{-0cm }\includegraphics[width=0.6 \textwidth]{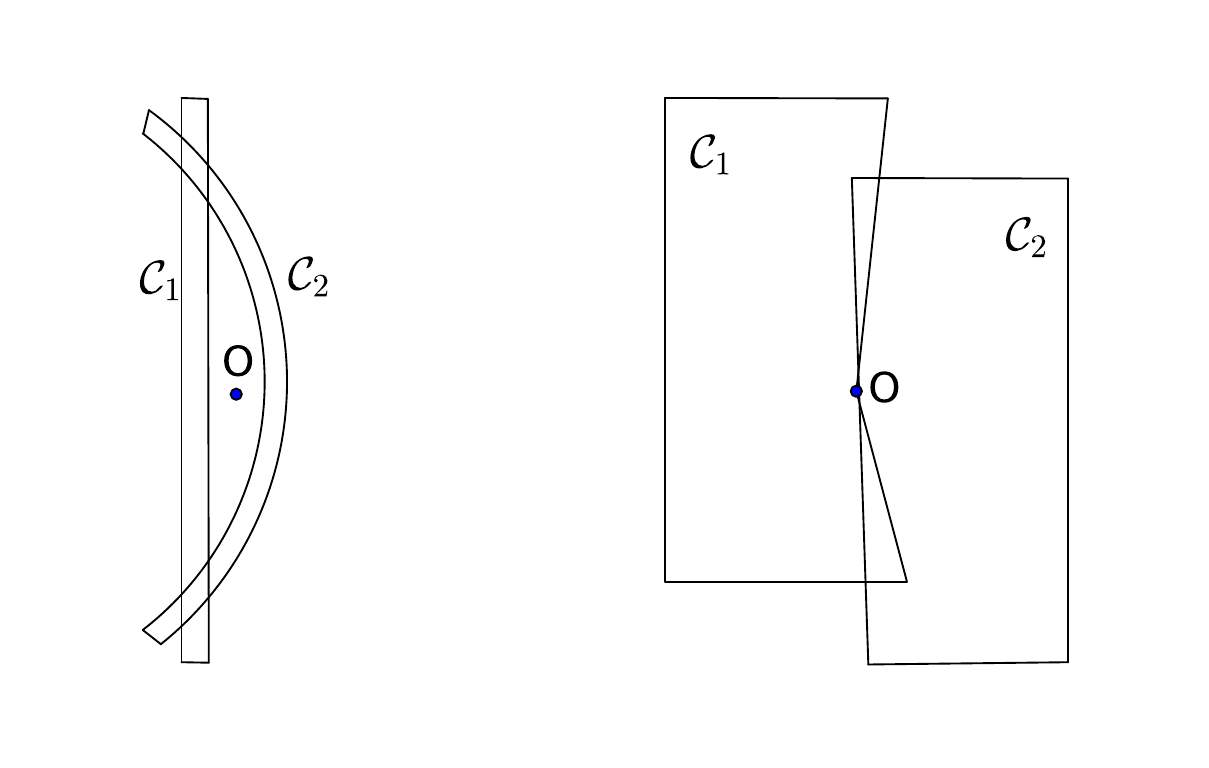}}

\vspace{-15pt}
  \caption{``Bad'' intersections of convex sets near a focus point.}
  \label{fig:BadIntersections}
\end{figure}

In a Euclidean space, every (locally) convex set is automatically
(locally) path-connected, and the intersection of two (locally)
convex sets is again a (locally) convex set.
In a singular affine manifold, a (locally) convex set is again
automatically (locally) path-connected (because we can go from one
point to another by a straight line). However, due to singular points
and monodromy, the intersection of two convex sets is neither
necessarily locally convex nor connected, as shown in Figure
\ref{fig:BadIntersections}.

So, in order to make sure that the intersection of two sets with
convexity properties still inherit convexity properties, we need a
stronger notion of convexity for sets in singular affine manifolds.

\begin{definition}
\label{def_strong_convexity}
{\rm(i)} A subset $\cC$  of a singular convex affine manifold $\cB$
is called \textbf{strongly convex}  in  $\cB$ if $\cC$ is convex,
i.e., any two distinct points $A,B\in \cC$ can be joined by a
straight line which lies in $\cC$, and, moreover, any straight line
going from $A$ to $B$ in $\cB$  also lies $\cC$.

{\rm(ii)} We say that $\cC \subset \cB$ is \textbf{strongly locally
convex} in  $\cB$ if any neighborhood $U$ of any point $x \in \cC$
admits a sub-neighborhood $V$ of $x$ such that $\cC \cap V$ is strongly
convex in $V$. In other words, for any two distinct points
$A, B \in \cC \cap V$ there is a straight line
going from $A$ to $B$ in $\cC \cap V$, and, moreover, any straight line
going from $A$ to $B$ in $V$ also lies in $\mathcal{C}$.
\end{definition}

\begin{remark}{\rm
Recall that, because of monodromy, given two points in $\mathcal{B}$,
there could be more than one straight line segment in the affine
structure of $\mathcal{B}$ linking these two points. So Definition
\ref{def_convexity} requires only that at \textit{least one} such
segment lies in $\mathcal{C}$, whereas  Definition
\ref{def_strong_convexity} requires that \textit{all} such segments lie
in $\mathcal{C}$.} \hfill $\lozenge$
\end{remark}

\begin{proposition}
{\rm (i)} The intersection of two strongly convex sets in a singular
affine manifold $\cB$ is again a  strongly convex set in $\cB$.

{\rm (ii)} The intersection of two strongly locally
convex sets in a singular affine manifold $\cB$
is again a  strongly locally convex set in $\cB$.
\end{proposition}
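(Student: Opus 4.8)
The plan is to derive both parts purely from the definitions of convexity, strong convexity, and their local versions, with part (i) being a short formal manipulation and part (ii) needing one careful shrinking argument. (If $\cC_1\cap\cC_2$ is empty both assertions are vacuous, so throughout I assume it is not.)

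For part (i), let $\cC_1,\cC_2\subset\cB$ be strongly convex and pick two distinct points $A,B\in\cC_1\cap\cC_2$. I would first get ordinary convexity of the intersection: since $\cC_1$ is convex there is a straight line $\gamma$ from $A$ to $B$ with $\gamma\subset\cC_1$; this $\gamma$ is in particular a straight line from $A$ to $B$ in $\cB$, so by strong convexity of $\cC_2$ (applicable since $A,B\in\cC_2$) we get $\gamma\subset\cC_2$ too, hence $\gamma\subset\cC_1\cap\cC_2$. For the strong part, take an arbitrary straight line $\delta$ from $A$ to $B$ in $\cB$: strong convexity of $\cC_1$ forces $\delta\subset\cC_1$, strong convexity of $\cC_2$ forces $\delta\subset\cC_2$, so $\delta\subset\cC_1\cap\cC_2$. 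This finishes (i).

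For part (ii), fix $x\in\cC_1\cap\cC_2$ and a neighborhood $U$ of $x$. Strong local convexity of $\cC_1$ gives a convex sub-neighborhood $V_1\subset U$ of $x$ with $\cC_1\cap V_1$ strongly convex in $V_1$; applying strong local convexity of $\cC_2$ to the neighborhood $V_1$ gives a convex sub-neighborhood $V_2\subset V_1$ of $x$ with $\cC_2\cap V_2$ strongly convex in $V_2$. The key intermediate claim is that $\cC_1\cap V_2$ is again strongly convex in $V_2$. The ``all straight lines'' half is immediate: any straight line $\delta$ joining $A,B\in\cC_1\cap V_2$ inside $V_2$ is also a straight line joining them inside $V_1$ (straightness is an intrinsic notion, stable under passing between an open set and an open superset as long as the curve's image stays inside), hence lies in $\cC_1$ by strong convexity of $\cC_1\cap V_1$ in $V_1$, and it lies in $V_2$ by hypothesis, so $\delta\subset\cC_1\cap V_2$. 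For the existence half one uses that $V_2$ is convex: for $A,B\in\cC_1\cap V_2$ pick any straight line $\delta\subset V_2$ from $A$ to $B$ and apply the half just proved. Once this claim holds, $\cC_1\cap V_2$ and $\cC_2\cap V_2$ are both strongly convex in the convex singular affine manifold $V_2$, so part (i), applied with ambient $V_2$, shows $(\cC_1\cap\cC_2)\cap V_2$ is strongly convex in $V_2$; since $V_2\subset U$, this is exactly the sub-neighborhood required by strong local convexity of $\cC_1\cap\cC_2$ at $x$.

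The main obstacle, and the one place where the argument is not purely formal, is the shrinking step in (ii): after replacing $V_1$ by the smaller $V_2$ one must re-verify that two points of $\cC_1\cap V_2$ can still be joined by a straight line lying inside $\cC_1\cap V_2$ — a connecting line inside $\cC_1\cap V_1$ need not stay in $V_2$. This is precisely why the definition of strong local convexity produces \emph{convex} neighborhoods $V$: convexity of $V_2$ supplies a connecting line contained in $V_2$, which the ``all-lines'' property of $\cC_1\cap V_1$ then automatically traps inside $\cC_1$. Two minor facts I would record in passing and use freely: an open subset of a singular affine manifold, with the restricted sheaf, is again a singular affine manifold, and is convex precisely when it was chosen convex; and a curve contained in an open set is a straight line there if and only if it is a straight line in the ambient manifold (checked separately for regular straight lines, where it is local, and for singular straight lines, where the approximating regular lines transfer verbatim).
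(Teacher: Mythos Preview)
Your proof of (i) is correct and essentially identical to the paper's.

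For (ii), your overall strategy—nest $V_2\subset V_1\subset U$ with $\cC_1\cap V_1$ strongly convex in $V_1$ and $\cC_2\cap V_2$ strongly convex in $V_2$—matches the paper's. The difference is that you factor through the intermediate claim ``$\cC_1\cap V_2$ is strongly convex in $V_2$'' and then invoke (i), whereas the paper proves directly that $(\cC_1\cap\cC_2)\cap V_2$ is strongly convex in $V_2$.

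There is one genuine soft spot in your route. You assert that ``the definition of strong local convexity produces \emph{convex} neighborhoods $V$'', and you need convexity of $V_2$ to get the existence half of your intermediate claim (to manufacture a line in $V_2$ between two points of $\cC_1\cap V_2$). But the definition, as spelled out in its ``in other words'' clause, only guarantees that $\cC\cap V$ is convex and that every straight line in $V$ between two of its points lies in $\cC$; it does not say $V$ itself is convex. Without that, $\cC_1\cap V_2$ need not be convex in $V_2$ (take $\cC_1=\cB$ and a non-convex $V_2$), so your intermediate claim can fail and (i) cannot be applied as stated.

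The paper sidesteps this neatly: it uses the guaranteed convexity of $\cC_2\cap V_2$ (not of $V_2$) to produce a connecting straight line $\gamma\subset\cC_2\cap V_2$ between $A,B\in(\cC_1\cap\cC_2)\cap V_2$; since $\gamma\subset V_2\subset V_1$, strong convexity of $\cC_1\cap V_1$ in $V_1$ forces $\gamma\subset\cC_1$, giving existence. The ``all lines'' half is then exactly your argument. So the fix is simply to drop the intermediate claim and argue directly, starting from a line supplied by $\cC_2\cap V_2$ rather than by $V_2$. (Alternatively one could first shrink $V_2$ to a convex neighborhood using local convexity near focus points, but that imports an extra result the paper's argument does not need.)
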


\begin{proof}
The proof is straightforward.

(i) Let $\cC_1, \cC_2$ be two strongly convex subsets of $\cB$, and
$A, B \in \cC_1 \cap \cC_2$. Let $\gamma$ be a straight line from
$A$ to $B$ in $\cC_1$ ($\gamma$ exists because $\mathcal{C}_1$ is convex).
Then $\gamma \subset \cC_2$ because $\cC_2$ is strongly convex, so
$\gamma \subset \mathcal{C}_1 \cap \cC_2$,
i.e., there is at least one straight line from $A$ to
$B$ in $\mathcal{C}_1 \cap \cC_2$. Moreover, if  $\gamma'$ is any other
straight line from $A$ to $B$ in $\cB$, then $\gamma' \subset
\mathcal{C}_i$ ($i=1,2$) because $\cC_i$ is strongly convex, and so
$\gamma' \subset \mathcal{C}_1 \cap \mathcal{C}_2$.

(ii) Let $\cC_1, \cC_2$ be two strongly locally convex subsets of $\cB$,
$x \in \cC_1 \cap \cC_2$, and $U$ is an arbitrary neighborhood of
$x$ in $\cB$. By definition, there is a neighborhood $V_1 \subset U$
of $x$ such that $\cC_1 \cap V_1$ is strongly convex in $V_1$, and
a neighborhood $V \subset V_1$ of $x$ such that
$\cC_2 \cap V$ is strongly convex in $V$. Then
$(\cC_1\cap\cC_2) \cap V$ is strongly convex in $V$.

Indeed, let  $A, B \in (\cC_1 \cap \cC_2) \cap V$ be two arbitrary
distinct points. Since $\cC_2 \cap V$ is convex, there is a straight
line $\gamma$ going from $A$ to $B$ in $\cC_2 \cap V$.
Since $\gamma \subset V \subset V_1$, $A, B \in \cC_1 \cap V_1$ and
$\cC_1 \cap V_1$ is strongly convex in $V_1$, we have that
$\gamma \subset \cC_1 \cap V_1 \subset \cC_1$. Hence
$\gamma \subset \cC_1 \cap \cC_2 \cap V$. Let $\gamma'$ be any other
straight line going from $A$ to $B$ in $V \subset V_1$.
Then, since $\mathcal{C}_1 \cap V_1$ is strongly convex in $V_1$
and $\mathcal{C}_2 \cap V$ is strongly convex in $V$, we have that
$\gamma' \subset \cC_1 \cap V_1$ and $\gamma' \subset \cC_2 \cap V$,
and hence $\gamma' \subset (\cC_1 \cap \cC_2) \cap V$.
\end{proof}

\begin{remark}{\rm
For \textit{regular} affine manifolds there is another different 
notion of convexity (which is not equivalent to ours),
which requires the universal covering of the manifold to be convex
(see, e.g., \cite{Barbot2000}).
}
\end{remark}

\section{Local convexity at focus points}
\label{section:LocalConvexity}

\subsection{Convexity of focus boxes in dimension 2} \hfill
\label{subsection:Local1}

Let $O$ be a focus singular point of index $k >0$
 in a singular 2-dimensional affine manifold $\cB^2$ with a local
 multi-valued  affine coordinate system
$F,G$: $F(O) = G(O) = 0$, $F$ is single valued,
$G$ is multivalued: when $F \leq 0$ then $G$ is single valued but when
$F > 0$ then $G$ has 2 branches $G_l$ and $G_r$ (extension of $G$ from
the region $\{F \leq 0 \}$ to the region $\{F > 0\}$
by the left or by the right of the singular point $O$), with the monodromy
formula  $G_r = G_l + kF$ when $F > 0$.

Let  $\delta, \delta' >0$ be two 
positive numbers, which are small enough if necessary for things to be well defined. The closed neighborhood $Box = Box(\delta,\delta')$
of $O$ defined by the inequalities
 \begin{equation}
Box = \{ x \ \text{such that}\, F(x) \in [-\delta,\delta] \text{ and }
G_l(x), G_r(x) \in [- \delta', \delta'] \}
 \end{equation}
is called a \textbf{\textit{focus box}}, i.e., a box with one focus
point in it.

\noindent\textbf{Remark}.
  The box is a quadrilateral figure
  (i.e., a figure whose boundary consists of  four straight lines)
  if $2\delta' > k\delta$; for simplicity of the exposition
we assume that this is the case. If $2\delta' \leq k\delta$
then the box is a triangle, but our subsequent arguments
based on the quadrilateral figure are easily seen to apply to this case
too.

\begin{theorem} \label{thm:FF1Local}
Any 2-dimensional focus box is convex.
\end{theorem}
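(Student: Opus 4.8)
\emph{Strategy.} The plan is to reduce to ordinary planar convexity by \emph{developing} the box onto a flat convex polygon, and then to treat the monodromy cut separately by hand. Working with a multi-valued affine coordinate system $(F,G)$ at $O$ as in the statement, I would introduce the map $\Psi:=(F,G_l)\colon Box\to\RR^2$. Both components are continuous on $Box$, and on the closed half $\{F\le 0\}$ one has $\Psi=(F,G)$ with $G$ single-valued affine, while on $\{F\ge 0\}$ the pair $(F,G_l)$ is a genuine affine chart (the ``left branch''); hence $\Psi$ is a homeomorphism of $Box$ onto a polygon which is the union of the rectangle $R_-=[-\delta,0]\times[-\delta',\delta']$ (image of $Box\cap\{F\le 0\}$) and the trapezoid $T=\{(\xi,\eta)\mid 0\le\xi\le\delta,\ -\delta'\le\eta\le\delta'-k\xi\}$ (image of $Box\cap\{F\ge 0\}$, obtained by rewriting the two box inequalities $G_l\in[-\delta',\delta']$, $G_r=G_l+kF\in[-\delta',\delta']$ in the coordinate $(F,G_l)$), the two pieces being glued along $\{\xi=0\}$. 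A direct check shows $R_-$, $T$, and hence $\Pi:=R_-\cup T$, are convex (if $2\delta'\le k\delta$ the trapezoid degenerates to a triangle, but $\Pi$ is still convex and the argument below is unchanged); moreover $\Psi$ restricts to an affine isomorphism of $Box\cap\{F\le 0\}$ onto $R_-$ and of $Box\cap\{F\ge 0\}$ onto $T$.

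\emph{Step 2: reading off straight lines.} The $\Psi$-preimage of any segment lying entirely in $R_-$, or entirely in $T$, is therefore a straight line of $Box$. The only place where $\Psi$ fails to be affine is the image $\widehat c:=\Psi(\{F=0,\ G\ge 0\})=\{0\}\times[0,\delta']$ of the branch-cut ray of $G_l$. At a regular point $w\in\{F=0,\ G>0\}$ one has $H(w)>0$ by Lemma~\ref{lem:AsymptoticG}, so $(F,G_r)$ is a regular affine chart near $w$, and since $G_l=G_r-kF$ on $\{F>0\}$ while $G_l=G_r$ on $\{F\le 0\}$, a curve that is a straight segment on each side of $\widehat c$ in the $(F,G_l)$-picture is a straight line of $Box$ near $w$ \emph{iff} the slope of its $\Psi$-image drops by exactly $k$ on passing from $\{F<0\}$ to $\{F>0\}$. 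Likewise, a $\Psi$-segment through $O'=\Psi(O)=(0,0)$ with no slope change pulls back to the ``left extension'' $\gamma^l$ of Proposition~\ref{prop:extension1}, which is a straight line, and a $\Psi$-segment meeting the axis strictly below $O'$ pulls back to an honest straight line as well (there $G_l$ is affine on a full neighbourhood).

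\emph{Step 3: the three cases, and the obstacle.} Given $A,B\in Box$: if $F(A),F(B)$ have the same sign (zero allowed), join $\Psi(A),\Psi(B)$ by the ordinary segment inside the convex set $R_-$, resp.\ $T$, and pull back. If $F(A)<0<F(B)$, let $s$ be the ordinate at which $\overline{\Psi(A)\Psi(B)}$ meets $\{\xi=0\}$; if $s\le 0$ the segment avoids the open cut $\widehat c$, stays in the convex polygon $\Pi$, and pulls back to a straight line by Step 2. The genuinely delicate case is $s>0$: here the ordinary segment is \emph{not} the $\Psi$-image of a straight line of $Box$ (it is broken ``the wrong way'' across the monodromy cut), and one must replace it by the broken path through the auxiliary point $(0,s')$ with $s':=s+\dfrac{k\,|F(A)|\,F(B)}{|F(A)|+F(B)}$. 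A one-line computation shows its two new slopes differ by exactly $k$, so by Step 2 its preimage is a straight line of $Box$; and using $\Psi(B)\in T$, i.e.\ $G_l(B)\le\delta'-kF(B)$, together with $G_l(A)\le\delta'$, one gets $0<s'\le\delta'$, so $(0,s')\in\widehat c$ and the two legs lie in the convex sets $R_-$ and $T$ respectively. Thus $A$ and $B$ are joined by a straight line in $Box$ in every case. The main obstacle is exactly this last subcase: one has to correct the crossing point of the developed segment by precisely the monodromy amount $k$, and the verification that the corrected crossing point still lies in the box is where the defining inequalities of $Box$ and the monodromy relation $G_r=G_l+kF$ enter decisively.
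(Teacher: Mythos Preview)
Your argument is correct and is essentially the paper's second proof, repackaged through the developing map $\Psi=(F,G_l)$: your straight $\Psi$-segment in the case $s\le 0$ is exactly the paper's line $\gamma_l$, and your broken segment through $(0,s')$ with slope drop $k$ is exactly $\gamma_r$ (indeed $s'=t_0G_r(B)+(1-t_0)G(A)$ with $t_0=|F(A)|/(|F(A)|+F(B))$, and your bound $s'\le\delta'$ is just the box constraint $G_r(B)\le\delta'$). The paper also gives a shorter first proof you may find worth knowing: for any $A$, a straight line through $A$ and $O$ (choosing either extension when $F(A)\neq 0$) cuts the box into two flat convex polygons with $A$ on their common boundary, so the box is $A$-star-shaped for every $A$.
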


\begin{proof}
We give two simple proofs.

\underline{Proof 1}. Let $A$ be an arbitrary point in the box. We show
that the box is $A$-star shaped, i.e., every other  point
can be connected to $A$ by a straight line in the box.

If $A$ lies on the line $\{F = 0\}$, then cut the box in two closed
parts by the line $\{F = 0\}$ (see Figure \ref{fig_cut_the_box}).
Each part is affinely isomorphic to a convex
polygon which contains $A$ on the boundary.
(The part  $\{F \leq 0\}$ is a rectangle and the part $\{F \geq 0\}$
is a trapezoid.) So both parts are $A$-star shaped and their union
is also $A$-star shaped, which proves the claim.

\begin{figure}[!ht]
\vspace{-15pt}
\centering
 {\mbox{} \hspace*{-2.3cm }\includegraphics[width=1.3 \textwidth]{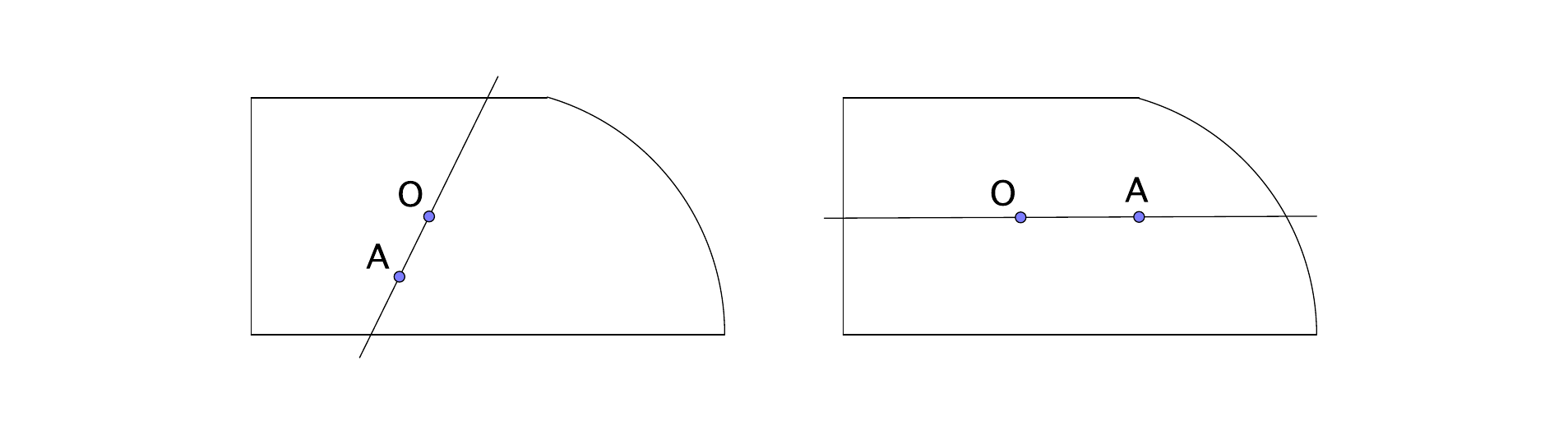}}

\vspace{-20pt}
  \caption{Cutting the box into two convex polygons with $A$ (and $O$) on the boundary.}
  \label{fig_cut_the_box}
\end{figure}

If $F(A) \neq 0$ then the straight line $AO$ admits two different straight
extensions at $O$. Just take any of these two and keep extending this line on
both ends so that the obtained straight line cuts the box in two parts (see
Figure \ref{fig_cut_the_box}). Again each part is a convex polygon with $A$ on
the boundary, which proves the claim.

\underline{Proof 2}. Let $A$ and $B$ be two arbitrary points in the box.
We have to find a straight line in the box going from $A$ to $B$.

- If $F(A) \leq 0$ and $F(B) \leq 0$, then $A$ and $B$ lie in the rectangle
$\{F \leq 0\}$ and in this case, there is a unique
straight line going from $A$ to $B$ consisting of points
$\gamma(t), \; 0 \leq t \leq 1$, such that
$F(\gamma(t)) = t F(B) + (1-t) F(A)$ and $G(\gamma(t)) = t G(B) +
(1-t) G(A)$. (The uniqueness follows from the fact that $F$ is single
valued and affine on the line, so $F$ is always smaller than or equal
to $0$ on the line, and when $F \leq0$, then there is no monodromy,
so $G$ is single valued and is affine on the line).

- Similarly, if $F(A) \geq 0$ and $F(B) \geq 0$, then $A$ and $B$
lie in the trapezoid $\{F \geq 0\}$, and in this case there is a unique
straight line going from $A$ to $B$, consisting of points
$\gamma(t), \; 0 \leq t \leq 1$, such that
$F(\gamma(t)) = t F(B) + (1-t) F(A)$ and $G_l(\gamma(t)) = t G_l(B) + (1-t) G_l(A)$ (or, equivalently,
$G_r(\gamma(t)) = t G_r(B) + (1-t) G_r(A)$).

- The more tricky situation is when $F(A) < 0$, $F(B) > 0$, or vice versa.
In this case, there exists at least one, but maybe  two different
straight lines from $A$ to $B$ in the box. We construct
up to two different straight lines from $A$ to $B$,
denoted by $\gamma_l : [0,1] \to Box$ and
$\gamma_r : [0,1] \to Box$.
The equations for the points of $\gamma_l : [0,1] \to Box$ are
$$ F(\gamma_l(t)) = t F(B) + (1-t) F(A)\; ; \quad
G_l(\gamma_l(t)) = t G_l(B) + (1-t) G_l(A)$$
and the  equations for the points of $\gamma_r : [0,1] \to Box$ are
$$
F(\gamma_r(t)) = t F(B) + (1-t) F(A)\; ; \;G_r(\gamma_r(t)) =
t G_r(B) + (1-t) G_r(A)
$$
for all $t \in [0,1]$; when $F \leq 0$ then $G_l = G_r = G$.

In particular, at the point $t_0 = \dfrac{-F(A)}{F(B)-F(A)}$\,, we have
$F(\gamma_r(t_0)) = 0$, $G_l(\gamma_l(t_0)) = t_0 G_l(B) + (1-t_0) G(A)$,
$G_r(\gamma_r(t_0)) = t_0 G_r(B) + (1-t_0) G(A).$

The lines $\gamma_l$ and $\gamma_r$ can be defined on the interval $[0,t_0]$
(for $F$ going from $F(A)$ to $0$) without any problem.
Indeed, since
the values of $G$ remain in the interval $[-\delta',\delta']$,
this line cannot leave the box. However, at $t_0$
we may run into problems.

\noindent $\bullet$ $G_l$ is the extension of $G$ from the left of $O$,
i.e., through points on $\{F=0\}$, where $G$ has negative values. Thus,
if $ t_0 G_l(B) + (1-t_0) G(A) >0$, then $G_l$ is the wrong function
to use and the equations $F(\gamma_l(t)) = t F(B) + (1-t) F(A)$,
$G_l(\gamma_l(t)) = t G_l(B) + (1-t) G_l(A)$  do not give a straight line,
but rather a broken line at $t_0$. So, in order for $\gamma_l$ to be a
straight line from $A$ to $B$, we need the following necessary and sufficient condition:
$$t_0 G_l(B) + (1-t_0) G(A) \leq 0$$

\noindent $\bullet$ Similarly,
in order for $\gamma_r$ to be a straight line from $A$ to $B$, we need the condition
$ t_0 G_r(B) + (1-t_0) G(A) \geq 0$.

Note that $G_r(B) = G_l(B) + kF(B) > G_l(B)$, and so
$ t_0 G_r(B) + (1-t_0) G(A) >  t_0 G_l(B) + (1-t_0) G(A)$. We have three
different possibilities (see Figure \ref{fig_3_possibilities}).

\begin{figure}[!ht]
\vspace{-15pt}
\centering
 {\mbox{} \hspace*{-2.3cm }\includegraphics[width=1.3 \textwidth]{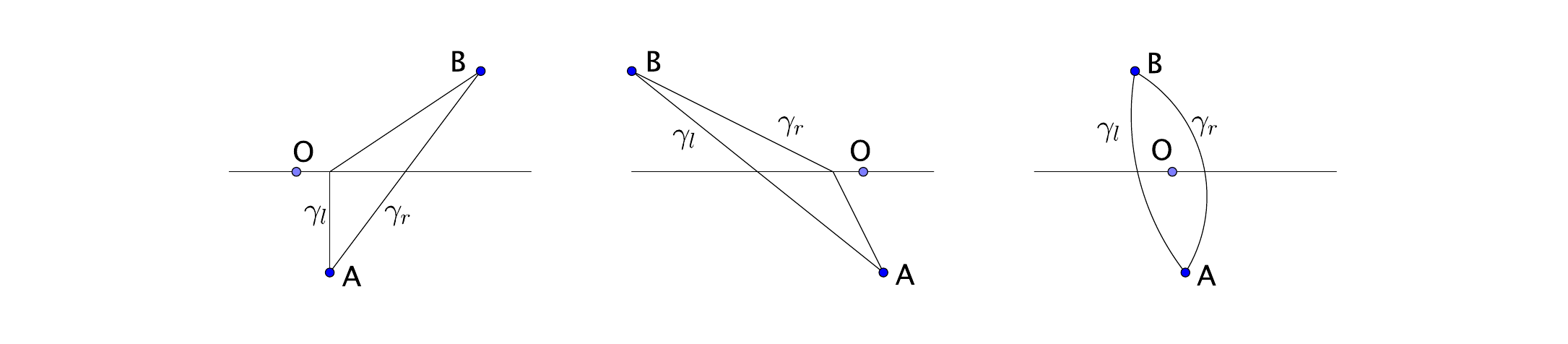}}

\vspace{-15pt}
  \caption{The three cases in a focus box.}
  \label{fig_3_possibilities}
\end{figure}

i) $ t_0 G_l(B) + (1-t_0) G(A) > 0$: then, automatically
$ t_0 G_r(B) + (1-t_0) G(A) > 0$, and we have exactly one straight line
from $A$ to $B$ in the box, which is $\gamma_r$.

ii) $ t_0 G_r(B) + (1-t_0) G(A) < 0$: then automatically
$ t_0 G_l(B) + (1-t_0) G(A) < 0$, and we have exactly one straight line
from $A$ to $B$ in the box, which is $\gamma_l$.

iii) $ t_0 G_l(B) + (1-t_0) G(A) \leq 0$ and $ t_0 G_r(B) + (1-t_0) G(A) \geq 0$:
then we have two different straight lines from $A$
to $B$ in the box, which are  $\gamma_l$ and $\gamma_r$.
\end{proof}

\begin{remark}{\rm
The number $k$ in the monodromy formula ($G_r = G_l + kF$ when $F > 0$)
can be any positive number (not necessarily an integer) and the focus
box is still convex, as the proof above shows. However,
the requirement $k > 0$ is essential: if $k <0$ then the box is
non-convex, because there exist points $A$ and $B$ in the box such that
$F(A) < 0$, $F(B) > 0$, $ t_0 G_l(B) + (1-t_0) G(A) > 0$, and
$ t_0 G_r(B) + (1-t_0) G(A) < 0$, where $t_0 = \dfrac{-F(A)}{F(B)-F(A)}$. See Figure \ref{ConvexFF2_figure}.
  }
\end{remark}

\begin{figure}[!ht]
\vspace{-15pt}
\centering
 {\mbox{} \hspace*{-1.5cm}\includegraphics[width=1.2 \textwidth]{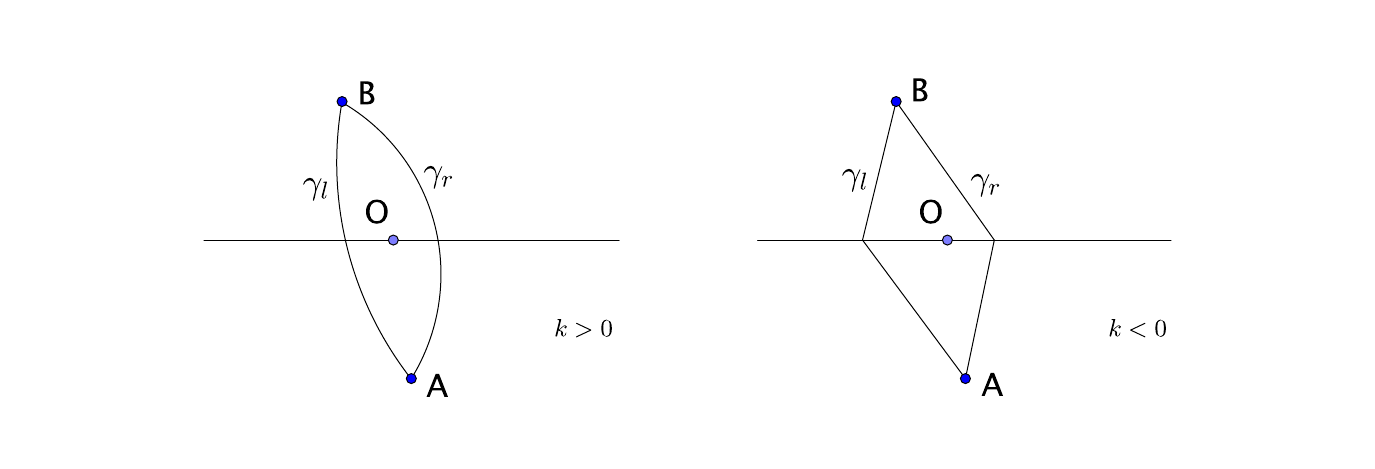}}
\vspace{-15pt}
  \caption{Two straight lines (the case $k > 0$) vs. no straight line ($k < 0$)
  from $A$ to $B$.}
  \label{ConvexFF2_figure}
\end{figure}

\subsection{Convexity of focus boxes in higher dimensions} \hfill

Theorem \ref{thm:FF1Local}
can be extended to the $n$-dimensional case in a straightforward manner.

Let $O$ be a focus singular point of index $k >0$ in a singular
$n$-dimensional affine manifold $\cB$ ($n \geq 3$)
with a local multi-valued  affine coordinate system
$F,G, L_1, \hdots, L_{n-2}$; $F(O) = G(O) = L_1(O) = \cdots =
L_{n-2}(O) = 0$,  $F,L_1,\hdots,L_{n-2}$ are single valued, and
$G$ is multivalued: when $F \leq 0$ then $G$ is single valued but when
$F > 0$ then $G$ has 2 branches $G_l$ and $G_r$ (extensions of $G$ from
the region $\{F \leq 0 \}$ to the region $\{F > 0\}$
on the left or on the right of the  $(n-1)$-dimensional manifold
$\cS \subset \{F=0\}$ of focus points, $O \in \cS$), with the monodromy
formula  $G_r = G_l + kF$ when $F > 0$ and $G_r = G_l = G$ when $F \leq 0$.

Similarly to the 2-dimensional case, we define a
\textbf{\textit{focus box}} around
the focus points $O$ to be the set of all points $x$
 in a neighborhood of $O$ in $\cB$ which satisfies the inequalities
 $$
 -\delta \leq F(x) \leq \delta,\, - \delta' \leq G_l(x),\, G_r(x)
 \leq \delta',
 $$
 $$
 -\delta_1 \leq L_1(x) \leq \delta_1,\hdots,  - \delta_{n-2} \leq L_{n-2} \leq \delta_{n-2} ,
 $$
where $\delta, \delta',\delta_1,\hdots, \delta_{n-2}$
are arbitrary sufficiently small positive numbers. We do this if
$O$ is an interior point of $\cB$.

If $O$ lies on the boundary of $\cB$, where $\cB$ comes from an
integrable Hamiltonian system whose singularities are non-degenerate and
have only elliptic and/or focus-focus components, then the singularity of
the integrable system at $O$ has $e$ elliptic components for some
positive number $e$, and we can assume that $L_1,\hdots,L_{e}$ are
the action functions corresponding to the $e$ elliptic components at $O$,
$L_1, \hdots, L_e \geq 0$ near $O$ (and the other functions
$L_{e+1}, \hdots, L_{n-2}$ can admit both negative and positive values
near $O$). On this boundary (i.e., $\text{elliptic}^e$-focus-focus case),
the above inequalities  for $F,G, L_i$ still define a box around $O$,
and for $i=1,\hdots,e$ we can replace the inequalities
$- \delta_i \leq L_i(x) \leq \delta_i$ by the inequalities
$0\leq L_i(x) \leq \delta_i$.

\begin{theorem} \label{thm:FF2Local}
Any $n$-dimensional focus box is convex.
\end{theorem}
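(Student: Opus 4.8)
The plan is to transcribe the proof of Theorem~\ref{thm:FF1Local} (in particular Proof~2) to dimension $n$, carrying the single-valued affine coordinates $L_1,\dots,L_{n-2}$ along every candidate line as passive labels. Fix two points $A,B$ in the focus box $Box$ and split according to the signs of $F(A)$ and $F(B)$. If $F(A)\le 0$ and $F(B)\le 0$, then $A,B$ lie in $\{F\le 0\}\cap Box$, on which the singular affine structure is regular and $(F,G,L_1,\dots,L_{n-2})$ is a single-valued affine chart; in that chart $\{F\le 0\}\cap Box$ is the product of intervals $[-\delta,0]\times[-\delta',\delta']\times\prod_{j}[-\delta_j,\delta_j]$, a convex polytope, so $A$ and $B$ are joined by a (unique) straight segment lying in it. Symmetrically, if $F(A)\ge 0$ and $F(B)\ge 0$, one argues on $\{F\ge 0\}\cap Box$: by the ``ditch'' model of Section~\ref{sec_ffsingularities} the affine structure restricted to $\{F\ge 0\}$ is again regular (the monodromy cut may be taken on $\{F=0,\,H\ge 0\}$, which is contained in the boundary of $\{F\ge 0\}$), with $(F,G_l,L_1,\dots,L_{n-2})$ an affine chart; since $G_r=G_l+kF$ there, the defining inequalities of $Box$ become affine in these coordinates, $\{F\ge 0\}\cap Box$ is a convex polytope, and $A,B$ are again joined by a straight segment.

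The only case requiring real work is $F(A)<0<F(B)$ (the reverse being symmetric), and here I would mimic Proof~2 of Theorem~\ref{thm:FF1Local}: define $\gamma_l,\gamma_r\colon[0,1]\to Box$ by
\[
F(\gamma(t))=tF(B)+(1-t)F(A),\qquad L_j(\gamma(t))=tL_j(B)+(1-t)L_j(A)\quad(1\le j\le n-2),
\]
for $\gamma\in\{\gamma_l,\gamma_r\}$, together with $G_l(\gamma_l(t))=tG_l(B)+(1-t)G_l(A)$ and $G_r(\gamma_r(t))=tG_r(B)+(1-t)G_r(A)$ (unambiguous near $A$, where $G_l=G_r=G$ because $F(A)<0$). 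That $\gamma_l$ and $\gamma_r$ stay inside $Box$ is a routine check: $F$, the $L_j$, and the interpolated branch lie between their in-range endpoint values, while on each of the two subintervals cut off by the crossing parameter $t_0=-F(A)/(F(B)-F(A))$ (where $F=0$) the other branch of $G$ is an affine function of $t$ interpolating two values already shown to be in $[-\delta',\delta']$.

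It then remains to decide which of $\gamma_l,\gamma_r$ is an honest straight line, and this is the one subtle point. Since the branch $G_l$ is affine exactly off the cut $\{F=0,\,H\ge 0\}$, the curve $\gamma_l$ is a straight line of $\cB$ precisely when its crossing point $\gamma_l(t_0)$ does not lie in $\{F=0,\,H>0\}$; using the higher-dimensional analogue of Lemma~\ref{lem:AsymptoticG} ($\partial G/\partial H>0$ on $\{F=0\}$ and $H=0$ on $\cS$), this is equivalent to
\[
G_l(\gamma_l(t_0))\ \le\ G^{critical}\bigl(L_1(\gamma_l(t_0)),\dots,L_{n-2}(\gamma_l(t_0))\bigr),
\]
equality meaning that $\gamma_l$ passes through a focus point of $\cS$ and $\gamma_l|_{[t_0,1]}$ is one of its legitimate branched extensions. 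Symmetrically, $\gamma_r$ is a straight line iff $G_r(\gamma_r(t_0))\ge G^{critical}(\cdots)$, evaluated at the same $L$-point since $\gamma_l$ and $\gamma_r$ carry the $L_j$ identically. As $G_r(\gamma_r(t_0))-G_l(\gamma_l(t_0))=t_0\,kF(B)>0$, the two crossing values cannot lie on the wrong sides of $G^{critical}(\cdots)$, so at least one of the two conditions holds; this yields a straight line from $A$ to $B$ in $Box$, exactly as in the trichotomy of the planar proof. Finally, when $O$ lies on the boundary of $\cB$ (the $\text{elliptic}^e$-focus-focus case), replacing $-\delta_j\le L_j\le\delta_j$ by $0\le L_j\le\delta_j$ for the elliptic directions only shrinks some intervals to half-intervals, and since linear interpolation preserves nonnegativity all of the above checks go through unchanged.

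I expect the main obstacle to be not the (purely two-dimensional) trichotomy but the two regularity inputs underlying it: that $\{F\le 0\}$ and $\{F\ge 0\}$ really do carry regular affine structures with the asserted global charts --- so that the slices $\{F\le 0\}\cap Box$ and $\{F\ge 0\}\cap Box$ are genuine convex polytopes even though $\cS$ is a curved, non-affine submanifold --- and the exact description of the non-smoothness loci of $G_l,G_r$ as the half-hyperplanes $\{F=0,\,H\ge 0\}$ and $\{F=0,\,H\le 0\}$, together with the monotonicity $\partial G/\partial H>0$ that converts ``$H\gtrless 0$'' into a comparison with $G^{critical}$. Both are already implicit in the normal-form and ditch discussion of Section~\ref{sec_ffsingularities} and in Lemma~\ref{lem:AsymptoticG}; granting them, the proof is a line-by-line repeat of Proof~2 of Theorem~\ref{thm:FF1Local} with $L_1,\dots,L_{n-2}$ treated as inert affine coordinates.
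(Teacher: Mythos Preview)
Your proposal is correct and follows essentially the same approach as the paper's proof: extend Proof~2 of Theorem~\ref{thm:FF1Local} by carrying the $L_j$ as passive affine interpolants, and in the mixed-sign case compare the crossing values $G_l(\gamma_l(t_0))$ and $G_r(\gamma_r(t_0))$ to the critical value $G^{critical}$ at the relevant $L$-coordinates, using $G_r(\gamma_r(t_0))-G_l(\gamma_l(t_0))=t_0kF(B)>0$ to force the trichotomy. Your write-up is in fact slightly more explicit than the paper's about why the candidate lines stay in $Box$ and about the role of $\partial G/\partial H>0$ in translating the $H$-sign condition into a $G^{critical}$ comparison; the paper also records, in a remark, the alternative route of slicing down to a 2-dimensional subbox containing $A$, $B$, and a single focus point, which you do not use.
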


\begin{proof}
We simply repeat the second proof of Theorem \ref{thm:FF1Local}, with
$L_1, \hdots, L_{n-2}$ added to the picture.
Let $A$ and $B$ be two arbitrary points in the box.
We have to find a straight line in the box going from $A$ to $B$.

- If $F(A) \leq 0$ and $F(B) \leq 0$ then $A$ and $B$ lie in the
(affinely flat) convex polytope $\{x \in Box, F(x) \leq 0\}$. In this
case, there is a unique straight line going from $A$ to $B$, consisting of
points $\gamma(t), \; 0 \leq t \leq 1$, such that
$F(\gamma(t)) = t F(B) + (1-t) F(A)$,  $G(\gamma(t)) = tG(B)+(1-t)G(A)$,
and $L_i(\gamma(t)) = t L_i(B) + (1-t) L_i(A)$ for all $i=1,\hdots, n-2$.
(This straight line is unique for the same reasons as in the
2-dimensional case).

- Similarly, if $F(A) \geq 0$ and $F(B) \geq 0$, then $A$ and $B$
lie in the affinely flat convex polytope $\{ x \in Box, F(x) \geq 0\}$,
and, in this case, there is a unique straight line going from $A$ to $B$,
consisting of points $\gamma(t), \; 0 \leq t \leq 1$ such that
$F(\gamma(t)) = t F(B) + (1-t) F(A)$, $L_i(\gamma(t)) =
t L_i(B) + (1-t) L_i(A)$ for all $i=1,\hdots, n-2$, and
$G_l(\gamma(t)) = t G_l(B) + (1-t) G_l(A)$
(or, equivalently, $G_r(\gamma(t)) = t G_r(B) + (1-t) G_r(A)$).

- The more tricky situation is when $F(A) < 0$ and $F(B) > 0$ or vice
versa. In this case, there exists at least one,
but maybe  two different straight lines from $A$ to $B$ in the box.
We construct up to two different straight lines from $A$ to $B$,
denoted by $\gamma_l : [0,1] \to \cB$ and $\gamma_r : [0,1] \to Box$.
The equations for the points of $\gamma_l : [0,1] \to Box$ are
(for $t \in [0,1]$):
$$
L_i(\gamma_l(t)) = t L_i(B) + (1-t) L_i(A),\quad
\forall\; i=1,\hdots, n-2,
$$
$$
F(\gamma_l(t)) = t F(B) + (1-t) F(A), \quad
G_l(\gamma_l(t)) = t G_l(B) + (1-t) G_l(A),
$$
and the  equations for the points of $\gamma_r : [0,1] \to Box$ are
$$
L_i(\gamma_l(t)) = t L_i(B) + (1-t) L_i(A)\; \forall\; i=1,\hdots, n-2,
$$
$$
F(\gamma_r(t)) = t F(B) + (1-t) F(A), \quad
G_r(\gamma_r(t)) = t G_r(B) + (1-t) G_r(A).
$$

In particular, at the point $t_0 = \dfrac{-F(A)}{F(B)-F(A)}$\,, we have
$F(\gamma_r(t_0)) = 0$, $G_l(\gamma_l(t_0)) = t_0 G_l(B) + (1-t_0) G(A)$,
$G_r(\gamma_r(t_0)) = t_0 G_r(B) + (1-t_0) G(A)$,
and $L_i(t_0) = c_i \in [-\delta_i,\delta_i]$ ($i=1,\hdots, n-2$) are some small
numbers.

The line $\{x\mid F(x) = 0, L_1(x) = c_1, \hdots, L_{n-2}(x) = c_{n-2}\}$
contains exactly one focus point, denoted by $O_{A,B}$, and the value
$g^{critical}_{A,B} = G(O_{A,B})$ is called
the critical value of $G$ (on this line, or with respect to $A$ and $B$).

The lines $\gamma_l$ and $\gamma_r$ can be defined on the interval
$[0,t_0]$ (for $F$ going from $F(A)$ to $0$) without any problem,
because the values of the functions $F,G, L_i$ always stays in the
intervals of allowed values for the box; hence these
values cannot leave the box. However, at $t_0$ we may run into problems.

\noindent $\bullet$ $G_l$ is the extension of $G$ from the left of
$O_{A,B}$, i.e., through points on $\{F=0, L_1 = c_1, \hdots,
L_{n-2} = c_{n-2}\}$, where $G$ has values less than the critical value.
Thus, if $ t_0 G_l(B) + (1-t_0) G(A) > g^{critical}_{A,B}$, then $G_l$
is the wrong function to use: the equations
$F(\gamma_l(t)) = t F(B) + (1-t) F(A)$, $G_l(\gamma_l(t)) =
t G_l(B) + (1-t) G_l(A)$  do not give a straight line
but rather a line which is broken at $t_0$. So in order for
$\gamma_l$ to be a straight line from $A$ to $B$, we need the following necessary and sufficient condition:
$$ t_0 G_l(B) + (1-t_0) G(A) \leq g^{critical}_{A,B} .$$

\noindent $\bullet$ Similarly,  in order for $\gamma_r$ to be a straight line from $A$ to $B$, we need the condition
$ t_0 G_r(B) + (1-t_0) G(A) \geq g^{critical}_{A,B}$.

Note that $G_r(B) = G_l(B) + kF(B) > G_l(B)$, and so
$ t_0 G_r(B) + (1-t_0) G(A) >  t_0 G_l(B) + (1-t_0) G(A)  $. We have three different possibilities:

i) $ t_0 G_l(B) + (1-t_0) G(A) > g^{critical}_{A,B}$:
then, automatically, $ t_0 G_r(B) + (1-t_0) G(A) > g^{critical}_{A,B}$,
and we have exactly one straight line
from $A$ to $B$ in the box, which is $\gamma_r$.

ii) $ t_0 G_r(B) + (1-t_0) G(A) < g^{critical}_{A,B}$:
 then, automatically, $ t_0 G_l(B) + (1-t_0) G(A) < g^{critical}_{A,B}$,
 and we have exactly one straight line
from $A$ to $B$ in the box, which is $\gamma_l$.

iii) $ t_0 G_l(B) + (1-t_0) G(A) \leq g^{critical}_{A,B}$
and $ t_0 G_r(B) + (1-t_0) G(A) \geq g^{critical}_{A,B}$:
then we have two different straight lines from $A$
to $B$ in the box, which are  $\gamma_l$ and $\gamma_r$.
\end{proof}

\begin{remark}
{\rm
Another proof of Theorem \ref{thm:FF2Local} can be given by the use of
appropriate $n-2$ independent affine equations
to define a 2-dimensional singular affine ``subbox''
$$
P = \{x \in Box \; |  \; a_jF(x) + L_j(x) = b_j \; \forall\ j = 1,\hdots,n-2 \}
$$
of $Box$, which contains
$A$, $B$, and $O_{A,B}$
as the only focus point in $P$. This reduces
the problem to the 2-dimensional
case already treated in the previous subsection.
}
\end{remark}

\begin{remark}
{\rm
The fact that the local $(n-2)$-submanifold of  focus points near $O$
lies on $\{F =0\}$ is  important in the proof of Theorem \ref{thm:FF2Local}.
If focus points were able to move more freely, one would be
able to construct counter-examples (where both choices $\gamma_l$
and $\gamma_r$ in the construction of a straight line from $A$ to $B$
are bad choices).
}
\end{remark}

\subsection{Existence of non-convex  $\text{focus}^m$ boxes}  \hfill
\label{subsection:NonConvexBox}

Toric-focus systems may lose convexity due to the presence of a
$\text{focus}^m$ singularity.

\begin{theorem}
\label{thm:NonConvexFocus2}
For any $m \geq 2$ and $n \geq 2m$, there exists a toric-focus
integrable system with $n$ degrees of freedom whose base space $\cB^n$
admits an interior singular point $O$ with $m$ focus components
(and no elliptic component) with a corresponding multi-valued
system of action coordinates $(F_1,G_1,\hdots, F_m,G_m,L_1,\hdots, L_{n-2n})$
(where  $F_1, \hdots, F_m, L_1,\hdots, L_{n-2n}$ are smooth single valued
and each function $G_i$ has two branches $(G_i)_l$ and $(G_i)_r$), and a
number $\delta > 0$ such that the $\text{focus}^m$  box
\begin{equation*}
\{ |F_i|, |L_j| \leq \delta \; \forall \; i,j; \; - \delta \leq (G_i)_l\; \& \; (G_i)_r \leq \delta,\; \forall\; i\}
\end{equation*}
around $O$ is locally convex at its boundary points, but is not convex.
\end{theorem}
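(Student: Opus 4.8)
The plan is to build the example in dimension $n=2m$ and then to reduce the general case $n\ge 2m$ to it by direct products. Adding a flat box $]-\delta,\delta[^{\,n-2m}$, with coordinates $L_1,\dots,L_{n-2m}$, multiplies a non-convex four-dimensional $\text{focus}^2$ box by a convex polytope, and the direct product of a non-convex singular affine box with a convex one is again non-convex while remaining locally convex at its boundary; and for $m>2$ one installs the ``frustrated'' model described below on the first two focus components $\cS_1,\cS_2$ and the standard direct-product focus model on $\cS_3,\dots,\cS_m$, so that the $\text{focus}^m$ box factors through the non-convex four-dimensional one. So it suffices to produce a single four-dimensional $\text{focus}^2$ box which is locally convex at its boundary but not convex. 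The mechanism is the one announced in the remark after Theorem \ref{thm:FF2Local}: when two focus components are present, the critical value governing the crossing of one of the two ``ditches'' can be made to depend non-trivially on the transverse variables (so that ``focus points move''), and then for a suitable pair of points $A,B$ straddling both ditches \emph{every} choice of branch for crossing them yields a broken --- not straight --- line.

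Concretely, I would first fix the abstract model: a neighborhood $\cU(O)$ of a $\text{focus}^2$ point $O$ with smooth coordinates $(F_1,H_1,F_2,H_2)$, singular submanifolds $\cS_i=\{F_i=H_i=0\}$, multi-valued affine coordinates $(F_1,G_1,F_2,G_2)$ with monodromy $(G_i)_r=(G_i)_l+k_iF_i$ on $\{F_i>0\}$ and $(G_i)_r=(G_i)_l$ on $\{F_i\le 0\}$ ($k_1,k_2\in\ZZ_+$), and critical-value functions $G_i^{\,crit}:=G_i|_{\cS_i}$ that vanish outside a small ball about $O$ but, near $O$, have large transverse slopes: $\partial G_1^{\,crit}/\partial H_2$ large negative and $\partial G_2^{\,crit}/\partial H_1$ large positive (viewing $G_1^{\,crit}$ as a function of $(F_2,H_2)$ and $G_2^{\,crit}$ of $(F_1,H_1)$). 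These functions are exactly the semi-global symplectic invariants of the two focus-focus components (cf.\ \cite{VuNgoc2003,San-SemiglobalFF2003,Wacheux-Asymptotics2015,PeTa2018,BoIz2017}), so this affine model is realized as a saturated neighborhood in the base space of a toric-focus integrable system: one starts from the direct product of two elementary focus-focus systems (whose base is a bidisk with \emph{constant} critical values) and modifies the system in a saturated neighborhood of the central fiber so that the ``regular parts'' of the two period integrals pick up the prescribed dependence on the transverse variables, using the flexibility of integrable surgery \cite{Zung-Integrable2003}. Checking that this modification keeps all singularities non-degenerate, toric-focus, with no elliptic component at $O$, and leaves the smooth structure of $\cB$ unchanged, is the first of the two points requiring care.

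Local convexity at the boundary points is then easy: since $O$ is an interior point and the only interior point where $\cS_1$ and $\cS_2$ meet is $O$ itself, every boundary point $x$ of the box is bounded away from $O$, and since the ``bending'' of $G_1^{\,crit},G_2^{\,crit}$ is supported near $O$, a small enough neighborhood of $x$ is a direct product of standard (constant-critical-value) focus boxes --- at most one per focus component meeting that neighborhood --- and flat polytope factors. Each standard focus box is convex by Theorems \ref{thm:FF1Local} and \ref{thm:FF2Local}, each flat factor is convex, and a direct product of convex singular affine boxes is convex (a straight line in each factor, taken with a common affine parameter, is a straight line in the product); hence the box is locally convex at every boundary point, and the same argument applies verbatim after the products of the first paragraph.

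For non-convexity, choose $A,B$ in the box, close to $O$ and generic, with $F_i(A)<0<F_i(B)$ for $i=1,2$ and with the zeros $t_1,t_2$ of the affine interpolants of $F_1,F_2$ satisfying $0<t_1<t_2<1$; genericity ensures that no straight line from $A$ to $B$ meets the singular set, and since $F_1,F_2$ are single-valued affine, every straight line from $A$ to $B$ crosses $\{F_i=0\}$ exactly at $t_i$ and is affine in $G_i$ on $[0,t_i]$ and on $[t_i,1]$, hence is one of the four candidates $\gamma_{\mathbf d}$, $\mathbf d=(d_1,d_2)\in\{l,r\}^2$, obtained by continuing with $(G_i)_{d_i}$ on $\{F_i>0\}$. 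As in the proofs of Theorems \ref{thm:FF1Local}--\ref{thm:FF2Local}, $\gamma_{\mathbf d}$ is genuinely straight (a limit of regular straight lines, not a broken line) iff at each $t_i$ the value of $G_i$ at the crossing point lies on the $d_i$-side of $G_i^{\,crit}$ evaluated at that point --- and the crossing point, in particular its transverse coordinate $H_j$ ($j\neq i$), shifts monotonically when the branch $d_j$ of the other component is changed. A direct (if somewhat involved) computation with the monodromy relations and the asymptotics of the $G_i$ shows that one can choose the transverse slopes of $G_1^{\,crit},G_2^{\,crit}$ --- and $A,B$ close enough to $O$ --- so that, for each fixed $d_2$ exactly one $d_1$ is admissible at $t_1$, for each fixed $d_1$ exactly one $d_2$ is admissible at $t_2$, and the two resulting selection maps $d_2\mapsto d_1$ and $d_1\mapsto d_2$ of $\{l,r\}$ compose to the transposition $l\leftrightarrow r$; since a transposition has no fixed point, no candidate $\gamma_{\mathbf d}$ is admissible at both $t_1$ and $t_2$. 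Hence there is no straight line from $A$ to $B$ in the box (one checks directly that $A$, $B$ and all four candidate segments lie in the box once $\delta$ is small), so the box is not convex. This last calibration --- which crucially uses $k_i>0$, since the ``gap'' between the two branch values at the crossing equals $t_i k_i F_i(B)>0$, exactly as the sign of $k$ mattered in the counterexample after Theorem \ref{thm:FF1Local} --- is the genuinely delicate step of the proof.
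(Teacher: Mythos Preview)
Your proposal is correct and follows essentially the same route as the paper: reduce to $m=2$, $n=4$, set up the four candidate straight lines $\gamma_{\mathbf d}$ indexed by branch choices $(d_1,d_2)\in\{l,r\}^2$, arrange the critical-value functions $G_i^{crit}$ (realized via integrable surgery) so that every candidate is broken at one of the two crossings, and invoke Theorem~\ref{thm:FF2Local} for local convexity at boundary points. Your ``selection maps composing to a transposition'' is a clean abstract repackaging of what the paper does pictorially in Figure~\ref{fig:4wrong}, where the paper takes $F_i(A)=-\delta$, $F_i(B)=\delta$ so that $t_1=t_2$ and all four crossing points $E^{l,l},E^{l,r},E^{r,l},E^{r,r}$ lie on the plane $\{F_1=F_2=0\}$.
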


\begin{proof}
We prove for the case $m = 2$ and $n=4$. The other cases are similar.

Let $O$ be a singular point of type focus square in
the four dimensional base space $\cB$
of a toric-focus integrable system with four degrees of freedom.
We assume that the singularity of the system over $O$
is  topologically a direct product of two elementary
focus-focus singularities of index 1. So we have a local smooth
coordinate system $(F_1, H_1, F_2, H_2)$ and a local
multi-valued system of action coordinates $(F_1,G_1,F_2, G_2)$
as described in Subsection \ref{subsection:FocusMBehavior}.

Take a number $\delta > 0$ and consider the box
\begin{equation*}
Box = \{x \in \cB \;|\; |F_i(x)| < \delta;\; -\delta \leq (G_i)_l(x);\;
(G_i)_r (x) \leq \delta,\; \forall \, i = 1,2 \}
\end{equation*}
containing $O$ in $\cB$. According to  Theorem \ref{thm:FF2Local},
this box is locally convex at its boundary points. We want to show
that the system can be chosen in such a way that our box is
\textit{not} convex.

If the  decomposition of the given $(\text{focus-focus})^2$ singularity
of the integrable Hamiltonian system
into a product of two elementary focus-focus singularities
is not only topological but also symplectic, then
the local base space near $O$ is also a direct product of two affine
2-dimensional manifolds (each one with a focus-focus
singular point), and then the convexity of $\cB$ near $O$ is obvious,
because it is clear that the direct product of two  convex (singular)
affine manifolds is convex. However, in general, this local product
decomposition of $\cB$ is only topological but not affine and that's
why we may run into non-convexity.

Let us recall that the set of singular points in the box
is the union $\cS_1 \cup \cS_2$ of two 2-dimensional quadrilateral
disks intersecting each other transversally at $O$,
each disk being given by the formula
\begin{equation*}
\cS_i = \{x \in Box\; |\; F_i(x) = H_i(x) = 0\}.
\end{equation*}
For each $i= 1,2$, the function $G_i$ is single-valued when
$F_i \leq 0$ but it has two branches $(G_i)_l$ and $(G_i)_r$
when $F_i >0$; $(G_i)_l = (G_i)_r = G_i$ when $F_i \leq 0$. These two
branches of $G_i$ are related by the monodromy formula $(G_i)_r =
(G_i)_l + F_i$ when $F_i >0$.

Take two different points $A$ and $B$ in the box, with
$F_1(A) < 0$, $F_2(A) < 0$ and $F_1(B) > 0$, $F_2(B) > 0$. Then,
potentially, there may be up to four different
straight lines in the box going from $A$ to $B$, each line corresponding to
one of the four choices of the value of the couple $(G_1,G_2)$ at $B$.
(This couple is single-valued at $A$.) Define four (straight or broken
piecewise straight) lines
\begin{equation*}
\gamma^{l,l}, \gamma^{l,r}, \gamma^{r,l}, \gamma^{r,r}: [0,1] \to Box
\end{equation*}
which go from $A$ to $B$ by the following formulas:
\begin{align*}
F_1(\gamma^{l,r}(t)) &= tF_1(B) + (1-t) F_1(A), \\
 F_2(\gamma^{l,r}(t)) &= tF_2(B) + (1-t) F_2(A), \\
(G_1)_l(\gamma^{l,r}(t)) &= t(G_1)_l(B) + (1-t) G_1(A), \\
(G_2)_r(\gamma^{l,r}(t)) &= t(G_2)_r(B) + (1-t) G_2(A)
\end{align*}
for $\gamma^{l,r}$, and similarly for the other three  lines.

It is easy to see that these four lines lie inside the box, because the
inequalities defining the box are satisfied on all of them. If at least one of
these four lines is a straight line (without any breaking, i.e., changing of
direction in the middle of the line) then it means there is a straight line from
$A$ to $B$. But similarly to the 2-dimensional cases, some of them may be
broken. Actually, as we will see, it may happen that all of them are broken and,
in that case, we cannot join $A$ and $B$ by any straight line.

Look at $\gamma^{r,l}$, for example. It is broken at the intersection
with the hyperplane $\{F_1 = 0\}$, if it intersects
$\{F_1 = 0\}$, on the ``left side'' (i.e., the side given by
$\{H_1 < 0\}$, which is the ``wrong side'' for  $ \gamma^{r,l}$)
of the surface  $\cS_1$ of focus-focus points on $\{F_1 = 0\}$ (i.e.,
the side with lower $G_1$-values than on $\cS_1$). It can also
be broken at the intersection with
the hyperplane $\{F_2 = 0\}$, if it intersects
$\{F_2 = 0\}$, on the ``right side'' (i.e., the side given by
$\{H_2 > 0\}$, which is again the ``wrong side'' for $ \gamma^{r,l}$)
of the surface  $\cS_2$ of focus-focus points on $\{F_2 = 0\}$
(i.e., the side with higher $G_2$-values than on $\cS_2$). See
Figure \ref{fig:4wrong}.

\begin{figure}[!ht]   
\vspace{-15pt}
\centering
 {\mbox{} \hspace*{0cm }\includegraphics[width=1 \textwidth]{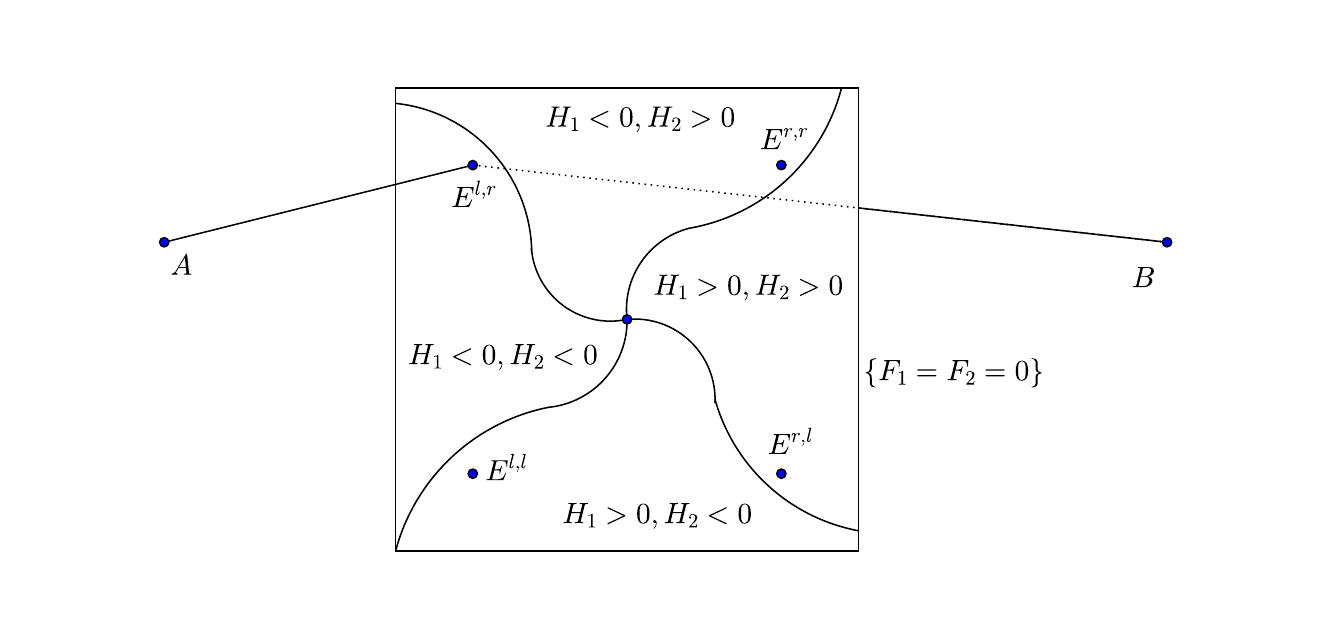}}
\vspace{-15pt}
\caption{A situation when all the choices are wrong choices.}\label{fig:4wrong}
\end{figure}

Let us assume, for example, that $\gamma^{l,l}$ is broken at $\{F_1=0\}$.
Then take $\gamma^{r,l}$ instead of  $\gamma^{l,l}$: by arguments
absolutely similar to the 2-dimensional situation, $\gamma^{r,l}$ is not
broken at $\{F_1=0\}$ in this case. But $\gamma^{r,l}$ can be broken
at  $\{F_2=0\}$. If  $\gamma^{r,l}$ is broken at  $\{F_2=0\}$, then
$\gamma^{r,r}$ is not broken at $\{F_2=0\}$, but then $\gamma^{r,r}$
can be broken at $\{F_1=0\}$. If $\gamma^{r,r}$ is broken at
$\{F_1=0\}$, we take $\gamma^{l,r}$ which is not broken at $\{F_1=0\}$,
but then $\gamma^{l,r}$ can be broken at $\{F_2=0\}$, in which case
we try to take $\gamma^{l,l}$ to avoid being broken at  $\{F_2=0\}$.
However, we already know that  $\gamma^{l,l}$ is broken at $\{F_1=0\}$.
See again Figure \ref{fig:4wrong}.

So, at least theoretically, it may happen that all of our four choices
go wrong: all of the lines
$ \gamma^{l,l}, \gamma^{l,r}, \gamma^{r,l}, \gamma^{r,r}$ are broken
somewhere. More specifically, up to a permutation of indices,
$ \gamma^{l,l}$ and $\gamma^{r,r}$ are broken at $\{F_1=0\}$, while
$ \gamma^{l,r}$ and $\gamma^{r,l}$ are broken at $\{F_2=0\}$. If this
happens, then our box is \emph{not} convex.

Such a situation (where all the possible four choices go wrong)
can actually happen. For example, we may have
$F_1(A) = F_2(A) = -\delta$, $F_1(B) = F_2(B) = \delta$ the four lines
$ \gamma^{l,l}, \gamma^{l,r}, \gamma^{r,l}, \gamma^{r,r}$
intersect the plane $\{F_1 = F_2 = 0\}$ (with
affine coordinates $(G_1, G_2)$) at four
respective points $ E^{l,l}, E^{l,r}, E^{r,l}, E^{r,r}$ as shown in
Figure \ref{fig:4wrong}, where the curves $H_1 = 0$ and $H_2 = 0$
(i.e., the curves of critical values for $G_1$ and $G_2$, respectively)
are also shown. One can see from this picture that all the four choices
in the situation on it are
wrong choices, so there is no straight line from $A$ to $B$.

By general results on \textit{integrable surgery} (the method
for constructing integrable system by glueing small pieces together, see
\cite[Proposition 4.10]{Zung-Integrable2003}: one first glues the base spaces, and then lift the glueing to
Lagrangian torus fibrations provided that the cohomological obstruction vanishes;
the cohomological obstruction lies in the so called Lagrange-Chern classes, which
automatically vanish when the base space is simple enough),
there is no obstruction to the creation of
an integrable Hamiltonian system with such a picture.

Before applying integrable surgery, we need to show the existence of 
semi-local models which correspond to sufficiently small neighborhoods of singular points of the base space. But in all our examples here and below, we may assume that very locally near each $\text{focus}^m$ point with $m \geq 2$ the base space is a local direct product (together with the affine structure), so that the existence of a corresponding integrable system over it is assured by simply taking a direct product of elementary pieces. For $\text{focus}^1$ points, which form codimension-2 submanifolds in $\cB$ which can be as curved as one likes, the existence of local integrable systems over them with a given local singular affine structure over the base space
is well known: for example, one can simply make a parametrized version of V\~u Ng\d{o}c's construction in \cite{VuNgoc2003} to construct them; see also 
\cite{Wacheux-Asymptotics2015}.
\end{proof}

\begin{remark}
{\rm
In our construction in the above theorem, the box is not convex, but if one takes a much smaller box
of the $\text{focus}^2$ point inside the original box then it is still convex. We suspect that this situation 
is true in general, though we do not have a formal proof for this fact at this moment: for any $\text{focus}^m$
point on a base space $\cB$ of a toric-focus integrable system there exists a small neighborhood of it
in $\cB$ which is convex, i.e., we still have (very) local convexity.}
\end{remark}

\section{Global convexity}
\label{sec_global_convexity}

\subsection{Local-global convexity principle}  \hfill

Let us recall the following two lemmas from \cite{Zung-Proper2006} for
regular affine manifolds (with boundary, but without focus singularities),
which are along the lines of the well-known local-global convexity
principle (see, e.g., \cite{CoDaMo-Moment1988,HiNePl-Convexity1994}).

\begin{lemma} \label{lem:convex_affine_map1}
Let $X$ be a connected compact locally convex regular affine manifold
(with boundary) and $\phi: X \rightarrow \bbR^m$ a locally injective
affine map, where $\bbR^m$ is endowed with the standard affine structure.
Then $\phi$ is injective and its image $\phi(X)$ is convex in $\bbR^m$.
\end{lemma}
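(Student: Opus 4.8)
The plan is to run the classical local-global convexity argument of Tietze--Nakajima type, adapted to the setting where the ``model'' target is $\bbR^m$ with its standard affine structure and $X$ carries a regular (non-singular) affine structure. First I would fix two points $a, b \in X$ and show they can be joined by a straight segment in $X$ whose image under $\phi$ is the Euclidean segment $[\phi(a),\phi(b)]$; once this is done, injectivity of $\phi$ and convexity of $\phi(X)$ both follow easily (if $\phi(a)=\phi(b)$, the joining segment together with local injectivity forces $a=b$; and $\phi(X)$ is convex because every pair of its points is an image of such a segment).

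To produce the joining segment, I would consider the set $T \subset [0,1]$ of parameters $t$ such that the Euclidean segment from $\phi(a)$ to $\phi(a) + t(\phi(b)-\phi(a))$ lifts, starting at $a$, to a straight path in $X$ along which $\phi$ is the affine parametrization. The set $T$ contains a neighborhood of $0$ because $X$ is locally convex and $\phi$ is locally injective affine, so near $a$ one can lift short segments uniquely; the same local argument shows $T$ is open in $[0,1]$. For closedness, take an increasing sequence $t_n \to t_\infty$ in $T$ with lifted endpoints $x_n \in X$; by compactness of $X$ a subsequence converges to some $x_\infty$, and using local convexity plus local injectivity of $\phi$ near $x_\infty$ one checks that the partial lifts glue into a single straight path up to parameter $t_\infty$, so $t_\infty \in T$. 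Hence $T = [0,1]$ and we obtain the desired segment from $a$ to $b$. A technical point to handle with care is the boundary of $X$: the lifted segment may run along $\partial X$ or corners, so the notion of ``locally convex regular affine manifold with boundary'' must be used to guarantee that short model segments still lift there; this is exactly what local convexity buys us.

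The main obstacle I anticipate is the closedness step, specifically ensuring that the limit lift $x_\infty$ is reached by a genuine straight path and that $\phi$ remains injective in a neighborhood large enough to absorb the tail of the sequence $(x_n)$. The subtlety is that local injectivity is only local, so one must argue that a uniform injectivity radius exists along the (compact) image of the lifted path, or iterate the local gluing finitely many times; compactness of $X$ is what makes this work, and it is presumably why the hypothesis is stated. Once uniqueness of short lifts is established, the gluing is routine: two straight paths in a regular affine manifold agreeing on an open set and with the same affine parametrization coincide on their common domain.

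Finally, I would note that convexity of $\phi(X)$ as a subset of $\bbR^m$ is immediate from the existence of the joining segments, since $\phi$ of a straight segment in the regular affine structure of $X$ is, by definition of affine map, an affine (hence straight, hence Euclidean) segment in $\bbR^m$; and injectivity follows since a straight segment from $a$ to $b$ on which $\phi$ is an affine parametrization cannot be constant unless $a=b$.
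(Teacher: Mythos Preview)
The paper does not give its own proof of this lemma; it is recalled from \cite{Zung-Proper2006}. The closest argument in the paper is the proof of the simplified Lemma~\ref{lemma_local_global_flat} (compact Tietze--Nakajima), whose second proof uses a star-of-$x$ argument showing $S(x)=\{y:[x,y]\subset C\}$ is open and closed in $C$.

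Your lifting argument does establish convexity of $\phi(X)$: the open--closed argument on $T\subset[0,1]$ produces a straight path in $X$ starting at $a$ whose $\phi$-image is $[\phi(a),\phi(b)]$, so that Euclidean segment lies in $\phi(X)$. But there is a genuine gap in the injectivity claim. The lift you build starts at $a$ and ends at \emph{some} point $c$ with $\phi(c)=\phi(b)$; nothing in your construction forces $c=b$, because the lifting procedure only sees $b$ through its image $\phi(b)$ and cannot distinguish $b$ from any other preimage. Your sentence ``if $\phi(a)=\phi(b)$, the joining segment together with local injectivity forces $a=b$'' is circular: when $\phi(a)=\phi(b)$ the target segment degenerates to a point, the lift is the constant path at $a$, and you learn nothing about $b$. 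To close the gap you need either (i) a covering-space step---$\phi(X)$ is convex hence simply connected, and $\phi:X\to\phi(X)$ is a proper local homeomorphism (with care at the boundary), hence a covering, hence a bijection---or (ii) a star argument run in $X$ rather than in $[0,1]$: fix $a$, set $S(a)=\{b\in X:\text{some straight segment in }X\text{ from }a\text{ to }b\text{ maps onto }[\phi(a),\phi(b)]\}$, show $S(a)$ is open and closed in $X$, hence equals $X$, and deduce injectivity from uniqueness of lifts starting at $a$. The latter is exactly the mechanism the paper deploys in its second proof of Lemma~\ref{lemma_local_global_flat}.
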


\begin{lemma} \label{lem:convex_affine_map2}
Let $X$ be a connected locally convex regular affine manifold (with boundary) and
$\phi: X\rightarrow \bbR^m$ a proper locally injective affine map, where
$\bbR^m$ with is endowed with the standard affine structure. Then
$\phi$ is injective and its image $\phi(X)$ is convex in $\bbR^m$.
\end{lemma}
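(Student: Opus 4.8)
The plan is to reduce the non-compact case to the compact case (Lemma \ref{lem:convex_affine_map1}) by an exhaustion argument, using properness to control what happens at infinity. First I would fix a base point $x_0 \in X$ and set $p_0 = \phi(x_0)$. For each $R > 0$ consider the closed ball $\overline{B}(p_0, R) \subset \bbR^m$ and the preimage $X_R := \phi^{-1}(\overline{B}(p_0,R))$. Since $\phi$ is proper, $X_R$ is compact. The key point is that $X_R$ is \emph{not} quite a locally convex affine manifold with boundary (its boundary need not be locally polyhedral, since the sphere $\partial \overline{B}(p_0,R)$ is not an affine hypersurface), so a naive application of Lemma \ref{lem:convex_affine_map1} is not available. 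To fix this, I would instead work with a connected component $Y_R$ of $X_R$ containing $x_0$, and then enlarge it slightly: the idea is that for a suitable sequence $R_n \to \infty$, the set $\phi(X)$ is the increasing union of the convex sets $\phi(Y_{R_n})$, hence convex, provided each $\phi(Y_{R_n})$ is convex and the $Y_{R_n}$ form a connected exhaustion.

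The cleaner route, which I would actually carry out, is the following. For $A, B \in X$, I want a straight segment from $A$ to $B$ in $X$. Pick $R$ large enough that both $A,B$ lie in $\phi^{-1}(B(p_0,R))$, and let $Y$ be the connected component of $\phi^{-1}(\overline{B}(p_0, R+1))$ containing the component of $\phi^{-1}(\overline{B}(p_0,R))$ that contains $A$. First one must check $Y$ is connected and contains both $A$ and $B$: connectedness of $X$ together with properness gives, after possibly increasing $R$, that $A$ and $B$ lie in the same component of each sufficiently large sublevel set — here one uses that a proper map from a connected, locally connected, locally compact space has the property that sufficiently large preimages of balls eventually ``merge'' the components through which a fixed path passes. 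Next, $Y$ is a compact, connected, locally convex regular affine manifold with boundary: local convexity is inherited from $X$ since local convexity is a local property and $Y$ is (the closure of) an open subset of $X$ away from $\partial \overline{B}(p_0,R+1)$; near $\partial \overline{B}$ one shrinks the neighborhoods so that convexity in $X$ restricts to convexity in $Y$, using that a segment in $X$ between two nearby points of $Y$ stays close and hence inside $Y$. Then Lemma \ref{lem:convex_affine_map1} applies to $\phi|_Y$: it is injective with convex image $\phi(Y) \subseteq \overline{B}(p_0,R+1)$.

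Now I would conclude: since $\phi|_Y$ is injective and $A, B \in Y$ with $\phi(A),\phi(B) \in B(p_0,R)$, the segment $[\phi(A),\phi(B)]$ lies in the interior ball $B(p_0,R+1) \subset \phi(Y)$ by convexity, and its $\phi|_Y$-preimage is a straight segment in $Y \subseteq X$ from $A$ to $B$; this proves convexity of $X$. For injectivity of $\phi$ on all of $X$: if $\phi(A) = \phi(B)$ with $A \neq B$, put both in such a $Y$ and invoke injectivity of $\phi|_Y$ for a contradiction. Finally, convexity of $\phi(X)$ in $\bbR^m$ follows because $\phi(X) = \bigcup_R \phi(Y_R)$ is an increasing union of convex sets (monotonicity in $R$ of the chosen components is arranged by always taking the component containing $x_0$), and a nested union of convex sets is convex.

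The main obstacle I expect is the topological bookkeeping of connected components: ensuring that for large $R$ a fixed pair of points (or a fixed path joining them in $X$) lies in a single connected component of $\phi^{-1}(\overline{B}(p_0,R))$, and that these components are nested as $R$ grows. This is where properness is essential and where the argument must be made carefully; everything else (inheritance of local convexity by $Y$, the final segment-lifting step) is routine given Lemma \ref{lem:convex_affine_map1}. A secondary technical point is handling the non-polyhedral boundary piece $\phi^{-1}(\partial \overline{B}(p_0,R+1))$ — I would sidestep it by never requiring convexity statements to be tight at that boundary, only using points whose images lie well inside $B(p_0,R+1)$.
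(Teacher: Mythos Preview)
The paper does not actually prove Lemma \ref{lem:convex_affine_map2}; it is stated as a result recalled from \cite{Zung-Proper2006}. However, the paper does indicate the intended method when it proves the analogous Theorem \ref{thm:GlobalConvex2Dfirst}, explicitly saying the idea is ``similar to the reduction of Lemma \ref{lem:convex_affine_map2} to Lemma \ref{lem:convex_affine_map1}'': one exhausts by preimages of large compact convex sets (there, squares $D_N$) and applies the compact case. Your proposal follows exactly this route --- exhaust by $\phi^{-1}$ of large balls, use properness for compactness, use a path in $X$ to ensure $A$ and $B$ land in the same component, then invoke Lemma \ref{lem:convex_affine_map1} --- so it matches the paper's intended approach.

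Two minor remarks. First, your worry about the spherical boundary is largely unnecessary: since $\phi$ is a local affine isomorphism, near any $y$ with $\phi(y)\in\partial\overline{B}(p_0,R+1)$ the set $Y$ is locally the $\phi$-preimage of the intersection of the ball with a half-space or orthant (accounting for $\partial X$), which is convex; so local convexity of $Y$ is immediate and Lemma \ref{lem:convex_affine_map1} applies directly. Alternatively, simply replace balls by cubes $[-N,N]^m$ as the paper does in Theorem \ref{thm:GlobalConvex2Dfirst}, and the issue disappears entirely. Second, the connected-component bookkeeping you flag as the main obstacle is handled in one line: pick any continuous path from $A$ to $B$ in $X$; its $\phi$-image is compact, hence contained in some $\overline{B}(p_0,R)$, so $A$ and $B$ lie in the same component of $\phi^{-1}(\overline{B}(p_0,R))$.
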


A simplified version of Lemma \ref{lem:convex_affine_map1} is the
following lemma, which is nothing but the compact version of the classical Tietze-Nakajima  theorem \cite{Tietze1928,Nakajima1928}:

\begin{lemma}[Compact version of the Tietze-Nakajima theorem]
\label{lemma_local_global_flat}
Let $C \subset \mathbb{R}^m$ be a compact connected locally convex set.
Then $C$ is convex.
\end{lemma}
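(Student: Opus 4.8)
The plan is to prove that a compact connected locally convex set $C \subset \mathbb{R}^m$ is convex by a connectedness argument: fix a base point $p \in C$, let $S \subset C$ be the set of points $q \in C$ for which the segment $[p,q]$ lies entirely in $C$, and show that $S$ is nonempty, open in $C$, and closed in $C$; since $C$ is connected this forces $S = C$, and as $p$ was arbitrary this gives convexity. The point $p$ itself lies in $S$, so $S \neq \emptyset$. Closedness of $S$ in $C$ is essentially immediate: if $q_n \to q$ with $[p,q_n] \subset C$, then any point $x \in [p,q]$ is a limit of points $x_n \in [p,q_n] \subset C$, and $C$ is compact hence closed, so $x \in C$; thus $[p,q] \subset C$ and $q \in S$.

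The real work is openness of $S$ in $C$. First I would record a quantitative consequence of local convexity: by compactness there is a single radius $r > 0$ such that for every $x \in C$ the intersection $B(x,r) \cap C$ is convex (cover $C$ by finitely many balls in which local convexity holds and take a Lebesgue number). Now suppose $q \in S$, so $[p,q] \subset C$. The segment $[p,q]$ is compact and contained in $C$, so I would like to "fatten" it: I claim there is $\varepsilon > 0$ so that any $q'$ with $|q' - q| < \varepsilon$ still satisfies $[p,q'] \subset C$. The mechanism is a chaining argument along the segment. Choose points $p = x_0, x_1, \dots, x_N = q$ on $[p,q]$ consecutive at distance less than $r/2$; each consecutive pair lies in the convex set $B(x_i, r) \cap C$. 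For $q'$ close to $q$, the segment $[p,q']$ lies within a small tube around $[p,q]$; one shows inductively that the portion of $[p,q']$ near $x_i$ lies in $B(x_i,r)\cap C$, using at each step that this set is convex and contains both the relevant piece coming in and the point $x_{i+1}$ (for $q'$ close enough). Concretely, I would parametrize $[p,q']$ and $[p,q]$ by $t \in [0,1]$ and argue that for each $i$ the sub-segment of $[p,q']$ with parameter in an interval around where $[p,q]$ passes through $x_i$ stays in $B(x_i,r)$, hence in $C$ by local convexity; overlapping these intervals covers $[0,1]$. This shows a whole neighborhood of $q$ in $\mathbb{R}^m$ (intersected with $C$) lies in $S$, i.e. $S$ is open in $C$.

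The main obstacle is making the chaining argument in the previous paragraph precise and uniform: one must choose the number of subdivision points $N$ depending only on $\mathrm{diam}(C)$ and $r$, verify that for $q'$ within a suitable $\varepsilon$ the segment $[p,q']$ genuinely stays in the union of the balls $B(x_i, r)$ with the right ordering of parameters, and handle the induction step where convexity of $B(x_i,r) \cap C$ is invoked to conclude that the segment between a point already known to be in $C$ and $x_{i+1}$ lies in $C$. Once the uniform local convexity radius $r$ is in hand, this is a routine but slightly fiddly $\varepsilon$–$\delta$ continuity-of-segments computation, which I would not carry out in full detail here. Finally, with $S$ open, closed, and nonempty in the connected space $C$, we conclude $S = C$; since $p \in C$ was arbitrary, every two points of $C$ are joined by a segment in $C$, so $C$ is convex. $\hfill\square$
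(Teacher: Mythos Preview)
Your overall architecture---fix $p$, let $S(p)$ be the star of $p$, show $S(p)$ is nonempty, closed, and open in $C$---is exactly the skeleton of the paper's second proof, and your closedness argument is correct and matches the paper's.

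The gap is in your openness argument. You write that the portion of $[p,q']$ lying in $B(x_i,r)$ ``stays in $B(x_i,r)$, hence in $C$ by local convexity''. This is a non-sequitur: local convexity says that $B(x_i,r)\cap C$ is convex, i.e., that a segment joining two points \emph{already known to lie in $C$} (and in the ball) stays in $C$. It does not say that any segment merely contained in $B(x_i,r)$ lies in $C$. Your induction never supplies the second endpoint: at step $i$ you know, at best, that some point $y$ on $[p,q']$ lies in $B(x_i,r)\cap C$, and that $x_{i+1}\in B(x_i,r)\cap C$; convexity then yields $[y,x_{i+1}]\subset C$, but $[y,x_{i+1}]$ is \emph{not} a sub-segment of $[p,q']$, so this does not let you advance along $[p,q']$. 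The ``routine but slightly fiddly'' computation you defer is in fact the crux of the argument, and it does not go through as you describe.

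The paper fills this gap by a genuinely different mechanism. Given $y\in S(x)$ and a nearby $z\in U_y \cap C$, rather than trying to push the single segment $[x,z]$ into $C$ directly, it constructs an affine map from an entire filled triangle $\Delta abc$ into $C$ sending the vertices to $x,y,z$; the edge $[a,c]$ then maps to a straight segment from $x$ to $z$ in $C$. This is carried out by a Zorn's lemma ``engulfing'' argument on the family of partial affine maps $\Psi:S\to C$ defined on ``bombed triangles'' $S\subset\Delta abc$ (subsets containing $[a,b]\cup[b,c]$ with convex complement): a maximal such map must have domain all of $\Delta abc$, since otherwise local convexity of $C$ allows an affine extension across a small additional triangle, contradicting maximality. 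The paper also records a shorter first proof via the intrinsic length metric: by Arzel\`a--Ascoli a length-minimizing path in $C$ between any two points exists, and it must be straight because local convexity would otherwise permit a shortcut.
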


\begin{proof} 

{\bf Proof 1:} There is a simple proof which goes as follows. For any two
points $x$ and $y$ in $C$ there is a piecewise linear path joining   
$x$ to $y$ in $C$ (by connectedness and local convexity of $C$) and
is, of course, of finite length with respect to a fixed Euclidean metric.
By the Arzel\`a-Ascoli theorem, the set of piecewise linear paths 
of finite lengths from  $x$ to $y$ in $C$ (parametrized by the length) 
admits a  sequence converging uniformly to a path $\gamma$ lying in $C$
(because $C$ is closed) and which realizes the infimum of lengths of paths
between $x$ and $y$. This path $\gamma$ must be a line segment. Indeed, if 
this were not the case  in a neighborhood of a  point, then we could shorten 
the length of $\gamma$ by joining  two nearby points on it by a straight line segment, which is possible by local convexity of $C$.

Note that this proof works because the affine structure of the Euclidean space 
$\mathbb{R}^m$ is compatible with its metric structure, i.e., straight line segments in the affine structure are paths of minimal length in the metric structure. However, later on (Proposition \ref{prop:FocusBox1}), we will work with affine structures on sets a priori do not have a compatible metric structure. This is why we provide a second proof that does not require a metric.

{\bf Proof 2:} If $x, y \in\mathbb{R}^m$ denote $[x,y]: = \{tx + (1-t)y \mid t \in [0,1]\}$
the straight line segment with extremities $x$ and $y$. For any
$x \in C \subset \mathbb{R}^m$ define the {\bfi star} $S(x):= \{y \in C\mid [x,y]\subset C\}$ of
$x$. It is clear that $C$ is convex if and only if $S(x) =C$ for all $x \in
C$. So, fix an arbitrary $x \in C$. We will show that $S(x)$ is both closed
and open in $C$; connectedness of $C$ guarantees then that $S(x) =C$.

To show that $S(x)$ is closed, let $y_n \rightarrow y$, where
$\{y_n\}_{n \in\mathbb{N}} \subset S(x)$. Since $C= \overline{C}$,
it follows that $y \in C$. This then implies that for any $t \in [0,1]$,
$tx + (1-t)y_n \rightarrow tx + (1-t)y$. However, $tx + (1-t)y_n \in
[x, y_n] \subset C$, by the definition of $S(x)$, and hence $[x,y] \subset
C$ since $C = \overline{C}$.

We now show that $S(x)$ is open in $C$, i.e., for any $y\in S(x)$ there is 
a neighborhood $U_y$ of $y$ in $\mathbb{R}^m$ such that 
$U_y \cap C \subset S(x)$. By local convexity of $C$, we can choose
a bounded neighborhood $U_y$ of $y$ in $\mathbb{R}^m$ such that $U_y \cap C$
is convex. Take any point $z \in U_y \cap C$, we will show that
$z \in S(x)$. In fact, we will show that there exists an affine map 
$\Phi:\Delta abc \to C$, where $\Delta abc$ is a triangle in $\RR^2$ 
with vertices $a,b,c \in \RR^2$, such that $\Phi(a)=x, \Phi(b)=y, \Phi(c)=z$.
Here, the affine property of $\Phi$ means the usual thing: the
pull-back of a local affine function by $\Phi$ is again a local affine function.
If such a $\Phi$ exists then of course $\Phi([a,c])$ is a straight line in
$C$ joining $x$ with $z$ and hence $z \in S(x)$.

To prove the existence of $\Phi$, we will use the
"engulfing technique", which can be conveniently expressed via the Zorn's lemma. 
Consider the set of "bombed triangles"
$$ \mathcal{S} = \{S \subset \Delta abc\; \text{such that}\;
[a,b]\cup [b,c]\subset S \; \text{and}\; \Delta abc \setminus S \; \text{is convex}\}$$
and the set of affine maps 
$$\cH = \{ \Psi: S  \to C \; | \; S\in \mathcal{S},
\Psi(a)=x, \Psi(b)=y, \Psi(c) = z,\; \text{and}\; \Psi 
\; \text{is affine}\}.$$
(The condition that $\Psi$ is affine means that the pull-back of a local affine function on $C$ by $\Psi$ 
is equal to the restriction of a local affine function on $\Delta abc$ to $S$.)

Then $\mathcal{S}$ and $\mathcal{H}$ are partially ordered sets 
with respect to the obvious inclusion order: for two elements  
$\Psi_1: S_1 \to C$ and $\Psi_2: S_2 \to C$ of $\mathcal{H}$, 
we write $\Psi_1 \preceq \Psi_2$ if $S_1 \subset S_2$ and 
the restriction of $\Psi_2$ to $S_1$ is equal to $\Psi_1$.
Notice that $\mathcal{H}$ is nonempty: 
it contains a (minimal) element $\Psi_{min}: [a,b] \cup [b,c] \to [x,y] \cup [y,z] \subset C$.
It is  also clear that $\mathcal{H}$ satisfies the inductivity
condition: if $\{\Psi_i: S_i \to C\}_{i \in I}$
is a totally ordered subset of $\mathcal{H}$, then it admits
an upper bound $\Psi: \cup_i S_i \to C$ defined by the formula $\Psi (x) = \Psi_i(x)$ if $x \in S_i$. (One verifies easily that $\cup_i S_i \in \mathcal{S}$ and $\Psi$ is well-defined and is affine.) Hence, by Zorn's lemma, $\mathcal{H}$ admits a maximal element $\Psi_{max}: S_{max} \to C$. 

\begin{figure}[!ht]
\vspace{-10pt}
\centering
 {\mbox{} \hspace*{0cm }\includegraphics[width=0.8 \textwidth]{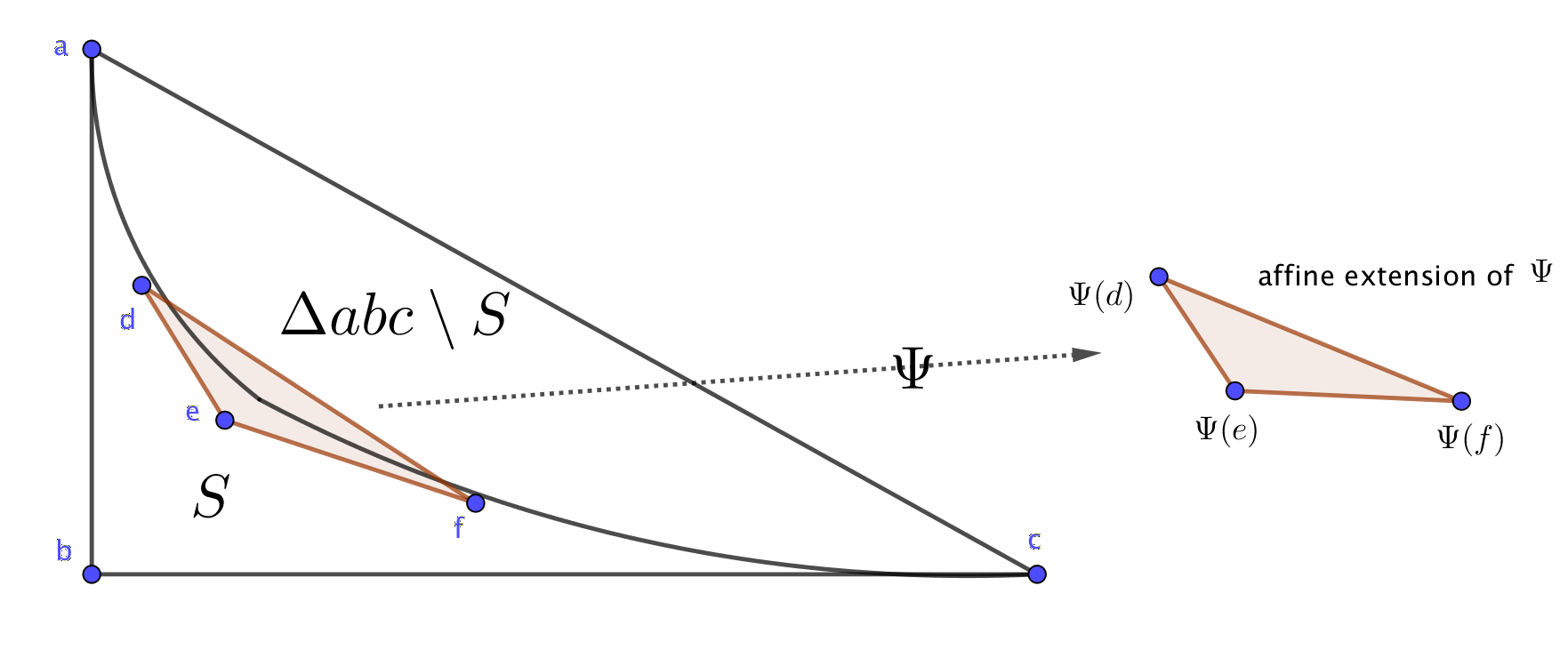} }

\vspace{-15pt}
  \caption{Making $\Psi$ bigger by affine extension.} \label{fig:AffineExtension}
\end{figure}

It remains to show that $S_{max} = \Delta abc$. Indeed, $S_{max}$ must be closed; otherwise by simply taking the continuous extension of $\Psi_{max}$ to the closure
of $S_{max}$ we would get a bigger element in 
$\mathcal{H}$. Now, if $\Delta abc \setminus S_{max}$ is nonempty convex, then there exists an arbitrarily small triangle $\Delta edf \subset 
\Delta abc$ such that $e,d,f \in S_{max}$ but 
$\Delta edf \cup S_{max} \in \mathcal{S}$ and is strictly bigger than $S$. Then, by local convexity of $C$, we can extend $\Psi_{max}$ affinely to 
$\Delta edf \cup S_{max}$ to get an element in 
$\mathcal{H}$ which is bigger than $\Psi_{max}$,
which is a contradiction. See Figure \ref{fig:AffineExtension}
for an illustration.
\end{proof}

For integral affine manifolds with focus points, the local-global convexity
principle no longer works, and indeed, as we shall see, there are many
examples of locally convex but globally non-convex integral affine manifolds
with focus points (which are base spaces of toric-focus integrable systems),
even in dimensions 2 and 3.

However, when there is only one focus point,
then the local-global convexity principle still holds,
as the following propositions show.
Recall from the previous section that a 2-dimensional focus 
box is an affine convex quadrilateral  figure (of any size, 
not necessarily small) with one focus point in it.

\begin{proposition} \label{prop:FocusBox1}
Let $B$ be a 2-dimensional focus box and
$C$ a closed connected subset of  $B$
which is strongly locally convex in $B$. Then $C$ is  convex.
If, moreover, $C$ contains the focus point then it is strongly
convex in $B$.
\end{proposition}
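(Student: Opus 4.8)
The natural strategy is to mimic the proof of the Tietze--Nakajima theorem (Proof~2 of Lemma~\ref{lemma_local_global_flat}), but working inside the singular affine structure of the focus box $B$ rather than in $\mathbb{R}^m$. Fix a point $x\in C$ and define the star $S(x) := \{y\in C \mid \text{there is a straight line in }C\text{ from }x\text{ to }y\}$. As in the flat case, it suffices to show $S(x)$ is nonempty, closed, and open in $C$; connectedness of $C$ then forces $S(x)=C$, and since $x$ was arbitrary this gives convexity of $C$. Closedness of $S(x)$ will follow from a limiting argument: a convergent sequence of straight segments from $x$ to $y_n$ in $C$ has a subsequence converging to a (regular or singular) straight line from $x$ to $y$, which lies in $C$ since $C$ is closed; here one must invoke the definition of singular straight line and the two-branch extension behaviour established in Proposition~\ref{prop:extension1}, and also use that $B$ itself is convex (Theorem~\ref{thm:FF1Local}) so that the limiting segment exists in $B$.

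For openness, take $y\in S(x)$ and a small neighbourhood $V$ of $y$ in $B$ with $C\cap V$ strongly convex in $V$ (using strong local convexity). For any $z\in C\cap V$, I would construct an affine map $\Phi\colon\Delta abc\to C$ from a planar triangle with $\Phi(a)=x$, $\Phi(b)=y$, $\Phi(c)=z$, exactly as in Proof~2 of Lemma~\ref{lemma_local_global_flat}, using the engulfing/Zorn's lemma argument: the poset $\mathcal{H}$ of affine maps defined on ``bombed triangles'' is nonempty (it contains the map on $[a,b]\cup[b,c]$ sending it to $[x,y]\cup[y,z]$, where $[x,y]$ is a straight segment in $C$ guaranteed by $y\in S(x)$ and $[y,z]$ one guaranteed by convexity of $C\cap V$), it is inductive, and a maximal element must be defined on all of $\Delta abc$ by the local-convexity extension step. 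Then $\Phi([a,c])$ is a straight line in $C$ from $x$ to $z$, so $z\in S(x)$, proving $C\cap V\subset S(x)$.

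The main obstacle is that the local-convexity extension step in the engulfing argument requires local convexity of $C$ at points of the image, and near the focus point $O$ local convexity is of the singular kind: a small neighbourhood of $O$ is affinely a cut-and-reglued Euclidean piece, not literally convex in $\mathbb{R}^2$. So the affine extension of $\Psi_{max}$ across a tiny triangle $\Delta edf$ whose image meets (or straddles) the monodromy ray must be justified using the local structure of the focus box rather than ordinary planar convexity. Concretely, one can pass to the universal-cover-like picture of a single focus box: $B$ is obtained from a convex plane polygon by gluing the two rays $OB$ and $OA$ by the monodromy matrix, and an affine map into $B$ whose source is simply connected (a triangle) lifts to the ``developing'' plane picture; there the extension is just the flat extension, and one checks it descends. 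Once this lifting observation is in place, the flat argument goes through verbatim. Finally, for the last sentence: if $O\in C$, one argues that any straight line from $A$ to $B$ in $B$ lies in $C$. When such a line avoids $O$ one can perturb/compare it to a nearby line through $O$ using the branched-extension picture and the strong local convexity along the whole segment (covering it by finitely many strongly-convex neighbourhoods), concluding that the line stays in $C$; when it passes through $O$ the two branches are handled by the same neighbourhood of $O$ in $C$. This upgrades convexity of $C$ to strong convexity in $B$.
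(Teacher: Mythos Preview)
Your star/engulfing strategy is the right idea and indeed matches the paper's treatment of the case $O\notin C$: there the images of the small triangles $\Delta edf$ stay at positive distance from $O$, so the local affine structure is flat and the extension step from Proof~2 of Lemma~\ref{lemma_local_global_flat} goes through verbatim. The gap is in the case $O\in C$. Your proposed fix, lifting to a ``developing plane picture'', does not work as stated: the focus box is obtained from a plane region with the wedge $\widehat{AOB}$ \emph{removed} (Figure~\ref{fig:Focus2}), so the pre-glued model is \emph{not} convex at $O$, and a flat affine extension of $\Psi_{max}$ across a tiny triangle whose image straddles the cut may well leave that region. More fundamentally, near $O$ the stalk of affine functions has rank one (only $F$), so ``affine map from $\Delta edf$ into $C$'' is underdetermined at $O$ and there is no canonical way to fill in the triangle; the engulfing step genuinely breaks down there.

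The paper avoids this by abandoning the engulfing argument entirely when $O\in C$ and using the explicit structure of the box instead: cut $B$ by the line $\{F=0\}$ into two flat convex polygons $B_\pm$, apply the flat Tietze--Nakajima lemma to show $C_\pm = C\cap B_\pm$ are convex (after checking $C\cap\{F=0\}$ is a single interval, which uses connectedness of $C$), and then, for points $p,q$ on opposite sides, run the two-candidate analysis $\gamma_l,\gamma_r$ from the proof of Theorem~\ref{thm:FF1Local}. The key observation is that $C$ admits supporting lines $\ell_l,\ell_r$ through the endpoints of the interval $C\cap\{F=0\}$, expressed in the branches $G_l,G_r$ respectively; this forces whichever of $\gamma_l,\gamma_r$ is a genuine straight line to lie in $C$, and simultaneously gives strong convexity. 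Your strong-convexity sketch (perturb, cover by neighbourhoods) is too vague to stand on its own; the supporting-line argument is what actually pins down \emph{all} straight lines from $p$ to $q$.
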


\begin{proof}
Since $B$ is a focus box, it is compact. Denote by $O$ the
focus point in $B$.

\textsf{Case 1: $O \notin C$:} It is not possible to invoke Lemma
\ref{lemma_local_global_flat}, because $C$ is not affinely diffeomorphic to a
subset of $\RR^2$, only locally. However, the proof of Lemma
\ref{lemma_local_global_flat} works. For any $x \in C$ define $S(x)= \{y \in C
\mid [x,y] \subset C\}$ to be the $x$-star in $C$. Since none of the segments in
the definition of $S(x)$ contains $O$, one can repeat the proof of Lemma
\ref{lemma_local_global_flat}, even though there may be more than one straight
line segment joining $x$ to $y$ in the affine structure of $B$.

\textsf{Case 2: $O \in C$:}  Cut $B$ into two parts $B_+ = \{F \geq 0\}$
and $B_- = \{F \leq 0\}$, where $F$ is the smooth single valued affine coordinate
of the box, $F(O) =0$. Then $B_+$ and $B_-$ are affinely isomorphic to
convex \emph{polygons} in $\mathbb{R}^2$ (if we forget about the singular point $O$), so
$C_+ = C \cap B_+$ and $C_- = C \cap B_-$ are locally convex, hence
they are disjoint unions of convex sets.

The intersection $D = C \cap \{F =0\} \ni O$ is also locally convex, and so is a
finite union of closed intervals. If $D$ is not connected, say $D$ has connected
components $D_1, D_2, \hdots$, then $D_1$ must be connected to some $D_i, i \geq 2$
by a path in $C_+$ or  $C_-$ because $C$ is connected (and hence path connected, because
it is also locally convex). But then $C_+$ or $C_-$ will contain a connected component (which contains $D_1$ and $D_i$) which is not convex (because there is a hole between $D_1$ and $D_i$,
contradicting Lemma~\ref{lemma_local_global_flat} applied to this component. Thus $D$ is an interval, which implies that $C_+$ and $C_-$ are
connected, hence they are each convex by Lemma~\ref{lemma_local_global_flat}.

Denote by $x$ and $y$ the two end points of $D$, $x$ is on the left and $y$ is
on the right of $O$ (or equal to $O$). Then, because of local convexity at $x$
and $y$ and convexity of $C_+$ and $C_-$, there are straight lines $\ell_l$ and
$\ell_r$, which pass through $x$ and $y$ respectively,  such that $C$   lies on
the right of $\ell_l$ and on the left of $\ell_r$. If, for example, $y=O$ then
$\ell_r$ is a straight line ``coming from the right''. It means that, in the
multi-valued affine coordinate system $(F,G)$ of the box, where $G$ has 2
branches $G_l$ and $G_r$, $\ell_l$ is given by an affine equation of the type
$G_l + a_lF = b_l$ while $\ell_l$ is given by an affine equation of the type
$G_r+ a_rF = b_r$.

If $p,q$ are two arbitrary different points in $C$, and they are both in $C_+$
or both in $C_-$, then there is only one straight line from $p$ to $q$ in $B$
and that straight line also lies in $C$.

Consider the opposite case, when $F(p) < 0$ and $F(q) > 0$ (or vice versa). Then
for both  potential straight lines $\gamma_l$ from $p$ to $q$ constructed in
Subsection \ref{subsection:Local1}, we have that the intersection of this
(straight or broken) line lies on the right of $x$ (because both points $p$ and
$p$ lie on the right of $\ell_l$), and if this point also lies on the left of
$O$, or coincides with $O$, then $\gamma_l$ is a true straight line from $p$ to
$q$. The same for  $\gamma_r$. At least one of the two lines $\gamma_l$ and
$\gamma_r$ must be straight, and whichever line is straight, is a straight line
in $C$. The strong convexity of $C$ is proved.
\end{proof}

\subsection{Angle variation of a curve on an affine surface} \hfill

This subsection is an adaptation of some old ideas and results
from Zung's thesis (see \cite{Zung-Integrable1993}), which were inspired
by Milnor's paper \cite{Milnor1958} on the non-existence of (locally
flat regular) affine structures on closed surfaces of genus greater
than or equal to 2.

In this subsection we define and show some results about the
\textbf{\textit{angle variation}} of a closed curve on an oriented
affine surface, which will be used to show topological restrictions on
affine surfaces with focus points which are locally convex at the boundary.

Since the word "boundary" in the literature may mean 
different things depending on the context, to avoid confusions,
let us spell out the notion of boundary used in the rest of our paper for subsets of 2-dimensional surfaces:

\begin{definition}
\label{def:boundary}
Let $\cB$ be a topological space such that the set $\cB_0$ 
of points $x \in \cB_0$ which admit a neighborhood homeomorphic to a 2-dimensional disk is open and dense in $\cB$. Then we say that
$\cB$ is a 2-dimensional surface (in a generalized sense), and the points in $\cB \setminus \cB_0$ are called boundary points of $\cB$. If $C$ is a closed subset of $\cB$ then by the boundary of $C$ we mean the set of points of $C$ which lie on the boundary of $\cB$ or on the topological frontier of $C$ in $\cB$.  
\end{definition}

Let $\cB$ be a smooth surface equipped with an affine structure with
or without focus points. We assume that $\cB$ is
\textit{oriented}; if $\cB$ is not orientable, then we
work on an oriented covering of $\cB$ instead of $\cB$ and, of course, if a
double covering of $\cB$ is convex then $\cB$ is also be convex.

Let $\gamma: [0,1] \to \cB$ be a continuously differentiable closed curve (i.e.,
$\gamma(0) = \gamma(1)$, $\dot{\gamma}(0) = \dot{\gamma}(1)$) on $\cB$ which
does not contain focus points of $\cB$ and whose velocity is non-zero
everywhere: $\dot{\gamma}(t) \neq 0,\,\forall\ t\in [0,1]$. We say that such a
curve is \textbf{\textit{non-critical}}. Put a smooth conformal structure on
$\cB$ and take a nonzero tangent vector $v \in T_{\gamma(0)}\cB$. We define the
\textit{angle variation} of $\gamma$ with respect to the affine structure of
$\cB$, the choice of $v$, and the conformal structure\footnote{Recall that a
conformal structure on a real smooth manifold $\mathcal{B}$ is an equivalence
class of Riemannian metrics, where two metrics are equivalent if one is a
multiple of the other via a smooth strictly positive function.} as follows.

Denote by $v(t) = v(t, \gamma)$ the parallel transport  of
$v$ by the affine structure of $\cB$ along $\gamma$ from $\gamma(0)$
to $\gamma(t)$ for each $t \in [0,1]$. Define
$A(t)$ to be the angle from $v(t)$ to $\dot{\gamma}(t)$
with respect to the chosen conformal structure.
Our angle $A(t)$ is algebraic, in the sense that
it can be negative and can be greater than $\pi$.
With every given choice of $A(0)$ (it is only unique up to a multiple of $2\pi$)
there is a unique choice of $A(t)$ for each $t\in [0,1]$ such that
$t \mapsto A(t)$ is continuous.

\begin{definition}
With the above assumptions and notations, the value
\begin{equation*}
AV(\gamma;v) := AV(\gamma; v, \cC) = A(1) - A(0)
\end{equation*}
is called the \textbf{\textit{angle variation}} of $\gamma$ on the affine
surface $\cB$ with respect to the conformal structure $\cC$ on $\cB$
and non-zero vector $v \in T_{\gamma(0)}(B)$.
\end{definition}

We collect some simple and useful facts about the angle variation.

\begin{lemma}
\label{lem:AV1}
With the above notations, we have:

{\rm(i)} If two $C^1$ non-critical
curves $\gamma$ and $\mu$ have the same initial point
($\gamma(0) = \mu(0)$) and are homotopic by a 1-dimensional
family of non-critical curves
with the same initial point, then $AV(\gamma; v, \cC) =
AV(\mu; v,\cC) $ with respect to any vector $v \in T_{\gamma(0)}\cB$.

{\rm(ii)} If $\cC_1$ and $\cC_2$ are two conformal structures on $\cB$
which coincide at $\gamma(0)$, then
$AV(\gamma; v, \cC_1) = AV(\gamma; v, \cC_2)$, i.e., we only have
to specify the conformal structure at the initial point $\gamma(0)$.

{\rm(iii)} If $AV(\gamma; v, \cC) = m \pi$ for some  integer $m$,
then $AV(\gamma; v, \cC') = m \pi$ for any other conformal
structure $\cC'$. If $m\pi < AV(\gamma; v, \cC) < (m+1) \pi$, then
$m\pi < AV(\gamma; v, \cC') < (m+1) \pi$ for any other $\cC'$.

{\rm(iv)} For any two different non-zero vectors $v$ and $v'$ we have
\begin{equation*}
|AV(\gamma; v', \cC)  - AV(\gamma; v, \cC) | < \pi.
\end{equation*}

{\rm(v)} If $\gamma$ bounds a regular disk
(without focus points) in $\cB$ then $AV(\gamma; v, \cC) = 2\pi$
for any $v$ and $\cC$,
provided that $\gamma$ is positively oriented with respect to the disk.

{\rm(vi)}  If $\cB$ (or a neighborhood of $\gamma$) can be affinely
immersed into $\mathbb{R}^2$,
then $AV(\gamma; v, \cC)$ is nothing but the
\textbf{\textit{winding number}} of $\gamma$ times $2\pi$.

{\rm(vii)} If $\gamma$ is an affine straight line,
then $AV(\gamma; \dot{\gamma}(0), \cC) = 0$.

{\rm(viii)}  If $\gamma$ is locally convex, in the sense that the set
of points near $\gamma$ and lying on $\gamma$ or ``on the left'' of
$\gamma$ is locally convex at $\gamma$, then
$AV(\gamma; \dot{\gamma}(0), \cC) \geq 0$. If $\mu$ is homotopic by a
path of non-critical closed curves to a locally convex $\gamma$, then
$AV(\mu; v) > -\pi$ for any $v$.
\end{lemma}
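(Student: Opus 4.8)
The plan is to deduce all eight assertions of Lemma~\ref{lem:AV1} from one identity together with three elementary auxiliary facts. Writing $p=\gamma(0)=\gamma(1)$, letting $v(t)$ be the parallel transport of $v$ along $\gamma$ for the flat affine connection of $\cB$ (meaningful since $\gamma$ avoids the focus strata $\cF_m$, so $\gamma\subset\cB_{reg}$), and letting $\mathrm{Hol}_\gamma\in\mathrm{GL}^+(T_p\cB)$ be the linear holonomy around $\gamma$, so that $v(1)=\mathrm{Hol}_\gamma(v)$, the continuous function $A(t)$ reduces mod $2\pi$ to the oriented $\cC$-angle at $\gamma(t)$ from $v(t)$ to $\dot\gamma(t)$; since $\gamma(1)=\gamma(0)$ and $\dot\gamma(1)=\dot\gamma(0)$ this yields
\[
AV(\gamma;v,\cC)\ \equiv\ \angle_{\cC,p}\bigl(\mathrm{Hol}_\gamma(v),\,v\bigr)\pmod{2\pi}.
\]
I would also record that (a) $AV$ depends continuously on $\gamma$ inside a non-critical family, on $v\in T_p\cB\setminus\{0\}$, and on the conformal structure; (b) the space of conformal structures on a neighbourhood of $\gamma$ is connected; and (c) $\mathrm{Hol}_\gamma$ depends only on the based homotopy class of $\gamma$ in $\cB_{reg}$.

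Given this, (i)--(iv) and (vii) are short. For (i), a based homotopy through non-critical curves keeps the class of $\gamma$ in $\pi_1(\cB_{reg},p)$, hence keeps $\mathrm{Hol}_\gamma$ and the right-hand side above fixed, so $s\mapsto AV(\gamma_s;v,\cC)$ is continuous and constant mod $2\pi$, therefore constant. For (iii), the right-hand side lies in $\pi\ZZ/2\pi\ZZ$ exactly when $\mathrm{Hol}_\gamma(v)$ is parallel to $v$, a condition independent of $\cC$, and then its value ($0$ or $\pi$) is also $\cC$-independent; so as $\cC$ ranges over a connected space, $AV$ either avoids $\pi\ZZ$ throughout and hence stays in one open interval $(m\pi,(m+1)\pi)$, or is the constant $m\pi$; item (ii) is the same argument restricted to conformal structures agreeing at $p$. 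For (iv), the continuous function $A'(t)-A(t)$ reduces mod $2\pi$ to the $\cC$-angle from $v(t)$ to $v'(t)$, and since parallel transport is linear these two vectors are parallel for all $t$ iff $v$ and $v'$ are; hence the principal value of this angle stays in one interval of length at most $\pi$, and $|AV(\gamma;v',\cC)-AV(\gamma;v,\cC)|<\pi$. Item (vii) is immediate, since on a straight line $\dot\gamma(t)$ is a positive multiple of the parallel velocity field, hence positively proportional to $v(t)$, so $A\equiv 0$.

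For (v) and (vi) the idea is to compare the affine parallel transport with a Riemannian one. Let $\iota$ be a developing map of the simply connected flat affine disk $D$ (for (v)), or the given affine immersion into $\RR^2$ (for (vi)); then $g:=\iota^*(\text{standard metric})$ is a flat Riemannian metric whose Levi--Civita connection \emph{is} the affine connection, both being $\iota^*$ of the standard flat connection of $\RR^2$. Computing $AV$ for the conformal class of $g$, the vector $v(t)$ is $g$-parallel, so $AV(\gamma;v,\cC_g)=\int_\gamma\kappa_g\,ds$, the total geodesic curvature; for (vi) this equals $2\pi$ times the winding number of the closed immersed curve $\iota\circ\gamma$ in $\RR^2$, while for (v) Gauss--Bonnet on $\bar D$ gives $\int_\gamma\kappa_g\,ds=2\pi\chi(\bar D)-\int_{\bar D}K\,dA=2\pi$ (with $+$ because $\gamma$ is positively oriented with respect to $D$). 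Since these values are multiples of $\pi$, (iii) upgrades them to every $v$ and every conformal structure.

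Finally, for (viii) I would first subdivide $[0,1]$ so finely that each arc of $\gamma$ lies in a single affine chart and its velocity direction turns only a little; in that chart parallel transport is trivial, so $v(t)$ is constant there, and local convexity means $\gamma$ is locally an arc of a convex curve with the convex side on its left, so its velocity direction turns counter-clockwise --- which is an orientation notion, hence $\cC$-independent --- whence each increment of $A$ is $\ge 0$ and $AV(\gamma;\dot\gamma(0),\cC)\ge 0$. For the second part, if the homotopy from $\mu$ to $\gamma$ is based at a common point then (i) gives $AV(\mu;v,\cC)=AV(\gamma;v,\cC)$ for all $v$, so taking $v=\dot\gamma(0)$ and using the first part gives $AV(\mu;\dot\gamma(0),\cC)\ge 0$, and then (iv) gives $AV(\mu;v,\cC)>-\pi$ for all $v$; the general freely-homotopic case follows by conjugating the homotopy with the path of base points, which changes the angle variation by less than $\pi$. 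The step I expect to be the main obstacle is (v)--(vi): one must carefully justify that the affine connection coincides with the Levi--Civita connection of $\iota^*(\text{standard metric})$, so that the Gauss--Bonnet and turning-number arguments genuinely apply; keeping track of the role of the conformal structure (the displayed identity and its use in (iii)) and the free-to-based reduction in (viii)(b) are the remaining delicate points.
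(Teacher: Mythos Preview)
The paper gives no proof of this lemma at all: it is introduced as ``some simple and useful facts about the angle variation'' and stated without argument. Your write-up is therefore considerably more detailed than the paper's treatment, and your organizing identity
\[
AV(\gamma;v,\cC)\equiv\angle_{\cC,\gamma(0)}\bigl(\mathrm{Hol}_\gamma(v),v\bigr)\pmod{2\pi}
\]
is a clean way to unify (i)--(iv); the arguments you give for (i)--(iv) and (vii) are correct, and your treatment of (v)--(vi) via a developing map and the observation that the pulled-back Euclidean metric has Levi--Civita connection equal to the affine connection is the natural one (this identification is immediate, since both connections are $\iota^*$ of the standard flat connection on $\RR^2$, so the step you flagged as the ``main obstacle'' is in fact harmless).

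The one place where your argument is genuinely loose is the free-homotopy clause of (viii). Saying that ``conjugating by the path of base points changes the angle variation by less than $\pi$'' is not quite the right formulation. A clean fix, entirely in the spirit of what you wrote, is: let $P_s$ be parallel transport along the basepoint path $s\mapsto\gamma_s(0)$, set $w_s=P_s(\dot\gamma_0(0))$, and consider $h(s)=AV(\gamma_s;w_s,\cC)$. Then $h$ is continuous and $h(0)\ge 0$ by the first part of (viii). By your identity and the conjugacy $\mathrm{Hol}_{\gamma_s}=P_s\,\mathrm{Hol}_{\gamma_0}\,P_s^{-1}$, one has $h(s)\equiv\angle_{\cC,\gamma_s(0)}(P_s a,P_s b)\pmod{2\pi}$ with $a=\mathrm{Hol}_{\gamma_0}(\dot\gamma_0(0))$ and $b=\dot\gamma_0(0)$ fixed. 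If $a,b$ are parallel, this angle is identically $0$ or $\pi$, so $h$ is constant and $h(1)=h(0)\ge 0$; if not, $P_s a$ and $P_s b$ are never parallel, so $h$ never meets $\pi\ZZ$ and stays in the same interval $(m\pi,(m+1)\pi)$ with $m\ge 0$, whence $h(1)>0$. In either case $h(1)\ge 0$, and then (iv) gives $AV(\mu;v,\cC)>h(1)-\pi\ge -\pi$ for every $v$, with (iii) removing the dependence on $\cC$.
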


\begin{lemma}[Collars of handles have negative angle variation]
\label{lem:AVHandle}
Let $\gamma$ be a  $C^1$ simple closed curve on $\cB$
which bounds a compact orientable domain $T$ of genus $\geq 1$
which does not contain any focus point
and such that $T$ ``lies on the left''  of
$\gamma$ with respect to the orientation of $\cB$.
Then we have
\begin{equation*}
AV(\gamma; v) < 0
\end{equation*}
with respect to any vector $v$ (and any conformal structure $\cC$).
\end{lemma}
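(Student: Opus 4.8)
The plan is to reduce Lemma \ref{lem:AVHandle} to the additivity of angle variation under concatenation of curves, combined with the known value $AV=2\pi$ for a positively oriented boundary of a regular disk (Lemma \ref{lem:AV1}(v)) and the elementary inequality $|AV(\gamma;v',\cC)-AV(\gamma;v,\cC)|<\pi$ (Lemma \ref{lem:AV1}(iv)), which makes the sign of $AV$ independent of the choice of reference vector when the value is bounded away from $\pi\mathbb{Z}$. The key geometric input is a standard decomposition of a genus-$g$ surface with one boundary circle: cut $T$ along $2g$ disjoint simple arcs $a_1,b_1,\dots,a_g,b_g$ with endpoints on $\gamma$ so that the result is a single disk $\Delta$, whose boundary, read off in the standard way, is the word $\gamma' \prod_{i=1}^g [a_i,b_i]$ (here $\gamma'$ is the arc of $\gamma$ and the bracket means the commutator loop traversed along the cut edges). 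Crucially, since $T$ contains no focus point, the affine structure restricted to (a neighborhood of) $\Delta$ is a genuine regular flat affine structure, so parallel transport along the boundary of $\Delta$ is well-defined and $AV(\partial\Delta; v)=2\pi$ by Lemma \ref{lem:AV1}(v).

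First I would set up angle variation carefully for piecewise-$C^1$ closed curves with corners, assigning to each corner the exterior turning angle, so that the Hopf-type formula $AV(\text{concatenation})=\sum AV(\text{pieces})+\sum(\text{corner angles}) \pmod{2\pi}$ holds, and in fact holds on the nose once a continuous choice of $A(t)$ along the whole loop is fixed. Then I would apply this to $\partial\Delta$: the contribution of the arc of $\gamma$ tends, as the cuts are made thin, to $AV(\gamma;v)$ up to a bounded error; the contributions of the paired cut edges $a_i$ and $a_i^{-1}$ (resp. $b_i,b_i^{-1}$) cancel \emph{up to the monodromy/holonomy of the affine connection around the handle}, and since $T$ is flat (no focus points) there is no such holonomy, so they cancel exactly except for the corner angles where the edges meet $\gamma$. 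Counting corners: the polygon $\Delta$ has $\gamma'$ plus $4g$ edge-sides meeting at vertices all sitting on $\gamma$, and the sum of exterior angles over a flat polygon with $AV(\partial\Delta)=2\pi$ forces the edge-and-corner contribution from the handle part to be a negative multiple of $2\pi$ (roughly $2\pi - 2\pi(2g)$ after accounting), leaving $AV(\gamma;v)=2\pi - 2\pi\cdot 2g + (\text{bounded error}) < 0$ once $g\geq 1$; the independence-of-$v$ and independence-of-$\cC$ statements (Lemma \ref{lem:AV1}(iii),(iv)) then remove the ambiguities and upgrade the strict inequality.

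I expect the main obstacle to be the bookkeeping of the corner angles and the ``bounded error'' when the cutting arcs are contracted onto $\gamma$: one must show that the limiting procedure does not introduce an uncontrolled contribution and that the sum of all exterior turning angles at the $4g$ vertices, together with the angle variation along the $4g$ cut-edges, is exactly $2\pi(1-2g)$ rather than merely $\leq 0$. The cleanest way around this is probably \emph{not} to take a limit at all, but to work with a fixed cut system and prove directly the identity
\begin{equation}
AV(\gamma; v) = AV(\partial\Delta; v) - \sum_{i=1}^g \big(AV_{a_i} + AV_{a_i^{-1}}\big) - \sum_{i=1}^g\big(AV_{b_i}+AV_{b_i^{-1}}\big) - (\text{corner sum}),
\end{equation}
observe that each edge pair contributes $0$ by flatness, and then identify the corner sum with $2\pi\cdot 2g$ minus a term lying in $(-\pi,\pi)$ by Lemma \ref{lem:AV1}(iv), using that the commutator loops $[a_i,b_i]$ each contribute $0$ angle variation as null-homotopic-in-$\Delta$ loops bounding a flat region (Lemma \ref{lem:AV1}(i),(v)). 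Putting the pieces together yields $AV(\gamma;v) < 2\pi - 2\pi g + \pi \leq 0$ for $g\geq 1$, which is the claim. The role of orientation (``$T$ lies on the left'') enters precisely in fixing the sign conventions so that the handle contributions are negative rather than positive, and the hypothesis that $T$ contains no focus point is exactly what makes parallel transport around the cut edges and the commutator loops trivial; without it, the monodromy matrices $\begin{pmatrix}1&k\\0&1\end{pmatrix}$ would obstruct the cancellation and the sign could flip, which is consistent with the negative convexity results elsewhere in the paper.
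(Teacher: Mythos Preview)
Your overall strategy---cut $T$ into a disk, apply $AV=2\pi$ to its boundary, and recover $AV(\gamma)$ from the pieces---is the same as the paper's, but your execution has a genuine gap. You assert that because $T$ contains no focus points ``there is no such holonomy'' and hence the contributions of $a_i$ and $a_i^{-1}$ ``cancel exactly.'' This is false: a \emph{regular} flat affine structure on a surface of positive genus generically has nontrivial linear holonomy around handle loops; the absence of focus points only means the affine structure has no \emph{singularities}, not that parallel transport along noncontractible loops is the identity. So when $a_i$ and $a_i^{-1}$ appear in the boundary word, the reference vector has been parallel-transported along the intervening pieces and is in general different, and the two contributions do \emph{not} cancel. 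This is also why your final numerics fail: your bound $AV(\gamma;v)<2\pi-2\pi g+\pi$ gives $\pi$, not $0$, for $g=1$.

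The paper's proof is precisely the repair your sketch needs. For $g=1$ it takes two closed loops $\mu_1,\mu_2$ based at (and tangent to) a point $S$ of $\gamma$, cuts to obtain a disk with boundary word $\gamma\,\mu_2^{-1}\mu_1\mu_2\mu_1^{-1}$, and observes that four of the five junctions are U-turns, each accounting for $\pi$ in the disk's total $2\pi$; hence the sum of the five piecewise angle variations (each with its own successively parallel-transported reference vector) equals $2\pi-4\pi=-2\pi$. Then, \emph{instead of} claiming cancellation, it invokes Lemma~\ref{lem:AV1}(iv) in the form $AV(\mu_i^{-1};w)+AV(\mu_i;w')>-\pi$ for each $i$ (the vectors $w,w'$ differ, and (iv) bounds the discrepancy by $\pi$). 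Combining these two inequalities with the equality yields $AV(\gamma;v)<-2\pi+\pi+\pi=0$. Note also that your description---arcs with endpoints on $\gamma$ together with a commutator boundary word---is internally inconsistent: commutators arise from based \emph{loops}, not arcs. If you switch to based loops and replace ``exact cancellation'' by the $>-\pi$ bound per pair, your outline becomes the paper's proof.
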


\begin{figure}[!ht]
\vspace{-15pt}
\centering
 {\mbox{} \hspace*{0cm }\includegraphics[width=1 \textwidth]{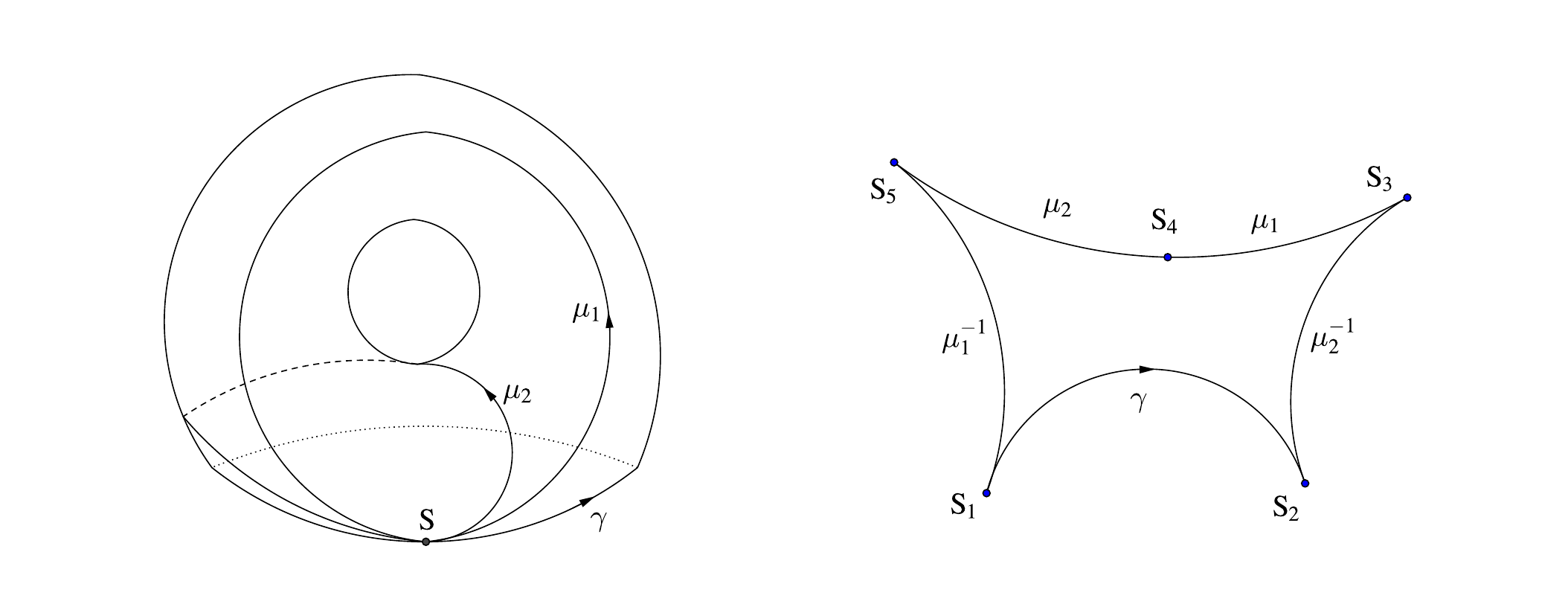} }

\vspace{-15pt}
  \caption{Collars of handles have negative angle variation} \label{fig:AV1}
\end{figure}

\begin{proof}
We will give the proof if the genus is 1. (The higher genus case is
proved in a completely similar manner.)
Draw two simple closed curves $\mu_1$ and $\mu_2$ on the handle $T$,
which are tangent to $\gamma$ at the point $S = \gamma(0) = \gamma(1)
= \mu_1(0) = \mu_1(1) = \mu_2(0) = \mu_2(1)$,
as shown in the left part of Figure \ref{fig:AV1}. Cutting $T$ by
$\mu_1$ and $\mu_2$, we get a disk $T'$ with the boundary consisting
of five consecutive paths $\gamma$, $\mu_2^{-1}$, $\mu_1$, $\mu_2$,
$\mu_1^{-1}$, as shown in the right part of Figure \ref{fig:AV1}.
Notice that, after this cutting,
$S$ becomes five points $S_1, S_2, S_3, S_4, S_5$, with U-turns at
$S_1$, $S_2$, $S_3$, $S_5$ and has $C^1$ continuation at $S^4$ for the
boundary of the  disk $T'$. The total angle variation of the boundary of
$T'$ would be $2\pi$, but since each U-turn accounts for $\pi$ in this
variation, if we count the sum of the angle variations of the five
pieces of the boundary of $T$, it is only  $2\pi - 4 \pi = -2 \pi$. In
other words, we have
\begin{equation}
\label{eq:-2pi}
 AV(\gamma; v) + AV(\mu_2^{-1}; v') +
 AV(\mu_1; v'') + AV(\mu_2; v''') + AV(\mu_1^{-1}; v'''')
 = -2\pi,
\end{equation}
where  $v'$ is the parallel transport of $v$ by $\gamma$
with respect to the affine structure, $v''$ is the parallel transport of
$v'$ by $\mu_2^{-1}$ with respect to the affine structure, and so on.

According to Lemma \ref{lem:AV1} we have
$-\pi < AV(\mu_2^{-1}; v') + AV(\mu_2;v''') $ and
$-\pi < AV(\mu_1; v'') +  AV(\mu_1^{-1};v'''')$. These
two inequalities together with equality \eqref{eq:-2pi} imply that
$AV(\gamma, v) < 0$.
\end{proof}

We say that a non-critical closed curve $\gamma$ is of
\textbf{\textit{completely negative angle variation}} if $AV(\gamma, v, \cC) <
0$ for any choice of $v$ and $\cC$. So Lemma \ref{lem:AVHandle} says that the
collar of a handle is of completely negative angle variation. It is easy to see
that the property of completely negative angle variation is invariant under
homotopy.

 \begin{lemma}
 If $\gamma_s$ ($s \in [0,1]$)
 is a continuous family of  non-critical closed curves,
 then $\gamma_0$ is of completely negative angle variation if and only if
 $\gamma_1$ is of  completely negative angle variation.
 \end{lemma}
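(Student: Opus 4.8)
The plan is to fix once and for all a smooth conformal structure $\cC_0$ on $\cB$ and to reduce the assertion to the behaviour of a single continuous real-valued function of $s\in[0,1]$. First, using Lemma~\ref{lem:AV1}(iii), one shows that a non-critical closed curve $\gamma$ is of completely negative angle variation if and only if $AV(\gamma;v,\cC_0)<0$ for \emph{every} nonzero $v\in T_{\gamma(0)}\cB$: if $AV(\gamma;v,\cC_0)<0$, then this value is either an integer multiple of $\pi$ (necessarily $\le-\pi$) or lies in an open interval $(m\pi,(m+1)\pi)$ with $(m+1)\pi\le0$, and in either case Lemma~\ref{lem:AV1}(iii) forces $AV(\gamma;v,\cC)$ to stay in that same set for all conformal structures $\cC$, hence to remain negative. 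Since replacing $v$ by $-v$ only shifts $A(t)$ by a constant and leaves $AV$ unchanged, this is a condition on the direction of $v$, i.e. on a point of the \emph{compact} projective line $\mathbb{RP}^1$ of directions at $\gamma(0)$.

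The hypothesis of non-criticality enters through the following structural remark. Measuring all angles with respect to $\cC_0$, one may write $AV(\gamma;\bar v,\cC_0)=W_\gamma-W^{\mathrm{tr}}_\gamma(\bar v)$, where $W_\gamma$ is the total turning of the velocity $\dot\gamma$ around $\gamma$ and $W^{\mathrm{tr}}_\gamma(\bar v)$ is the total turning of the $\cC_0$-direction of the affine parallel transport of $\bar v$ along $\gamma$. Because $\gamma(0)=\gamma(1)$ and $\dot\gamma(0)=\dot\gamma(1)\ne0$, the number $W_\gamma$ is $2\pi$ times the winding number of the nowhere-vanishing loop $\dot\gamma$; this winding number is a homotopy invariant of non-critical closed curves, so $W_{\gamma_s}=W_{\gamma_0}=:2\pi w$ is constant along the family, while $(s,\bar v)\mapsto W^{\mathrm{tr}}_{\gamma_s}(\bar v)$ is continuous (the affine connection is smooth and all the $\gamma_s$ lie in the regular part of $\cB$, being non-critical). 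Hence $h(s):=\max_{\bar v\in\mathbb{RP}^1}AV(\gamma_s;\bar v,\cC_0)=2\pi w-\min_{\bar v}W^{\mathrm{tr}}_{\gamma_s}(\bar v)$ is a continuous function of $s\in[0,1]$, and by the previous paragraph $\gamma_s$ is of completely negative angle variation exactly when $h(s)<0$. Thus $\{\,s:\gamma_s\text{ completely negative}\,\}=h^{-1}\big((-\infty,0)\big)$ is open, and it suffices to show it is closed, equivalently that $h(s)\neq0$ for every $s$; the intermediate value theorem then forces $h(0)$ and $h(1)$ to have the same sign, which is the claim.

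The identity $h(s)\neq0$ is where the real work lies, and it hinges on the observation that $W^{\mathrm{tr}}_\gamma(\bar v)\in\pi\ZZ$ precisely when the affine parallel transport carries $\bar v$ to a direction parallel to $\bar v$, i.e. precisely when $\bar v$ is an eigendirection of the linear holonomy $L_\gamma\in GL^+(2,\RR)$ of $\gamma$ (this also re-derives the conformal invariance used in the first paragraph). So $h(s)=0\in\pi\ZZ$ would require the minimizing direction of $W^{\mathrm{tr}}_{\gamma_s}$ to be a holonomy eigendirection; but the extrema of $\bar v\mapsto W^{\mathrm{tr}}_{\gamma_s}(\bar v)$ occur at the directions where the projective transformation of $\mathbb{RP}^1$ induced by $L_{\gamma_s}$ has unit derivative, whereas at an eigendirection that derivative is the quotient of the two eigenvalues -- which is $\neq1$ when the eigenvalues are real and distinct, there being no eigendirection at all when they are non-real, and in the parabolic case a short computation shows the eigendirection is where $W^{\mathrm{tr}}_{\gamma_s}$ is maximal rather than minimal -- so in all these cases $h(s)\notin\pi\ZZ$. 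The genuinely delicate case, and the main obstacle, is the scalar holonomy $L_{\gamma_s}=\lambda\,\mathrm{Id}$: then $W^{\mathrm{tr}}_{\gamma_s}$ is constant in $\bar v$ and lies in $\pi\ZZ$. For $\lambda<0$ one checks directly it is an \emph{odd} multiple of $\pi$, so $h(s)$ is nonzero; for $\lambda>0$ the equality $h(s)=0$ would say that $\gamma_s$ is, with respect to the affine structure, a closed affine straight line with trivial holonomy, and excluding this along a family that starts from a completely negative curve requires combining the constancy of $w$ above with Lemma~\ref{lem:AV1}(iii),(v),(viii) and Lemma~\ref{lem:AVHandle} (a completely negative curve behaves like a curve of negative total turning and cannot be deformed, through non-critical curves, onto such a geodesic). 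Granting this last point, $h(s)\neq0$ for all $s$, so $\{\,s:\gamma_s\text{ completely negative}\,\}$ is clopen in the connected interval $[0,1]$, and the lemma follows.
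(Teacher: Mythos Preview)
The paper itself offers no proof beyond the sentence ``It is easy to see that the property of completely negative angle variation is invariant under homotopy,'' so there is nothing to compare your argument against except that assertion. Your strategy---reduce to a fixed conformal structure via Lemma~\ref{lem:AV1}(iii), introduce $h(s)=\max_{\bar v}AV(\gamma_s;\bar v,\cC_0)$, and show $\{s:h(s)<0\}$ is clopen---is sound and is essentially what the paper must have in mind. But your execution of the closedness step has a real gap, which you yourself flag.

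Concretely, two of your case analyses are incomplete. In the parabolic case you assert that ``the eigendirection is where $W^{\mathrm{tr}}_{\gamma_s}$ is maximal rather than minimal,'' but this depends on the sign of the off-diagonal entry: for $L=\begin{pmatrix}1&a\\0&1\end{pmatrix}$ one computes $\Phi(\theta)-\theta\approx -a\theta^2$ near the eigendirection, so it is a maximum only when $a>0$ and a minimum when $a<0$. And in the positive scalar case you openly punt (``Granting this last point''). In both cases your attempt to rule out $h(s)=0$ \emph{pointwise}, for a single $s$, cannot succeed, because a single curve with parabolic holonomy of the wrong sign, or with positive scalar holonomy, can perfectly well have $h=0$.

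The missing observation that dissolves all of this is that the \emph{conjugacy class} of the linear holonomy is constant along the family: since the affine connection is flat away from focus points, $L_{\gamma_s}=P_s\,L_{\gamma_0}\,P_s^{-1}$ where $P_s$ is parallel transport along the path of basepoints $s'\mapsto\gamma_{s'}(0)$. Hence the type (elliptic, hyperbolic, parabolic, scalar) is fixed once and for all, the eigendirections $v_0(s)$ (when they exist) vary continuously, and $AV(\gamma_s;v_0(s))\in\pi\ZZ$ is a continuous integer-valued function of $s$, therefore a \emph{constant} $k\pi$. Combined with Lemma~\ref{lem:AV1}(iv) one gets $h(s)\in[k\pi,(k+1)\pi)$ for every $s$ (in the elliptic case, where there are no eigendirections, the whole range of $AV$ lies in a single open interval $(k\pi,(k+1)\pi)$ and $k$ is again locally constant hence constant). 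In every case ``$h(s)<0$'' is equivalent to ``$k\le-1$,'' which is manifestly independent of $s$. This handles your scalar case trivially (there $h(s)=k\pi$ is itself constant) and your parabolic case uniformly, without needing to know whether the eigendirection is a maximum or a minimum.
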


 The following lemma says what happens when the homotopy passes over
 a focus point.

\begin{lemma} \label{lem:AVFocus1}
Assume that two closed non-critical curves $\gamma$ and $\mu$ on $\cB$ form the
boundary of an annulus which contains exactly one focus point $O$ inside it, and
are oriented in such a way that the annulus ``lies of the left'' of $\mu$ and
``lies on the right'' of $\gamma$.
If $\gamma$ is of completely negative angle
variation, then $\mu$ is also of completely negative angle variation.
\end{lemma}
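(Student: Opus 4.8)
The idea is to replace the homotopy across the focus point $O$ by a \emph{concatenation} with a small loop around $O$, and then to add up angle variations, exactly in the style of the proof of Lemma~\ref{lem:AVHandle}. Let $N$ be the annulus bounded by $\gamma$ and $\mu$, with $O$ in its interior. Removing a small open disk $\mathcal{D}_O$ around $O$ turns $N$ into a pair of pants $N':=N\setminus\mathcal{D}_O\subset\cB_{reg}$, whose fundamental group is free on two generators represented by (a push-off of) $\gamma$ and by a small simple loop $\sigma$ around $O$. The boundary relation in $N'$ shows that $\mu$ is freely homotopic, \emph{inside $N'$} (hence through non-critical closed curves, after a generic perturbation to keep velocities non-zero), to a $C^1$ concatenation $\gamma'\cdot\sigma$ where $\gamma'$ is freely homotopic to $\gamma$ and $\sigma$ is such a small loop. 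The orientation hypothesis --- the annulus lies on the left of $\mu$ and on the right of $\gamma$ --- is precisely what pins down the orientation of $\sigma$: since $\mu$ is $\gamma$ swept to the side where $O$ sits, the lasso $\sigma$ picked up in the regular region is oriented so that the disk $\mathcal{D}_O$ lies to its right (the ``negative'' orientation around $O$). This is Step~1; the only delicate point here is carefully matching the ``left/right'' conventions to the orientation of $\sigma$.

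\textbf{Additivity.} Arranging $\gamma'$ and $\sigma$ to meet at a point $P$ with matching velocities, so that $\gamma'\cdot\sigma$ is $C^1$, the reference frame transported along $\gamma'\cdot\sigma$ is continuous across $P$ (it is just the continued parallel transport), and $\dot{(\gamma'\cdot\sigma)}$ is continuous there; hence there is no corner term and
\begin{equation*}
AV(\mu;v,\cC)=AV(\gamma'\cdot\sigma;v,\cC)=AV(\gamma';v,\cC)+AV(\sigma;v',\cC),
\end{equation*}
where $v'$ is the affine parallel transport of $v$ along $\gamma'$, the first equality being Lemma~\ref{lem:AV1}(i). Since $\gamma'$ is freely homotopic to $\gamma$, it is of completely negative angle variation, so $AV(\gamma';v,\cC)<0$. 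This is Step~2 and is routine given Lemma~\ref{lem:AV1}.

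\textbf{The loop around $O$.} The heart of the matter is the estimate that $AV(\sigma;w,\cC)<0$ for \emph{every} $w$ and $\cC$ when $\sigma$ is a small negatively oriented loop around $O$. For this I would use the explicit model of the affine structure near $O$ from Section~\ref{sec_ffsingularities}: it is the Euclidean plane with a wedge of angle $\arctan k$ removed and the two edges glued by the index-$k$ shear, so the ``cone angle'' at $O$ is $2\pi-\arctan k\in(\tfrac{3\pi}{2},2\pi)$, and the holonomy around $\sigma$ is the parabolic matrix $\begin{pmatrix}1&-k\\0&1\end{pmatrix}$ (resp. its inverse). Developing $\sigma$ into $\mathbb{R}^2$, the developed velocity winds exactly $-1$ time (an embedded small loop), and unwinding the definition of $AV$ against a parallel-transported frame produces a residual term equal to the variation of $\arg w$ under the parabolic monodromy along a path from the identity; because that shear pushes all directions monotonically and in one fixed rotational sense, this residual term has a definite sign and absolute value $<\pi$. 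One obtains $AV(\sigma;w,\cC)\in[-2\pi,-\pi)$ for all $w,\cC$; in particular $AV(\sigma;w,\cC)<0$. (Lemma~\ref{lem:AV1}(viii) together with the convexity of the focus box, Theorem~\ref{thm:FF1Local}, gives the weaker bound $AV(\sigma^{+};\dot\sigma^{+}(0),\cC)\ge 0$, which is not by itself enough uniformly in $w$, so the developing-map computation is the essential ingredient.) I expect this computation, and getting every orientation convention consistent with Step~1, to be the main obstacle.

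\textbf{Conclusion.} Combining the three steps, for all $v$ and $\cC$,
\begin{equation*}
AV(\mu;v,\cC)=AV(\gamma';v,\cC)+AV(\sigma;v',\cC)<0+0=0 ,
\end{equation*}
so $AV(\mu;v,\cC)<0$ for every $v$ and the given $\cC$. Since this value is non-zero, Lemma~\ref{lem:AV1}(iii) propagates negativity to every other conformal structure, and therefore $AV(\mu;v,\cC)<0$ for all $v$ and all $\cC$: $\mu$ is of completely negative angle variation, as claimed. (Equivalently, one may argue directly that $\gamma'\cdot\sigma$ is completely negative and invoke the preceding homotopy-invariance lemma for free homotopies of non-critical closed curves.)
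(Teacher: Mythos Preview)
Your concatenation approach has a genuine gap at Step~1. A free homotopy in $\pi_1(N')$ need not be realizable through \emph{non-critical} closed curves: regular homotopy classes of immersions $S^1\to N'$ are detected by $\pi_1(N')$ together with the tangent winding number (the pair of pants has trivializable tangent bundle, so a Whitney--Graustein type obstruction applies). The simple curve $\mu$ has tangent winding $+1$, whereas a $C^1$ concatenation $\gamma'\cdot\sigma$ of two \emph{simple} closed loops based at the same point with matching velocity has winding $0$ or $\pm2$, never $+1$. Hence no regular homotopy between $\mu$ and such a $\gamma'\cdot\sigma$ exists, and Lemma~\ref{lem:AV1}(i) cannot be invoked as you do.

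This winding mismatch is exactly the source of the trouble in Step~3. The pair-of-pants boundary relation in fact gives $[\mu]=[\gamma][\sigma^{+}]$ with $\sigma^{+}$ \emph{positively} (CCW) oriented around $O$ (disk on its left), contrary to your claim; and for a simple CCW loop one computes $AV(\sigma^{+};w)=2\pi-\theta(w)\in(\pi,2\pi]$, where $\theta(w)\in[0,\pi)$ is the CCW rotation of $w$ under the parabolic monodromy $M_O$. This is \emph{positive}, so naive additivity would give $AV(\gamma'\cdot\sigma^{+};v)=AV(\gamma';v)+(2\pi-\theta)$, which need not be negative. Your range $[-2\pi,-\pi)$ is correct only for $\sigma^{-}$, which is not the loop the $\pi_1$ relation produces. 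The ``missing'' $-2\pi$ is precisely the regular-homotopy obstruction above: if you replace $\sigma$ by a winding-$0$ immersed representative of $[\sigma^{+}]$, then $AV(\sigma;w)=-\theta(w)\le0$ and the argument does go through --- but at that point you have essentially rederived the paper's direct argument (the extra monodromy factor $M_O$ pushes $v(1)$ to the left by $\theta$, decreasing $A(1)$, hence $AV(\mu;v)\le AV(\gamma;v)<0$) in a more circuitous way.
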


\begin{proof}
The reason is simply that the affine monodromy around a focus point in the
positive direction pushes every tangent vector to the left (except for one
direction which remains unchanged under this monodromy). When $v(t)$
is pushed to the left then the angle between $v(t)$ and $\dot{\gamma}(t)$
becomes smaller, and so the angle variation becomes even more negative.
\end{proof}

\begin{lemma} \label{lem:AVFocus2}
Assume that two  closed non-critical curves $\gamma$ and $\mu$
on $\cB$ form the boundary of an annulus which contains
a finite number of focus points  inside it,
and are oriented in such a way that the annulus
``lies of the left''
of $\mu$ and ``lies on the right'' of $\gamma$.  If $\gamma$
is of negative angle variation,  then $\mu$ is also
of negative angle variation.
\end{lemma}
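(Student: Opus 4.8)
The statement is the many-focus-point generalization of Lemma \ref{lem:AVFocus1}, which handled a single focus point inside the annulus. So the natural approach is to reduce the general case to the one-focus case by a finite induction on the number of focus points enclosed. First I would fix a conformal structure $\cC$ and a vector $v \in T_{\gamma(0)}\cB$, and recall that ``of negative angle variation'' here means $AV(\gamma;v) < 0$ for the chosen data (as opposed to ``completely negative,'' which quantifies over all $v,\cC$); but by Lemma \ref{lem:AV1}(iii) the sign of the angle variation, once it is nonzero, is independent of the conformal structure, so I may as well phrase everything in terms of a fixed $\cC$ and only track the dependence on the base vector.

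The induction step is the heart of the matter. Suppose the annulus $\mathcal{A}$ bounded by $\gamma$ (on the right) and $\mu$ (on the left) contains $N \geq 1$ focus points $O_1,\dots,O_N$. I would choose a simple closed non-critical curve $\nu$ inside $\mathcal{A}$, homotopic to $\gamma$ rel nothing, which together with $\gamma$ bounds a sub-annulus $\mathcal{A}_1$ containing exactly one focus point, say $O_1$, and no others — this is possible because the focus points form a discrete set (they lie on codimension-two submanifolds; in dimension 2 they are isolated) and one can push a collar of $\gamma$ inward past a single $O_j$ at a time. Orient $\nu$ so that $\mathcal{A}_1$ lies on its left and $\mathcal{A}\setminus \mathcal{A}_1$ (the sub-annulus between $\nu$ and $\mu$, containing the remaining $N-1$ focus points) lies on its right. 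Then Lemma \ref{lem:AVFocus1} applies to the pair $(\gamma,\nu)$: since $\gamma$ is of negative angle variation, so is $\nu$ — wait, one must be careful, because Lemma \ref{lem:AVFocus1} as stated is for \emph{completely} negative angle variation. Here is where I need to either (a) observe that the proof of Lemma \ref{lem:AVFocus1} actually gives the stronger statement that the angle variation only \emph{decreases} when one crosses a focus point in the positive direction (which is exactly what its one-line proof says: the monodromy pushes $v(t)$ to the left, making the angle $A(t)$ smaller, hence $A(1)-A(0)$ smaller), so if $AV(\gamma;v)<0$ then $AV(\nu;v')<0$ for the transported vector $v'$; or (b) re-run that monodromy argument directly. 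Either way, $\nu$ is of negative angle variation. Now the sub-annulus between $\nu$ and $\mu$ contains only $N-1$ focus points, so by the induction hypothesis $\mu$ is of negative angle variation. The base case $N=1$ is precisely Lemma \ref{lem:AVFocus1} (modulo the ``completely'' issue), and the base case $N=0$ is Lemma \ref{lem:AV1}(i) (homotopy invariance through non-critical curves, since an annulus with no focus points gives a non-critical homotopy from $\gamma$ to $\mu$).

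**The main obstacle.** The delicate point is the interplay between ``negative'' and ``completely negative'' angle variation and the precise direction of the monodromy inequality. Lemma \ref{lem:AVFocus1} is phrased for completely negative angle variation, but what I really want to propagate is the plain inequality $AV < 0$ for a single transported vector. The clean fix is to state and prove the sharper local fact: \emph{if $\nu'$ is obtained from $\nu$ by an elementary homotopy sweeping across exactly one focus point in the positive direction, then $AV(\nu';v') \leq AV(\nu;v)$} where $v'$ is the affine parallel transport of $v$; this follows verbatim from the argument in the proof of Lemma \ref{lem:AVFocus1} — the positive monodromy around a focus point is, in a suitable affine chart, the shear $\begin{pmatrix}1 & k\\ 0 & 1\end{pmatrix}$ with $k>0$, which rotates every non-eigendirection tangent vector strictly counterclockwise (to the left), decreasing the angle from $v(t)$ to $\dot\gamma(t)$. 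Once this monotonicity is in hand, the induction is immediate and the theorem follows; I would also remark that the same reasoning shows a curve of completely negative angle variation stays completely negative, recovering the statement of Lemma \ref{lem:AVFocus1} uniformly.

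\begin{proof}
By Lemma \ref{lem:AV1}(iii), the sign of the angle variation, when nonzero, does not depend on the conformal structure, so we fix a conformal structure $\cC$ once and for all and suppress it from the notation.

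We argue by induction on the number $N$ of focus points contained in the interior of the annulus bounded by $\gamma$ and $\mu$. If $N=0$, then the annulus provides a homotopy through non-critical closed curves between $\gamma$ and $\mu$ (after transporting the base point along a path in the annulus), and Lemma \ref{lem:AV1}(i) gives $AV(\mu;v) = AV(\gamma;v') < 0$, where $v'$ is the affine parallel transport of $v$. If $N=1$, the statement is contained in (the proof of) Lemma \ref{lem:AVFocus1}: the positive affine monodromy around the unique focus point is conjugate to the shear $\begin{pmatrix} 1 & k \\ 0 & 1 \end{pmatrix}$ with $k>0$, which moves every tangent vector not lying on the monodromy eigendirection strictly to the left, hence decreases the angle from the parallel-transported vector $v(t)$ to $\dot\gamma(t)$; consequently $AV(\mu;v') \leq AV(\gamma;v) < 0$ for the transported base vector $v'$, which is what we want.

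Assume now $N \geq 2$ and that the claim holds for fewer than $N$ enclosed focus points. Since the focus points of $\cB$ form a discrete subset of the surface, we may choose a simple closed $C^1$ non-critical curve $\nu$ inside the annulus, cutting it into two sub-annuli: one sub-annulus $\mathcal{A}_1$ bounded by $\gamma$ and $\nu$ and containing exactly one focus point, and another sub-annulus $\mathcal{A}_2$ bounded by $\nu$ and $\mu$ and containing the remaining $N-1$ focus points. Orient $\nu$ so that $\mathcal{A}_1$ lies on its left and $\mathcal{A}_2$ lies on its right. Applying the case $N=1$ to the pair $(\gamma,\nu)$, we obtain that $\nu$ is of negative angle variation. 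Applying the induction hypothesis to the pair $(\nu,\mu)$, we conclude that $\mu$ is of negative angle variation as well.
\end{proof}
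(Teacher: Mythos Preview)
Your proof is correct and follows exactly the same approach as the paper, which simply says ``apply Lemma \ref{lem:AVFocus1} $m$ times, where $m$ is the number of focus points in the annulus.'' Your version is in fact more careful: you explicitly flag and resolve the discrepancy between ``negative'' here and ``completely negative'' in Lemma \ref{lem:AVFocus1} by extracting the underlying monotonicity of the angle variation under positive monodromy, a point the paper leaves implicit.
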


\begin{proof}
Just apply Lemma \ref{lem:AVFocus1} $m$ times, where $m$ is the
number of focus points in the annulus.
\end{proof}

 \begin{lemma} \label{lem:AV8}
 If $\gamma_1, \gamma_2, \hdots, \gamma_k$ ($k \geq 3$)
 are closed non-critical curves
 which bound a regular domain $\cD$ (without focus points) of genus 0
 and are positively oriented with respect to $\cD$, and such that
$\cD$ is locally convex at $\gamma_2, \hdots, \gamma_k$, then
$\gamma_1$ is of completely negative angle variation.
 \end{lemma}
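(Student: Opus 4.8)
The plan is to reduce Lemma \ref{lem:AV8} to the handle/collar computations already established, using the fact that a genus-$0$ domain $\cD$ with $k$ boundary circles ($k\geq 3$) is a sphere with $k$ holes, i.e. a pair of pants when $k=3$ and a connected sum of pants in general. The key point to exploit is that such a domain is homeomorphic to a planar domain, so a neighborhood of $\cD$ can be affinely immersed into $\RR^2$ \emph{only if} there are no focus points — which there aren't, by hypothesis — hence Lemma \ref{lem:AV1}(vi) is available to compute angle variations as winding numbers. First I would fix an orientation and a conformal structure on $\cB$, choose a basepoint and reference vector $v$, and recall that $\gamma_1,\dots,\gamma_k$ together bound $\cD$ with the correct orientations, so that in a planar model the sum of the winding numbers behaves like the boundary of a multiply-connected planar region.

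\textbf{The Gauss--Bonnet-type identity.} The main computation is an angle-variation identity analogous to equation \eqref{eq:-2pi} in Lemma \ref{lem:AVHandle}. Cut $\cD$ along $k-1$ disjoint arcs connecting $\gamma_1$ successively to $\gamma_2,\dots,\gamma_k$, turning $\cD$ into a single disk $\cD'$ whose boundary is a concatenation of $\gamma_1$, the curves $\gamma_2,\dots,\gamma_k$ (each traversed with reversed orientation, since they bound holes), and each connecting arc traversed twice in opposite directions. Each such cut-arc contributes a pair of U-turns at its endpoints, accounting for $-2\pi$ total per arc in the way that in Lemma \ref{lem:AVHandle} the handle-cuts contributed $-4\pi$. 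Since the total angle variation around $\partial\cD'$ must equal $2\pi$ (it bounds a regular disk, Lemma \ref{lem:AV1}(v), and $\cD$ contains no focus points so this is unambiguous by part (iii)), I obtain
\begin{equation*}
AV(\gamma_1; v) + \sum_{i=2}^k AV(\gamma_i^{-1}; v_i) = 2\pi - 2(k-1)\pi = -2(k-2)\pi,
\end{equation*}
where each $v_i$ is the appropriate parallel transport of $v$ along the boundary path preceding $\gamma_i^{-1}$. (One checks that the doubled connecting arcs, traversed in opposite directions with parallel-transported vectors, cancel in angle variation up to the U-turn terms — this is exactly the mechanism used in Lemma \ref{lem:AVHandle}.)

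\textbf{Bounding the hole terms.} Now I use local convexity of $\cD$ at $\gamma_2,\dots,\gamma_k$. Since $\cD$ lies on the left of each $\gamma_i$ (positive orientation with respect to $\cD$), local convexity of $\cD$ along $\gamma_i$ means precisely that $\gamma_i$ is locally convex in the sense of Lemma \ref{lem:AV1}(viii), so $AV(\gamma_i; \dot\gamma_i(0)) \geq 0$; reversing orientation, $\gamma_i^{-1}$ bounds the complementary side, and by parts (iv) and (viii) applied to the reversed curve (whose "left side" is the hole, hence concave) one gets $AV(\gamma_i^{-1}; v_i) > -\pi$ for \emph{any} vector $v_i$. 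Actually the cleaner route: $AV(\gamma_i; v_i') \geq 0$ for the transport $v_i'$ tangent to $\gamma_i$, hence $AV(\gamma_i; v_i) > -\pi$ for arbitrary $v_i$ by (iv), so $AV(\gamma_i^{-1}; v_i) < \pi$. Summing, $\sum_{i=2}^k AV(\gamma_i^{-1}; v_i) < (k-1)\pi$, and therefore
\begin{equation*}
AV(\gamma_1; v) = -2(k-2)\pi - \sum_{i=2}^k AV(\gamma_i^{-1}; v_i) < -2(k-2)\pi + (k-1)\pi = -(k-3)\pi \leq 0,
\end{equation*}
using $k\geq 3$. Since $v$ was arbitrary and the argument did not depend on the conformal structure (parts (ii), (iii)), $\gamma_1$ is of completely negative angle variation — except that when $k=3$ the strict inequality gives only $AV(\gamma_1;v) < 0$, which is exactly what "completely negative" means, and for $k>3$ it is even more negative.

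\textbf{Main obstacle.} The delicate step is the bookkeeping of parallel-transported reference vectors and U-turn contributions when cutting $\cD$ into a disk — making sure the connecting arcs genuinely cancel in pairs and that each produces exactly $-\pi$ per endpoint, so that the total correction is $-2(k-1)\pi$ and not something else. This is a direct generalization of the cutting argument in Lemma \ref{lem:AVHandle} (where the handle produced $-4\pi$ from four U-turns), but one must be careful that the arcs can be chosen disjoint and that $\cD'$ is genuinely a disk with piecewise-$C^1$ boundary; the genus-$0$ hypothesis is what guarantees this. I would also double-check the sign convention so that "lies on the left of $\gamma_i$" translates correctly into the sign of $AV(\gamma_i^{-1})$, since an error here flips the conclusion.
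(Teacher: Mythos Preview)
Your approach is the same as the paper's—cut $\cD$ into a disk, obtain a Gauss--Bonnet-type identity relating the angle variations of the boundary pieces, then bound the hole terms via local convexity and Lemma~\ref{lem:AV1}(viii). However, there is an orientation error that breaks the argument as written. When you traverse $\partial\cD'$ with $\cD'$ on the left, the hole curves appear in their \emph{given} positive orientation $\gamma_i$, not as $\gamma_i^{-1}$: ``positively oriented with respect to $\cD$'' already means $\cD$ lies on the left of each $\gamma_i$, so for $i\ge 2$ the curve $\gamma_i$ goes clockwise around its hole in a local planar chart, and that is precisely how it is traversed in $\partial\cD'$. The correct identity is thus $AV(\gamma_1;v) + \sum_{i\ge 2} AV(\mu_i; w_i) = -2(k-2)\pi$, with $\mu_i = \alpha_i\gamma_i\alpha_i^{-1}$ homotopic to $\gamma_i$; then Lemma~\ref{lem:AV1}(viii) gives $AV(\mu_i;w_i) > -\pi$ directly, and $AV(\gamma_1;v) < -2(k-2)\pi + (k-1)\pi = -(k-3)\pi \le 0$ follows.

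Your displayed inequality contains a second error that does not cancel the first: from $\sum AV(\gamma_i^{-1};v_i) < (k-1)\pi$ you need a \emph{lower} bound on the sum to get an upper bound on $AV(\gamma_1;v) = -2(k-2)\pi - \sum$, but your ``cleaner route'' only supplies an upper bound. (Your first attempt, $AV(\gamma_i^{-1};v_i) > -\pi$, is exactly what would be needed, but as you rightly suspected it does not follow from (viii) since the left side of $\gamma_i^{-1}$ is the hole, not $\cD$.) Finally, your self-identified obstacle—whether the doubled arcs cancel and the pieces of $\gamma_1$ recombine when the affine monodromy around the holes is nontrivial—is genuine; the paper sidesteps it by homotoping each $\gamma_i$ to a curve $\mu_i$ tangent to $\gamma_1$ at a single common point (exactly the device of Lemma~\ref{lem:AVHandle}), so that $\gamma_1$ stays in one piece and no connecting arcs are needed. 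With the orientation fixed and either this device or the conjugate-loop reading of your arcs, the argument goes through.
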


\begin{proof}
The proof is similar to the proof of Lemma \ref{lem:AVHandle}.
For example, if $k=3$, we can draw curves
$\mu_2$ and $\mu_3$ which are homotopic by paths of
non-critical closed curves to $\gamma_2$ and $\gamma_3$ respectively,
such that $AV(\gamma_1; v_1, \cC) + AV(\mu_1; w_2, \cC) +
AV(\mu_3,w_3, \cC)  = -2\pi$, where $v_1$ is arbitrary and the choice
of $v', v''$ depends on $v$. Then use the inequalities
$AV(\mu_i; w_i, \cC) > -\pi$ to conclude the statement.
\end{proof}

\begin{proposition}\label{prop:TopoLocallyConvex2Dim}
Let $C$ be a compact subset of an orientable affine surface $\cB$
with (or without) focus points, such that $C$ has non-empty boundary
and is locally convex at its boundary. Then $C$ has no handle
(i.e., it can be embedded into $\mathbb{R}^2$) and has at most two
boundary components.
\end{proposition}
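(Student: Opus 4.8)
The plan is to argue by contradiction and reduce, in both cases at once, to the statement that some boundary curve of $C$ is simultaneously locally convex and of completely negative angle variation, which is forbidden by Lemma \ref{lem:AV1}(viii). First I would set up the topology. We may assume $C$ connected. Since $\cB$ is orientable and $C$ is locally convex at its boundary, near each of its boundary points (in the sense of Definition \ref{def:boundary}) $C$ is locally an honest two-dimensional convex region; hence $C$ is a compact orientable surface with boundary whose boundary curves, after rounding the (necessarily convex) corners, are $C^1$, non-critical, and locally convex with $C$ on their left. Write $\partial C=\gamma_1\cup\dots\cup\gamma_b$ with $b\geq 1$, let $g$ be the genus, and let $p_1,\dots,p_N$ be the finitely many focus points of $\cB$ contained in $C$; all of these are interior points. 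By Lemma \ref{lem:AV1}(viii), each $\gamma_i$ (oriented with $C$ on its left) satisfies $AV(\gamma_i;\dot\gamma_i(0),\cC)\geq 0$, so \emph{no $\gamma_i$ is of completely negative angle variation}. I must show $g=0$ and $b\leq 2$, so I assume $2g+b\geq 3$ and seek a contradiction.

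Next I would eliminate the focus points by a spine construction. Choose a spine $\Sigma_0\subset\mathrm{int}(C)$ of $C$ disjoint from $\{p_1,\dots,p_N\}$ (possible by dimension count) and let $C':=N(\Sigma_0)$ be a closed regular neighbourhood of $\Sigma_0$ in $\mathrm{int}(C)$. Then $C'$ is a deformation retract of $C$, hence a compact orientable surface with the same genus $g$ and the same number $b$ of boundary components $\gamma_1',\dots,\gamma_b'$, each $\gamma_i'$ being parallel to $\gamma_i$, and $C\setminus\mathrm{int}(C')$ is a disjoint union of $b$ half-open collar annuli, one around each $\gamma_i$. By sliding $\Sigma_0$ I would arrange that all of $p_1,\dots,p_N$ lie in the collar between $\gamma_1$ and $\gamma_1'$; then $C'$ is \emph{focus-free}, the collars of $\gamma_2,\dots,\gamma_b$ are focus-free, so $\gamma_2',\dots,\gamma_b'$ are genuine push-offs of locally convex curves and hence are themselves locally convex with $C'$ on their left. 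We now have a focus-free domain $C'$, with $2g+b\geq 3$ and positively oriented boundary $\gamma_1',\dots,\gamma_b'$, locally convex at $\gamma_2',\dots,\gamma_b'$.

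Now I would run the angle-variation machinery on $C'$ to conclude that $\gamma_1'$ is of completely negative angle variation. When $g\geq 1$, $b=1$ this is exactly Lemma \ref{lem:AVHandle}; when $g=0$, $b\geq 3$ it is exactly Lemma \ref{lem:AV8}. The remaining cases ($g\geq 1$, $b\geq 2$) are handled by the common generalization of these two lemmas, proved by the same technique: \emph{if a focus-free compact domain $\cD$ with positively oriented boundary $\delta_1,\dots,\delta_k$ has $2(\text{genus of }\cD)+k\geq 3$ and is locally convex at $\delta_2,\dots,\delta_k$, then $\delta_1$ is of completely negative angle variation}. One cuts $\cD$ along a standard system of handle curves and of arcs from a basepoint on $\delta_1$ to the other boundary components, turning $\cD$ into a flat disk immersed in $\RR^2$ whose boundary has total turning $2\pi$; counting the U-turns produced by the cuts (each contributing $\pi$) against the near-cancellations $AV(\cdot)+AV(\cdot^{-1})>-\pi$ along each pair of cut copies and the bounds $AV(\delta_i;v)>-\pi$ for $i\geq 2$ (from Lemma \ref{lem:AV1}(iv),(viii)), the inequality $2(\text{genus})+k\geq 3$ forces $AV(\delta_1;v)<0$ for every $v$ and $\cC$. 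Finally I transport this across the focus points: the collar annulus $A$ between $\gamma_1'$ and $\gamma_1$ contains precisely $p_1,\dots,p_N$, lies on the right of $\gamma_1'$ (since $C'$ is on its left) and on the left of $\gamma_1$ (since $A\subset C$), so Lemma \ref{lem:AVFocus2} propagates complete negativity from $\gamma_1'$ to $\gamma_1$. Thus $\gamma_1$ is of completely negative angle variation, contradicting $AV(\gamma_1;\dot\gamma_1(0),\cC)\geq 0$. Hence $2g+b\leq 2$, i.e. $C$ has no handle and at most two boundary components.

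The step I expect to be the main obstacle is the careful bookkeeping of orientations so that each invocation of Lemmas \ref{lem:AVHandle}, \ref{lem:AV8}, and \ref{lem:AVFocus1}--\ref{lem:AVFocus2} matches the exact orientation conventions (which curve has the annulus, resp. the collar, ``on its left''), together with two elementary-but-fiddly points: the topological fact that a spine of $C$ can be isotoped so as to sweep all focus points into a single boundary collar, and the U-turn count in the cut-into-a-disk argument that establishes the common generalization of Lemmas \ref{lem:AVHandle} and \ref{lem:AV8}.
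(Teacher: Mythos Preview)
Your approach is exactly what the paper's one-line proof (``put together Lemmas \ref{lem:AVHandle}, \ref{lem:AVFocus2}, \ref{lem:AV8}'') intends: retract to a focus-free core, apply the angle-variation lemmas there (or their common generalization, proved by the same cut-to-a-disk and U-turn count), then use Lemma \ref{lem:AVFocus2} on the collar containing all focus points to transport completely negative angle variation outward to a boundary curve of $C$, contradicting local convexity via Lemma \ref{lem:AV1}(viii).

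One small slip: the assertion ``$\gamma_2',\dots,\gamma_b'$ \dots\ hence are themselves locally convex'' is not justified --- an arbitrary regular-neighbourhood push-off of a locally convex curve need not be locally convex. This does no real damage, since in your sketch of the common generalization you only use $AV(\gamma_i';v)>-\pi$, and that follows directly from Lemma \ref{lem:AV1}(viii) because each $\gamma_i'$ is homotopic through non-critical curves (in its focus-free collar) to the locally convex $\gamma_i$. A cleaner way to sidestep the issue altogether is to push only $\gamma_1$ inward (sweeping up all focus points into its collar, as you describe) and leave $\gamma_2,\dots,\gamma_b$ untouched; then $C'$ has boundary $\gamma_1',\gamma_2,\dots,\gamma_b$, is focus-free, and is genuinely locally convex at $\gamma_2,\dots,\gamma_b$ since it coincides with $C$ there.
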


\begin{proof}
Just put together Lemma \ref{lem:AVHandle}, Lemma \ref{lem:AVFocus2},
and Lemma \ref{lem:AV8}.
\end{proof}

\begin{remark}
{\rm
In \cite{Zung-Integrable1993}, it was shown that the conclusion of
Proposition \ref{prop:TopoLocallyConvex2Dim} still holds for each regular domain of the base space even
in the case  with hyperbolic singularities.
Proposition \ref{prop:TopoLocallyConvex2Dim} was also mentioned in
\cite{Zung-Integrable2003}(without a proof), and proved by Leung and Symington
 in \cite{LeuSym-AlmostToric2010} .
}
\end{remark}
\subsection{Convexity of compact affine surfaces with non-empty boundary} \hfill

The Local-Global Principle (see Proposition \ref{prop:FocusBox1})
in a focus box allows us to prove  global convexity results in
dimension 2, for base spaces of toric-focus integrable systems on
symplectic 4-manifolds.

First we consider the compact case.

\begin{theorem}
\label{thm_gobal_convex_2D_compact}
Let $\mathcal{B}$ be a closed connected subset of the base space of a 
toric-focus integrable Hamiltonian system on a connected, compact, 4-dimensional symplectic manifold
Assume $\cB$ is locally convex with respect to the associated affine structure, and
that $\cB$ has a nonempty boundary. 
Then $\mathcal{B}$ is convex. Moreover, if $\cB$ is orientable, then it is
topologically a disk or an annulus. If $\cB$ is an annulus, there is a global
single-valued non-constant affine function $F$ on $\cB$ such that $F$ is
constant on each of the two boundary components of $\cB$ and, in particular, the
boundary components of $\cB$ are straight curves.
\end{theorem}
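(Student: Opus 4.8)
The plan is to combine the topological restriction of Proposition~\ref{prop:TopoLocallyConvex2Dim} with the Local-Global Convexity Principle adapted to the presence of focus points. First I would observe that, since $\cB$ is compact, locally convex, has non-empty boundary, and (after passing to the orientation double cover if needed --- which, as noted in the paper, suffices because convexity of the cover implies convexity of $\cB$) is orientable, Proposition~\ref{prop:TopoLocallyConvex2Dim} tells us $\cB$ embeds in $\mathbb{R}^2$ and has at most two boundary components; so $\cB$ is topologically a disk or an annulus. I would then handle the two cases separately.

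In the disk case, the key point is that a disk contains at most finitely many focus points (compactness plus the fact that focus points are isolated in dimension 2), and I would induct on their number. If there are no focus points, $\cB$ carries a regular integral affine structure and is a compact, connected, locally convex, flat affine manifold with boundary; by Lemma~\ref{lem:convex_affine_map1} applied to a developing (immersion) map $\cB \to \mathbb{R}^2$ --- which exists because a disk is simply connected and the monodromy is trivial in the absence of focus points --- the developing map is injective and its image is convex, so $\cB$ is convex. For the inductive step with $r \geq 1$ focus points, I would pick one focus point $O$, enclose it in a small focus box $B_0 \subset \cB$ (using that $\cB$ is locally convex and that focus boxes are convex by Theorem~\ref{thm:FF1Local}), and then ``fill in'' $O$ by replacing $B_0$ with a convex subset and running an engulfing/star argument. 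Concretely, I would mimic the proof of Lemma~\ref{lemma_local_global_flat} Proof~2: fix $x \in \cB$ and show the star $S(x) = \{y \in \cB \mid \text{some straight segment from } x \text{ to } y \text{ lies in } \cB\}$ is both open and closed. Closedness uses compactness and continuity of straight lines under limits; openness at a point $y$ uses local convexity of $\cB$ near $y$ together with the engulfing-by-bombed-triangles argument, which goes through verbatim when the triangle avoids focus points and, when it must pass near $O$, uses the branched-extension structure and the convexity of the focus box exactly as in Proposition~\ref{prop:FocusBox1} Case~2. Connectedness then gives $S(x) = \cB$ for every $x$, i.e.\ convexity.

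In the annulus case, the main structural claim is the existence of a global single-valued affine function $F$ constant on each boundary component. I would argue as follows. The annulus $\cB$ has cyclic fundamental group generated by a loop $\gamma$; the monodromy of the singular affine structure along $\gamma$ is a product of the elementary focus matrices $\begin{pmatrix} 1 & k_i \\ 0 & 1 \end{pmatrix}$ (in suitably chosen bases), all upper-triangular with $1$'s on the diagonal, hence the total monodromy is itself of the form $\begin{pmatrix} 1 & K \\ 0 & 1 \end{pmatrix}$ with $K \geq 0$, and it fixes the affine function in the first slot. That function --- call it $F$ --- is therefore globally single-valued on $\cB$ (its differential is a well-defined closed integral affine $1$-form with trivial periods, so it integrates to a single-valued function). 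Then $F$ has no critical points in the interior away from focus points (where it is still smooth and regular by Lemma~\ref{lem:AsymptoticG}, since $F$ is the ``good'' coordinate $F_i$ transverse to each $\cS_i$), so by compactness $F$ attains its max and min on $\partial\cB$; a level-set connectedness argument (running the Local-Global Principle, now with target $\mathbb{R}$ via the proper affine map $F$, i.e.\ Lemma~\ref{lem:convex_affine_map2} applied slice-wise) shows each boundary component is a single level set of $F$, hence a straight curve. Finally, convexity of the annulus follows by cutting along a straight segment joining the two boundary curves (a fiber-transverse arc), unrolling $\cB$ into a focus box or a finite union thereof along the $F$-direction, and applying Proposition~\ref{prop:FocusBox1} / Theorem~\ref{thm:FF1Local} together with the same star argument as in the disk case.

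The main obstacle I anticipate is the openness step of the star argument precisely at the focus points: one must verify that when the ``engulfing triangle'' is forced to straddle a focus point, the branched double-valued extension still produces a genuine straight segment inside $\cB$, i.e.\ that at least one of the two branches stays in $\cB$. This is exactly the content of the three-cases analysis in the proof of Theorem~\ref{thm:FF1Local} (cases i, ii, iii) and of Proposition~\ref{prop:FocusBox1} Case~2, so the work is to patch those local arguments into the global star argument uniformly; the sign condition $k_i > 0$ is what makes it work, and the finiteness of the focus set keeps the patching manageable. A secondary subtlety is the reduction to the orientable case and ensuring that the topological conclusion (disk/annulus) is not disturbed by it, but this is routine given the remark in the paper that convexity descends from the double cover.
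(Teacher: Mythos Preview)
Your proposal has a genuine gap in the annulus case, and the disk case, while plausible in spirit, glosses over precisely the difficulty that forces the paper to take a different route.

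\textbf{Annulus case.} The claim that the monodromy around a generator of $\pi_1(\cB)$ has the form $\begin{pmatrix} 1 & K \\ 0 & 1 \end{pmatrix}$ is unjustified. Each focus point contributes a matrix \emph{conjugate} to $\begin{pmatrix} 1 & k_i \\ 0 & 1 \end{pmatrix}$, but the conjugating transformations depend on the local frames, and a product of unipotents that are upper-triangular in \emph{different} bases need not even be unipotent (for instance $\begin{pmatrix} 1 & 1 \\ 0 & 1 \end{pmatrix}\begin{pmatrix} 1 & 0 \\ 1 & 1 \end{pmatrix}=\begin{pmatrix} 2 & 1 \\ 1 & 1 \end{pmatrix}$ fixes no covector). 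The existence of a global $F$ is therefore a \emph{consequence} of local convexity, not an a priori algebraic fact, and your argument never uses local convexity in this step. The paper instead introduces $\Sigma$-convex hulls (minimal closed connected sets that are strongly convex in every box of a fixed finite cover), shows via a shrinking argument (Lemma~\ref{lem:ConvexHull}) that the $\Sigma$-convex hull of a boundary circle $\gamma$ must be one-dimensional and hence equal to $\gamma$, so $\gamma$ is straight; only then does one obtain a local $F$ constant on $\gamma$, and an induction on the number of focus points propagates $F$ across the annulus.

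\textbf{Disk case.} The paper does not run a star/engulfing argument. It again takes a $\Sigma$-convex hull $C_0$ of $\{x,y\}$: if $C_0$ is one-dimensional it is the desired segment, and if two-dimensional one invokes a structural lemma (Lemma~\ref{lem:DiskSections}) that decomposes $C_0$ into ``sectors'' emanating from $x$ which together cover $\partial C_0$; that lemma is itself proved by induction on the number of focus points using the hull machinery and by ruling out three impossible behaviours of rays (re-entry into the hull, self-intersection, infinite winding). Your engulfing sketch stalls exactly where the bombed-triangle map must cross a focus point: there the affine extension ceases to be unique, and Proposition~\ref{prop:FocusBox1} does not help, since it concerns convexity of a \emph{set} inside a single focus box, not extension of an affine \emph{map} across one (let alone several) focus points. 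The $\Sigma$-convex hull device is precisely the replacement for the failed engulfing.
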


In order to prove the above theorem, we will use the \textbf{\textit{shrinking
method}} to define and study  \textbf{\textit{convex hulls}} of subsets on
affine  manifolds with singular points. In the Euclidean space, the convex hull
of a given subset is unique. In our case,  it may be non-unique, but exists and
can be defined as follows.

By compactness of $\mathcal{B}$, there is a finite set of closed focus boxes and
regular boxes $\Sigma:= \{B_i \subset \mathcal{B} \mid i \in I, \, I \,
\text{finite}\}$ whose interiors cover $\mathcal{B}$. We need to introduce the
following definition:

\begin{quote}
$C \subset \mathcal{B}$ \textit{is} $\Sigma$-{\bfi convex} \textit{if}
$C \cap B_i$ \textit{is strongly convex in $B_i$  (see Definition
\ref{def_strong_convexity}) for all} $B_i \in \Sigma$.
\end{quote}

Note that
if $C$ is $\Sigma$-convex, then $C$ is locally convex since each $B_i$ is
convex (see Theorem \ref{thm:FF2Local}). For each given non-empty subset
$S$ of $\cB$,  the family
\begin{equation*}
\mathcal{C}_S = \{C \subset \mathcal{B} \mid S \subset C,\, C \, \text{is
closed, connected, $\Sigma$-convex}\}
\end{equation*}
of subsets of $\mathcal{B}$ is not empty since $\mathcal{B} \in
\mathcal{C}_S$.

We claim that $\mathcal{C}_S$ \textit{has a minimal element.}
Indeed, $\cC_S$ is a partially ordered set with respect to inclusion.
Let $\{C_\alpha\}\subset \mathcal{C}_S$ be a totally ordered subset of $\cC$, i.e.,
for any two elements in
$\mathcal{C}$ one is included in the other.
Let $C_\infty = \cap_\alpha C_\alpha$. Clearly $S \subset C_\infty$ and $C_\infty$ is strongly
convex in each box $\mathcal{B}_i$ by Proposition \ref{prop:FocusBox1}.
The set $C_\infty$ is
also connected (see \cite[Theorem 6.1.18, page 355]{Engelking1989}) and,
therefore, $C_\infty \in \mathcal{C}_S$. By Zorn's Lemma, the set
$\mathcal{C}_S$ has a minimal element $C_S$, which is hence connected, closed, $\Sigma$-convex.

We will call a minimal element $C_S$ of  $\mathcal{C}_S$
a \textbf{\textit{$\Sigma$-convex hull of $S$ in $\cB$}}. It exists but is not necessarily unique. 

\begin{lemma} \label{lem:ConvexHull}
With the above notations, under the assumption that $\cB$ has non-empty boundary,
any $\Sigma$-convex hull $C_S$ of a non-empty set $S \subset B$
has a non-empty boundary (in the sense of Definition
\ref{def:boundary}). Moreover, each connected component of the boundary of $C_S$
contains at least one point of $S$, and is either a single point or 
locally a straight line outside the points of $S$.
\end{lemma}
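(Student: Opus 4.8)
The strategy is to argue by contradiction using the shrinking/engulfing idea behind $\Sigma$-convex hulls: if the conclusion fails, one produces a strictly smaller element of $\mathcal{C}_S$, contradicting the minimality of $C_S$. First I would dispose of the claim that $C_S$ has non-empty boundary. Suppose $C_S$ has empty boundary in the sense of Definition \ref{def:boundary}; then $C_S$ is a closed surface without boundary (possibly with focus points) sitting inside $\cB$. Since $\cB$ itself has non-empty boundary and $C_S$ is closed, connected and $\Sigma$-convex, the natural move is to observe that a closed-surface component cannot be $\Sigma$-convex together with the requirement that it be a minimal connected set containing $S$: indeed, pick any box $B_i \in \Sigma$ meeting the topological frontier of $C_S$ in $\cB$; strong convexity of $C_S \cap B_i$ in $B_i$ forces $C_S\cap B_i$ to be, up to the affine chart, a genuine convex subset of the box (Proposition \ref{prop:FocusBox1} / Theorem \ref{thm:FF2Local}), hence with boundary, so $C_S$ has frontier points, and these are boundary points of $C_S$ in the sense of Definition \ref{def:boundary}. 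More simply: if $C_S$ had no boundary it would be open in $\cB$ (being a locally convex, hence locally Euclidean, closed subset with no frontier and no boundary), and by connectedness of $\cB$ we'd get $C_S = \cB$, contradicting that $\cB$ has non-empty boundary unless $S$ meets $\partial\cB$ — and even then $C_S=\cB$ need not be minimal. The cleanest formulation is: a non-empty closed proper subset of a connected manifold-with-boundary has non-empty topological frontier; combined with $\partial\cB\neq\emptyset$, Definition \ref{def:boundary} then gives $\partial C_S\neq\emptyset$ unless $C_S=\cB$, and $C_S=\cB$ is excluded by minimality whenever one can shrink, which is exactly what the rest of the proof establishes. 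I would present this as: either $C_S=\cB$, in which case $\partial C_S=\partial\cB\neq\emptyset$; or $C_S\subsetneq\cB$, in which case it has frontier points which are boundary points by Definition \ref{def:boundary}.

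Next, the heart of the lemma: every connected component $\Gamma$ of $\partial C_S$ contains a point of $S$. Suppose some component $\Gamma$ is disjoint from $S$. The plan is to "push $\Gamma$ inward" a little to obtain a strictly smaller $\Sigma$-convex closed connected set still containing $S$. Concretely, cover a neighborhood of $\Gamma$ in $\cB$ by finitely many boxes from $\Sigma$; in each such box $B_i$, $C_S\cap B_i$ is strongly convex, hence (forgetting the focus point, or using Proposition \ref{prop:FocusBox1}) is cut out locally by affine inequalities. Near $\Gamma$ one can slide the relevant supporting affine hyperplanes/curves slightly toward the interior of $C_S$, producing a new closed set $C'$ obtained from $C_S$ by removing a thin collar of $\Gamma$. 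Because $\Gamma\cap S=\emptyset$, for small enough collar $S\subset C'$; because the cutting curves are straight (affine) in each box and we only moved them by a small affine amount, $C'\cap B_i$ is again strongly convex in each $B_i$; and $C'$ is still connected (removing a boundary collar from a connected surface with boundary keeps it connected, provided $\Gamma$ was not the entire boundary of a disk component — and if it were, that whole component would have to contain a point of $S$ or be deleted, again contradicting minimality, so $S$ is in it). This $C'\subsetneq C_S$ contradicts minimality of $C_S$ in $\mathcal{C}_S$. The main technical care here is to verify the "strong convexity is preserved under a small inward affine push" claim box-by-box, using exactly the three-case analysis of the proof of Theorem \ref{thm:FF2Local}/Proposition \ref{prop:FocusBox1}: a half-space $\{aF+G_\bullet\le b\}$ in a focus box, moved to $\{aF+G_\bullet\le b-\epsilon\}$, still intersects the box in a strongly convex set.

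Finally, the statement that each boundary component is a single point or locally a straight line away from $S$. Fix a component $\Gamma$ and a point $x\in\Gamma\setminus S$, and a box $B_i\in\Sigma$ containing $x$ in its interior. By minimality, $C_S\cap B_i$ cannot be "locally strictly convex" at $x$ in the strict sense — if the frontier of $C_S$ at $x$ were not locally a straight segment, then locally there would be a supporting affine line at $x$ with $C_S$ strictly on one side in a neighborhood, and one could again push a small convex cap off $C_S$ near $x$ (not removing any point of $S$, since $x\notin S$ and $S$ is closed so a whole neighborhood of $x$ is $S$-free) to get a strictly smaller $\Sigma$-convex closed connected set containing $S$ — contradiction. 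Hence near every point $x\in\Gamma\setminus S$ the frontier of $C_S$ coincides with an affine straight line in the local affine chart of $B_i$ (using Theorem \ref{thm:FF2Local} for the structure of focus boxes, so "straight line" makes sense even through a focus point of the box, with the branched-extension caveat of Proposition \ref{prop:extension1}). If moreover $\Gamma$ consists of a single point, we record that as the degenerate case (this happens, e.g., when $S$ is a single point and the hull shrinks down to it). The delicate point throughout — and the step I expect to be the main obstacle — is making the "push inward to shrink the hull" operation rigorous while simultaneously preserving (a) closedness, (b) connectedness, and (c) $\Sigma$-convexity in every box, particularly across the focus points where the two branches $G_l,G_r$ behave differently; this requires doing the push "branch-consistently" and invoking the quantitative three-case picture from the proof of Theorem \ref{thm:FF2Local} to see that a small uniform inward displacement of the cutting affine functions keeps all the $\gamma_l/\gamma_r$ straight-line alternatives available inside the shrunk set.
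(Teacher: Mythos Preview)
Your overall strategy---exploit minimality of $C_S$ by shrinking it whenever the conclusion fails---is exactly the paper's approach, and your three sub-arguments (non-empty boundary, every component meets $S$, local straightness off $S$) are the right ones. The difference is in the \emph{order} of the last two, and this ordering is what creates the obstacle you flag at the end.

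The paper proves local straightness \emph{first}: at any boundary point $z\notin S$ which is not a focus point, if $\mu$ is not straight at $z$ one simply cuts off a small corner (a triangular chip with vertex $z$), and the result is visibly still $\Sigma$-convex, closed, connected, and contains $S$---contradiction. Focus points on $\mu$ are handled by a one-line observation: either one can cut as before, or $\mu$ is locally a limit of interior straight lines there, in which case one may treat $z$ as regular. \emph{Only then} does the paper attack the claim that every component $\mu$ meets $S$: since $\mu$ is now known to be straight everywhere (having no $S$-points by assumption), it is a straight circle, and one can push it inward to a nearby parallel straight circle $\mu'$ and excise the thin collar between them. The paper explicitly notes that this last step uses the \emph{integral} affine structure (to guarantee a global single-valued affine function $F$ with $\mu=\{F=0\}$, so that $\mu'=\{F=\varepsilon\}$ exists and is again a straight circle).

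By contrast, you try to push an a priori non-straight $\Gamma$ inward box-by-box before knowing straightness. This is exactly the ``branch-consistent push across focus points'' difficulty you identify, and it is genuinely awkward to make rigorous that way. Reordering as the paper does dissolves the difficulty entirely: once $\Gamma$ is a straight circle, the collar excision is a single global move and there is nothing to patch across boxes. Two further small gaps in your sketch: you do not separately dispose of the case where $C_S$ is one-dimensional (then $\partial C_S=C_S\supset S$ trivially), and you do not mention the role of integrality in producing the parallel circle $\mu'$.
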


\begin{proof}
Since the boundary of $\mathcal{B}$ is not empty, neither is
the boundary of $C_S$. Indeed, if the boundary of $C_S$ were
empty, then $C_S$ would be open in $\cB$ and different from $\cB$,
which contradicts the  closedness of $C_S$ and the 
connectedness of $\cB$.

If $C_S$ is one-dimensional, then the boundary of $C_S$ is $C_S$ itself,  and it contains $S$, so we are done.

If $C_S$ is two-dimensional, it follows that the interior of $C_S$ is not empty. Let $\mu$ be a connected component of the boundary of $C_S$. We know that $\mu$ must
be homeomorphic to a circle and that $C_S$  is locally convex at $\mu$.

\begin{figure}[!ht]
\vspace{-15pt}
\centering
 {\mbox{} \hspace*{0cm }\includegraphics[width=0.6 \textwidth]{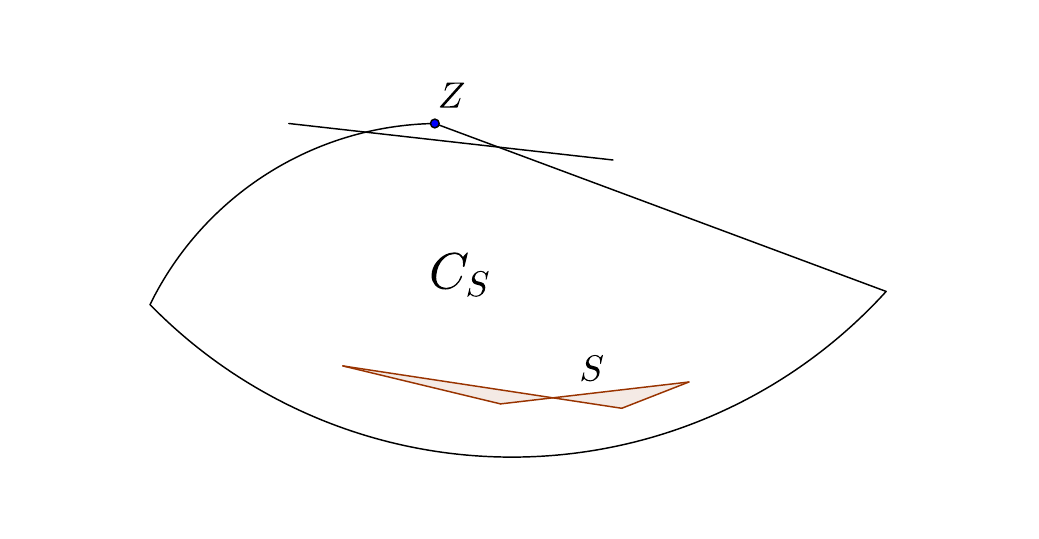}}

\vspace{-15pt}
  \caption{Making $C_S$ smaller by cutting out a corner at $Z$.}
  \label{fig:Cutting1}
\end{figure}

For every $z \in \mu$ such that  $z \notin S$ and $z$
is not a focus point, we have that $\mu $ is locally straight at $z$.
Indeed, if $\mu$ is not straight at
$z$, we can cut a small piece of $C_S$ with $z \in \mu$
being a vertex and the resulting set is still $\Sigma$-convex, closed,
connected, containing $S$ and it is strictly included in $C_S$ (see
Figure \ref{fig:Cutting1}). This contradicts minimality of $C_S$.

If $z$ is a focus point, then because $C_S$ is convex at $z$, we can
either cut out a small piece containing $z$ just like in the previous
case, or the boundary component $\mu$ must be locally a limit of straight
lines from the interior of $C_z$. Since $C_z$ is minimal, this is the
only possible case. By forgetting about the complement of $C_z$,
we may forget the fact that $z$ is a focus point and pretend that
it is a regular point and have the same situation as in the previous case.

Thus,  if  $\mu$ does not contain any point of $S$, then $\mu$ is a
straight circle, and we can push the boundary $\mu$ a bit into the
interior of $C$ to another straight curve $\mu '$, and then cut the
``collar'' bounded by $\mu$ and $\mu'$. (Here we use the fact that the
affine structure is integral.) The resulting set is clearly in
$\mathcal{C}_S$ if the collar that has been cut out is sufficiently thin.
This contradicts the minimality of $C_S$ in $\mathcal{C}_S$.
\end{proof}

\noindent {\it Proof of Theorem \ref{thm_gobal_convex_2D_compact} in the case 
when $\cB$ is not a disk}.

We may assume that $\cB$  is orientable (if not just take an orientable
covering of it). Then by Proposition \ref{prop:TopoLocallyConvex2Dim},
if $\cB$ is not a disk, it must be topologically an annulus.

Denote the two components of the boundary of  $\cB$ by $\gamma$ and $\mu$.
Notice that a $\Sigma$-convex hull $C_\gamma$ of $\gamma$ in $\cB$ must
be 1-dimensional, for otherwise it must have some other boundary
component $\mu'$ disjoint from $\gamma$, and we can still shrink
$C_\gamma$ near $\mu$, which is a contradiction. However, if $C_\gamma$
is 1-dimensional, then it is equal to $\gamma$, i.e., $\gamma$ must be
a $\Sigma$-convex set in $\cB$, and it follows that it is a straight line.

Apply the same argument to $\mu$ to conclude that both $\gamma$ and
$\mu$ are straight lines. Now we want to show the existence
of a global single-valued integral affine function $F$ such that $F$
is constant on $\gamma$ and on $\mu$. We do this by induction on the
number of focus points in $\cB$.

If  $\cB$ does not contain any focus point, then the universal covering
of $\cB$ is affinely isomorphic to a part of $\mathbb{R}^2$ by two
straight lines, and these lines must be parallel. From that we can
construct our affine  function $F$.

Assume now that the statement is true when there are less than $n$ focus
points ($n \in \mathbb{N}$). Let us show that the statement is also
true when there are exactly $n$ focus points.

 \begin{figure}[!ht]
\vspace{-15pt}
\centering
{\mbox{} \hspace*{0cm} \includegraphics[width=0.8 \textwidth]{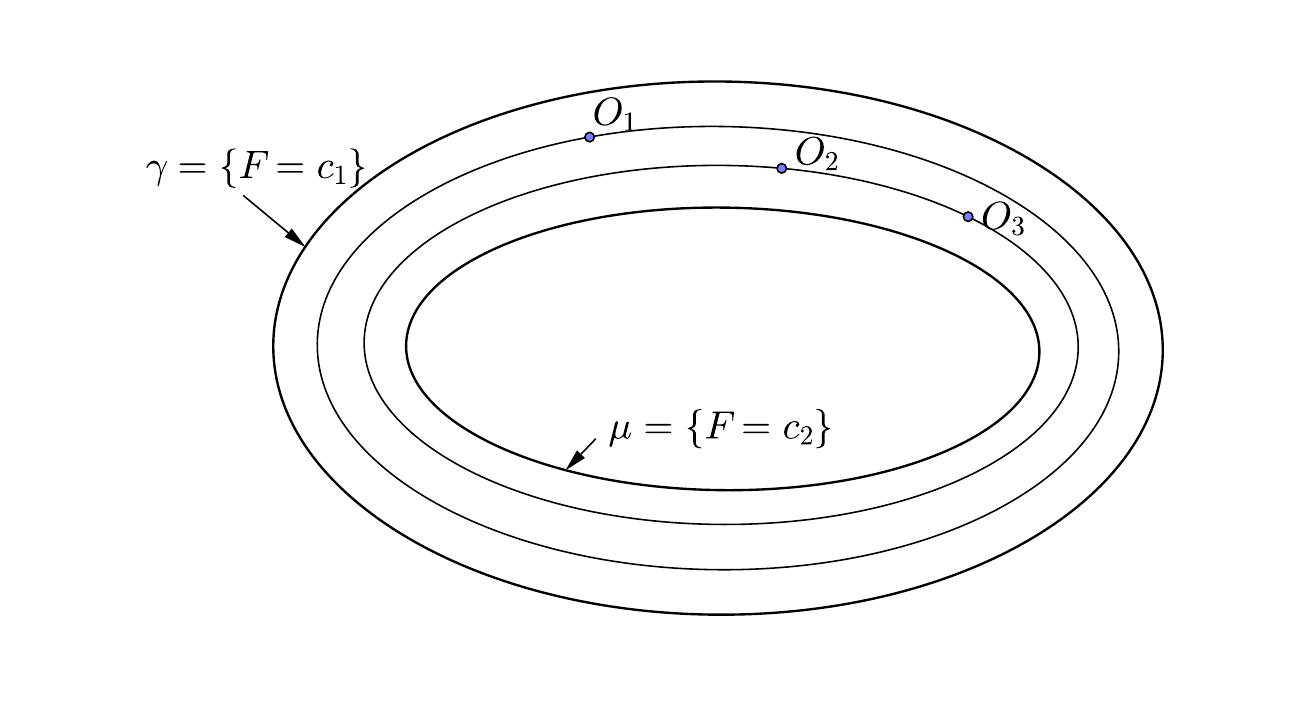}}

\vspace{-15pt}
  \caption{Convex annulus.}
  \label{fig:ConvexAnnulus}
\end{figure}

Denote the set of focus points by $\cO = \{O_1,\hdots, O_n\}$ and
consider a $\Sigma$-convex hull of the union $\gamma \cup \cO$ in $\cB$.
Its boundary must contain $\gamma$ but not only $\gamma$,
so it is an annulus, and has
another boundary component, say $\mu'$,
which contains a focus point, say $O_n$  (see Figure \ref{fig:ConvexAnnulus}).
 Then $\mu'$ must be  a
straight line and, by induction, there is a single-valued non-constant
affine function $F$ on the annulus between $\gamma$ and $\mu'$, which is
constant on $\gamma$ and on $\mu'$. It follows that the strip between
$\mu'$ and $\mu$ is also locally convex, and hence it is  an annulus on
which we have a non-constant affine function $F'$ such that $F'$ is
constant on $\mu'$ and on $\mu$. It is easy to see that we can ``glue''
$F$ with $F'$ after some affine transformation to get a non-constant
affine function on $\cB$ which is constant on $\gamma$ and on $\mu$.
The global convexity of the annulus is now easy to see, by the same method
of potential straight lines as in the proof of Theorem \ref{thm:FF1Local}.
\QED

\noindent {\it Proof of Theorem \ref{thm_gobal_convex_2D_compact} in
the case when $\cB$ is a disk}.

Take $x, y \in \mathcal{B}$. We need to show the existence of a
straight line segment $[x,y] \subset \mathcal{B}$.

Denote by $C_0 = C_{\{x,y\}}$ a $\Sigma$-convex hull of $x$ and $y$
in $\cB$. If $C_0$ is one-dimensional, it follows that it is locally
straight (in the affine structure of $\mathcal{B}$) so it is straight and
hence there is a straight line $[x,y] \subset C_0 \subset \cB$.

If $C_0$ is two-dimensional, it follows that the interior of $C_0$ is not
empty. Let $\mu$ be a boundary component of $C_0$. We know that $\mu$ must
be homeomorphic to a circle and that $C_0$  is locally convex at $\mu$.
So, according to Lemma \ref{lem:ConvexHull}, $\mu$ must contain
$x$ or $y$, or both of them, and $\mu$ is locally straight outside these
points.

If both $x$ and $y$ belong to $\mu$ then both paths
on $\mu$ from $x$ to $y$ are straight, and we are done.

Consider now the case when $y \in \mu$ but $x \notin \mu$ (or vice versa).
Then Lemma \ref{lem:DiskSections} below states, in particular, that
there is a straight line from $x$ to $y$ in $C_0$, and we are again done.
\QED

\begin{lemma}
\label{lem:DiskSections}
Let $D$ be a $\Sigma$-convex closed disk on $\cB$ with boundary $\mu$
and $x\in D$.

{\rm(i)} {\rm(}See Figure \ref{fig:DiskSectors}{\rm)}.
If $x$ is in the interior of $D$, then there exist a finite number of
consecutive points $A_1,\hdots, A_n \in \mu$ ($n \geq 1$) and
$n$ corresponding disjoint closed segments $I_1, \hdots, I_n$ on the
circle of  nonzero tangent vectors at $x$, such that:

-  Each straight ray emanating from $x$ in direction $d$ belonging to
the interior of $I_i$ hits a point $y(d)$ on the ``arc'' $A_iA_{i+1}$
on $\mu$ (with the convention that $A_{n+1} = A_1$;

- The above straight line $[x, y(d)]$  lies in $D$, is regular in $D$
(i.e., it does not  hit any focus point in $D$),
and the map $d \mapsto y(d)$  is a homeomorphism from the interior
of $I_i$ to the interior of the ``arc'' $A_iA_{i+1}$ on $\mu$;

- When $d$ tends to the end points of $I_i$, then $[x, y(d)]$ tends to
straight lines going from $x$ to $A_i$ and $A_{i+1}$ (these straight
lines may contain focus points).

\begin{figure}[!ht]
\vspace{-15pt}
\centering
 {\mbox{} \hspace*{0cm }\includegraphics[width=0.7 \textwidth]{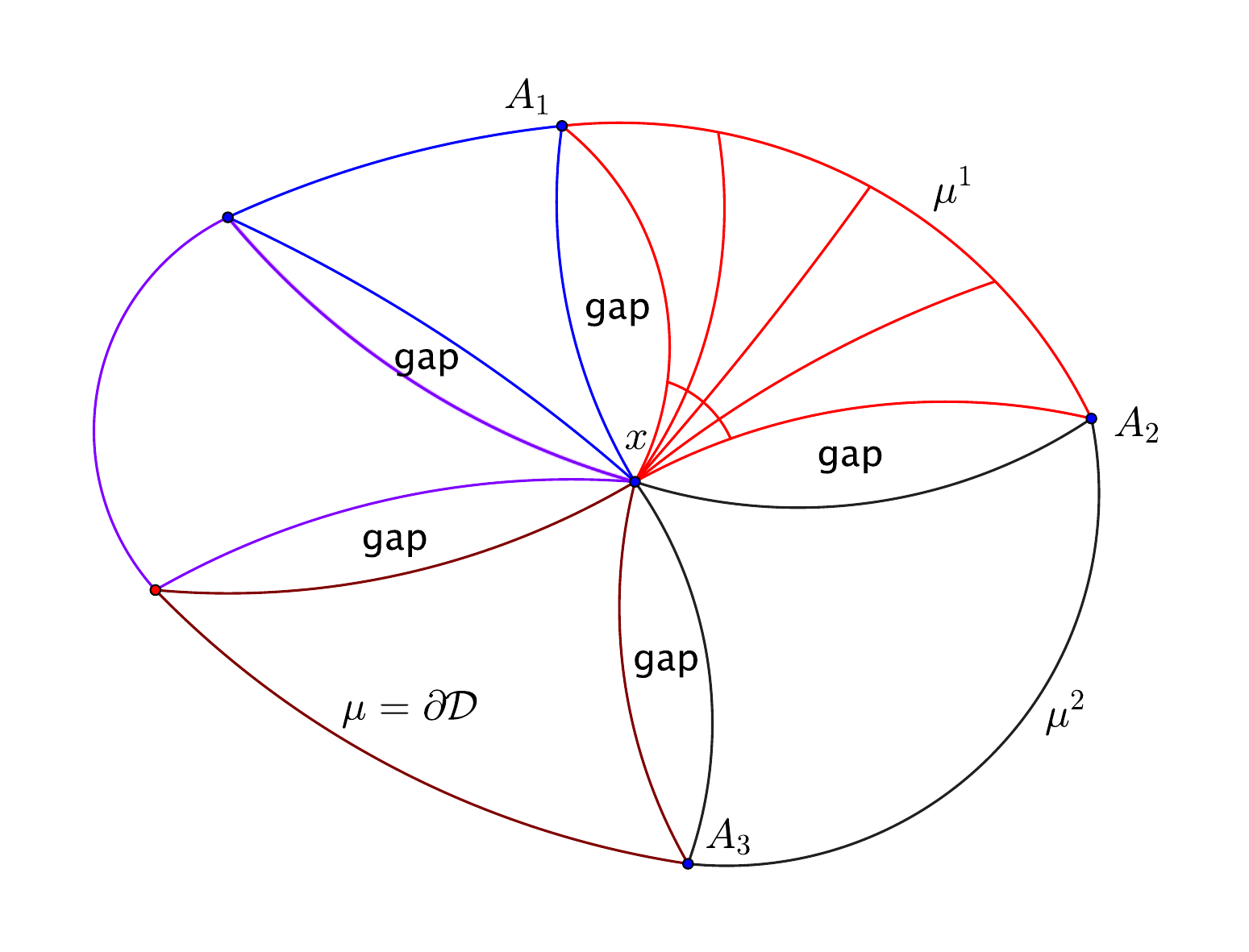}}

\vspace{-15pt}
  \caption{Dividing the disk into sectors and gaps.}
  \label{fig:DiskSectors}
\end{figure}

{\rm(ii)} {\rm(}See Figure \ref{fig:DiskSectors2}{\rm)}.
If $x \in \mu$, then either $\mu \setminus \{x\}$ is a straight line, or
there exist a finite number of consecutive points
$A_1,\hdots, A_n \in \mu$ ($n \geq 1$, it may happen than $A_1 = x$
or $A_n = x$, or both) and  $n-1$ corresponding disjoint closed
segments $I_1, \hdots, I_{n-1}$ on the circle of  nonzero tangent
vectors at $x$, such that, the ``arcs'' $xA_1$ and $xA_n$ on $\mu$
are straight lines, and the other ``arcs'' $A_iA_{i+1}$ on
$\mu$ {\rm(}$1 \leq i \leq n-1${\rm)} satisfy the same properties as in the
previous case:

-  Each straight ray emanating from $x$ in direction $d$ belonging to
the interior of $I_i$ hits a point $y(d)$ on the ``arc'' $A_iA_{i+1}$
on $\mu$;

- The above straight line $[x, y(d)]$  lies in $D$, is regular in $D$,
and the map $d \mapsto y(d)$ is a homeomorphism from the interior of
$I_i$ to the interior of the ``arc'' $A_iA_{i+1}$ on $\mu$;

- When $d$ tends to the end points of $I_i$, then $[x, y(d)]$ tend to
straight lines going from $x$ to $A_i$ and $A_{i+1}$.

\begin{figure}[!ht]
\vspace{-15pt}
\centering
 {\mbox{} \hspace*{0cm }\includegraphics[width=0.85 \textwidth]{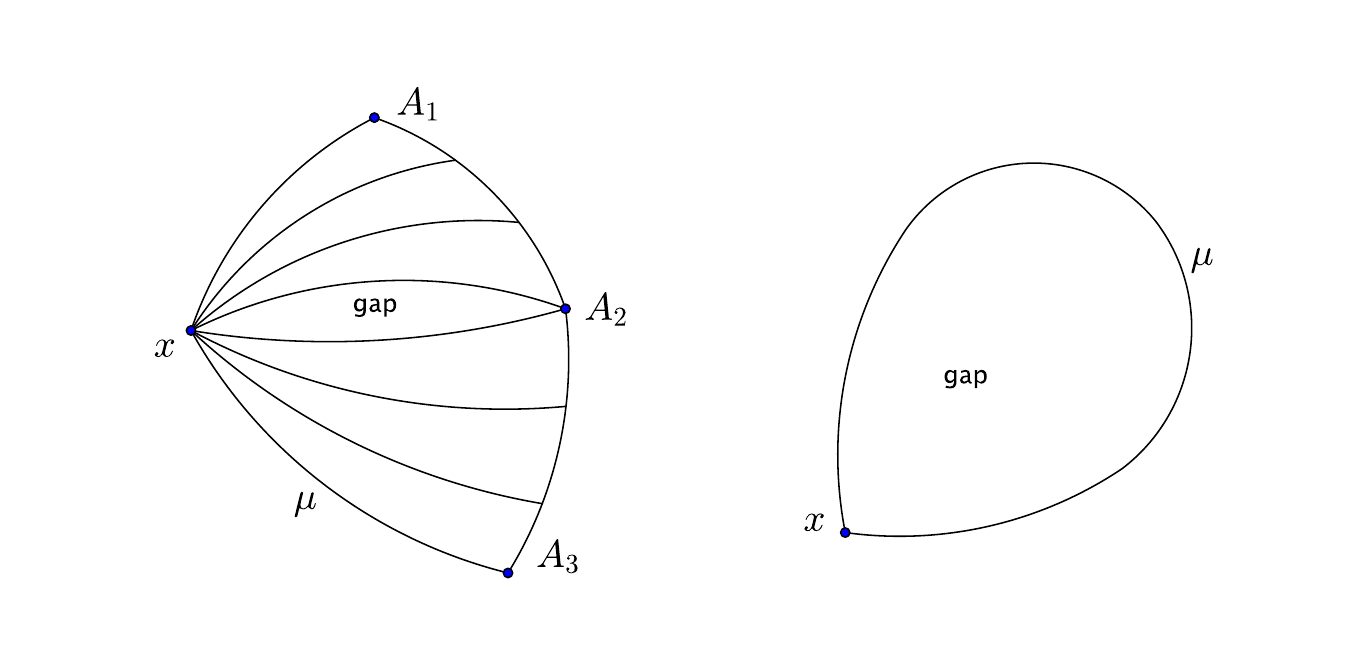}}

\vspace{-15pt}
  \caption{Special cases when $x$ is on the boundary of the disk.}
  \label{fig:DiskSectors2}
\end{figure}
\end{lemma}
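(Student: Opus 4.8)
The plan is to build the promised "fan decomposition" of the disk $D$ by studying, for a fixed base point $x \in D$, the set of straight rays emanating from $x$ and how they sweep across the boundary circle $\mu$. I would set up the argument as a continuity/connectedness analysis on the circle $T_x$ of nonzero tangent directions at $x$. First I would handle case (i), $x$ in the interior. For a direction $d \in T_x$, start the straight line from $x$ in direction $d$ and extend it maximally inside $D$; since $D$ is $\Sigma$-convex and compact, this line reaches $\mu$ in finite "affine time". If the line is regular (hits no focus point), its endpoint $y(d)$ is well-defined; the key point is that near such a $d$ the endpoint depends continuously (indeed homeomorphically, by the implicit-function-type argument using that regular straight lines form a foliation transverse to $\mu$) on $d$. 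The bad directions are exactly those for which the straight line from $x$ first hits a focus point before reaching $\mu$; since there are finitely many focus points in $D$ (compactness), there are finitely many such "critical" directions — call the resulting points on $\mu$ the $A_i$. Each critical direction, upon hitting a focus point, has (by Proposition \ref{prop:extension1}) exactly two straight extensions, a "left" and a "right" one, which continue to $\mu$, and these are the one-sided limits of the regular rays on the two sides. This shows the complement of the critical directions in $T_x$ is a finite union of open arcs $\operatorname{int} I_i$, each mapped homeomorphically by $d \mapsto y(d)$ onto an open arc $A_iA_{i+1}$ of $\mu$, with the stated limiting behavior at endpoints. Covering $\mu$ follows because every point of $\mu$, being on the boundary of the convex set $D$, is the endpoint of some straight segment from $x$ (this uses local convexity of $D$ at $\mu$ together with $\Sigma$-convexity to propagate the segment globally — essentially the same potential-straight-line construction as in Theorem \ref{thm:FF1Local}, iterated box by box).

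For case (ii), $x \in \mu$: the same analysis applies, except now the "extreme" directions are tangent to $\mu$ at $x$ (on the two sides). Following $\mu$ away from $x$ in either direction, the arc stays straight as long as no focus point is encountered and no "turning" occurs; by Lemma \ref{lem:ConvexHull}-type reasoning $\mu$ is locally straight away from the finitely many special points, so either $\mu \setminus \{x\}$ is a single straight line (the degenerate case), or we again get finitely many marked points $A_1, \dots, A_n$, with the two outermost arcs $xA_1$ and $xA_n$ straight, and the in-between arcs swept by regular rays from $x$ exactly as in case (i). The fan structure on $T_x$ now has $n-1$ arcs $I_i$ rather than $n$, because the two boundary directions at $x$ are "absorbed" into the straight arcs $xA_1$, $xA_n$.

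The main obstacle, and the step I would spend the most care on, is establishing that the map $d \mapsto y(d)$ is actually a \emph{homeomorphism} onto each open arc of $\mu$ — i.e., surjectivity onto the full arc and injectivity. Injectivity is the more delicate half: a priori two distinct regular rays from $x$ could hit the same boundary point, or a ray could fail to be the \emph{unique} straight line from $x$ to $y(d)$. Here I would exploit that within a single focus box (or a regular box) there is essentially uniqueness of straight lines once we fix on which side of the singular set we travel (this is exactly what the trichotomy at the end of the proof of Theorem \ref{thm:FF1Local} quantifies: the two potential lines $\gamma_l, \gamma_r$ degenerate to one unless we are in the "both valid" case), and then glue box by box using $\Sigma$-convexity. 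Surjectivity onto the \emph{open} arc $A_iA_{i+1}$, as opposed to hitting only part of it, follows from an intermediate-value argument on $T_x$: the endpoint map is continuous on $\operatorname{int} I_i$, its two one-sided limits are $A_i$ and $A_{i+1}$, and the image is a connected subset of the arc containing points arbitrarily close to both ends, hence is the whole open arc — provided we first know the map is injective (so the image is an interval, not something that could "fold back"). So injectivity really is the linchpin, and I would prove it first.
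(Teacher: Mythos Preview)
Your direct approach has a real gap at the very first step: you assert that a regular ray from $x$ ``reaches $\mu$ in finite affine time'' because $D$ is $\Sigma$-convex and compact, but neither hypothesis gives this. A straight ray in a singular affine disk with several focus points can in principle wind around them forever without hitting the boundary --- the paper's own ``affine black hole'' (Theorem~\ref{thm:NonConvexS2}) exhibits exactly this trapping phenomenon for rays in a locally convex region. Compactness only tells you the ray accumulates somewhere; $\Sigma$-convexity only controls the shape of $D$ box by box, and a ray can revisit the same finite collection of boxes infinitely often. Your injectivity argument has a related problem: two regular rays from $x$ that wind differently around the focus points pick up different monodromy and could a priori meet again, so ``uniqueness within a box plus gluing'' does not settle it without already knowing the sector is a regular convex cone, which is what you are trying to prove.

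The paper avoids both issues by arguing \emph{inductively on the number of focus points in the interior of $D$}. In the inductive step one takes a $\Sigma$-convex hull $C$ of $x$ together with all the focus points; by Lemma~\ref{lem:ConvexHull} the boundary $\gamma=\partial C$ must contain at least one focus point, so $C$ has strictly fewer focus points in its interior and the induction hypothesis supplies the fan decomposition for $C$. Now the rays from $x$ already reach $\gamma$, and one only needs to extend them through the \emph{regular} annulus between $\gamma$ and $\mu$. In that regular region the paper can rule out the three bad behaviors (the ray falls back to $C$, self-intersects, or winds infinitely) using the flat affine embedding into $\mathbb{R}^2$ and the integrality of the affine structure --- arguments that are unavailable when focus points are still present. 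After extending, the arcs on $\mu$ covered by the sectors may overlap (because of the branching at the focus point on $\gamma$), and one simply shrinks the direction segments $I_i$ to make the arcs $A_iA_{i+1}$ disjoint; in particular the $A_i$ are \emph{not} canonically the hit-points of critical directions as your outline suggests.
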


In other words, the above lemma says that $D$ can be divided into
\textbf{\textit{sectors}} and \textbf{\textit{gaps}}: each sector is a
regular ``convex cone'' with $x$ as the vertex.
The interiors of the sectors do not overlap and the whole boundary
$\mu$ is covered by theses sectors. To go from $x$ to the points on
$\mu$, we can move in the sectors and forget about the  gaps.

\begin{proof}
We prove the lemma by induction on the number of focus points in the
interior of $D$. (Focus points on the boundary of $D$ can be ignored).

If $D$ does not contain any focus point, then we can apply the regular
local-global convexity principle to $D$ to conclude that $D$ is affinely
isomorphic to a compact convex set in $\mathbb{R}^2$, in which case the lemma
becomes trivial (there is only one sector).

Assume now that there are exactly $n$ focus points $O_1, \hdots, O_n$
in the interior of $D$. Denote by $C = C_{\{x,O_1,\hdots, O_n\}}$
a $\Sigma$-convex hull of $x, O_1, \hdots, O_n$, and let $\gamma$ be
the boundary of $C$. (If $C$ has a hole, then denote by $C'$ the union
of the $C$ with the disk inside the hole and by $\gamma$  the  boundary
of $C'$).
The following cases may happen:

\underline{Case 1.} $x$ lies in the interior of $C$ (or $C'$, if $C$
has a hole) and at least one of the focus points, say $O_1$, lies on
$\gamma$. Then we can apply the induction hypothesis to $C$
(if $C$ is a disk) or the annulus case of Theorem
\ref{thm_gobal_convex_2D_compact}
(if $C$ has a hole) to divide $C$ into sectors and holes with the above
properties.

Let $d$ be any direction at $x$ in one of the sectors of $C$ (or $C'$)
and consider the corresponding straight line $\ell = \ell(d)$ passing
through $x$ and $y(d)$.  After hitting the point $y(d)$ on
$\gamma = \partial C$ (or $\partial C'$), this straight line $\ell$
gets out of $C$ and goes in the regular region between $\gamma$ and
$\mu$. It must then hit $\mu$ at a point, say $z(d)$ (it may happen
that $z(d) = y(d)$, i.e., it already hits $\mu$ at $y(d)$ without
having to extend), or, otherwise, one of the following 3 bad
 possibilities happens (see Figure \ref{fig:3Impossible}):

\begin{figure}[!ht]
\vspace{-15pt}
\centering
 {\mbox{} \hspace*{-1cm }\includegraphics[width=1.2 \textwidth]{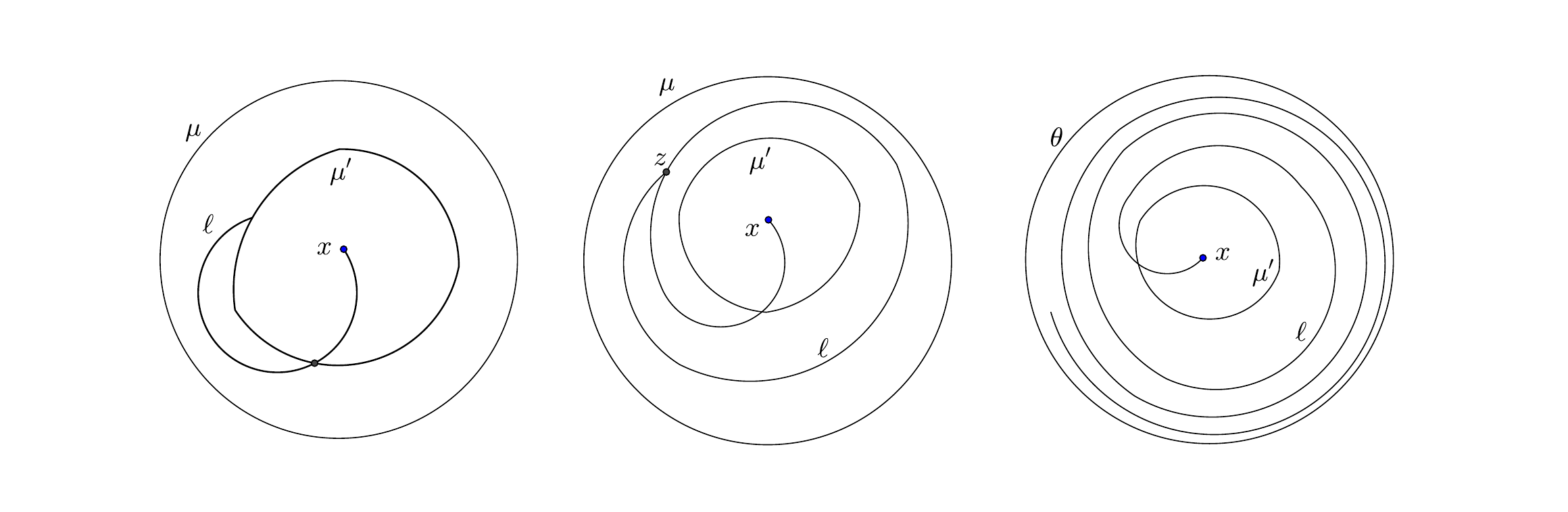}}

\vspace{-15pt}
  \caption{The three impossible situations.}
  \label{fig:3Impossible}
\end{figure}

a) The straight line $\ell$ falls back to $C$ at a point $y'(d)$.
This situation is impossible, because the region between the segment
from $y(d)$ to $y'(d)$ on the straight line and the ``arc'' from $y(d)$
to $y'(d)$ on $\gamma$ can be affinely immersed into $\mathbb{R}^2$.
However, $\gamma$ viewed from $\ell$ is concave while $\ell$ is straight,
a situation that cannot happen in $\mathbb{R}^2$.

b) The straight line $\ell$ cuts itself (after going around). Then we
have locally an annulus between a loop created by the straight line
and $\mu$, whose boundary is not straight (at the self-intersection
point of the straight line), which contradicts the annulus case of
Theorem \ref{thm_gobal_convex_2D_compact}. So this situation
is also impossible.

c) The straight line $\ell$ winds around an infinite number of times
without cutting itself or hitting $\mu$.  Denote by $\mu'$ the limit
set of $\ell$. Then $\mu'$ is a closed straight line union $\ell$. Such
a situation cannot happen in integral affine geometry, because in a
neighborhood of $\mu'$ there would exists a non-constant
affine function $F$ such that $\mu' = \{F = 0\}$, and $F$ is not constant
on $\ell$, implying that $F =0$ on some point of $\ell$, i.e., $\ell$
would have to cut $\mu'$.

So none of the above three ``bad'' situations can happen, which means
that $\ell$ must necessarily cut $\mu$ at a point, say $z(d)$.
The map $d \mapsto z(d)$ maps the segments $I_i$ of directions $d$
at $x$ (which correspond to the  sectors of $C$) to ``arcs''
in $\mu$. Then one can easily see that these arcs overlap and cover the
whole $\mu$. (The focus point on $\gamma$ is also responsible for some
overlapping). In order to avoid overlapping, one simply shrinks the
segments as much as necessary (the choice of shrinking is not unique).
After this shrinking, one gets the required sectors and gaps for $D$.

\underline{Case 2.} $x$ lies in the interior of $D$ and also on the
boundary of $C$. The treatment of this case is similar to Case 1,
with one segment of directions at $x$ added: the one which consists
of the directions which point towards the exterior of $C$.

\underline{Case 3.} $x$ lies in on the boundary $\mu$ of $D$.
If $\mu \setminus \{x\}$ is a straight line, then the conclusion of
the lemma is empty, we have nothing to prove.
If  $\mu \setminus \{x\}$ is not a straight line then $C$ is strictly
included in $D$ and $C$ contains a focus point on its boundary. We can
apply the induction hypothesis to $C$ and treat this case similarly
to Case 2.
\end{proof}

\begin{remark}
\label{global_affine_remark}
{\rm
In the proof of Theorem \ref{thm_gobal_convex_2D_compact} we used
the fact that the affine structure is integral, especially for the
annulus case. However, if there is a \textit{global} affine function
on $\mathcal{B}$, then the fact that the affine structure is integral
is not needed and the conclusions of the theorem still hold. In particular,
if there is a global affine function on $\mathcal{B}$, the indices of the
focus points  are allowed to be non-integers, and $\mathcal{B}$ would still be convex.
}
\end{remark}

\subsection{Convexity in the non-compact proper case} \hfill

Let $\cB$ be the 2-dimensional base space of a toric-focus integrable
Hamiltonian system on a non-compact symplectic 4-manifold
which has both elliptic and focus-focus singularities.
Assume that the number of focus-focus singularities is finite.
The boundary of $\cB$ corresponds to the elliptic singularities of the
system. In this subsection, we assume that the interior of $\mathcal{B}$
is homeomorphic to an open disk.

Fix a point $p$ on the boundary.
Take continuous paths from $p$ to each
focus point of $\cB$ and modify, if necessary, the paths such that
they do not intersect in $\cB$. Now cut along these paths, thereby
obtaining a new set $\widehat{\cB}$, whose boundary consists of
the old boundary of $\cB$ union with twice each path linking the
elliptic point to the focus singularity. This new set does not
contain any point with monodromy, because of the cuts that place the
former focus singularities on the boundary of $\widehat{\cB}$. Thus
we get a global affine map $\varphi: \widehat{\cB} \rightarrow
\mathbb{R}^2$.

\begin{definition}
\label{def_proper}
If $\mathcal{B}$ is such a base space, then $\cB$ is said to be
{\bfi proper} if the affine map $\varphi: \widehat{\cB} \rightarrow
\mathbb{R}^2$ is proper (i.e., the inverse image by $\varphi$
of any compact set in $\mathbb{R}^2$ is compact in $\widehat{\cB}$).
\end{definition}

Notice that the above definition does not depend on the choice of the paths
linking the elliptic singularities on the boundary of $\cB$ to the
focus points inside $\mathcal{B}$. This fact follows easily from the
following lemma:

\begin{lemma}
Let $X$ be a Hausdorff topological space, 
$Y=\overline{Y} \subset X$ and $Z =\overline{Z} \subset X$ are two closed subsets which together cover $X$:
$Y \cup Z = X$. Then a continuous map $f:X \rightarrow T$ to
some other topological space $T$ is proper if and only if both restrictions
$f|_Y:Y \rightarrow T$ an $f|_Z:Z \rightarrow T$ are proper.
\end{lemma}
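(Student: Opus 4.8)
The plan is to prove both implications directly from the definition of properness (inverse image of compact is compact), using only the fact that $Y$ and $Z$ are closed and cover $X$. First I would establish the easy direction: if $f : X \to T$ is proper, then for a compact $K \subset T$ the set $f^{-1}(K)$ is compact in $X$; since $Y$ is closed, $(f|_Y)^{-1}(K) = f^{-1}(K) \cap Y$ is a closed subset of a compact set, hence compact. The same argument applies to $Z$. This uses nothing beyond the Hausdorff assumption (indeed not even that, for this direction) and the closedness of $Y$ and $Z$.

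For the converse, suppose both $f|_Y$ and $f|_Z$ are proper and let $K \subset T$ be compact. Then
\[
f^{-1}(K) = \big(f^{-1}(K) \cap Y\big) \cup \big(f^{-1}(K) \cap Z\big) = (f|_Y)^{-1}(K) \cup (f|_Z)^{-1}(K),
\]
using $Y \cup Z = X$. Each of the two sets on the right is compact by hypothesis, and a finite union of compact subsets of a topological space is compact. Hence $f^{-1}(K)$ is compact, so $f$ is proper. I would spell out the set-theoretic identity above carefully, since that is the only place the covering hypothesis $Y \cup Z = X$ enters.

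There is essentially no obstacle here — the statement is a routine point-set topology fact. The only mild subtlety worth flagging is that in the ``easy'' direction one genuinely needs $Y$ and $Z$ to be \emph{closed} (so that $f^{-1}(K) \cap Y$ is closed in the compact set $f^{-1}(K)$, hence compact); the Hausdorff hypothesis on $X$ is not strictly needed for the argument as written, though it is harmless to keep it since it matches the intended application where $X = \widehat{\cB}$ and $Y, Z$ are the pieces obtained by slitting along the chosen paths. I would remark at the end that the lemma is exactly what is needed to see that Definition~\ref{def_proper} is independent of the choice of paths: two such choices give two decompositions of $\widehat{\cB}$ into closed pieces on which the respective affine maps agree up to the identification, so properness of one global affine map is equivalent to properness of the other.
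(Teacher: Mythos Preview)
Your proof is correct and follows essentially the same approach as the paper's own proof: both directions use the set identity $f^{-1}(K) = (f^{-1}(K)\cap Y)\cup(f^{-1}(K)\cap Z)$, with the forward direction using that a closed subset of a compact set is compact, and the converse using that a finite union of compact sets is compact. Your observation that the Hausdorff hypothesis is not actually used, and your closing remark tying the lemma back to the well-definedness of Definition~\ref{def_proper}, are both apt.
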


\begin{proof}
If $K \subset T$, then $f^{-1}(K) =
f^{-1}(K)\cap(Y \cup Z) = \left(f^{-1}(K) \cap Y\right) \cup
\left(f^{-1}(K) \cap Z \right)$.

If $K \subset T$ is compact and $f: X \rightarrow T$ is proper, it
follows that $f^{-1}(K)$ is compact in $X$. Therefore, the inverse
images of $f^{-1}(K)$ by the inclusions $Y \hookrightarrow X$,
$Z \hookrightarrow X$ are compact sets in $Y$ and $Z$,
respectively, because $Y$ and $Z$ are closed. 
These inverse images coincide with $(f|_Y)^{-1}(K)
\subset Y$ and $(f|_Z)^{-1}(K) \subset Z$, which shows that both
$f|_Y:Y \rightarrow T$ an $f|_Z:Z \rightarrow T$ are proper.

Conversely, if $f|_Y:Y \rightarrow T$ an $f|_Z:Z \rightarrow T$ are
proper, then $(f|_Y)^{-1} (K) = f^{-1}(K) \cap Y$ is a compact set in
$Y$ and $(f|_Z)^{-1} (K) = f^{-1}(K) \cap Z$ is a compact set in $Z$. Since
$Y= \overline{Y}$, $Z = \overline{Z}$, it follows that $f^{-1}(K) \cap Y$
and $f^{-1}(K) \cap Z$ are compact in $X$ which then implies that their
union, which equals $f^{-1}(K)$ is compact in $X$.
\end{proof}

Now we can formulate a global convexity
theorem in the proper non-compact 2-dimensional case.

\begin{theorem}
\label{thm:GlobalConvex2Dfirst}
Let $\mathcal{B}$ be the 2-dimensional base space of a toric-focus
integrable Hamiltonian system on a connected, non-compact,
symplectic, 4-manifold without boundary.
Assume:
\begin{itemize}
\item[{\rm (i)}]  the system has elliptic singularities (so
the boundary of $\cB$ is not empty);
\item[{\rm (ii)}] the number of focus points in $\mathcal{B}$ is
finite, and $\mathcal{B}$ minus its boundary is homeomorphic to an open disk;
\item[{\rm (iii)}] $\mathcal{B}$ is proper (see Definition \ref{def_proper}).
\end{itemize}
Then $\mathcal{B}$ is convex (in its own underlying affine structure).
\end{theorem}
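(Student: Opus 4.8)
The plan is to reduce the non-compact proper case to the compact case, Theorem~\ref{thm_gobal_convex_2D_compact}, using the properness hypothesis to confine the relevant convex hulls to a compact region. The first observation is that $\mathcal{B}$ is automatically locally convex: focus boxes are convex by Theorems~\ref{thm:FF1Local} and~\ref{thm:FF2Local}, and near regular points and near the boundary and corner points coming from elliptic singularities the affine structure is locally polyhedral, hence locally convex. So the $\Sigma$-convexity machinery of the previous subsection applies. Fix $x,y\in\mathcal{B}$; if $x=y$ there is nothing to prove, so assume $x\neq y$, and set $S=\{x,y,O_1,\dots,O_d\}$, where $O_1,\dots,O_d$ are all the focus points of $\mathcal{B}$ (finite by hypothesis).

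The central reduction is: it suffices to produce a compact, connected, locally convex subset $K\subset\mathcal{B}$ with $S\subset K$ and $\partial K\neq\emptyset$. Indeed, the proof of Theorem~\ref{thm_gobal_convex_2D_compact} uses only the compactness of the set under consideration (it proceeds by covering it with finitely many focus and regular boxes and forming $\Sigma$-convex hulls), so it applies verbatim to such a $K$ inside the base of any toric-focus system; it then gives that $K$ is convex, and in particular that $x$ and $y$ are joined by a straight segment inside $K\subset\mathcal{B}$. To build $K$, I would fix a countable locally finite cover $\Sigma$ of $\mathcal{B}$ by focus and regular boxes and take a minimal element $C=C_S$ (a $\Sigma$-convex hull of $S$) of the family of closed, connected, $\Sigma$-convex subsets of $\mathcal{B}$ containing $S$ --- the family is nonempty because $\mathcal{B}$ itself belongs to it. For the Zorn's-lemma step, a descending chain $\{C_\alpha\}$ has the intersection $\bigcap_\alpha C_\alpha$ as a lower bound: it is closed, and $\Sigma$-convex by Proposition~\ref{prop:FocusBox1} applied box by box (both strong convexity and ``all straight segments lie inside'' pass to the intersection), and its connectedness follows from Engelking's theorem~\cite{Engelking1989} once one knows the $C_\alpha$ stay in a fixed compact set --- which is precisely where properness is used.

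The properness input, and the heart of the argument, goes as follows. Cut $\mathcal{B}$ along disjoint paths joining a boundary point to each $O_i$ to obtain the regular affine surface $\widehat{\mathcal{B}}$ with its developing map $\varphi:\widehat{\mathcal{B}}\to\mathbb{R}^2$, which is proper by Definition~\ref{def_proper}. Choose $R$ so large that $\overline{D_R}\subset\mathbb{R}^2$ contains the $\varphi$-images of the finitely many preimages in $\widehat{\mathcal{B}}$ of the points of $S$. I claim any minimal closed connected $\Sigma$-convex $C\supset S$ (and, more generally, any minimal member of a chain) is contained in the compact set $\varphi^{-1}(\overline{D_R})$: if not, then, since $S$ is finite and contains every focus point, the part of $\partial C$ lying far from $\varphi^{-1}(\overline{D_R})$ contains no point of $S$ and no focus point, hence --- by minimality and local convexity, exactly as in Lemma~\ref{lem:ConvexHull} --- is an eventually straight curve; along a straight ray of it escaping to infinity one pushes the boundary slightly inward along a global affine function defined on a thin, flat, simply connected tubular neighborhood of that ray, and excises the resulting (possibly unbounded) collar. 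This produces a strictly smaller member of the family, contradicting minimality. Hence $C=K$ is compact; it is connected, locally convex (each $C\cap B_i$ is convex), and $\partial K\neq\emptyset$ because $\partial\mathcal{B}\neq\emptyset$, and we conclude by the reduction above. The delicate point I expect to have to treat with care is the excision of the escaping collar where it attaches near a point of $S$: one must check that the new piece is still closed, connected, $\Sigma$-convex and contains $S$, the only new feature being a convex corner where the pushed-in ray meets the unchanged boundary, which is harmless for local convexity.
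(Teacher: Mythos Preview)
Your strategy has a circularity problem at its core. You want to apply Zorn's lemma to produce a minimal $\Sigma$-convex hull $C_S$, and you correctly note that connectedness of the intersection of a descending chain (needed to stay in the family) requires the chain to live in a fixed compact set. But then your argument for compactness --- the ``push in the escaping ray'' step --- is only stated for \emph{minimal} elements, which you do not yet have until Zorn succeeds. The phrase ``any minimal member of a chain'' does not rescue this: a descending chain in general has no minimal member. Nor does the push-in argument, as you describe it, produce a \emph{compact} subset from a non-compact one; it only produces a strictly smaller one, so you cannot use it to first populate a compact subfamily and then run Zorn there. You would need to show directly that the family contains \emph{some} compact member before any minimization argument can get off the ground.

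The paper sidesteps all of this by simply \emph{writing down} such a compact member. Take a large closed square $D_N\subset\mathbb{R}^2$ containing the $\varphi$-images of all the (compact, finitely many) cut paths and of $x,y$, and let $\cB_N\subset\cB$ be the subset corresponding to $\varphi^{-1}(D_N)\subset\widehat{\cB}$. Properness of $\varphi$ makes $\cB_N$ compact; since the cut paths lie in its interior, its frontier in $\cB$ (away from $\partial\cB$) is cut out by the affine equations defining $\partial D_N$ and is therefore locally convex; and $\partial\cB_N$ is nonempty. Theorem~\ref{thm_gobal_convex_2D_compact} then applies directly to $\cB_N$ and yields a straight segment from $x$ to $y$. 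This is a one-line reduction once you notice that the proper developing map hands you a compact locally convex exhaustion for free --- no Zorn, no convex hulls, no delicate collar excisions at infinity.
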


\begin{proof} The main idea is to reduce Theorem
\ref{thm:GlobalConvex2Dfirst} to Theorem \ref{thm_gobal_convex_2D_compact},
in a way similar to the reduction of Lemma \ref{lem:convex_affine_map2} to
Lemma \ref{lem:convex_affine_map1}.

Fix the proper integral affine map $\varphi: \widehat{\cB} \rightarrow
\mathbb{R}^2$ given in the definition of the properness of $\cB$.
Then, for $N \in \mathbb{N}$ large enough, the square
$D_N = \{(s,t) \in \mathbb{R}^2 \; | \; |s|, |t| \leq N\}$
contains the images of all the paths used to cut $\cB$, because
these paths are compact and there are only finitely many of them. It
follows that $\varphi^{-1}(D_N)$ corresponds to a locally convex compact
subset $\cB_N$ in $\cB$ whose boundary is not empty.
By theorem \ref{thm_gobal_convex_2D_compact}, we obtain that $\cB_N$
is convex. For any $x,y \in \cB$ there exists $N$ large enough such that
$x, y \in \cB_N$, hence there is a straight line from $x$ to $y$. Thus
$\cB$ is convex.
\end{proof}

\subsection{Non-convex examples in the non-proper case} \hfill
\label{subsec_nonconvex_ex_non_proper_J}

In this subsection we briefly recall some (non)convexity results due to Vu Ngoc, Pelayo and Ratiu 
\cite{VuNgoc2006,PeRaVN2015} in the case of toric-focus integrable Hamiltonian systems on symplectic 4-manifolds
with a global Hamiltonian $\mathbb{T}^1$-action. Let
$\textbf{F}=(J, H): (M^4,\omega) \rightarrow \mathbb{R}^2$ be a proper
momentum map of a toric-focus integrable system with the following additional properties:
\begin{itemize}
\item  $J: (M^4,\omega) \rightarrow \mathbb{R}$ is the momentum map of a Hamiltonian $\mathbb{T}^1$-action;
\item the fibers of $J$ are connected;
\item the  bifurcation set of $J$ is discrete;
\item for any critical value $x$ of $J$,
there exists a neighborhood $V\ni x$ such that the number of connected components of the critical set of $J$ in $J^{-1}(V)$ is finite.
\end{itemize}
Such a system is called a \textit{proper semitoric system}. If, moreover, the function $J$ itself is proper,
then the system is called a \textit{semitoric system}.
(Unfortunately, the above notions are a bit confusing,
because a "semitoric system" is "more proper" than a
"proper semitoric system".) Vu Ngoc \cite{VuNgoc2006}
showed that in the semitoric case one can associate
to the system a family of convex "momentum polygons"
similar to Delzant polytopes, so in a sense we still have
convexity in this case. (This convexity is not the same as our notion of convexity used in this paper, but it is clearly very closely related.)

The case when $\textbf{F}$ is proper but $J$ is not 
proper is fundamentally different, as was shown in \cite{PeRaVN2017}. In the
study of semitoric systems, properness of $J$ plays a crucial role since
it permits the use of Morse-Bott theory and of techniques related to
the Duistermaat-Heckman theorem, ultimately leading to the
proof of the connectedness of the fibers of $\textbf{F}$ and of the convexity
result of the "momentum polygons". These methods
are not available if $J$ is not proper. The consequences of the
loss of properness of $J$ are remarkable. Not only are the proofs totally
different, but even the properties of $\textbf{F}(M)$ change radically.
In \cite{PeRaVN2017}, an invariant generalizing that for semitoric and
toric systems, the \textit{cartographic projection}, was introduced: A cartographic projection is the natural planar representation of the singular affine structure of the proper semitoric
system, and is a union of subsets $\mathcal{R}
\subset \mathbb{R}^2$ of the following four very specific types:
\begin{itemize}
\item $\mathcal{R} \subset \mathbb{R}^2$ is of \emph{type} \textup{I}
if there is an interval $I \subseteq \mathbb{R}$ and
$f, g\colon I \to \mathbb{R}$ such that $f$ is a piecewise linear
continuous convex function, $g$ is a piecewise linear continuous concave
function, and
$$
\mathcal{R}=\Big\{(x,y) \in \mathbb{R}^2\,\,|\, \, x \in I\,\,\,\, {\rm
  and}\,\,\,\, f(x) \leq y \leq g(x)\Big\}.
$$
\item $\mathcal{R} \subset \mathbb{R}^2$ is of \emph{type} \textup{II} if
there is an interval $I \subseteq \mathbb{R}$ and $f\colon I\to\mathbb{R}$,
$g \colon I \to \overline{\mathbb{R}}$ such that $f$ is a piecewise
linear continuous convex function, $g$ is lower semicontinuous, and
$$
\mathcal{R}=\Big\{(x,y) \in \mathbb{R}^2\,\,|\, \, x \in I\,\,\,\,
{\rm and}\,\,\,\, f(x) \leq y < g(x)\Big\}.
$$
\item $\mathcal{R} \subset \mathbb{R}^2$ is of \emph{type} \textup{III}
if there is an interval $I \subseteq \mathbb{R}$ and
$f \colon I \to \overline{\mathbb{R}}$, $g \colon I \to \mathbb{R}$
such that $f$ is upper semicontinuous, $g$ is a piecewise linear
continuous concave function, and
$$
\mathcal{R}=\Big\{(x,y) \in \mathbb{R}^2\,\,|\, \, x \in I \,\,\,\,
{\rm and}\,\,\,\, f(x) < y \leq g(x)\Big\}.
$$
\item $\mathcal{R} \subset \mathbb{R}^2$ is of \emph{type} \textup{IV}
if there is an interval $I \subseteq \mathbb{R}$ and
$f,g \colon I \to \overline{\mathbb{R}}$ such that $f$ is upper
semicontinuous, $g$ is lower semicontinuous, and
$$
\mathcal{R}=\Big\{(x,y) \in \mathbb{R}^2\,\,|\, \, x \in I \,\,\,\,
{\rm and}\,\,\,\, f(x) < y< g(x)\Big\}.
$$
\end{itemize}

Here, $\overline{\mathbb{R}}:=
\mathbb{R}\cup\{-\infty,+\infty\}$ is endowed with the standard topology.
Notice that $\mathcal{R}$ is a type I set if and only if there exists
a convex polygon $\mathcal{P} \subset \mathbb{R}^2$ and an interval
$I \subseteq \mathbb{R}$ such that $\mathcal{R}=\mathcal{P} \cap
\left\{(x,y) \in \mathbb{R}^2\mid x \in I\right\}$.

\begin{figure}[!ht]
  \centering
  \includegraphics[height=5.5cm]{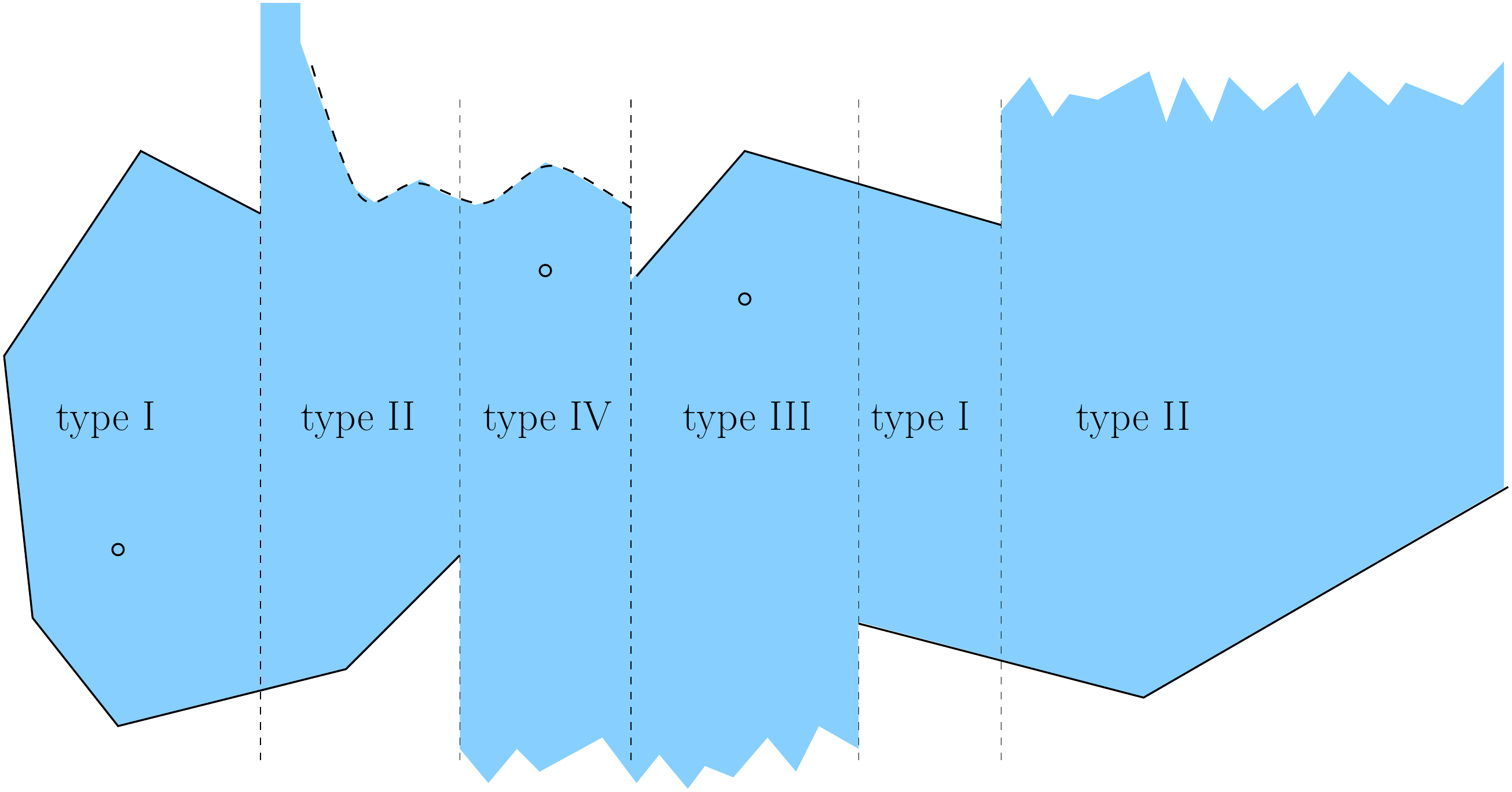}
  \caption{A cartographic projection of $F$. It is a symplectic
    invariant of $F$, see \cite[Theorem C]{PeRaVN2017}.}
  \label{fig:cartographic}
\end{figure}

An example of
a possible cartographic projection is given in Figure
\ref{fig:cartographic} (taken from \cite{PeRaVN2017} with permission). The cartographic
projection contains the information given by the singular affine
structure induced by the singular Lagrangian fibration $\textbf{F} \colon M
\to \mathbb{R}^2$ on the base $\textbf{F}(M)$.
For its construction and the study of its properties we refer to
\cite[Theorems B, C, and Corollary 4.3]{PeRaVN2017}. In
\cite[Theorem D]{PeRaVN2017}, examples of proper semitoric systems
are produced for which the cartographic projection is neither polygonal,
nor convex. More precisely, it is shown, by construction, that there
are uncountably many proper semitoric systems having the range of
$F$ unbounded and the cartographic projections not convex, neither
open nor closed in $\mathbb{R}^2$, and containing every type of the
four possible sets listed above in the union
forming the cartographic projection. In addition, one can build
two uncountable subfamilies such that the cartographic projections
for the first family are bounded and those for the second family are
unbounded. One can even construct semitoric systems such that they
are isomorphic if and only if their parameter indices coincide.  
(See \cite[Section 7]{PeRaVN2017}.)

\subsection{An affine black hole and non-convex $S^2$}  \hfill

During a long time, we thought that any 
locally convex singular affine structure on $S^2$ 
must be automatically globally convex. 
So the following non-convexity result came to us as a big surprise:

\begin{theorem} \label{thm:NonConvexS2}
There exists a non-degenerate singular Lagrangian torus fibration on a
symplectic manifold diffeomorphic to K3, with only focus-focus
singularities, and whose base space is a sphere $S^2$ with a non-convex
singular integral affine structure.
\end{theorem}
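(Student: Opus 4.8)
The strategy is to build the desired Lagrangian fibration on K3 by \emph{integrable surgery}: first construct a singular integral affine structure on $S^2$ with only focus points, then verify it is locally convex, then verify it is \emph{not} globally convex, and finally lift it to a symplectic K3 surface. The count of $24$ focus points (with multiplicity) is forced by the fact that a K3 surface has Euler characteristic $24$, and each focus-focus point on an elliptic Lagrangian fibration contributes $+1$ (more precisely $+k$ for index $k$) to the Euler characteristic of the total space, while the regular part contributes $0$; thus the affine structure on $S^2$ must have focus indices summing to $24$.

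First I would construct the affine structure on $S^2$ combinatorially. Cut $S^2$ into a small number of affine pieces — e.g.\ several ``long thin'' cylinders and ``caps'' — each carrying a flat integral affine structure, and place focus points inside them so that the monodromies around nested loops compose to something with an eigenvector pointing ``inward''. The key mechanism is Lemma~\ref{lem:AVFocus2}: going around a handle or around enough focus points forces completely negative angle variation, which obstructs a straight line from escaping. Concretely, the ``affine black hole'' of Theorem~\ref{thm:NonConvexS2} (mentioned in the abstract and Section~\ref{sec_global_convexity}) is a disk-like region with a central point from which every straight ray, upon hitting the focus locus and branching, is pushed further to the left and spirals back inward, never reaching the outer boundary; both branches of every extension fail to escape, exactly as in the ``all four choices are wrong'' phenomenon of Theorem~\ref{thm:NonConvexFocus2} but now engineered globally. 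One then glues this black-hole region into $S^2$, completing it with additional focus points so that the total index is $24$ and so that at \emph{every} point the local model is a focus box (or regular box), hence locally convex by Theorem~\ref{thm:FF2Local}. I would use the angle-variation calculus (Lemmas~\ref{lem:AV1}, \ref{lem:AVFocus1}, \ref{lem:AVFocus2}) to show that the center of the black hole and a point far outside it cannot be joined by any straight segment: any such segment would have to cross a loop of sufficiently negative angle variation, producing a contradiction with the way straight lines behave (a straight line has $AV=0$ by Lemma~\ref{lem:AV1}(vii), while the topology forces it to pick up negative variation).

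Next I would realize this affine structure by an actual integrable system. By the results on integrable surgery cited in the proof of Theorem~\ref{thm:NonConvexFocus2} (see \cite[Proposition 4.10, Proposition 4.3]{Zung-Integrable2003}), over each local piece of the base there exists a local Lagrangian fibration with the prescribed affine structure — over focus$^1$ loci one uses a parametrized version of V\~u Ng\d{o}c's construction \cite{VuNgoc2003}, over focus$^m$ points one takes local direct products, and over regular pieces one takes trivial torus bundles — and these local models glue to a global singular Lagrangian torus fibration provided the Lagrange-Chern cohomological obstruction vanishes. Since the base is $S^2$ and the only nontrivial cohomology is $H^2(S^2)$, which affects only the choice of the symplectic form's cohomology class (and can be absorbed by rescaling), the obstruction vanishes, giving a closed symplectic $4$-manifold $M$ with this fibration. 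Finally I would identify $M$ as diffeomorphic to K3: a symplectic $4$-manifold admitting an elliptic Lagrangian torus fibration over $S^2$ with $24$ focus-focus fibers and no multiple fibers is, by the classification of elliptic surfaces (it is simply connected — since $\pi_1$ is killed by the vanishing cycles at the focus fibers — has $b_2^+ > 1$, and has the right Euler characteristic and signature), diffeomorphic to the K3 surface; one also checks the canonical class is trivial, consistent with a special Lagrangian/torus fibration picture.

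The main obstacle I expect is the combinatorial design of the affine structure on $S^2$: one must simultaneously arrange (a) local convexity \emph{everywhere}, including at all $24$ focus points and along the closed ``horizon'' curve, (b) genuine global non-convexity — a verified pair of points with no joining straight segment — and (c) consistency of the monodromy representation around all loops on $S^2$ (the product of all local monodromies around the $24$ focus points, read in a suitable order, must be trivial, since $\pi_1(S^2 \setminus \{24 \text{ points}\})$ has that single relation). Balancing the ``inward-pushing'' monodromies that create the black hole against this global triviality constraint is the delicate point; it is essentially a problem of finding a suitable word in $SL(2,\mathbb{Z})$ that is a product of $24$ conjugates of $\left(\begin{smallmatrix}1&1\\0&1\end{smallmatrix}\right)$, equals the identity, yet has a nontrivial ``partial product'' whose eigendirection traps straight lines. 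Verifying non-convexity rigorously — as opposed to merely plausibly — will require a careful application of the angle-variation lemmas to an explicit family of potential straight segments and their branched extensions, showing every branch is broken or trapped, which is the heart of the argument.
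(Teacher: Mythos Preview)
Your high-level strategy---build a singular integral affine structure on $S^2$ with $24$ focus points (counted with multiplicity), verify non-convexity, lift via integrable surgery, identify the total space as K3---matches the paper's. But the proposal has a genuine gap at the heart of the argument: you do not actually construct the affine structure, and your proposed mechanism for verifying non-convexity is not the right tool.

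The paper's proof is entirely explicit. It draws an ``8-vertex shuriken'' in $\mathbb{R}^2$ with specific integer coordinates, glues pairs of edges by explicit matrices in $GL(2,\mathbb{Z})$ to produce an ``8-petal flower'' with eight focus points ($O_1,O_3,O_5,O_7$ of index $2$ and $O_2,O_4,O_6,O_8$ of index $1$), and then glues on a complementary octagon (also carrying eight focus points) to close up to $S^2$. The non-convexity is verified by a direct ray-tracing argument: each petal $AO_iP_iP_{i+1}O_{i+1}$ is affinely a triangle, and one checks by hand that any straight ray from the center $A$ entering petal $i$ must exit through an edge that is simultaneously an entry edge of petal $i{+}1$; by induction the ray cycles through the petals forever and never reaches the octagon. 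No angle-variation calculus is used here.

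Your plan to invoke Lemmas~\ref{lem:AV1}, \ref{lem:AVFocus1}, \ref{lem:AVFocus2} is misdirected. Those lemmas concern the angle variation of \emph{closed} non-critical curves and are used in the paper only to derive topological restrictions on locally convex surfaces (Proposition~\ref{prop:TopoLocallyConvex2Dim}); they do not give a mechanism for showing that a straight \emph{segment} between two points fails to exist. In particular, Lemma~\ref{lem:AV1}(vii) applies to closed straight curves, not to open segments, so the contradiction you sketch (``$AV=0$ versus forced negative variation'') does not go through. Likewise, the ``all choices go wrong'' phenomenon of Theorem~\ref{thm:NonConvexFocus2} is a local obstruction at a $\text{focus}^2$ point in dimension $\geq 4$; the $2$-dimensional black hole works by a different, global mechanism---accumulated monodromy around many separate focus$^1$ points bending every ray inward---and must be exhibited concretely rather than deduced from those lemmas. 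What is missing from your proposal is exactly this concrete construction and the elementary edge-tracking verification that replaces the angle-variation heuristic.
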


The theorem is proved by constructing an explicit example of an integral
affine structure on $S^2$ with focus points, which is, of course, locally
convex, but which is globally non-convex. This example is an instance
of the phenomenon \textit{monodromy can kill global convexity}, where
the  \textit{local-global convexity principle} no longer holds.

Look at the ``8-vertex shuriken'' drawn in Figure \ref{fig:Shuriken},
together with a standard integral lattice in $\mathbb{R}^2$. We glue
the edges of this shuriken together, by the arrows shown in Figure
\ref{fig:Shuriken}. For example:

$O_1Q_8$ is glued to $O_1P_1$ by the linear transformation
which admits $O_1$ as the origin and is given by the matrix
$\begin{pmatrix} 1 & 2 \\  0  & 1\end{pmatrix}$.
Indeed, $\displaystyle \overrightarrow{O_1Q_8} =
\begin{pmatrix} -6 \\ 3\end{pmatrix}, \overrightarrow{O_1P_1} =
\begin{pmatrix} 0\\ 3\end{pmatrix}$, and
$ \begin{pmatrix} 1 & 2 \\ 0 & 1\end{pmatrix} \begin{pmatrix} -6 \\
3\end{pmatrix} = \begin{pmatrix} 0 \\ 3\end{pmatrix}$.

\begin{figure}[!ht]
\centering
\includegraphics[width=0.8 \textwidth]{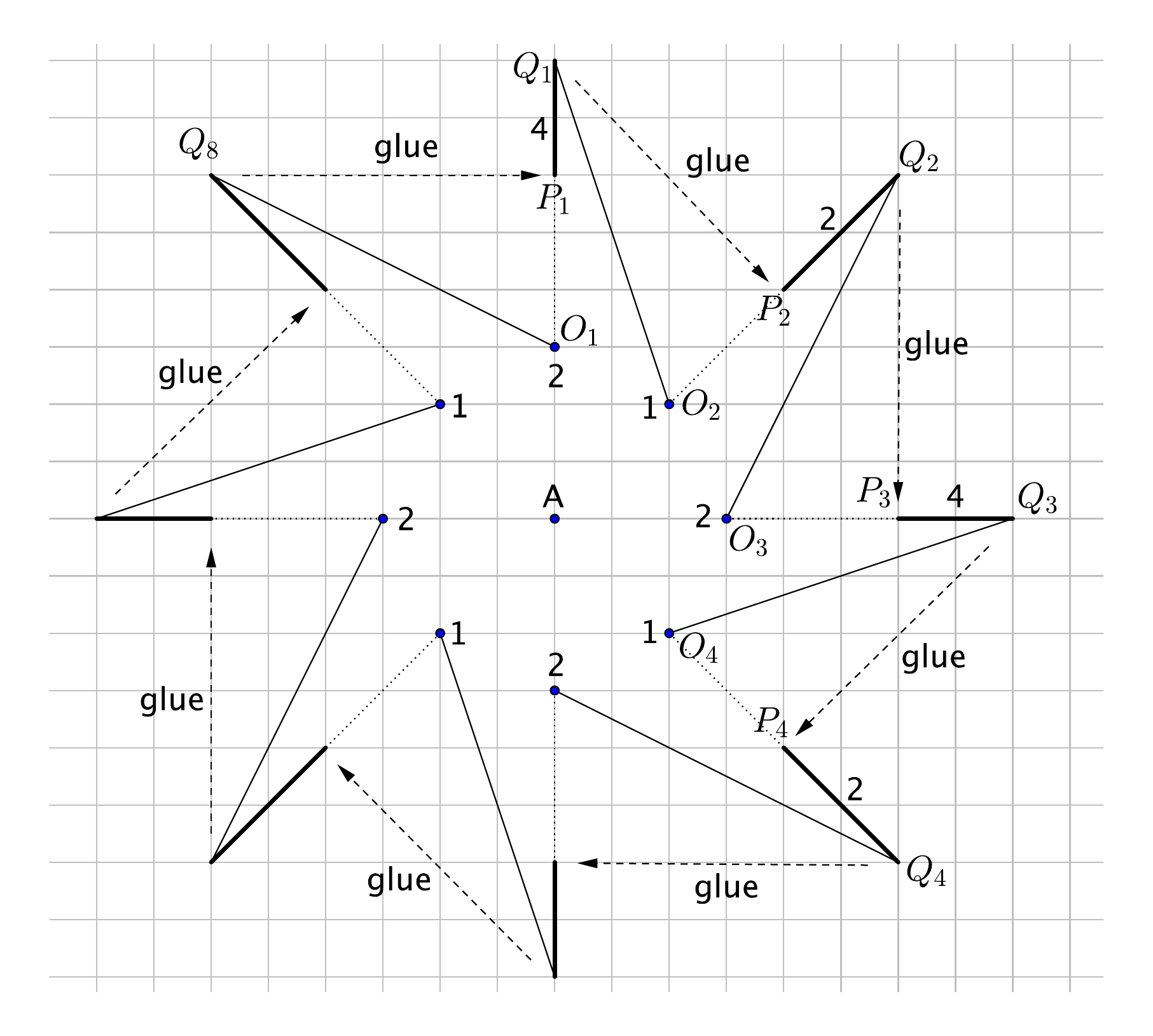}
\caption{The ``shuriken''.} \label{fig:Shuriken}
\end{figure}

$O_2Q_1$ is glued to $O_2P_2$ by the linear transformation which
admits $O_2$ as the origin and is given by the matrix
$$ \begin{pmatrix} 2 & 1 \\ - 1 & 0\end{pmatrix} =
\begin{pmatrix} 1 & 0 \\ - 1 & 0\end{pmatrix}
\begin{pmatrix} 1 & 1 \\ 0 & 1\end{pmatrix}
\begin{pmatrix} 1 & 0 \\ - 1 & 1\end{pmatrix}^{-1}$$
Indeed, $\displaystyle \overrightarrow{O_2Q_1} = \begin{pmatrix} -2 \\
6\end{pmatrix}, \overrightarrow{O_2P_2} = \begin{pmatrix} 2 \\
2\end{pmatrix}$, and
$ \begin{pmatrix} 2 & 1 \\ - 1 & 0\end{pmatrix} \begin{pmatrix} -2 \\
6\end{pmatrix} = \begin{pmatrix} 2 \\ 2\end{pmatrix}$.

After this glueing process, we get an ``8-petal flower'', as shown
in Figure \ref{fig:Flower}.
Its boundary consists of the straight lines
$P_1P_2, P_2P_3,\hdots, P_8P_1$, and it is concave at the vertices $P_i$.
(Note that $Q_i$ is identified with $P_{i+1}$, and $P_9 = P_1$ by our
convention). The flower has 8 focus points $O_1, O_2,\hdots, O_8$
inside ($O_1,O_3,O_5,O_7$ have index 2, and $O_2,O_4,O_6,O_8$ have
index 1). Each curve $AO_iP_iP_{i+1}$ (consisting of a dashed part
and a boundary part) is in fact a straight line ``coming from the left''
with respect to the singular affine structure on the flower, and each petal
$AO_iP_iP_{i+1}O_{i+1}$ is affinely isomorphic to a triangle with vertices
$A_i, P_{i+1}, O_{i+1}$ ($P_9 = P_1$ and $O_9 = O_1$ by convention).
(The fact that each petal is a triangle is more clearly shown on the
shuriken picture).

\begin{figure}[!ht]
\centering
\includegraphics[width=0.6 \textwidth]{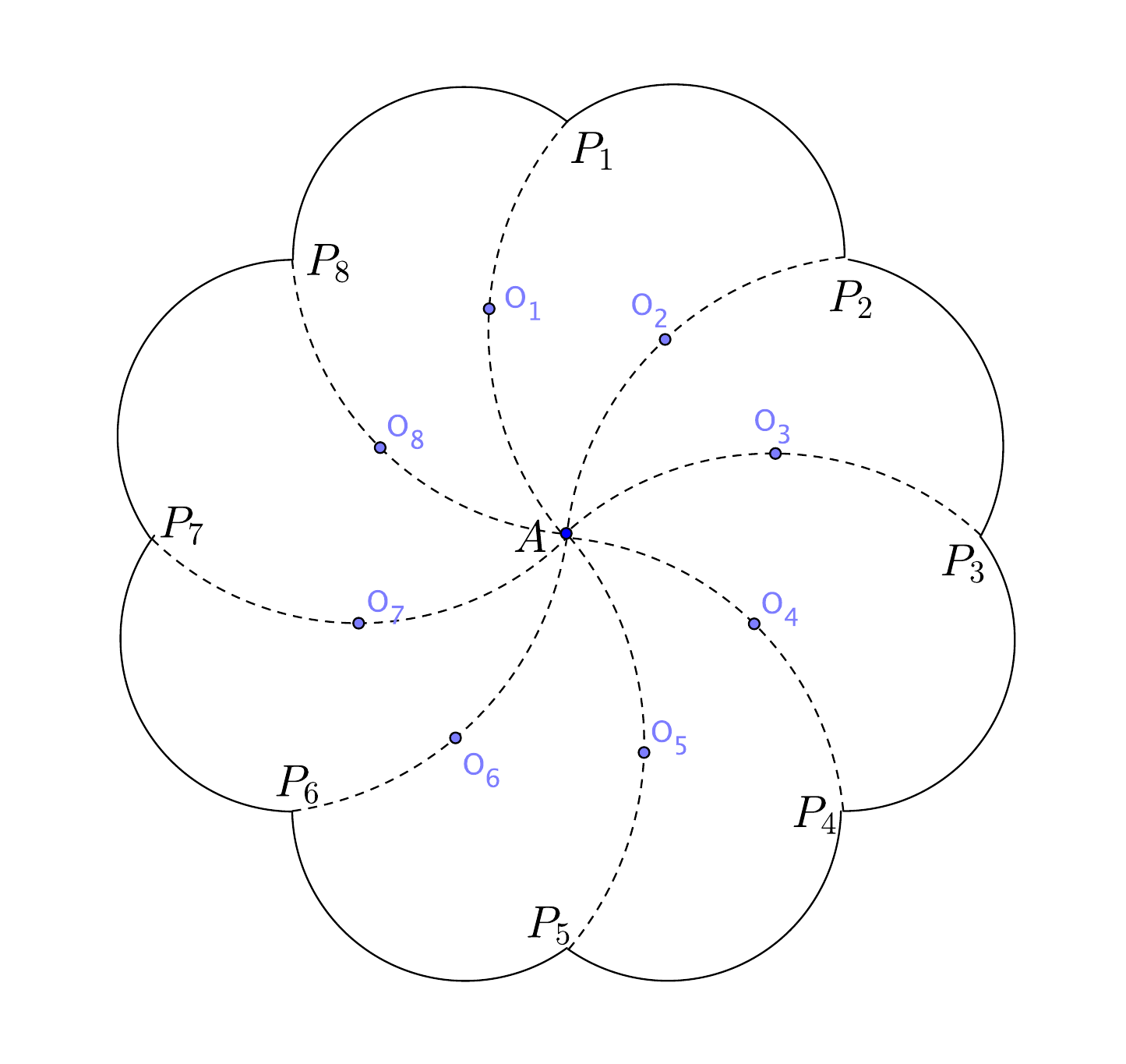}
\caption{The ``shuriken'' becomes an 8-petal flower after glueing. (The dashed lines
are straight lines with respect to the affine structure.)}
\label{fig:Flower}

\end{figure}

The boundary of our 8-petal flower
$P_1P_2\hdots P_8P_1$ is a piecewise
straight simple closed curve  with the following integral affine
invariants:

- Every edge $P_iP_{i+1}$ has {\bfi integral direction}, which means that
there is a local integral affine function $F_i$ which vanishes on
$P_iP_{i+1}$.

- Each vertex $P_i$ is a {\bfi simple vertex}, which means that the
covectors $dF_{i-i}(P_i)$, $dF_{i}(P_i)$
form a basis of the lattice of integral covectors at $P_i$.

- Each edge has {\bfi integral affine length} equal to 2.

- Each edge $P_iP_{i+1}$ of the boundary has its
{\bfi characteristic number} defined as follows. We
can embed a small neighborhood of the 3-piece broken line
$P_{i-1}P_iP_{i+1}P_{i+2}$ by an integral affine map $\Phi_i$ into the
standard integral affine plane $\mathbb{R}^2$, such that under this map
$\Phi(P_i) = (0,0)$, $\Phi(P_{i+1}) = (\ell,0)$ (where $\ell =2$
is the integral affine length of $P_iP_{i+1}$), $\Phi(P_{i-1})$ lies on the
axis $(0,y)$, and $\Phi_{i+2}$ lies on the line $x = cy + \ell$. Then $c$
is called the {\bfi characteristic number} of the  edge $P_iP_{i+1}$
(with respect to the two adjacent edges). The edges $P_1P_2, P_3P_4,
P_5P_6, P_7P_8$ have index 4, while the edges $P_2P_3, P_4P_5, P_6P_7, P_8P_1$ have index 2, as shown on Figure \ref{fig:Shuriken}.

Since our 8-petal flower has concave boundary,
we can glue an appropriate
octagon with convex boundary to it to obtain a sphere $S^2$ with a
singular  integral affine structure with focus points. Such
an explicit octagon is given on Figure \ref{fig:Octagon},
with exactly the same
integral lattice as in Figure \ref{fig:Shuriken}.

We take a flat octagon, then create 8 focus points inside it
(4 focus points of index 1 and 4 focus points of
index 2) by cutting out 8 small triangles and glueing the edges of
the created angles together, as shown on Figure \ref{fig:Octagon}.

\begin{figure}[!ht]
\centering
\includegraphics[width=0.6 \textwidth]{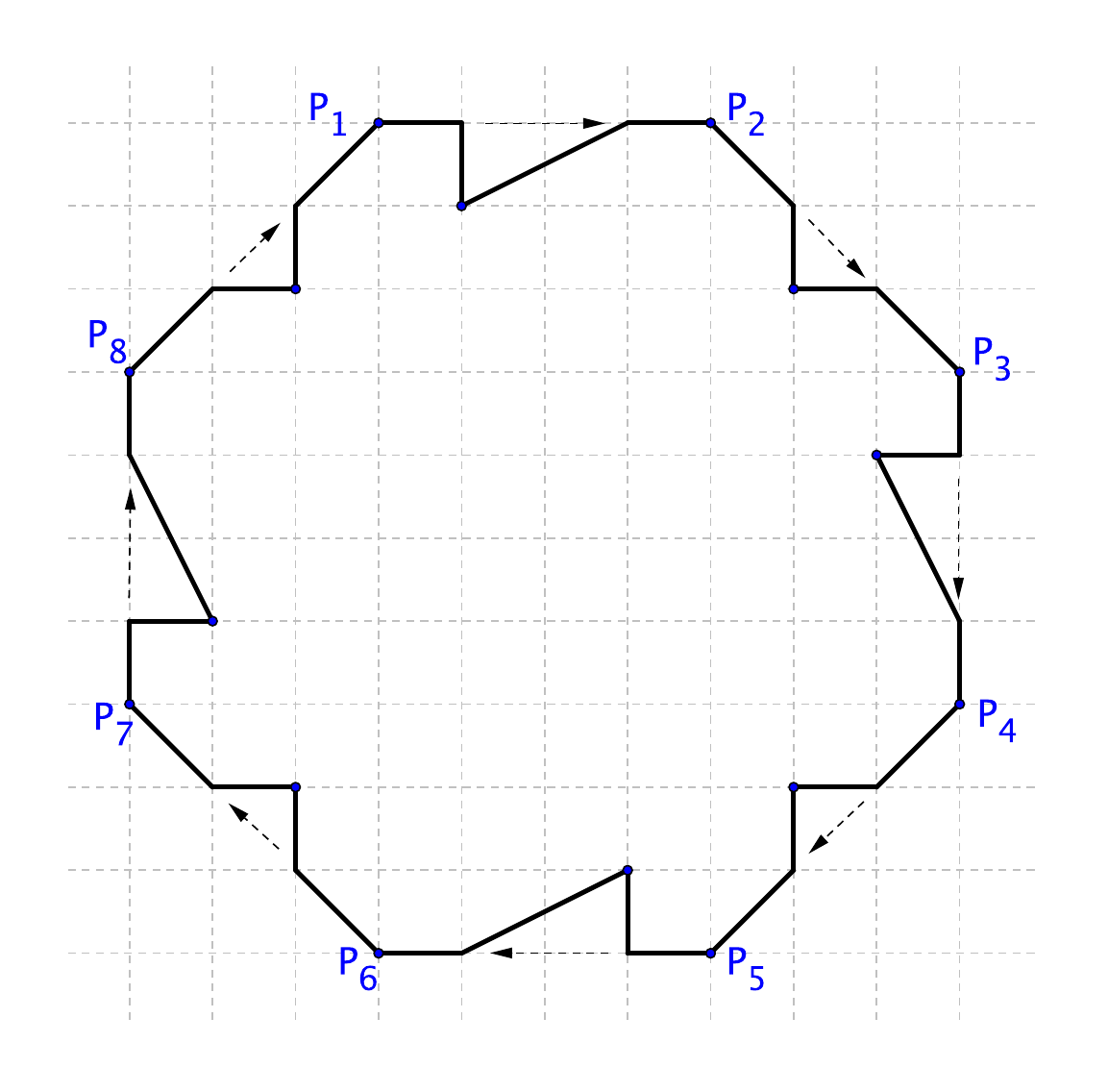}
\caption{Glue by the arrows to get the complementary octagon.}
\label{fig:Octagon}
\end{figure}

It is easy to check that the boundary of our octagon has exactly the same
integral affine invariants as the boundary of our flower. So the octagon can be
glued to the flower edge by edge in a integral affine way to obtain a sphere
$S^2$ with an integral affine structure on it. By integrable surgery (see
\cite{Zung-Integrable2003}), $S^2$  is the base space of a singular Lagrangian
torus fibration  with 8 simple and 8 double focus-focus fibers (for a total of
24 singular focus-focus points in the symplectic manifold, and it is well-known
that such a manifold is diffeomorphic to a complex K3 surface; see
\cite{LeuSym-AlmostToric2010}). We now show that this singular affine $S^2$ is
\textit{not convex}.

Indeed, take any straight line going from the center $A$ of the flower.
One can see that this straight line is trapped inside the flower, can
never get out of it, and so for any point $B$ lying in the interior
of the complementary octagon, there is no straight line going from
$A$ to $B$. One may imagine the flower as a kind of ``black hole''
in which the ``light rays'' are bent so much by the ``affine
gravity'' (i.e., monodromy) of the focus points that no light
ray from $A$ can escape it.

For example, let's say that we take a ray (i.e., straight line) starting
from $A$ and lying between $AO_1$ and $AO_2$. Then it must get out of the
petal $AO_1P_1P_2O_2$ by the edge $O_2P_2$. When it gets out of the first
petal, it enters the second petal $AO_2P_2P_3O_3$ by the edge $AO_2P_2P_3$.
Hence it must get out of this petal by one of the edges
$AO_3$ and $O_3P_3$ to enter the third petal. But $AO_3$ and $O_3P_3$
lie on the same edge $AO_3P_3P_4$ of the petal $AO_3P_3P_4O_4$, and so it
must get out of the third petal by $AO_4$ or $O_4P_4$, to go into the next
petal, and so on. By induction, the ray from $A$ will never get out of the
flower.

Theorem \ref{thm:NonConvexS2} is proved.

\begin{remark}{\rm
It is also easy to see that, points near $A$ in the proof of the above theorem have a similar fate as $A$: any affine "light ray" passing near $A$ will forever remain in a
small neighborhood of the "black hole flower". Indeed, let $\gamma$
be a straight ray starting from a point $A'$ near $A$. If $\gamma$
hits one of the dashed lines in Figure \ref{fig:Flower}, say $AO_1P_1$, "from the left", then by induction one sees that $\gamma$ will also hit all the other consecutive dashed lines $AO_2P_2, AO_3P_3$ and so on from the left and will forever remain in the flower. The only way for $\gamma$ to escape the flower is to hit the dashed lines only from the right (whenever it hits them) and to hit a boundary point, say $R$ on the boundary piece $P_1P_2$. But then $\gamma$ must be "nearly parallel" to the straight line segment
$AO_1P_1P_2$, which implies that $\gamma$ will hit the boundary piece
$P_2P_3$ from outside soon after $R$, and after that, 
by the same inductive arguments as in the previous case, $\gamma$ will forever stay in the flower. In fact, any affine ray which enters the flower from outside will stay in the flower and will never get out again. 
}
\end{remark}

\begin{remark}{\rm
Since any function on $S^2$ has critical points,
there is no integrable Hamiltonian system (with a global momentum map)
with only focus-focus singularities and whose base space is $S^2$.
However, if we take the direct product of our non-convex integral
affine $S^2$ with a closed integral affine interval $D^1$ (with the product affine structure), and view 
this direct product as the base space of the direct product of some toric-focus singular Lagrangian fibrations over $S^2$ and $D^1$ respectively, then by composing the
projection map from the product symplectic manifold to $S^2 \times D^1$ with any embedding from $S^2 \times D^1$ into
$\mathbb{R}^3$, we get the momentum map of a
3-degrees-of-freedom integrable Hamiltonian system
whose base space is our product $S^2 \times D^1$, 
which is non-convex.
}
\end{remark}

\subsection{A globally convex $S^2$ example}  \hfill

This subsection is
devoted to the proof of the following theorem.

\begin{theorem} \label{thm:ConvexS2}
There exists a non-degenerate singular Lagrangian torus fibration on a
symplectic manifold diffeomorphic to K3, with only focus-focus
singularities, and whose base space is a sphere $S^2$ with a globally
convex singular integral affine structure.
\end{theorem}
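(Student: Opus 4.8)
The plan is to construct an explicit locally convex singular integral affine structure on $S^2$ with $24$ focus-focus points (counted with multiplicity) and to verify global convexity directly, mirroring the construction used in Theorem \ref{thm:NonConvexS2} but arranging the monodromy so that straight rays can always escape. Concretely, I would take two convex polygonal building blocks with matching integral-affine boundary data (the same list of invariants used in the non-convex case: integral directions of edges, simple vertices, integral affine edge lengths, and characteristic numbers of edges), create the required focus points inside them by the standard ``cut out a triangular wedge and glue the two sides by a unipotent matrix $\begin{pmatrix} 1 & k \\ 0 & 1\end{pmatrix}$'' procedure, and glue the two blocks edge to edge to obtain $S^2$. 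Unlike the ``shuriken/flower'' of Theorem \ref{thm:NonConvexS2}, I would place the focus points and choose their indices so that the cumulative monodromy encountered along any straight ray is bounded and never forces the ray into an invariant trapping region; in particular one wants the boundary curve along which the two pieces are glued to be a genuine straight closed curve, or at least a union of straight segments meeting convexly, so that no ``concave pocket'' is created. By the integrable surgery results quoted from \cite{Zung-Integrable2003} (first glue base spaces, then lift to Lagrangian torus fibrations, the Lagrange-Chern obstruction vanishing because the pieces are simple enough), such an affine $S^2$ is realized as the base of a toric-focus system on a symplectic manifold; since the number of focus-focus points is $24$, that manifold is diffeomorphic to a K3 surface as in \cite{LeuSym-AlmostToric2010}.

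The next step is to prove global convexity of this constructed $S^2$. Local convexity is automatic from Theorem \ref{thm:FF2Local} (every point has a focus box or a regular box around it, and these are convex). For global convexity I would use the $\Sigma$-convex hull / shrinking machinery developed for Theorem \ref{thm_gobal_convex_2D_compact}: given two points $A,B \in S^2$, take a $\Sigma$-convex hull $C$ of $\{A,B\}$. The obstruction in the sphere case, compared with the disk/annulus case, is precisely that $C$ may have empty boundary, i.e. $C = S^2$, so Lemma \ref{lem:ConvexHull} does not directly apply and one cannot ``shrink from the boundary''. The whole point of the construction is to rule this out: I would show that, for the specific affine structure chosen, any $\Sigma$-convex hull of a two-point set is a proper closed subset with nonempty boundary (equivalently, that there exists a straight closed curve or a straight arc separating generic pairs of points, or more robustly that there is at least one global single-valued affine function with a regular level set, forcing any convex hull to be pinched). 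Once $C \neq S^2$, Lemma \ref{lem:ConvexHull} gives $C$ a nonempty boundary, Proposition \ref{prop:TopoLocallyConvex2Dim} forces $C$ to be a disk or annulus, and then Lemma \ref{lem:DiskSections} (sectors and gaps) together with the annulus case of Theorem \ref{thm_gobal_convex_2D_compact} produces a straight line from $A$ to $B$ inside $C \subset S^2$.

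The main obstacle I expect is precisely the combinatorial/geometric design of the two polygonal pieces and the placement of the focus points so that (a) the boundary invariants match for a valid integral-affine gluing into $S^2$, (b) the Gauss--Bonnet-type constraint is respected — the total ``affine curvature'' concentrated at the focus points and at the corners must be consistent with the Euler characteristic of $S^2$, which dictates how many focus points of which indices can appear and how the boundary angles must be arranged — and (c) no trapping region of the black-hole type of Theorem \ref{thm:NonConvexS2} is created. I would likely verify (c) by exhibiting, in the planar developments of the two pieces, that every straight ray emanating from an arbitrary point eventually crosses the gluing locus transversally and passes into the other piece, using an induction on successive crossings exactly parallel to (but with the opposite conclusion of) the flower argument; the cleanest way to package this is to produce a global single-valued affine function whose regular fibers are straight closed curves foliating $S^2$ minus the focus set, which simultaneously guarantees convexity via the ``potential straight line'' argument of Theorem \ref{thm:FF1Local} and shows that no ray is trapped. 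Drawing the explicit picture (the analogue of Figures \ref{fig:Shuriken}, \ref{fig:Flower}, \ref{fig:Octagon}) and checking the arrows and matrices is the routine but delicate part; the conceptual content is the observation that reversing the ``sense'' of the monodromy wedges — so they open outward rather than curling the rays inward — converts the black hole into a convex sphere.
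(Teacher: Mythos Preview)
Your proposal has a genuine gap at the crucial step. You plan to prove global convexity by showing that the $\Sigma$-convex hull $C$ of any two points is a \emph{proper} subset of $S^2$, and you suggest doing this via ``at least one global single-valued affine function with a regular level set''. But no such function can exist on a closed surface: a non-constant affine function is smooth on $\cB$, hence attains an interior extremum on $S^2$, and at a regular point of the affine structure its differential would vanish, forcing it to be locally (hence globally) constant; near a focus point the only single-valued local affine functions are of the form $aF+c$, whose differential $a\,dF$ is again either identically zero or nowhere zero. So this route to ``$C\neq S^2$'' is blocked, and the alternative you mention (a straight closed curve separating generic pairs of points) is asserted but not established---and it is precisely the statement that fails in the black-hole example, so it cannot follow from generalities. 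Your final remark about ``reversing the sense of the monodromy wedges'' is also off: the index $k$ in $G_r=G_l+kF$ must stay positive, otherwise even local convexity fails (see the remark after Theorem~\ref{thm:FF1Local}).

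The paper takes a completely different, and much shorter, route. It builds $S^2$ not from two pieces but from eight copies of a standard integral-affine triangle (with the focus points pushed to the edges, giving $12$ double focus points), arranged octahedrally. The point of this highly symmetric construction is that $S^2$ is then covered by \emph{three} locally convex annuli---one for each pair of opposite ``octahedral vertices''---with the property that \emph{any two of the three annuli already cover all of $S^2$}. Hence any pair of points $A,B$ lies in a common annulus, and one simply quotes the annulus case of Theorem~\ref{thm_gobal_convex_2D_compact} to get a straight segment from $A$ to $B$. No convex-hull shrinking, no global affine function, and no ray-tracing argument is needed; the work is entirely in the explicit combinatorics of the eight-triangle gluing and the observation about the three covering annuli.
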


The original example constructed by Zung in 1993
(see \cite{Zung-Integrable1993,Zung-Integrable2003})
of a singular Lagrangian torus fibration with only focus-focus
singularities on a symplectic 4-manifold diffeomorphic to K3,
whose base space is the  sphere $S^2$ equipped with a singular affine
structure, is as follows:

\begin{figure}[!ht]
\centering
\includegraphics[width=\textwidth]{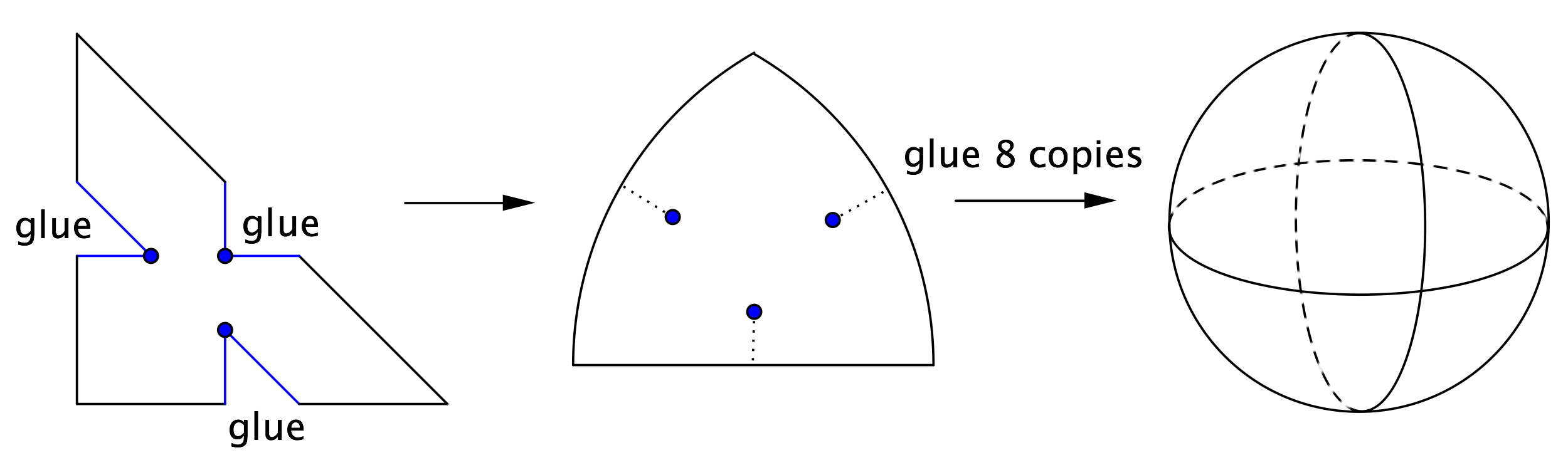}
\vspace{-15pt}
\caption{Construction of $S^2$ with 24 focus-focus points.} \label{fig:ConvexS2a}
\end{figure}

Take a standard triangle $\{ (x,y)  \in \mathbb{R}^2\; | \; x \geq 0,
y \geq 0, x+y \leq c\}$ in the Euclidean space $\mathbb{R}^2$ with the
standard integral affine structure. Cut out from it three small
triangles homothetic to it, one on each edge, as shown on the left
in Figure \ref{fig:ConvexS2a}, and then glue the edges of the created
angles together to obtain a ``rounded'' singular affine triangle with
3 focus points of index 1 inside, as shown in the middle of
Figure \ref{fig:ConvexS2a}. We can now take 8 copies of this
``rounded out'' singular affine triangle, glue them together to get
an affine sphere $S^2$ with 24 singular points, which is the base space of
a singular Lagrangian torus fibration with 24 simple focus-focus
singular fibers on a 4-dimensional symplectic manifold, which therefore
is diffeomorphic to a complex K3 surface; see \cite{LeuSym-AlmostToric2010}
for details.

We can simplify the construction by pushing the focus points
to the edges of the ``rounded''
affine triangles. By doing so, the 24 focus points of
index 1 on $S^2$ become 12 focus points
of index 2, each lying on one quarter of one
of the three ``great circles'' on $S^2$, as shown in
Figure \ref{fig:ConvexS2b}. Of course, this new example of
an affine $S^2$ with 12 double
focus points is also the base space of a singular Lagrangian
torus fibration with 12 double focus-focus fibers.  Each 1/8 of the
sphere (rounded triangle) in this example is affinely
isomorphic to the standard triangle $\{ (x,y)  \in \mathbb{R}^2\; |
\; x \geq 0, y \geq 0, x+y \leq c\}$ in $\mathbb{R}^2$ (because the
singular points have been pushed to the boundary).
Let us show that this simplified example is convex.

Notice that in each triangle which is 1/8 of our sphere
we have 3 trapezoids ``X'', ``Y'', ``Z'', and any
two of them already cover the whole triangle.
If we take all the trapezoids ``X'' in all the eight triangles,
then they form a strip, which is an annulus whose boundary
consists of two affine circles (there are regular affine circles
in the interior of the annulus which tend to these boundary
circles).  The same with trapezoids ``Y'' and trapezoids ``Z''.
So we have three locally convex annuli,
and  any two of these annuli already cover the whole $S^2$.

\begin{figure}[!ht]
\centering
\vspace{-25pt}
\includegraphics[width=\textwidth]{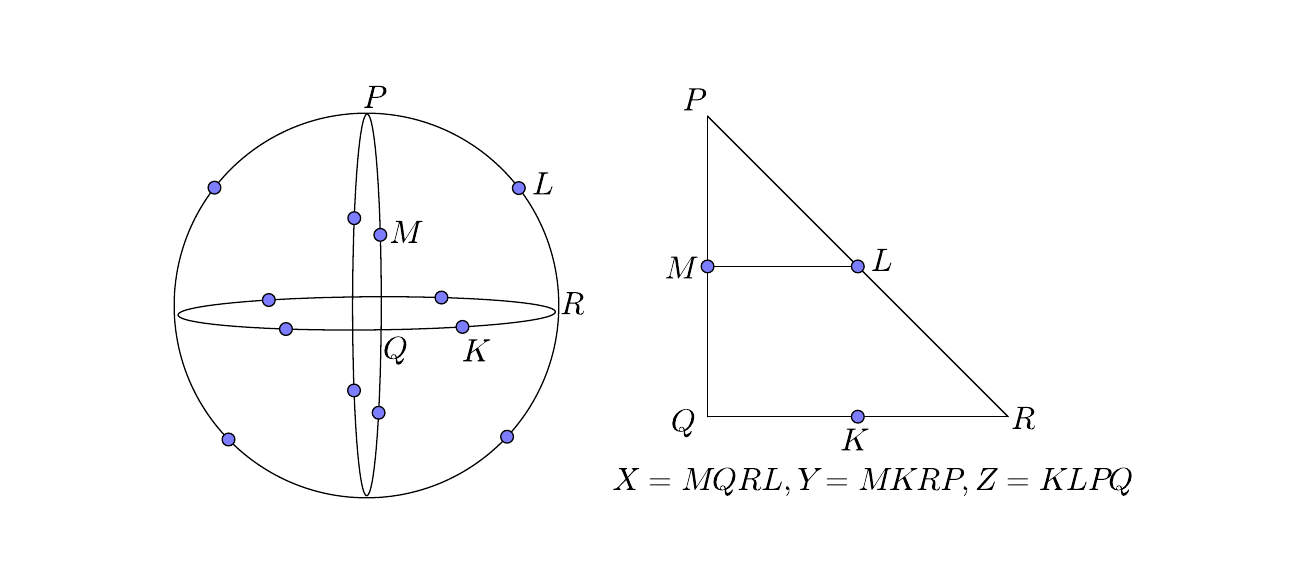}
\vspace{-25pt}
\caption{A convex affine structure on $S^2$} \label{fig:ConvexS2b}
\end{figure}

If $A$ and $B$ are two arbitrary points on our $S^2$,
then at least one of these three annuli contains both $A$ and $B$.
From Theorem \ref{thm_gobal_convex_2D_compact} we know that each annulus
is convex. So we can connect $A$ to $B$ by a straight line lying in
one of the annuli on our $S^2$. Thus our $S^2$ is convex. \hfill $\square$

\subsection{Convexity of toric-focus base spaces in higher dimensions} \hfill

We have seen in Subsection \ref{subsection:NonConvexBox} that there exist
non-convex compact toric-focus base space $\cB$ of any dimension $\geq 4$
with a $\text{focus}^2$ point, due to the ``all choices go wrong''
phenomenon near such points. In fact, already in dimension 3, this
phenomenon can already happen, with just two curves of focus points,
and so we get the following result.

\begin{theorem} \label{thm:NonConvexDim3}
There exists a toric-focus integrable system on a compact 6-dimensional
symplectic manifold, with elliptic and focus-focus singularities, whose
base space is not convex.
\end{theorem}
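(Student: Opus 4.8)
The plan is to mimic the proof of Theorem \ref{thm:NonConvexFocus2}, replacing the single $\text{focus}^2$ point used there by two \emph{disjoint} curves of focus-focus points sitting inside a three-dimensional base, and then to globalize the resulting local picture by integrable surgery. The point of the two-curve version is that the curves do not meet, so near them the local set of regular points has fundamental group the free group on two generators rather than $\mathbb{Z}^2$, and one may choose the two associated monodromy transvections in a ``crossed'' pattern --- the affine function that becomes multi-valued around one curve being the monodromy-invariant affine function of the other --- so that their images generate a non-abelian, in fact free, subgroup of $GL(3,\mathbb{Z})$. This is the local monodromy statement announced in the introduction and the conceptual reason convexity must break, although the explicit argument below uses only the two generators.

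First I would build the local model on a box $\cU \cong \,]-a, a[^3$ carrying a singular integral affine structure in which two affine coordinate functions $F_1, F_2$ are single-valued, there is a third affine function $G$ which is single-valued on $\cU$ minus two disjoint smooth curves $S_1, S_2 \subset \cU$ of focus points of indices $k_1, k_2 > 0$ and which changes by $k_i F_i$ under the monodromy around $S_i$, and the two curves are placed so as to pass close to one another without intersecting. Since $F_1, F_2$ are affine, they are monotone along any straight line, so a straight line crosses each of the two ``monodromy walls'' at most once. Consequently a straight line from a point $A$ with $F_1(A), F_2(A) < 0$ to a point $B$ with $F_1(B), F_2(B) > 0$ still has at most the four candidate branches $\gamma^{l,l}, \gamma^{l,r}, \gamma^{r,l}, \gamma^{r,r}$ of the proof of Theorem \ref{thm:NonConvexFocus2}, each $\gamma^{d_1,d_2}$ being genuinely straight precisely when, at its crossing with the $i$-th wall, it meets that wall on the correct side of $S_i$ relative to the critical value of the multi-valued coordinate along $S_i$.

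Second comes the ``all choices go wrong'' step. The key gain over the $\text{focus}^2$ case is that $S_1$ and $S_2$, hence the two critical-value functions along them, may be prescribed completely independently, since the curves are no longer forced through a common point. Using this freedom I would choose $A$, $B$, and the placements of $S_1$ and $S_2$ so that the four crossing points $E^{l,l}, E^{l,r}, E^{r,l}, E^{r,r}$ sit relative to the two critical curves exactly as in Figure \ref{fig:4wrong}: up to relabelling, $\gamma^{l,l}$ and $\gamma^{r,r}$ are broken at the first wall while $\gamma^{l,r}$ and $\gamma^{r,l}$ are broken at the second. Then no straight line in $\cU$ joins $A$ to $B$.

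Third I would globalize, exactly as at the end of the proof of Theorem \ref{thm:NonConvexFocus2}: embed $\cU$ with its two focus curves into a compact three-dimensional base $\cB$, topologically a cube, flat away from $\cU$, with elliptic singularities along its boundary and corners (the two focus curves may be taken closed or ending on the boundary, whichever is convenient). All local pieces admit semi-local integrable models --- flat pieces and elliptic boundary/corner pieces are standard, and each $\text{focus}^1$ curve carries a local integrable model by a parametrized version of V\~u Ng\d{o}c's construction \cite{VuNgoc2003} (see also \cite{Wacheux-Asymptotics2015}) --- and the cohomological Lagrange--Chern obstruction to assembling them into a global Lagrangian fibration vanishes for a base this simple, so integrable surgery \cite[Proposition 4.10]{Zung-Integrable2003} yields a toric-focus integrable Hamiltonian system with elliptic and focus-focus singularities on a compact symplectic $6$-manifold whose base space is $\cB$. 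Since $A$ and $B$ lie in $\cB$ with no straight segment between them, $\cB$ is not convex. The main obstacle is the second step: producing a concrete, verifiable configuration of $A$, $B$, $S_1$, $S_2$ in the three-dimensional box for which all four candidate lines break simultaneously, i.e. checking that two independently placed codimension-two focus curves really do leave enough room to realize the Figure \ref{fig:4wrong} obstruction; the first step is a transcription of the $\text{focus}^2$ set-up with one wall per curve, and the third is a routine application of the integrable-surgery machinery already used in the paper.
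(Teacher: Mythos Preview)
Your overall architecture---two disjoint focus curves in a three-dimensional box, four candidate straight lines indexed by left/right choices at each wall, then integrable surgery---is exactly the paper's. But your specific local model is the wrong one, and with it the ``all four go wrong'' step cannot be carried out.

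You take two single-valued affine coordinates $F_1,F_2$ and one multi-valued $G$ with monodromy $G\mapsto G+k_iF_i$ around $S_i\subset\{F_i=0\}$. First, these two transvections both fix the plane spanned by $dF_1,dF_2$ and only shift $dG$, so they commute; the image in $GL(3,\mathbb Z)$ is abelian, contradicting your claim of a free monodromy group. Second, and fatally: all four candidate lines $\gamma^{d_1,d_2}$ share the same linear functions $F_1,F_2$ of the parameter $t$, hence hit the wall $\{F_1=0\}$ at the \emph{same} value $F_2(t_1)$. The focus curve $S_1$ on that wall is a graph $G=c_1(F_2)$, so the critical value $c_1$ against which the crossings are tested is identical for all four candidates. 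Writing $g_{d_1,d_2}$ for the $G$-value of $\gamma^{d_1,d_2}$ at $t_1$, one has $g_{r,d_2}-g_{l,d_2}=t_1k_1F_1(B)>0$ and $g_{d_1,r}-g_{d_1,l}=t_1k_2F_2(B)>0$. A short case check (e.g.\ starting from ``if $g_{l,l}>c_1$ then $\gamma^{r,l},\gamma^{r,r}$ are both wall-1-correct, so one of them must fail at wall~2, but the analogous monotonicity at wall~2 forces the other to succeed'') shows that at least one of the four is always unbroken at both walls. Your box is therefore convex.

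The paper instead uses \emph{one} single-valued $F$ and \emph{two} multi-valued functions $G_1,G_2$, with the two walls at $\{F=0\}$ and $\{F=\delta\}$ and $S_i$ lying on the $i$-th wall. The point is that on wall~1 the transverse coordinate is $G_2$, and the four crossing points have \emph{different} $G_2$-values (because $(G_2)_l(B)\neq(G_2)_r(B)$), so they see different critical values of $G_1$ along $S_1$. Since the restriction $G_1|_{S_1}$ can be any smooth function of $G_2$, this extra freedom is exactly what makes the simultaneous breaking of all four candidates achievable.
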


 \begin{proof} We construct a 3-dimensional box
 \begin{equation*}
 \cB = \{x\; |\; -1 \leq F(x) \leq 1;
 -1 \leq (G_1)_l (x), (G_1)_r(x) \leq 1; -1 \leq (G_1)_l (x), (G_1)_r(x) \leq 1\},
 \end{equation*}
 where:

\begin{itemize}
\item There is a smooth coordinate system $(F,H_1,H_2)$ on $\cB$, and
two curves $\cS_1 = \{x \in \cB \; |\; F(x) = H_1(x) =0\}$ $\cS_1 =
\{x \in \cB \; |\; F(x) = \delta, H_2(x) =0\}$ lying on the disks
$\{F = 0\}$ and $\{F = \delta\}$, respectively.

\item  $G_1$ is a function of $\cB$ with two branches $(G_1)_l$,
$(G_1)_r$ satisfying $(G_1)_r = (G_1)_l$ when $F \geq 0$ and
$(G_1)_r= (G_1)_l + F$ when $F \leq 0$.

\item  $G_2$ is a function of $\cB$ with two branches $(G_2)_l$,
$(G_2)_r$ satisfying $(G_1)_r = (G_1)_l$ when $F \leq \delta$
and $(G_1)_r= (G_1)_l + (F-\delta)$ when $F \geq 0$.

\item $(G_1)_l$ is smooth outside of the set $\{F=0, H_1 \geq 0\}$;
$(G_1)_r$ is smooth outside of the set $\{F=0, H_1 \leq 0\}$.

\item $(G_2)_l$ is smooth outside of the set $\{F=\delta, H_1 \geq 0\}$;
$(G_2)_r$ is smooth outside of the set $\{F=\delta, H_1 \leq 0\}$.

\item $(F,G_1,G_2)$ is a multi-valued integral affine coordinate
system for the singular integral affine structure on $\cB$ with
two curves of focus points $\cS_1$ and $\cS_2$.

\item $G_1$ is a smooth parametrization for $\cS_2$ and
$G_2$ is a smooth parametrization for $\cS_1$.

\item The restriction of $G_i$ to $\cS_i$ is a smooth function
for each $i = 0,1$.
  \end{itemize}

Using integrable surgery, one can construct an integrable system
with such a box $\cB$ as the base space in which $(F,G_1,G_2)$ is a
multi-valued system of action coordinates and $\cS_1$, $\cS_2$
are the two curves of focus points. For each $i=1,2$, the restriction
of $G_i$ to $S_i$ can be chosen to be an arbitrary
smooth function with absolute value smaller than or equal to 1.

\begin{figure}[!ht]
\centering
\vspace{-15pt}
\includegraphics[width=\textwidth]{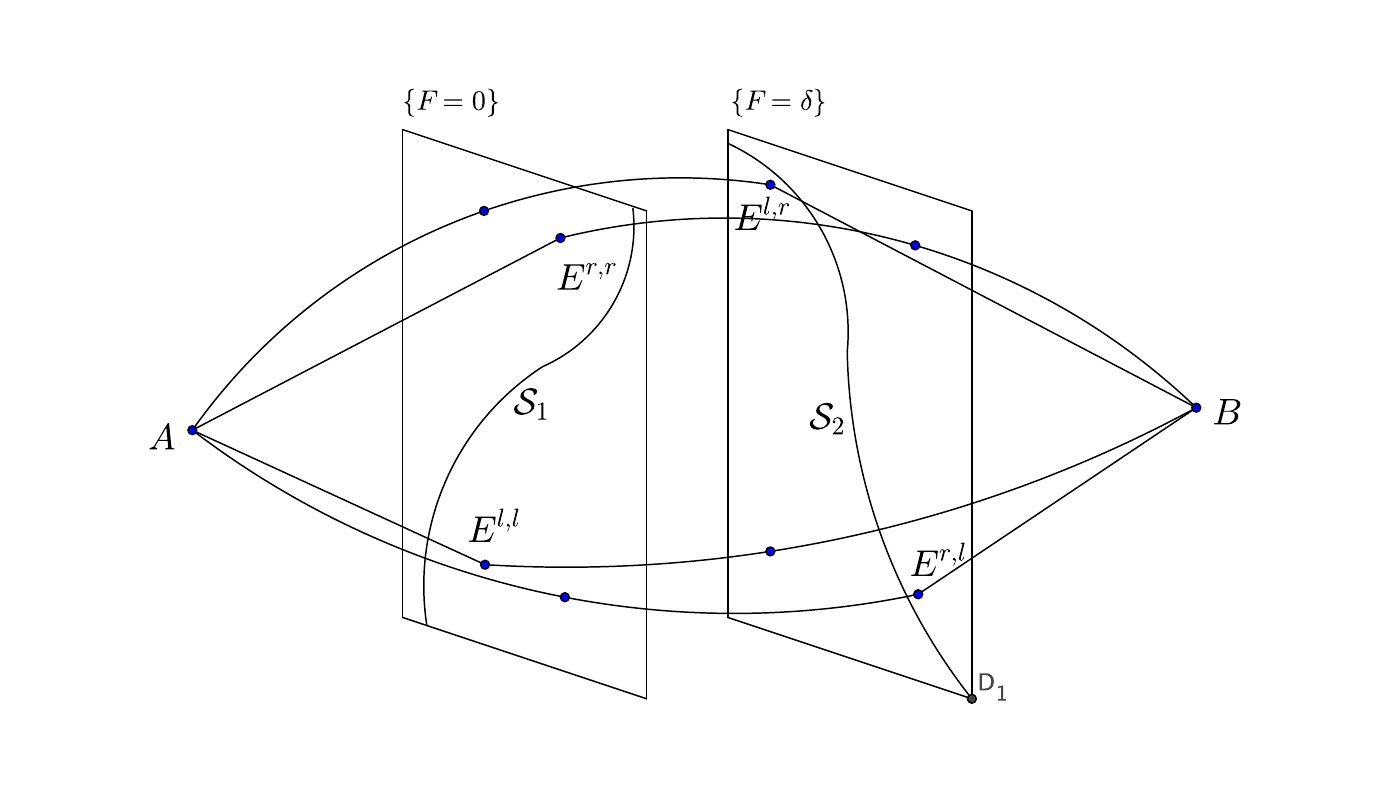}

\vspace{-15pt}
\caption{All four potential straight lines are broken at
respective points
$E^{l,l}, E^{r,l}, E^{l,r}, E^{r,r}$.} \label{fig:4wrongDim3}
\end{figure}

To go from a point $A$ with $F(A) = -1$ to a point $B$ with
$F(B) = 1$ in $\cB$, we have to go through the planes $\{F= 0\}$
and $\{F= \delta\}$ containing focus curves. For each intersecting
plane we have two potential choices:
either go ``on the left'' (i.e., in the domain with $H_i \leq 0$)
or ``on the right'' (i.e., in the domain with $H_i \geq 0$) of the
focus curve. So, in total we have 4 potential choices.  However,
similarly to the case considered in Theorem \ref{thm:NonConvexFocus2},
we can choose our data in such a way that all the 4 potential choices
are wrong, so there is no straight line going from $A$ to $B$,
as illustrated on Figure \ref{fig:4wrongDim3}.
\end{proof}

Theorem \ref{thm:NonConvexDim3} can, of course, be extended to higher
dimensions,  simply by taking symplectic direct products.
Notice that in all non-convex Theorems
\ref{thm:NonConvexDim3}, \ref{thm:NonConvexS2},
and \ref{thm:NonConvexFocus2}, the monodromy group
is big (its image in $GL(n,\mathbb{Z})$ has at least 2 generators).
So one may say that, in all these cases, it is big monodromy which makes non-convexity possible. 
The following theorems essentially states that,
in any dimension $n$, 
if there are $n-1$ independent global affine functions on $\mathcal{B}$
(so the monodromy group cannot be too complicated), then
we always have convexity (under the compactness or properness condition).

\begin{theorem} \label{thm:ConvexSmallMonodromy}
Let $\cB$ be the base space of a toric-focus integrable Hamiltonian
system with $n$ degrees of freedom on a connected compact symplectic
manifold $M$. Assume that the system admits a
global Hamiltonian $\mathbb{T}^{n-1}$-action. Then $\cB$ is convex.
\end{theorem}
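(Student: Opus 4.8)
The plan is to exploit the $n-1$ globally well-defined action functions $F_1,\dots,F_{n-1}\in\cA(\cB)$ associated with the global Hamiltonian $\mathbb{T}^{n-1}$-action, and to study the global affine map $\mathbf{F}=(F_1,\dots,F_{n-1})\colon\cB\to\mathbb{R}^{n-1}$. First I would record a structural consequence. Since each $F_j$ is single-valued on all of $\cB$, its differential is fixed by the affine monodromy around every focus curve. Near an $F^m$-point the local monodromy group is the free abelian group generated by the matrices $M_i$ of Subsection \ref{subsection:FocusMBehavior}, whose common fixed subspace on affine $1$-forms is exactly $(n-m)$-dimensional (spanned by $dF_1,\dots,dF_m,dL_1,\dots,dL_{n-2m}$ in the notation there). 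Hence the $n-1$ independent covectors $dF_1,\dots,dF_{n-1}$ force $n-1\le n-m$, i.e. $m\le 1$: \emph{the base space of a toric-focus system carrying a global $\mathbb{T}^{n-1}$-action has only $F^1$-points}. By Theorem \ref{thm:FF2Local} every focus box is then convex, and near regular, elliptic, or corner points $\cB$ is locally polyhedral; therefore $\cB$ is locally convex. The same count also shows that the monodromy-active coordinate $F$ near a focus point lies in $\mathrm{span}\{dF_1,\dots,dF_{n-1}\}$, so every focus curve lies locally inside a level hypersurface of $\mathbf{F}$.

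Next I would feed $\mathbf{F}$ into the local-global convexity principle (the version of \cite{Zung-Proper2006}). The map $\mathbf{F}$ is proper because $\cB$ is compact ($M$ is compact and $\mathbf{H}$ is proper). I would then check that $\mathbf{F}$ is locally fiber-connected and carries local convexity data: at a regular or elliptic point this is the standard locally polyhedral picture; at an $F^1$-point, in multivalued coordinates $(F,G,L_1,\dots,L_{n-2})$, after an affine change of $\mathbb{R}^{n-1}$ the restriction of $\mathbf{F}$ to a focus box is $(F,L_1,\dots,L_{n-2})$, which is open onto a box and whose fibers are the $G$-segments, connected even for the one that passes through the single focus point of the box. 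The principle then yields that $\Delta:=\mathbf{F}(\cB)\subset\mathbb{R}^{n-1}$ is a compact convex (locally polyhedral) set, the fibers of $\mathbf{F}$ are connected, and $\mathbf{F}\colon\cB\to\Delta$ is an open map. I would also note in passing that $\cB$ necessarily has nonempty boundary: on a closed $\cB$ the global affine function $F_1$ would attain a maximum at a point which, not being regular and not being a focus point (where $dF,dL_1,\dots,dL_{n-2}$ are independent), must be a transversally-elliptic boundary point.

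Finally, to produce a straight segment between arbitrary $A,B\in\cB$: if $\mathbf{F}(A)=\mathbf{F}(B)$ they lie in the same fiber, a connected $1$-dimensional affine interval, which contains a segment joining them. Otherwise let $\sigma=[\mathbf{F}(A),\mathbf{F}(B)]\subset\Delta$ and $\cB_\sigma=\mathbf{F}^{-1}(\sigma)$, the subset of $\cB$ cut out by the $n-2$ global affine equations defining the line through $\mathbf{F}(A)$ and $\mathbf{F}(B)$. Then $\cB_\sigma$ is a compact $2$-dimensional singular affine surface with nonempty boundary, with only $F^1$-points (isolated in $\cB_\sigma$, since $\mathbf{F}$ maps the focus set into a finite union of hyperplane pieces each meeting $\sigma$ in at most a point), it is locally convex as a slice of the locally convex $\cB$, and it carries the global non-constant affine function $F^\sigma$ obtained by pulling back an affine coordinate of $\sigma$. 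Applying Theorem \ref{thm_gobal_convex_2D_compact} in the form valid whenever a global affine function exists (cf. Remark \ref{global_affine_remark}), and using Proposition \ref{prop:TopoLocallyConvex2Dim} to control its topology, one gets that the component of $\cB_\sigma$ containing $A$ is convex, hence contains a straight line from $A$ to $B$ inside $\cB_\sigma\subset\cB$.

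The step I expect to be the main obstacle is this last one: controlling the slice $\cB_\sigma$ when the line $\sigma$ cannot be taken transversal to the hyperplane pieces carrying the focus curves, since the endpoints $\mathbf{F}(A),\mathbf{F}(B)$ are fixed. If that genericity fails, the fallback is to run the two-candidate-line construction of Theorems \ref{thm:FF1Local}--\ref{thm:FF2Local} directly on $\cB_\sigma$: because $F^\sigma$ is single-valued and affine, it is monotone along any candidate line, so each crossing of a focus level set presents a single left/right choice exactly as in the focus-box case, and a compactness argument together with induction on the number of focus points of $\cB_\sigma$ should close the proof; alternatively, one could formulate a version of the local-global convexity principle tailored to proper affine maps with $1$-dimensional convex fibers, which would subsume both the compact and the proper non-compact statement at once.
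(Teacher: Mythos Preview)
Your proposal is correct and follows essentially the same route as the paper: reduce to a two-dimensional affine slice $\cB_\sigma=\mathbf{F}^{-1}(\sigma)$ over the segment $\sigma$ joining the images of $A$ and $B$, and then invoke Theorem~\ref{thm_gobal_convex_2D_compact} via Remark~\ref{global_affine_remark} (global affine function, non-integral indices allowed). The paper's argument is somewhat shorter because it obtains convexity of the image and connectedness of the fibers of $\mathbf{F}$ directly from the Atiyah--Guillemin--Sternberg theorem applied to the $\mathbb{T}^{n-1}$-action on $M$, rather than re-deriving these from the local-global convexity principle; your auxiliary observations (only $F^1$-points occur, $\cB$ has nonempty boundary) are correct but not needed for the paper's version of the argument.

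Your anticipated obstacle---the slice $\cB_\sigma$ failing to meet the focus hypersurfaces $\{F=0\}$ transversally---is not actually a problem, and the paper does not address it separately. If $\sigma$ lies locally inside the hyperplane image of $\{F=0\}$, then near the corresponding focus curve the slice $\cB_\sigma$ sits inside $\{F=0\}$, where the multivalued coordinate $G$ is single-valued and the induced affine structure is \emph{regular} (cf.\ Case~1 in Subsection~\ref{subsection:StraightFFb}); so the degenerate case is easier, not harder. One small point you should make explicit: $\cB_\sigma$ is \emph{connected} (hence $A$ and $B$ lie in the same component), which follows because $\sigma$ is connected, the fibers of $\mathbf{F}$ are connected, and $\cB_\sigma$ is compact.
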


\begin{proof}
Let $\mathbf{L}= (L_1 ,\ldots , L_{n-1}):M \rightarrow \mathbb{R}^{n-1}$
be the momentum map of the $\mathbb{T}^{n-1}$-action.
By the Atiyah-Guillemin-Sternberg theorem \cite{At1982, GuSt-Convexity1982},
$\mathbf{L}(M)$ is
a convex polytope in $\mathbb{R}^{n-1}$ and the preimage in $M$ by
$\mathbf{L}$
of each point is connected. Because the torus action preserves the
integrable system, $\mathbf{L}$ descends to a map $\mathcal{B} \rightarrow
\mathbb{R}^{n-1}$, which we still call $\mathbf{L}$, and the preimage
of each point by $\mathbf{L}$ in $\mathcal{B}$ is still connected.

Let $x, y \in \mathcal{B}$ be arbitrary
distinct points; we need to show that there is a straight line connecting
$x$ to $y$. If $\mathbf{L}(x) =\mathbf{L}(y) = c \in \mathbb{R}^{n-1}$,
then $x, y \in\mathbf{L}^{-1}(c) \subset \mathcal{B}$ which is connected
and is a straight line, so we are done.

Consider the case when $\mathbf{L}(x) = c_1$, $\mathbf{L}(y) = c_2$,
with $c_1 \neq c_2$. Let $\ell$ be the intersection of the straight
line passing through $c_1$ and $c_2$ with the polytope $\mathbf{L}(M)$.
Then $\ell$ is a closed interval. Let $P = \mathbf{L}^{-1}(\ell) \subset
\mathcal{B}$. Then $P$ contains $x $ and $y$, it inherits a singular
affine structure from $\mathcal{B}$, it is compact and locally convex
(since $\mathcal{B}$ is compact and locally convex). Note that
$P$ is connected since the interval $\ell$ is connected, the preimage
of every point in $\ell$ by $\mathbf{L}$ is connected, and $P$ is compact.

The singular points
in $\mathcal{B}$ are still of focus type, except for the fact their
indices may not be integers and the affine structure on $\mathcal{B}$
may not be integral. However, on $\mathcal{B}$ we have a global
affine function (one of the $L_i$), so by Remark \ref{global_affine_remark}, the conclusion
of Theorem \ref{thm_gobal_convex_2D_compact} still holds, which means that $P$ is convex and hence
there is a straight line from $x$ to $y$ in $P \subset \mathcal{B}$.
\end{proof}

\begin{definition}
\label{def_general_proper}
The singular affine manifold $\mathcal{B}$ is called {\bfi proper} if for any
closed, connected, simply connected subset $S \subset \mathcal{B}$, the local
injective map $\varphi: S \rightarrow \mathbb{R}^n$ given by an $n$-tuple of
independent affine functions is a proper map.
\end{definition}

\begin{theorem}
\label{thm:GlobalConvexnDnoncompact}
Let $\mathcal{B}$ be the $n$-dimensional base space of a toric-focus
integrable Hamiltonian system on a connected, non-compact,
symplectic, $2n$-manifold without boundary.
Assume that:
\begin{itemize}
\item[{\rm (i)}] The system admits a
global Hamiltonian $\mathbb{T}^{n-1}$-action with momentum map
$\mathbf{L}: M \rightarrow \mathbb{R}^{n-1} $;
\item[{\rm (ii)}] the set of focus points in $\mathcal{B}$ is
compact;
\item[{\rm (iii)}] the interior of  $\mathcal{B}$ is homeomorphic to an open
ball in $\mathbb{R}^n$;
\item[{\rm (iv)}] $\mathcal{B}$ is proper (see Definition \ref{def_general_proper}).
\end{itemize}
Then $\mathcal{B}$ is convex (in its own underlying affine structure).
\end{theorem}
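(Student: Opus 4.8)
The plan is to reduce to the compact case by exhausting $\cB$ with compact, connected, locally convex pieces and then running, on each such piece, the argument in the proof of Theorem~\ref{thm:ConvexSmallMonodromy}. Two facts are available from the outset. First, the $\bbT^{n-1}$-action provides $n-1$ globally defined single-valued affine functions $L_1,\dots,L_{n-1}$ on $\cB$, the components of its momentum map, which descend to $\cB$ since the action preserves the system; write $\mathbf{L}=(L_1,\dots,L_{n-1})\colon\cB\to\bbR^{n-1}$ and assume, as the hypothesis implicitly requires, that these are independent at regular points. This already forces every focus point of $\cB$ to have a single focus component: at a $\mathrm{focus}^m$ point with $m\ge 2$ the only single-valued affine functions are combinations of $F_1,\dots,F_m,L^{\mathrm{loc}}_1,\dots,L^{\mathrm{loc}}_{n-2m}$, whose differentials span a space of dimension $n-m<n-1$ on a whole neighborhood, so $dL_1\wedge\cdots\wedge dL_{n-1}$ would vanish identically there; but an affine function that is locally constant on an open subset of the (connected, dense) regular part of $\cB$ is constant, so a positive-dimensional subtorus would act trivially, contradicting effectiveness. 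Thus all monodromy in $\cB$ comes from blocks $\begin{pmatrix}1 & k\\ 0 & 1\end{pmatrix}$. Second, $\cB$ is automatically locally convex, by the local product description of toric-focus base spaces together with Theorems~\ref{thm:FF1Local} and~\ref{thm:FF2Local}.

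Next I would build a global unfolding of $\cB$. Because the focus set $\cF\subset\cB$ is compact (hypothesis~(ii)) and consists of finitely many codimension-two submanifolds, and because the interior of $\cB$ is a ball (hypothesis~(iii)), one can cut $\cB$ along a codimension-one locus built from one ``ditch wall'' of the form $\{F_i=0,\ H_i\ge 0\}$ emanating from each component of $\cF$, chosen pairwise disjoint, to obtain a connected, simply connected region $\widehat{\cB}$ carrying a \emph{regular} affine structure and a folding map $\Pi\colon\widehat{\cB}\to\cB$ that is a bijection off the cuts. Adjoining one further affine function $L_n$ on the simply connected $\widehat{\cB}$ yields a locally injective affine map $\varphi=(L_1,\dots,L_{n-1},L_n)\colon\widehat{\cB}\to\bbR^n$. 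Hypothesis~(iv) says precisely that $\varphi$ is proper, so Lemma~\ref{lem:convex_affine_map2} applies: $\varphi$ is injective and $\varphi(\widehat{\cB})\subset\bbR^n$ is convex.

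For $N\in\bbN$ I would then put $\cB_N:=\Pi\bigl(\varphi^{-1}(\{z\in\bbR^n : |z|\le N\})\bigr)$. Properness of $\varphi$ makes $\cB_N$ compact; it is connected, locally convex (near each of its points it is the intersection of a focus or regular box of $\cB$ with finitely many affine half-space conditions, hence a convex sub-box), contains $\cF$ for $N$ large, and $\bigcup_N\cB_N=\cB$. The map $\mathbf{L}$ descends to $\mathbf{L}_N\colon\cB_N\to\bbR^{n-1}$ with image $\pi\bigl(\varphi(\widehat{\cB})\cap\{|z|\le N\}\bigr)$ convex (here $\pi\colon\bbR^n\to\bbR^{n-1}$ forgets the last coordinate) and with connected fibers, since each fiber is the $\Pi$-image of the $\varphi^{-1}$ of a segment. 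Given $x,y\in\cB$, choose $N$ with $x,y\in\cB_N$. If $\mathbf{L}_N(x)=\mathbf{L}_N(y)$ the two points lie on a common fiber of $\mathbf{L}_N$, which is connected and is a straight line. Otherwise let $\ell$ be the chord of $\mathbf{L}_N(\cB_N)$ through $\mathbf{L}_N(x)$ and $\mathbf{L}_N(y)$ and set $P:=\mathbf{L}_N^{-1}(\ell)$; exactly as in the proof of Theorem~\ref{thm:ConvexSmallMonodromy}, $P$ is a compact, connected, locally convex two-dimensional singular affine surface with non-empty boundary (the fibers over the endpoints of $\ell$) carrying a global affine function, so Theorem~\ref{thm_gobal_convex_2D_compact} together with Remark~\ref{global_affine_remark} gives that $P$ is convex. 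Hence there is a straight line from $x$ to $y$ in $P\subset\cB_N\subset\cB$, and $\cB$ is convex. (For $n=2$ the $\bbT^{n-1}=\bbT^1$ hypothesis is vacuous for this argument and the statement is contained in Theorem~\ref{thm:GlobalConvex2Dfirst}.)

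I expect the main obstacle to be the unfolding step and its compatibility with Definition~\ref{def_general_proper}: one must check that the ditch-wall cuts can be made coherently and disjointly so that $\widehat{\cB}$ is genuinely simply connected with a regular integral affine structure, and that the resulting affine immersion is proper --- Definition~\ref{def_general_proper} is phrased for closed, connected, simply connected \emph{subsets} of $\cB$, whereas $\widehat{\cB}$ is $\cB$ cut open, so one should either exhaust $\widehat{\cB}$ by pieces that embed back into $\cB$ as such subsets (using that $\cF$ is compact and sits in a ball) or reformulate the properness input accordingly. Once the unfolding is in hand, the local convexity of the truncations $\cB_N$ near the artificial boundary $\Pi(\varphi^{-1}(\{|z|=N\}))$ and the transfer of the convex-image and connected-fiber properties from $\varphi(\widehat{\cB})$ to $\mathbf{L}_N$ are routine, and the two-dimensional slice is handled by the already-established results.
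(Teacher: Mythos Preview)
Your proposal is correct and follows essentially the same route as the paper: cut $\cB$ open along codimension-one walls to kill the monodromy, use the properness hypothesis to exhaust by compact locally convex pieces $\cB_N$, and on each $\cB_N$ run the two-dimensional slice argument of Theorem~\ref{thm:ConvexSmallMonodromy}. The paper's own proof is a one-line sketch (``reduce to Theorem~\ref{thm:ConvexSmallMonodromy} exactly as Theorem~\ref{thm:GlobalConvex2Dfirst} was reduced to Theorem~\ref{thm_gobal_convex_2D_compact}''), so you have in fact supplied more than it does: your observation that the $\bbT^{n-1}$-action forces every focus point to be $\text{focus}^1$ is correct and useful, and your replacement of the Atiyah--Guillemin--Sternberg input (convex image, connected fibers of $\mathbf{L}$) by the direct argument via the injective unfolding map $\varphi$ is exactly what is needed, since $\cB_N$ is not literally the base space of a system on a compact manifold. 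You have also correctly flagged the one genuine soft spot the paper leaves implicit, namely that Definition~\ref{def_general_proper} speaks of simply connected \emph{subsets} of $\cB$ rather than of the cut-open space $\widehat{\cB}$; your suggested workaround (exhaust $\widehat{\cB}$ by pieces that re-embed in $\cB$, using compactness of the focus set) is the right way to close this.
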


\begin{proof}
This is done by reducing the statement to Theorem
\ref{thm:ConvexSmallMonodromy} exactly in the same way
Theorem \ref{thm:GlobalConvex2Dfirst} was proved as a consequence
of a cutting construction and
Theorem \ref{thm_gobal_convex_2D_compact}.
\end{proof}

\section*{Acknowledgement}

We would like to thank many colleagues, in particular Rui Loja Fernandes,
Eugene Lerman, Alvaro Pelayo, and San V\~u Ng\d{o}c, for useful
discussions on the topics of this paper.

This paper was written during  Nguyen Tien Zung's stay at the School of
Mathematical Sciences, Shanghai Jiao Tong University, as a visiting
professor. He would like to thank Shanghai Jiao Tong University, and
especially Tudor Ratiu, Jianshu Li, and Jie Hu for the invitation,
hospitality and excellent working conditions.

\end{document}